\documentclass[11pt,reqno,a4paper]{amsart}
\usepackage{amsmath,amsthm,verbatim,amscd,amssymb,setspace,enumitem,hyperref}
\usepackage{exscale,color}
\usepackage[colorinlistoftodos,prependcaption,textsize=tiny]{todonotes}

\usepackage{enumitem}
\setcounter{tocdepth}{2}

\usepackage{mathrsfs}

\renewcommand{\epsilon}{\varepsilon}

\newcounter{mtheorem}
\newtheorem{mtheorem}[mtheorem]{Theorem}

\setcounter{mtheorem}{0}

\newcommand{{\vol}}{\rm vol}

\newcommand{\Ric}{\operatorname{Ric}}

\newcommand{\IP}{\mathbb{P}}
\newcommand{\IC}{\mathbb{C}}
\newcommand{\Rm}{\operatorname{Rm}}

\newtheoremstyle{fancy}{}{}{\itshape}{}{\textbf\bgroup}{.\egroup}{ }{}
\newtheoremstyle{fancy2}{}{}{\rm}{}{\textbf\bgroup}{.\egroup}{ }{}

\theoremstyle{fancy}
\newtheorem{theorem}{Theorem}[section]
\newtheorem{lemma}[theorem]{Lemma}

\newtheorem{prop}[theorem]{Proposition}

\theoremstyle{fancy2}
\newtheorem{definition}[theorem]{Definition}

\newtheorem{remark}[theorem]{Remark}

\newtheorem{claim}[theorem]{Claim}

\textheight250mm
\textwidth167mm

\addtolength{\topmargin}{-15mm}
\addtolength{\oddsidemargin}{-20mm}
\addtolength{\evensidemargin}{-20mm}

\setlist{leftmargin=*}

\numberwithin{equation}{section}

\begin{document}
\title{Non-collapsed finite time singularities of the Ricci flow on compact K\"ahler surfaces are of Type I}
\date{\today}

\author{Ronan J.~Conlon}
\address{Department of Mathematical Sciences, The University of Texas at Dallas, Richardson, TX 75080}
\email{ronan.conlon@utdallas.edu}

\author{Max Hallgren}
\address{Department of Mathematics, Rutgers University, Piscataway, NJ 08854, USA}
\email{mh1564@scarletmail.rutgers.edu}

\author{Zilu Ma}
\address{Department of Mathematics, Rutgers University, Piscataway, NJ 08854, USA}
\email{zilu.ma@rutgers.edu}

\date{\today}

\begin{abstract}
We show that any non-collapsed finite time singularity of the Ricci flow on a compact K\"ahler surface is of Type I. Combined with a previous result of the first author, Cifarelli, and Deruelle, it follows that any such singularity is modeled on the shrinking Ricci soliton of Feldman-Ilmanen-Knopf on the total space of the line bundle $\mathcal{O}_{\mathbb{P}^1}(-1)\to\mathbb{P}^{1}$.
\end{abstract}

\maketitle

\markboth{Ronan J.~Conlon, Max Hallgren, and Zilu Ma}{Non-collapsed finite time singularities of the Ricci flow on compact K\"ahler surfaces are of Type I}

\section{Introduction}

\subsection{Background}
Let $(M,\,g)$ be a compact K\"ahler manifold with K\"ahler metric $g$. The K\"ahler-Ricci flow is a one-parameter family $(g_t)_{t\in [0,T)}$ of K\"ahler metrics on $M$ with $g_{0}=g$ evolving under the evolution equation
\begin{equation*}
\frac{\partial g_{t}}{\partial t}=-\operatorname{Ric}_{g_{t}},
\end{equation*}
where $\operatorname{Ric}_{g_{t}}$ denotes the Ricci curvature of $g_{t}$. This flow can be reformulated in terms of a strictly parabolic complex Monge-Amp\`ere equation, from which short time existence and uniqueness of solutions follow from standard theory. However, being non-linear, the flow may develop singularities in finite time. Let $[0,\,T), T>0,$ denote the maximal time interval of existence of $(g_{t})$ and assume that $T<\infty$. Then we know that the norm of the curvature $|{\operatorname{Rm}_{g_{t}}}|_{g_{t}}\to\infty$ as $t\to T^{-}$ \cite{Hamilton}. If the curvature does not blow up too fast, i.e., if \begin{equation*} 
\sup_{t\,\in\,[0,\,T)} \sup_{M} |{\operatorname{Rm}_{g_{t}}}|_{g_{t}}(T-t)<\infty, \end{equation*}
then we say that the singularity is of \emph{Type I}. Otherwise, it is of \emph{Type II}.

Finite time Type I singularities of the K\"ahler-Ricci flow are of interest because they are modeled on non-flat smooth shrinking gradient K\"ahler-Ricci solitons \cite{topping, nabersoliton} with bounded curvature.
K\"ahler-Ricci flows beginning from any K\"ahler metric in a positive multiple of the first Chern class of a del Pezzo surface develop finite time Type I singularities \cite{TianEinstein,Koiso,XuJiaZhu,perltian,TianZhuSoliton}, and the corresponding singularity model is the unique \cite{BandoMabuchi,TianZhuUniqueness} shrinking K\"ahler-Ricci soliton on the del Pezzo surface in question. Moreover, in this dimension, all possible Type I singularity models have been classified \cite{CCD}. On the other hand, in complex dimensions three and above, there exist finite time singularities of the flow that are not of Type I \cite{li2, miao}.

We say that the flow $(M,\,(g_{t})_{t\in[0,\,T)}),\,T<\infty,$ is \emph{non-collapsed} if
\begin{equation}\label{VolDef}
\liminf_{t\to T^{-}}\operatorname{vol}_{g_{t}}(M)>0.
\end{equation}
In \cite{SongTypeI}, it was shown that non-collapsed finite time singularities of the K\"ahler-Ricci flow beginning from any $U(n)$-invariant metric on the blowup of $\mathbb{P}^{n}$ at one point are of Type I, and subsequently in \cite{GuoSong} that any resulting singularity model is the shrinking Ricci soliton of Feldman-Ilmanen-Knopf on the total space of $\mathcal{O}_{\mathbb{P}^n}(-1)\to\mathbb{P}^{n}$ \cite{FIK}. Similar results were obtained by M\'aximo \cite{maximo} for $n=2$. In \cite{CCD},
it was shown that any non-collapsed finite time singularity of Type I on a compact K\"ahler surface is modeled on the Feldman-Ilmanen-Knopf shrinking soliton
on the total space of $\mathcal{O}_{\mathbb{P}^1}(-1)\to\mathbb{P}^{1}$ (without any symmetry assumptions). In this article, we prove that any non-collapsed finite time singularity of the K\"ahler-Ricci flow on a compact K\"ahler surface is of Type I.

\subsection{Main result}
Our main result is the following.

\begin{mtheorem}\label{noncollapsedtheorem} 
Let $(M,\,(g_{t})_{t\in[0,\,T)}),\,T\in(0,\,\infty),$ be a K\"ahler-Ricci flow on a compact K\"ahler surface $M$ with $t=T$ the first singular time. If 
$\liminf_{t\to T^{-}}\operatorname{vol}_{g_{t}}(M)>0$, then $$\sup_{t\in [0,\,T)} \sup_{M} |{\Rm_{g_{t}}}|_{g_{t}}(T-t)<\infty.$$ In particular, $g_{t}$ develops a finite time Type I singularity at $t=T$.
\end{mtheorem}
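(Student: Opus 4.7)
The natural strategy is to argue by contradiction. Suppose the singularity is of Type II, so that after passing to a subsequence there exist $(x_i, t_i) \in M\times[0,T)$ with $t_i\to T$ and $|\Rm_{g_{t_i}}|(x_i)(T-t_i)\to\infty$. A standard Hamilton-type point-picking procedure then produces a new sequence $(y_i, s_i)$ with $K_i:=|\Rm_{g_{s_i}}|(y_i)\to\infty$ and $K_i(T-s_i)\to\infty$, such that $|\Rm_{g_t}|\le 2K_i$ on a parabolic neighborhood of $(y_i,s_i)$ whose rescaled radius tends to infinity. I would then parabolically rescale by $K_i$ to obtain a sequence of pointed K\"ahler-Ricci flows defined on time intervals exhausting $\R$, with curvature bounded by $2$ on larger and larger parabolic balls based at $(y_i,0)$ and with $|\Rm|(y_i,0)=1$.

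The assumption (\ref{VolDef}) together with the compactness of $M$ provides a uniform lower bound on Perelman's $\mu$-functional along the sequence. Invoking Bamler's compactness theorem for non-collapsed Ricci flows, a subsequence $\mathbb{F}$-converges to a metric flow, and the local curvature bound promotes this convergence to smooth pointed Cheeger-Gromov-Hamilton convergence on compact parabolic subsets. The limit is therefore realized by a smooth, complete, non-flat, non-collapsed \emph{eternal} K\"ahler-Ricci flow $(N^\infty,(h_s)_{s\in\R},y_\infty)$ of complex dimension two with $|\Rm|(y_\infty,0)=1$.

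Next I would analyze the asymptotic behavior of $(N^\infty,h_s)$ as $s\to-\infty$. By Bamler's construction of tangent flows at infinity for ancient non-collapsed Ricci flows, together with the fact that the K\"ahler structure passes to the $\mathbb{F}$-limit, the parabolic rescalings $|s_j|^{-1}h_{|s_j|\cdot\tau}$ subconverge to a non-flat metric shrinking gradient soliton. The four-dimensional regularity theory for non-collapsed Ricci flows, combined with the improvement provided by the K\"ahler condition in complex dimension two, would be invoked to ensure that this limit is a \emph{smooth} shrinking K\"ahler-Ricci soliton. The classification of \cite{CCD} then identifies this asymptotic soliton as the Feldman-Ilmanen-Knopf shrinker on $\mathcal{O}_{\P^1}(-1)\to\P^1$.

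The principal obstacle is converting this identification into a contradiction with the eternity of $(N^\infty,h_s)$. The natural path is to establish a rigidity statement of the following form: any complete, non-collapsed, ancient K\"ahler-Ricci flow on a complex surface whose asymptotic soliton at $-\infty$ is the FIK shrinker must itself be a time-translate of the FIK flow. Since the FIK flow is defined only on a half-infinite time interval, such rigidity would be incompatible with $(N^\infty,h_s)$ being defined on all of $\R$, producing the desired contradiction. Establishing this rigidity—presumably by combining the strict monotonicity of Nash entropy, the sharp entropy gap at the FIK Gaussian density, and the holomorphic splitting of the soliton vector field available in complex dimension two—is the main technical hurdle.
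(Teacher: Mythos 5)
Your proposal diverges completely from the paper's route, and it contains two genuine gaps, one of which you acknowledge yourself. First, the identification of the backward asymptotic soliton of your eternal Type II limit as the \emph{smooth} FIK shrinker is not a routine consequence of existing theory. In real dimension four, Bamler's compactness/partial-regularity theory only yields a shrinking gradient K\"ahler-Ricci soliton with (a priori infinitely many) isolated \emph{orbifold} singularities; there is no ``four-dimensional regularity theory plus K\"ahler condition'' that upgrades this to smoothness. Ruling out the orbifold points, and then ruling out the other entries of the classification in \cite{CCD} (flat $\mathbb{C}^{2}$, the cylinder $\mathbb{P}^1\times\mathbb{C}$, compact models), is exactly what occupies Sections 3--5 of the paper, and the arguments there lean heavily on structure that your abstract blowup limit does not carry: the contracted $(-1)$-curve $\widetilde{E}$, its tubular neighborhood $\mathcal{V}\cong\mathcal{O}_{\mathbb{P}^1}(-1)$, the potential $\widetilde{u}_t=\log(\pi^{\ast}\Omega_N/\widetilde{\omega}_t^2)$, and the symplectic-filling/linking-number arguments. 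For a general eternal limit extracted by Hamilton point-picking there is no analogue of these tools, so this step cannot be waved through.

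Second, and decisively, the rigidity statement you need --- that any complete non-collapsed ancient (eternal) K\"ahler-Ricci flow on a complex surface whose tangent flow at $-\infty$ is the FIK shrinker must be a time-translate of the FIK flow --- is not a known theorem, and no argument of the ``entropy gap plus holomorphic splitting'' type that you sketch exists in the literature; establishing it would be a substantial backward-uniqueness result in its own right, comparable in difficulty to the whole paper. Without it your contradiction with eternity never materializes, so the proof does not close. By contrast, the paper never takes a Type II blowup at all: it proves a quantitative Type I bound away from shrinking neighborhoods of $\widetilde{E}$ (Proposition \ref{NoncollapsedTypeI}, via a gradient estimate for $\widetilde{u}_t$ and the topological obstruction that an ALE Calabi-Yau bubble would force a $(-2)$-sphere inside $\mathcal{V}$), then shows the tangent flow at the singular time is the smooth FIK shrinker (Theorem \ref{ShrinkerisFIK}), and finally observes that any would-be Type II points must be $P^{\ast}$-close to $\widetilde{E}$, where the rescaled flows converge smoothly to the bounded-curvature FIK model, yielding the contradiction directly. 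If you want to salvage your approach, you would have to either prove the rigidity statement or replace it by an argument showing the eternal limit itself cannot exist, and neither is currently available.
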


From \cite{SoWe1}, it is known that such a singularity corresponds to the contraction of finitely many disjoint $(-1)$-curves. As a consequence of Theorem \ref{noncollapsedtheorem} and \cite[Theorem B]{CCD}, any singularity model of the flow based at a contracted $(-1)$-curve must be the flow corresponding to the shrinking soliton of Feldman-Ilmanen-Knopf \cite{FIK} on the total space of the line bundle $\mathcal{O}_{\mathbb{P}^1}(-1)\to\mathbb{P}^{1}$. In general, we expect that all finite time singularities of the K\"ahler-Ricci on a compact K\"ahler surface are of Type I. 

\subsection{Outline of Proof}
Let $(\widetilde{M},\,\widetilde{J})$ be a compact K\"ahler surface with complex structure $\widetilde{J}$ and let $(\widetilde{g}_t)_{t\in [0,\,T)},\,T<\infty$, be a K\"ahler-Ricci flow on $\widetilde{M}$ with $t=T$ the first singular time satisfying \eqref{VolDef}. Then by \cite{SoWe1}, 
there is a blowdown map $\pi:\widetilde{M} \to N$ contracting finitely many disjoint $(-1)$-curves. For simplicity, we assume that $\pi$ contracts only one such curve $\widetilde{E}$. By Bamler's compactness theory \cite{Bam3}, a tangent flow based at a point of $\widetilde{E}$ at the singular time $T$ is a shrinking gradient K\"ahler-Ricci soliton surface $(X,\,g,\,J,\,f)$ with (a priori infinitely many) isolated orbifold singularities. This theory is introduced, along with notation and other preliminaries, in Section $2$. 

In Section $3$, we show that $X$ has only finitely many orbifold singularities and that $\widetilde{E}$ limits to a compact holomorphic curve $E$ in $X$. To achieve this, we fix a volume form $\Omega_N$ on $N$ and work with the function $\widetilde{u}_t = \log \left( \frac{\pi^{\ast}\Omega_N}{\widetilde{\omega}_t^2} \right)$ defined on $\widetilde{M}$. Here, $\widetilde{\omega}_{t}$ denotes the K\"ahler form of the evolving K\"ahler metric $\widetilde{g}_{t}$.
We first demonstrate in Proposition \ref{functionu}(i)--(iii) that $\widetilde{u}_t$ approximately solves the heat equation away from $\widetilde{E}$ and has a logarithmic singularity along $\widetilde{E}$. By using these facts and analyzing the singular term along $\widetilde{E}$, we derive the gradient estimate $|\nabla^{\widetilde{g}_{t}}\widetilde{u}_t|_{\widetilde{g}_{t}} \lesssim (T-t)^{-\frac{1}{2}}$ away from $\widetilde{E}$, in a sense made precise by Proposition \ref{functionu}(iv). Combining this estimate with the fact that $\sqrt{-1}\partial \bar{\partial} \widetilde{u}_t \approx {\operatorname{Ric}_{\widetilde{g}_{t}}}$ away from $\widetilde{E}$, we show in Proposition \ref{NoncollapsedTypeI} that any Type II singularity model based at spacetime points converging to $\widetilde{E}$ slowly must be a non-flat smooth ALE Calabi-Yau\footnote{A Calabi-Yau manifold for us is a Ricci-flat K\"ahler manifold.} surface. We identify this as flat $\mathbb{C}^{2}$ by using the topological classification of ALE Calabi-Yau surfaces, together with the fact that a punctured neighborhood of $\widetilde{E}$ is diffeomorphic to $\mathbb{C}^2\setminus \{0\}$. This yields a contradiction, and so we deduce that a Type I curvature bound holds away from time-dependent shrinking neighborhoods of $\widetilde{E}$, in a sense made quantitative in Proposition \ref{NoncollapsedTypeI}. From this, it follows in Proposition \ref{dichotomy} that any orbifold point of $X$ is the limit of spacetime points converging quickly to $\widetilde{E}$, from which it follows that all orbifold points of $X$ lie along the limit curve $E\subseteq X$ of $\widetilde{E}$, whose existence is guaranteed by Lemma \ref{Hausdorff}. For this reason, we focus our analysis on $E$. 

In Proposition \ref{finiteorb}, we prove that $E$ is compact in the following way. For the sake of a contradiction we assume that $E$ is unbounded. Then Proposition \ref{bigcurve} shows that the connectedness of $\widetilde{E}$ would give an unbounded connected component of $E$. Combining Bamler's partial regularity theory for $X$ (cf.~Proposition \ref{highcurvaturesmall}) together with lower volume bounds for holomorphic curves (cf.~Proposition \ref{arealowerbound}) then shows that $E$ would have infinite volume. This contradicts the fact that $E$, as a limit of Type I rescalings of $\widetilde{E}$, has finite volume, leading to the desired conclusion. In particular, since the singularities of $X$ are isolated and are contained in $E$, $X$ has only finitely many orbifold singularities.
 
In Section 4, we first show in Proposition  \ref{nosteadies} that every steady gradient K\"ahler-Ricci soliton surface that arises as a Ricci flow singularity model is ALE Calabi-Yau. This is used in the proof 
of Theorem \ref{bddcurve}, where $X$ is shown to have bounded curvature. To this end, one 
considers the curvature blowup rates of $X$
near spatial infinity. If the curvature blows up faster than the distance squared from a basepoint, then a rescaled pointed limit at infinity is a non-flat ALE Calabi-Yau manifold due to the finiteness of the orbifold singularities established in Section 3. Thus, there is a corresponding domain in $X$ diffeomorphic to an ALE Calabi-Yau manifold asymptotic at infinity to $\mathbb{C}^2/\Gamma$ for some $\Gamma \subseteq U(2)$. { Thus, an unbounded number of disjoint copies of this ALE manifold embed smoothly into $\widetilde{M}$. A topological argument due to \cite{CFSZ} then yields a contradiction.}
 
If the curvature blows up precisely at a quadratic rate, then a rescaled pointed limit is a non-collapsed steady gradient K\"ahler-Ricci soliton. 
Proposition \ref{nosteadies} rules out this case. If the curvature blows up slower than quadratically, then we show that a limit at infinity splits isometrically. Appealing to the classification of three-dimensional $\kappa$-solutions rules out this case as well. In conclusion, we find that $X$ has bounded curvature. In passing, we note that the proof of Theorem \ref{bddcurve} can be combined with the fact that smooth shrinking gradient K\"ahler-Ricci solitons are quasiprojective \cite[Theorem 1.1]{JunshengSong} to give a new proof of the fact that all shrinking gradient K\"ahler-Ricci soliton surfaces have bounded curvature
\cite{LiWang}. The details we provide in Theorem \ref{2d}.

In Section 5, we prove that $X$ is smooth. To achieve this, we first recall the estimates from Section 3 which imply that the function $|\nabla^{\widetilde{g}_{t}} \widetilde{u}_t|_{\widetilde{g}_{t}}$ on $\widetilde{M}$ is bounded away from $\widetilde{E}$ after a Type I rescaling. Using this, we show in Proposition \ref{ExcSet} that $\widetilde{u}_t$ limits to a function $u: X\to [-\infty,\infty)$ with $u+f$ plurisubharmonic everywhere and strictly plurisubharmonic away from $E$. This forces every compact analytic curve in $X$ to be contained in $E$. In Proposition \ref{nocones}, we apply techniques from \cite{CCD} and the curvature bound established in Section 4 to ascertain that $X$ has quadratic curvature decay, hence is asymptotic at spatial infinity to a K\"ahler cone with smooth link, the Remmert reduction of $X$. It follows from the above that $E$ is precisely the exceptional set of the Remmert reduction. On the other hand, we show that the asymptotic cone is biholomorphic to $\mathbb{C}^2$, so in particular, if $E = \emptyset$, then $X$ would be isometric to Euclidean space \cite{CDS}. This would contradict a result of Bamler \cite[Theorem 2.37]{Bam3} characterizing the singular set of a Ricci flow, and so we deduce that $E\neq\emptyset$.

We conclude Section 5 by showing in Proposition \ref{minusone} that the proper transform of $E$ under the minimal resolution of $X$ comprises finitely many disjoint $(-1)$-curves (so that two irreducible components of $E$ necessarily  intersect at a singular point of $X$), and in Proposition \ref{irreducible}(i) 
that $E$ is irreducible. The former is proved using algebraic-geometric methods, whereas for the latter, we use a linking number argument to show that if two irreducible components of $E$ intersect (necessarily at an orbifold point), then there are two distinct second homology classes of $\widetilde{M}$, each positive multiples of $[\widetilde{E}]$, intersecting positively. This contradicts the fact that they both lie in a set diffeomorphic to the unit disk bundle in $\mathcal{O}_{\mathbb{P}^1}(-1)\to\mathbb{P}^{1}$. Using similar methods and the classification of minimal symplectic fillings of spherical sphere forms, we show in Proposition \ref{irreducible}(ii) that all of the singularities of $X$ are Wahl singularities. In particular, the exceptional set of the minimal resolution of these singularities has a well-known characterization \cite{Kollar}. Combined with the above, we can identify 
the exceptional set of the minimal 
resolution $\widetilde{X}$ of $X$. On the other hand, we know that the minimal resolution is a composition of blowups of $\mathbb{C}^{2}$. These facts allow us to compute the trace of the intersection matrix of the blowdown map $\widetilde{X}\to X\to \mathbb{C}^2$ in two different ways, and in doing so, derive a contradiction to $X$ having singular points. This is carried out in Proposition \ref{ShrinkerisFIK}. The results of \cite{CDS} now allow us to identify $X$ as the two-dimension shrinking K\"ahler-Ricci soliton of Feldman-Ilmanen-Knopf \cite{FIK}. Combined with the Type I curvature bound for $\widetilde{M}$ away from $\widetilde{E}$ from Section 3, we finally obtain Theorem \ref{noncollapsedtheorem}.

In Appendix A, we give a short proof of the fact that for any sequence of smoothly converging K\"ahler structures $(g_i,J_i)\to (g,J)$, there exist $J_i$-holomorphic coordinates converging locally smoothly to $J$-holomorphic coordinates. This is used in Sections 3 and 5, where it allows us to apply certain known theorems concerning sequences of holomorphic curves with respect to a fixed K\"ahler structure to the setting of smoothly converging K\"ahler structures. In Appendix B, we prove a technical result that ensures that certain holomorphic curves can be cut in a topologically controlled manner, allowing such curves to be capped by smooth 2-chains with controlled area. This is used in the proof of Proposition \ref{irreducible}.

\subsection{Acknowledgements} The authors would like to thank Bennett Chow and Pak-Yeung Chan for organizing the Workshop on Ricci flow in May 2023 at UCSD. This paper would not have been possible without the lively discussions with the participants during the workshop. The authors would also like to thank Richard Bamler, Charles Cifarelli, Ilyas Khan, Chi Li, Alec Payne, Jian Song, Song Sun, and Junsheng Zhang for many helpful discussions. We are grateful to Charlie Cifarelli, Jiangtao Li, and Yuan Liao for their useful comments and corrections, and to Junsheng Zhang for pointing out a mistake in Lemma \ref{intersectiongraph}. The second author is supported in part by the National Science Foundation under Grant No.~DMS-2202980. 

This material is based upon work supported by the National Science Foundation under Grant No.~DMS-1928930 while the first author was in residence at the Simons
Laufer Mathematical Sciences Institute (formerly MSRI) in Berkeley, California, during the Fall 2024 semester. He wishes to thank SLMath for their excellent working conditions and hospitality during this time.

\section{Preliminaries}

\subsection{Ricci flow convergence theory}
Let $(M^n,(g_t)_{t\,\in \,[0,\,T)},J),\,T<\infty,$ be a K\"ahler-Ricci flow on a compact K\"ahler manifold of dimension $n$. Throughout, we follow notation from \cite{Bam3} and write $\mathbb{S}^{3}$ for the unit sphere in $\mathbb{R}^{4}$.

\subsubsection{Basic definitions and estimates}
For a Riemannian manifold $(M,\,g)$, we let $R_{g}$, $\Ric_{g}$, and $\Rm_{g}$ denote the scalar curvature, Ricci curvature, and Riemannian curvature tensor, respectively.
For a K\"ahler manifold $(M,\,g,\,J)$, we write $\omega:= g(J\cdot\,,\cdot)$ for the K\"ahler form and $\Ric_{\omega}:=\Ric_{g}(J\cdot\,,\cdot)$ for the Ricci form. 

Given $(x,t)\in M\times [0,\,T)$, let $K(x,t;\cdot\,,\cdot):M\times [0,t)\to(0,\infty)$ denote the conjugate heat kernel based at $(x,\,t)$. Define probability measures $d\nu_{x,t;s}:=K(x,\,t;\cdot\,,s)\,dg_s$, where $dg_s$ denotes the Riemannian volume measure of $(M,\,g_s)$ for $s\in [0,\,t)$. We also use the notation
$$B(x,t,r):= \{y\in M\,|\, d_t(x,y)<r\}$$
for the geodesic ball with respect to the Riemannian distance $d_t:= d_{g_t}$ of $(M,\,g_t)$. For $(x,\,t)\in M\times[0,T)$, define the curvature scale
$$r_{{\Rm}}(x,t):=\sup\{r>0\,|\,|{\Rm}|\leq r^{-2}\textrm{ on }B(x,t,r)\times([t-r^{2},t+r^{2}]\cap[0,T))\}.$$
If $(M,(g_t)_{t\in [0,T)})$ is instead a smooth complete orbifold K\"ahler-Ricci flow, the definition of $B(x,t,r)$ still makes sense, so we can define $r_{\Rm}(x,t)$ denote the supremum of $r>0$ such that both \linebreak $B(x,t,r)\cap X_{\textnormal{reg}}=\emptyset$ and $|{\Rm}|\leq r^{-2}$ on $B(x,t,r)\times [t-r^2,t+r^2]\cap [0,T)$. Given $t_0 \in [0,T)$ and a curve $\gamma:[t_1,t_2]\to M$ with $t_2\le t_0$, recall the definition of the $\mathcal{L}$-length:
\begin{equation} \label{Llengthdefinition} \mathcal{L}_{t_0}(\gamma):= \int_{t_1}^{t_2} \sqrt{t_0-t}\left( R_{g_t}(\gamma(t))+|\dot{\gamma}(t)|_{g_t}^2 \right)dt.\end{equation}
Given $(p,t_0)\in M\times [0,T)$, the reduced distance $\ell_{(p,t_0)}:M\times [0,t_0) \to \mathbb{R}$ based at $(p,t_0)$ is defined by
$$\ell_{(p,t_0)}(x,t) := \inf \left\{ \mathcal{L}_{t_0}(\gamma) \: | \: \gamma:[t,t_0] \to M \text{ is a piecewise smooth curve with } \gamma(t) = x, \gamma(t_0)=p \right\}.$$
The reduced distance was first defined in \cite{perelman}. The reader is directed to \cite[Section 7.6]{Chow1} for a summary of its most important properties. If $(M,\,g)$ is a (possibly incomplete) Riemannian manifold, let $r_{{\Rm}}^{(M,g)}(x)$ denote the supremum over all $r>0$ such that $B_{g}(x,r)$, the ball of radius $r$ centered at $x$ with respect to $g$, has compact closure in $M$ and $|{\Rm}|_{g}\leq r^{-2}$ in $B_{g}(x,r)$.

A $P^{\ast}$-parabolic neighborhood \cite[Definition 9.2]{Bam1} centered at $(x_0,t_0)\in M \times [0,T)$ is defined as follows: $P^{\ast}(x_0,t_0;A,-T^-,T^+)$ is the set of $(x,t) \in M\times [0,T)$ with $t \in [t_0 - T^-,t_0+T^+]$ and
$$d_{W_1}^{g_{t_0-T^-}}(\nu_{x_0,t_0;t_0-T^-},\nu_{x,t;t_0-T^-})<A,$$
where $d_{W_1}^{g_t}$ is the 1-Wasserstein distance between probability measures on the metric space $(M,d_{g_t})$ (cf.~\cite[Section 2]{Bam2}). 

We next summarize the relevant points of Bamler's weak compactness and partial regularity theory that we need. We omit some definitions for the sake of brevity, but will refer the reader to the appropriate sections in \cite{Bam2,Bam1,Bam3}. The notation in this section will be used throughout.

Given $x_0 \in M$ and any sequence of times $T_i \nearrow T$, we can pass to a subsequence to obtain convergence in $C_{\operatorname{loc}}^{\infty}(M\times (0,T))$ to a positive solution $$K(x_0,\,T;\cdot,\,\cdot):= \lim_{i\to \infty} K(x_0,\,T_i;\cdot,\,\cdot):M \times [0,T)\to\mathbb{R}$$ of the conjugate heat equation \cite[Lemma 2.2]{mantegazza} such that if $d\nu_{x_0,T;t} = K(x_0,T;\cdot,t)\,dg_t$ for $t\in [0,T)$, then
$$\lim_{i\to \infty} \sup_{t\in [0,T-\epsilon]} d_{W_1}^{g_t}(\nu_{x_0,T_i;t},\nu_{x_0,T;t})=0$$
for any $\epsilon>0$ \cite[Lemma 2.35]{Bam3}. By the $W_1$-convergence $\nu_{x_0,T_i;t}\to \nu_{x_0,T;t}$, we moreover have that 
$$\int_{M} \int_{M} d_t^2(x,y)\,d\nu_{x_0,T;t}(x)\, d\nu_{x_0,T;t}(y) \leq H_{2n}(T-t),$$
where $H_n:= \frac{(n-1)\pi^2}{2}+4$. This implies that for each $t\in [0,T)$, there exists $z \in M$ such that $\int_X d_t^2(z,y) d\nu_{x_0,T;t} \leq H_{2n}(T-t)$ \cite[Section 3]{Bam1}. The point $(z,\,t)$ is called an \emph{$H_{2n}$-center of $(x_0,T)$}. 

We now recall for future reference some heat kernel estimates proved in \cite{Bam1}. 
\begin{prop} 
[\protect{\textnormal{\cite[Theorem 7.2 \& Theorem 7.5]{Bam1}}}]\label{bamheatkernel} 
Suppose $(M^n,\,(g_t)_{t\in [0,T)})$ is a Ricci flow on a closed $n$-dimensional manifold. Then there exists $C>0$ such that for all $(x,t)\in M\times [0,T)$, $(y,s)\in M\times [0,t)$, and for any $H_n$-center $(z,s)$ of $(x,t)$, the following holds.
\begin{enumerate}
    \item $K(x,t;y,s)\leq \frac{C}{(t-s)^{\frac{n}{2}}} \exp \left( -\frac{d_s^2(z,y)}{9(t-s)} \right)$;

    \item $|\nabla_x K(x,t;y,s)|_{g_t} \leq \frac{C}{(t-s)^{\frac{n+1}{2}}} \exp \left( -\frac{d_s^2(z,y)}{10(t-s)} \right)$.
\end{enumerate}
\end{prop}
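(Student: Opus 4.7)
The plan is to derive these pointwise Gaussian bounds in three stages, working from coarser to sharper estimates. First, I would establish the on-diagonal upper bound $K(x,t;y,s) \leq C(t-s)^{-n/2}$. This comes from Perelman's $\mathcal{W}$-entropy monotonicity, which yields a logarithmic Sobolev inequality with constants depending only on $T-t$ and the initial entropy bound (which is uniformly controlled on a closed manifold). Standard Davies-type arguments then convert the log-Sobolev inequality into ultracontractivity of the conjugate heat semigroup, producing the claimed on-diagonal bound.

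Next, for part (i), I would introduce the Gaussian weight through a Davies perturbation argument. Fix the base point $(x,t)$ and consider the weighted $L^2$-norm $\int_M e^{2F_\tau(y)} K(x,t;y,s)^2\, dg_s(y)$ for a carefully chosen time-dependent function $F_\tau$ that grows like $d_s(z,\cdot)^2/(C(t-s))$ outside a ball around $z$. Differentiating and using the conjugate heat equation, together with the definition of an $H_n$-center $(z,s)$, which forces $\nu_{x,t;s}$ to concentrate on a ball of radius $\sim\sqrt{(t-s)}$ around $z$, one controls the weighted $L^2$ norm. Combining this bound with the on-diagonal estimate via a standard $L^2\to L^\infty$ interpolation (Moser iteration or the Gaffney-Nash trick) and optimising the exponent yields the Gaussian decay with constant $9$ in the denominator, where a little slack is left to absorb error terms. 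Alternatively, one could proceed by invoking Perelman's reduced distance $\ell_{(x,t)}$ and the bound $K(x,t;y,s) \leq (4\pi(t-s))^{-n/2} e^{-\ell_{(x,t)}(y,s)}$, then relate $\ell_{(x,t)}(y,s)$ to $d_s(z,y)^2/(t-s)$ using the variance concentration and triangle inequality along $\mathcal{L}$-geodesics.

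For part (ii), once (i) is in hand, I would apply a Hamilton-type gradient estimate of the form $|\nabla \log K|^2 \leq \frac{C}{t-s}$ on parabolic regions where the Gaussian bound controls $\log(K^{-1}(t-s)^{n/2})$. This is established by a Bernstein/maximum principle argument on the quantity $(t-s)|\nabla \log K|^2 - A\log((t-s)^{n/2}K)$ with an appropriately large constant $A$. Plugging the pointwise bound from (i) into this gradient estimate produces part (ii), with the Gaussian constant relaxed from $9$ to $10$ to absorb the error from the prefactor in $|\nabla \log K|$.

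The main obstacle is part (i): the Gaussian must be centered at $(z,s)$, but $z$ is defined only implicitly through the concentration of $\nu_{x,t;s}$ on the Ricci flow background, so one cannot use classical heat kernel techniques that rely on a fixed smooth metric or a Li-Yau gradient inequality with respect to $d_s$. The essential technical move is to couple the variance control $\int d_s^2(z,\cdot)\, d\nu_{x,t;s} \leq H_n(t-s)$ with the evolution of the weighted $L^2$ norm, so that the Gaussian profile emerges from the Ricci flow itself rather than from fixed-metric comparison. Making the constants sharp enough to obtain the form $e^{-d_s^2(z,y)/(9(t-s))}$ (as opposed to a larger constant) requires careful bookkeeping and iteration; this is where Bamler's use of $H_n$-centers is essential.
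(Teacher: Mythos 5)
The paper's own proof is essentially a citation: part (i) is exactly \cite[Theorem 7.2]{Bam1} combined with the uniform pointed Nash entropy lower bound \cite[Proposition 4.35]{Bam3} (which is what makes the constant $C$ uniform in the basepoint $(x,t)$ as $t\to T$), and part (ii) follows from \cite[Theorem 7.5]{Bam1} plus the elementary absorption of the factor $\sqrt{\log\left(C(t-s)^{-n/2}K^{-1}\right)}$ into the exponential, which is precisely why the Gaussian constant relaxes from $9$ to $10$. Your endgame for (ii) — Hamilton-type gradient estimate, then absorb the logarithm at the cost of the exponent — is the same algebra the paper performs, so that part is fine in spirit. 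The issue is that for (i) you are proposing to reprove Bamler's Theorem 7.2 from scratch, and as sketched the argument has two genuine holes.

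First, your ``alternative'' route is based on an inequality that goes the wrong way: Perelman's reduced-distance comparison gives the \emph{lower} bound $K(x,t;y,s)\ge (4\pi(t-s))^{-n/2}e^{-\ell_{(x,t)}(y,s)}$; there is no pointwise upper bound of that form without strong extra hypotheses, so this route cannot produce (i). Second, in the main route the only quantitative input you take from the $H_n$-center is the variance bound $\int_M d_s^2(z,\cdot)\,d\nu_{x,t;s}\le H_n(t-s)$. By Chebyshev this yields only polynomial decay of $\nu_{x,t;s}$ outside balls around $z$, which is not enough to seed a Davies-type weighted $L^2$ argument with an exponential weight centered at $z$: the genuinely Gaussian concentration of $\nu_{x,t;s}$ around $H_n$-centers is a separate, nontrivial theorem in \cite{Bam1} (proved via exponential test functions and the $W_1$-monotonicity, not from the variance alone), and without it your weighted evolution inequality cannot be closed, since $d_s(z,x)$ itself is uncontrolled and the metrics $d_{s'}$ vary in the time direction of the integration. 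Relatedly, the uniformity of $C$ in $(x,t)$ requires the pointed Nash entropy bound (Bamler's estimates carry a factor $e^{-\mathcal{N}_{x,t}(t-s)}$), which your appeal to ``the initial entropy bound'' only partially captures. If you intend a self-contained proof, these two inputs — Gaussian concentration at $H_n$-centers and the pointed Nash entropy lower bound — must be supplied explicitly; otherwise the efficient proof is simply to quote \cite[Theorems 7.2 and 7.5]{Bam1} as the paper does.
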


\begin{proof} 
\begin{enumerate}
    \item This follows directly from \cite[Theorem 7.2]{Bam1} using the Nash entropy bound \cite[Proposition 4.35]{Bam3}. 
    \item By \cite[Theorem 7.5]{Bam1} and the Nash entropy bound \cite[Proposition 4.35]{Bam3}, there exists $C>0$ such that
    \begin{align*} |\nabla_x K(x,t;y,s)|_{g_t} &\leq \frac{C}{\sqrt{t-s}} \sqrt{ \log\left( \frac{C}{(t-s)^{\frac{n}{2}}K(x,t;y,s)} \right)}K(x,t;y,s) \\
    &= \frac{C^2}{(t-s)^{\frac{n+1}{2}}}\eta \left( C^{-1}(t-s)^{\frac{n}{2}}K(x,t;y,s) \right) \left( C^{-1} (t-s)^{\frac{n}{2}}K(x,t;y,s) \right)^{\frac{9}{10}},
    \end{align*}
where for $\rho \in (0,1)$, we define
$$\eta(\rho):=\rho^{\frac{1}{10}}\sqrt{\log(1/\rho)}.$$
Because $\eta$ is bounded, the claim follows from (i). 
\end{enumerate}
\end{proof}

The notion of pointed Nash entropy is used to measure the non-collapsedness of a Ricci flow solution. We next recall the definition. 

\begin{definition} 
[\protect{\textnormal{\cite[Section 2.6]{Bam3}}}]
\label{conjsingtime} $K(x_0,T;\cdot,\cdot)$ and $\nu_{x_0,T;t}$ are called \emph{conjugate heat kernels based at $(x_0,T)$}. Define $f \in C^{\infty}(M\times [0,t))$ by $K(x_0,T;\cdot\,,t)=(2\pi \tau)^{-n}e^{-f}\,dg_t$, where $\tau:= T-t$. Then the \textit{pointed Nash entropy} at $(x_0,T)$ is
given by $$\mathcal{N}_{x_0,T}(\tau):= \int_M f\,d\nu_{x_0,T;t}-n.$$
\end{definition}

\begin{remark} Note that $K(x_0,T;\cdot,\cdot)$ and $\nu_{x_0,T;t}$ may in general depend on the sequence $T_i \nearrow T$. However, $\mathcal{N}_{x_0,T}(\tau)$ is independent of the sequence $T_i \nearrow T$ \cite[Corollary 5.11]{Bam1}, so that
$$\mathcal{N}_{x_0,T}(0) := \lim_{\tau \searrow 0} \mathcal{N}_{x_0,T}(\tau) \in (-\infty,0]$$
depends only on $x_0 \in M$.
\end{remark}

\subsubsection{Ricci flow spacetimes}

The notion of a Ricci flow spacetime was introduced in \cite{KLSing} in the context of three-dimensional Ricci flow, but was shown in \cite{Bam2} to be the natural structure on the regular part of a singular limit of Ricci flows. We make a modification of this definition from \cite{HJ} so as to take into account the K\"ahler structure.

\begin{definition}[\protect{\textnormal{\cite[Definition 1.2]{KLSing}}}] \label{KRFspacetimedef}
A \emph{K\"ahler-Ricci flow spacetime} is a tuple $(\mathcal{M},\mathfrak{t},\partial_{\mathfrak{t}},g,\mathcal{J})$ comprising the following data.
\begin{enumerate} 
    \item A smooth manifold $\mathcal{M}$;
\item A smooth submersion $\mathfrak{t} \in C^{\infty}(\mathcal{M})$;
\item A vector field $\partial_{\mathfrak{t}}\in \mathfrak{X}(\mathcal{M})$ with $\partial_{\mathfrak{t}}\mathfrak{t}=1$;
\item A bundle metric $g$ on the subbundle $\ker(d\mathfrak{t})\subseteq T\mathcal{M}$ satisfying $\mathcal{L}_{\partial_{\mathfrak{t}}}g=-{\operatorname{Ric}_{g}}$, where $\mathcal{M}_t:= \mathfrak{t}^{-1}(t)$, and ${\operatorname{Ric}_{g}}|_{\mathcal{M}_t}$ is defined to be the Ricci curvature of $g_t := g|_{\mathcal{M}_t}$;
\item A bundle endomorphism $\mathcal{J}$ on $\ker(d\mathfrak{t})$ satisfying $\mathcal{L}_{\partial_{\mathfrak{t}}}\mathcal{J}=0$ that restricts to an almost-complex structure $\mathcal{J}_t$ on each $\mathcal{M}_t$ such that $(\mathcal{M}_t, g_t,\mathcal{J}_t)$ is a K\"ahler manifold.
\end{enumerate}
\end{definition}  

We now define a K\"ahler-Ricci flow spacetime structure on the regular set of certain K\"ahler orbifolds. We refer the reader to  \cite[Chapter 4]{sasakian} for the definition of a K\"ahler orbifold and an overview of their basic properities. Given any orbifold $X$, let $X_{\textnormal{reg}} \subseteq X$ denote the (dense, open) set of points whose isotropy groups are trivial. 

\begin{definition} An \emph{orbifold shrinking gradient K\"ahler-Ricci soliton} $(X,\,g,\,J,\,f)$ is a K\"ahler orbifold $(X,\,g)$ with complex structure $J$, together with a smooth real-valued function $f\in C^{\infty}(X)$ satisfying $${\operatorname{Ric}_{g}}+\nabla^2f=g,\qquad\mathcal{L}_{\nabla f}J=0.$$
We say $X$ has \emph{isolated singularities} if $X \setminus X_{\textnormal{reg}}$ is discrete. The isotropy group of an isolated singularity is a non-trivial finite subgroup of $U(n)$ acting freely on $\mathbb{C}^{n}\setminus\{0\}$. 
\end{definition}

\begin{remark} \label{holoflow} If $(X,\,g,\,J,\,f)$ is an orbifold gradient shrinking K\"ahler-Ricci soliton with isolated singularities, then $\nabla f$ vanishes at each point of $X \setminus X_{\textnormal{reg}}$ because the isotropy groups fix the lift of $\nabla f$ in any orbifold chart and the isotropy action on $\mathbb{C}^{n}\setminus\{0\}$ is free. We can therefore define a one-parameter family of biholomorphisms of $(X,\,J)$ that preserves $X_{\textnormal{reg}}$ by defining $\varphi_{-1}=\text{id}_X$ and
$$\partial_t \varphi_t(x) = \frac{1}{|t|}\nabla f(\varphi_t(x)).$$
Setting $g_t := |t|\varphi_t^{\ast}g$, the one-parameter family of metrics $(X,(g_t)_{t\in (-\infty,0)},\,J)$ defines an orbifold K\"ahler-Ricci flow.
\end{remark}

By this remark, any orbifold gradient shrinking K\"ahler-Ricci soliton corresponds to a natural K\"ahler-Ricci flow spacetime, which we now define.
\begin{definition}\label{standardsoliton}Given an orbifold shrinking gradient K\"ahler-Ricci soliton $(X,g,J,f)$ with isolated singularities, the \emph{induced K\"ahler-Ricci flow spacetime} $(\mathcal{R},\mathfrak{t},\partial_{\mathfrak{t}},g,\mathcal{J})$ is defined by the following data.
\begin{enumerate}
    \item $\mathcal{R} := X_{\textnormal{reg}} \times (-\infty,0)$;
\item $\mathfrak{t}:\mathcal{R}\to (-\infty,0)$ is projection onto the second factor;
\item $\partial_{\mathfrak{t}} \in \mathfrak{X}(\mathcal{R})$ is the standard vector field on $(-\infty,0)$;
\item $g:= \mathfrak{t} \Phi^{\ast}g$, where $\Phi(x,t):= \varphi_t(x)$;
\item $\mathcal{J}_t = J$ for all $t\in (-\infty,\,0)$.
\end{enumerate}
\end{definition}

\subsubsection{Compactness and partial regularity theory}

For the following theorem, we fix a conjugate heat kernel $(\nu_{x_0,T;t})_{t\in [0,T)}$ based at $(x_0,T)$. Metric flows are defined in \cite[Definition 3.2]{Bam2} and metric solitons \cite[Definition 3.57]{Bam2} are a special class of metric flows. The natural topology of a metric flow is defined in \cite[Definition 3.32]{Bam2} and the notions of correspondence and $\mathbb{F}$-convergence are defined in \cite[Section 5.1]{Bam2}. 

\begin{theorem}
[\protect{\textnormal{\cite[Theorem 2.37]{Bam2} \& \cite[Theorem 2.5]{HJ}}}]
\label{bamconvergence} Let $(\widetilde{M}^2,(\widetilde{g}_t)_{t\in [0,T)},\widetilde{J})$ be a K\"ahler-Ricci flow on a compact K\"ahler surface $(\widetilde{M}^2,\,\widetilde{J})$ with $T<\infty$. Given any sequence $t_i \nearrow T$, set $\widetilde{g}_{i,t}:=(T-t_i)^{-1}\widetilde{g}_{T+(T-t_i)t}$ and $\widetilde{\nu}_t^i := \nu_{x_0,T;T+(T-t_i)t}$ for $t\in [-(T-t_i)^{-1}T,0)$. After passing to a subsequence, there exists a metric soliton $(\mathcal{X},(\nu_{x_{\infty};t})_{t\in (-\infty,0]})$ along with a correspondence $\mathfrak{C}$ such that the following $\mathbb{F}$-convergence within the correspondence on compact time intervals holds:
$$(\widetilde{M},(\widetilde{g}_{i,t})_{t\in (-(T-t_i)^{-1}T,0)},(\widetilde{\nu}_t^i)_{t\in (-(T-t_i)^{-1}T,0)})\xrightarrow[i\,\to\,\infty]{\mathbb{F},\,\mathfrak{C}} (\mathcal{X},(\nu_{x_{\infty};t})_{t\,\in\,(-\infty,0)}).$$
Moreover, the metric flow pair $({\mathcal{X}},({\nu}_{x_{\infty};t})_{t\in (-\infty,0)})$ is modeled on an orbifold shrinking gradient K\"ahler-Ricci soliton $(X,\,g_X,\,J_X,\,f_X)$ with isolated singularities in the following precise sense:
\begin{enumerate}
    \item ${\mathcal{X}} = X \times (-\infty,0)$, where the natural topology and product topology coincide;
\item The K\"ahler-Ricci flow spacetime structure $({\mathcal{R}},{\mathfrak{t}},{\partial_{\mathfrak{t}}},{g},{\mathcal{J}})$ coincides with the induced spacetime $X_{\textnormal{reg}}\times (-\infty,0)$ associated to $(X,{g}_X,{J}_X,{f}_X)$;
\item Each time slice $({\mathcal{X}}_t,{d}_t)$ is the completion of the Riemannian length metric on $(X_{\textnormal{reg}},d_{g_t})$;
\item $d{\nu}_{x_{\infty};t}=(2\pi|t|)^{-2}e^{-{f}}d{g}_t$ for some $f\in C^{\infty}(\mathcal{R})$ which is identified with $\Phi^{\ast}f_X$ under the above correspondence, where $\Phi$ is as in Definition \ref{standardsoliton};
\item There is an exhaustion of ${\mathcal{R}}$ by precompact open sets $(U_i)$  along with time-preserving open embeddings $\psi_i :U_i \to \widetilde{M}_i \times (-\infty,0)$ such that the following hold:
\begin{enumerate}[label=\textnormal{(\alph{*})}, ref=(\alph{*})]
    \item $\psi_i^{\ast}\widetilde{g}_i \to {g}$ in $C_{\operatorname{loc}}^{\infty}({\mathcal{R}})$;
\item  $(\psi_i^{-1})_{\ast} \partial_t \to {\partial_{\mathfrak{t}}}$ in $C_{\operatorname{loc}}^{\infty}({\mathcal{R}})$;
\item  $\psi_i^{\ast}\widetilde{J} \to {\mathcal{J}}$ in $C_{\operatorname{loc}}^{\infty}({\mathcal{R}})$;
\item  If we write $d\widetilde{\nu}_t^i = (2\pi |t|)^{-2}e^{-\widetilde{f}_i}d\widetilde{g}_{i,t}$, then $\psi_i^{\ast}\widetilde{f}_i\to f$ in $C_{\operatorname{loc}}^{\infty}({\mathcal{R}})$;
\end{enumerate}
\item $|t|(R_g+|\nabla f|_g^2) = {f} - W$ on ${\mathcal{R}}$, where $W:=\mathcal{N}_{x_0,T}(0)$.
\end{enumerate}
We call the orbifold K\"ahler-Ricci flow $(X,(g_t)_{t\in [0,T)},J)$ a tangent flow of $(\widetilde{M},(\widetilde{g}_t)_{t\in [0,T)},\widetilde{J})$.
\end{theorem}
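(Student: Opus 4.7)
The plan is to derive the theorem by combining Bamler's $\mathbb{F}$-compactness and partial regularity theory for Ricci flows with bounded Nash entropy with the K\"ahler refinement of Hallgren--Jian. Since $\widetilde{M}$ is compact, the Nash entropy $\mathcal{N}_{x_0,T_i}(\tau)$ is uniformly bounded below in $i$ and $\tau$, so the rescaled flows satisfy the hypotheses of \cite[Theorem 2.11]{Bam3}. Extracting a subsequence yields $\mathbb{F}$-convergence within some correspondence $\mathfrak{C}$ to a metric flow pair $(\mathcal{X},(\nu_{x_\infty;t})_{t\in(-\infty,0)})$. Because the rescalings are parabolic self-similar rescalings about a fixed basepoint and Perelman's $\mathcal{W}$-monotonicity degenerates in the limit along such a sequence, an argument as in \cite[Theorem 2.40]{Bam3} upgrades the limit to a \emph{metric soliton} in the sense of \cite[Definition 3.57]{Bam2}.

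Next, Bamler's partial regularity theorem \cite[Theorem 2.37]{Bam3} endows the regular set $\mathcal{R}\subseteq\mathcal{X}$ with a smooth Ricci-flow spacetime structure and shows that the singular set has parabolic Minkowski codimension at least four. Since $\widetilde{M}$ is a real four-manifold, each time slice $\mathcal{X}_t$ therefore has a discrete singular set. Tangent-cone analysis at such a point identifies any tangent flow as a non-flat gradient shrinking Ricci soliton with bounded entropy on a metric cone; combined with the codimension-$4$ bound, the tangent must be a flat cone $\mathbb{R}^4/\Gamma$ for a finite $\Gamma\subset O(4)$, i.e., an orbifold point. The smooth convergence on compact subsets of $\mathcal{R}$ of the parallel, unit-norm almost complex structures $\widetilde{J}$ produces a parallel $\mathcal{J}$ on $\ker d\mathfrak{t}$, which promotes $\Gamma$ to a finite subgroup of $U(2)$ acting freely on $\mathbb{C}^2\setminus\{0\}$, so the singularity is a K\"ahler-orbifold point. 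This is the content of \cite[Theorem 2.5]{HJ}.

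With the orbifold shrinking gradient K\"ahler-Ricci soliton structure $(X,g_X,J_X,f_X)$ established on the $t=-1$ time slice, the product decomposition $\mathcal{X}=X\times(-\infty,0)$ in (i) and the identification of spacetime structures in (ii) follow from integrating the self-similar scaling: $\nabla f_X$ generates the holomorphic one-parameter family $\varphi_t$ on $X_{\textnormal{reg}}$ from Remark \ref{holoflow}, and the map $\Phi$ of Definition \ref{standardsoliton} provides the identification. Property (iii) is automatic from the metric-completion construction of the time slice, (iv) is the standard expression of a self-similar conjugate heat kernel in terms of the soliton potential, and (v) is the translation of Bamler's smooth convergence on compact subsets of $\mathcal{R}$ into the language of time-preserving embeddings $\psi_i$. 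Finally, (vi) is Perelman's pointwise soliton identity $|t|(R+|\nabla f|^2)=f-W$ passed from the approximators to the smooth limit on $\mathcal{R}$; the constant $W=\mathcal{N}_{x_0,T}(0)$ is fixed by the behavior of the Nash entropy along the rescaling sequence together with the normalization $d\nu_{x_\infty;t}=(2\pi|t|)^{-2}e^{-f}dg_t$.

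The main obstacle in this outline is the K\"ahler-orbifold conclusion at each singular point of $\mathcal{X}_t$: upgrading Bamler's abstract codimension-$4$ estimate to a smooth orbifold chart in which $\widetilde{J}$ converges to a genuine K\"ahler structure requires carefully combining the smooth convergence on $\mathcal{R}$ with the classification of possible four-dimensional tangent cones of bounded entropy, which is precisely the technical input supplied by \cite[Theorem 2.5]{HJ}.
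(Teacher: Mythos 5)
Your outline follows the same skeleton as the paper: extract the $\mathbb{F}$-limit and metric soliton structure from Bamler's compactness theory, invoke his partial regularity/structure theorems to get a shrinking soliton orbifold with isolated singularities, use the K\"ahler refinement of Hallgren--Jian on the regular part, and then identify $\mathcal{X}$ with $X\times(-\infty,0)$ via the flow $\Phi$ of $\nabla f_X$ so that (i)--(vi) hold. That bookkeeping step is carried out in the paper with an explicit homeomorphism $\Theta$ from \cite[Theorem 15.69]{Bam3} and the map $\Psi(x,t)=(\Phi(x,t),t)$, and your sketch of it is consistent with that.

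However, there is a genuine gap at the point you yourself flag as the main obstacle. You assert that the K\"ahler-orbifold conclusion at the singular points --- i.e., that the smooth orbifold charts can be chosen so that the complex structure extends holomorphically over the singular point, forcing the isotropy group into $U(2)$ --- is ``precisely the technical input supplied by \cite[Theorem 2.5]{HJ}.'' It is not: in the paper's usage, \cite[Theorem 2.5]{HJ} provides the bundle endomorphism $\mathcal{J}$ and the K\"ahler-Ricci flow spacetime structure on the regular set $\mathcal{R}$ together with the convergence statements in (v), while \cite[Theorem 2.46]{Bam3} only guarantees that $X$ is a \emph{smooth Riemannian} orbifold. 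The paper therefore has to supply an extra argument: in a smooth orbifold chart $\eta:\widehat{B}\to B$ centered at a singular point, take the $\Gamma$-invariant complex structure $\widehat{J}_X$ on $\widehat{B}\setminus\{0\}$ induced by $J_X$, parallel transport $\widehat{J}_X|_{z_0}$ (with respect to $\widehat{g}_X=\eta^{\ast}g_X$) along straight lines from a fixed regular point $z_0$ to obtain a smooth, parallel almost complex structure $\overline{J}$ on all of $\widehat{B}$; since $\widehat{J}_X$ is itself $\widehat{g}_X$-parallel, $\overline{J}$ agrees with $\widehat{J}_X$ on a dense open set, hence on $\widehat{B}\setminus\{0\}$ by continuity, and is integrable, so the Newlander--Nirenberg theorem produces a holomorphic orbifold chart compatible with $J_X$. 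Without this (or an equivalent) argument, your proposal does not establish that $(X,J_X)$ is a complex orbifold --- merely saying that the smooth convergence of $\widetilde{J}$ ``promotes $\Gamma$ to a finite subgroup of $U(2)$'' presupposes exactly the chart compatibility that needs to be proved.
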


\begin{proof} By \cite[Theorems 2.37]{Bam3}, we can pass to a subsequence to obtain a metric soliton $(\mathcal{X},(\nu_{x_{\infty};t})_{t\in (-\infty,0]})$ satisfying (iii), and a correspondence $\mathfrak{C}$ realizing the desired $\mathbb{F}$-convergence. By \cite[Theorem 15.69  \& 2.46]{Bam3}, there is an orbifold shrinking gradient K\"ahler-Ricci soliton $(X,g_X,J_X,f_X)$, along with a time-preserving homeomorphism \nolinebreak $\Theta: \mathcal{X} \to X\times (-\infty,0)$ whose restrictions $\Theta_t:(\mathcal{X}_t,|\mathfrak{t}|^{-\frac{1}{2}}d_t)\to (X,d_{g_X})$ are isometries, where we identify $d_X$ with its pullback by the projection map $X\times (-\infty,0)\to X$. Let $(\mathcal{R},\mathfrak{t},\partial_{\mathfrak{t}},g)$ be the Ricci flow spacetime structure on the regular part of $\mathcal{X}$. By \cite[Theorem 15.69]{Bam3} and \cite[Theorem 2.5]{HJ}, there is a function $f\in C^{\infty}(\mathcal{R})$ such that (vi) holds, there exists $\mathcal{J}$ as in Definition \ref{KRFspacetimedef}(v) making $(\mathcal{R},\mathfrak{t},\partial_{\mathfrak{t}},g,\mathcal{J})$ a K\"ahler-Ricci flow spacetime, and there is an open exhaustion $(U_i)$ of $\mathcal{R}$ and time-preserving open embeddings $\psi_i$ satisfying (v). Moreover, $\Theta$ restricts to a diffeomorphism $\mathcal{R}\to X_{\text{reg}}\times (-\infty,0)$ with $\Theta^{\ast}t=\mathfrak{t}$, $\Theta^{\ast}g_X=|\mathfrak{t}|^{-1}g$, $\Theta_{\ast}(\partial_{\mathfrak{t}}-\nabla f)=\partial_t$, $\Theta^{\ast}f_X=f$, and $\Theta^{\ast} J_X = \mathcal{J}$, where we identify $g_X,J_X,f_X$ with their pullbacks by the projection $X\times (-\infty,0)\to X$ and where $\partial_t \in \mathfrak{X}(X_{\text{reg}}\times (-\infty,0))$ is the standard vector field on the second factor. 

On the other hand, if we define $\Psi:X\times (-\infty,0)\to X \times (-\infty,0)$ by $(x,t)\mapsto (\Phi(x,t),t)$, then $\Psi$ is a diffeomorphism of orbifolds which restricts to a diffeomorphism of manifolds $X_{\text{reg}}\times (-\infty,0) \to X_{\text{reg}}\times (-\infty,0)$. Moreover, if $f':=\Phi^{\ast}f_X$ and $(\mathcal{R}',\mathfrak{t}',\partial_{\mathfrak{t}'},g',\mathcal{J}')$ is the K\"ahler-Ricci flow spacetime induced by $(X,g_X,J_X,f_X)$ as in Definition \ref{standardsoliton}, then we have that $\Psi^{\ast}t=\mathfrak{t}'$, $\Psi^{\ast}g_X=|\mathfrak{t}'|^{-1}g'$, $\Psi_{\ast}(\partial_{\mathfrak{t}}'-\nabla f')=\partial_t$, and $\Psi^{\ast}J_X=\mathcal{J'}$. It follows that $\Psi^{-1} \circ \Theta$ is an identification of $\mathcal{X}$ with $X\times (-\infty,0)$ such that (i),(ii),(iv)--(vi) hold, and (iii) holds as well under this identification by \cite[Theorem 2.4]{Bam3}.

It remains only to verify that $(X,\,J_X)$ is a complex orbifold. Indeed, \cite[Theorem 2.46]{Bam3} only guarantees that $X$ is a smooth Riemannian orbifold. We must show that the orbifold charts at the singular points may be chosen to be compatible with the complex structure on $X_{\textnormal{reg}}$, or equivalently, that for any $x_{\ast} \in X\setminus X_{\textnormal{reg}}$, there is a smooth orbifold chart $\check{U}\to U\subset X$ centered at $x_{\ast}$ such that the pullback of $J_X$ to $\check{U}\setminus \{0\}$ extends smoothly to a complex structure on $\check{U}$. To this end, let $\eta:\widehat{B}\to B$ be a smooth orbifold chart centered at $x_{\ast}$, where $\widehat{B} \subseteq \mathbb{C}^2$ is a ball. Let $\Gamma \subseteq O(4,\mathbb{R})$ be the (finite) isotropy group at $x_{\ast}$, $\widehat{J}_{X}$ the natural $\Gamma$-invariant complex structure on $\widehat{B} \setminus \{0\}$ that descends to the complex structure $J_X$ on $B\setminus \{ x_{\ast} \}$, and set $\widehat{g}_{X}:= \eta^{\ast}g_X$ on $\widehat{B}$. For fixed $z_0 \in \widehat{B} \setminus \{0\}$, let $\overline{J}$ be the $\widehat{g}_X$-parallel transport of $\widehat{J}_X|_{z_0}$ along straight lines emanating from $z_0$. This defines a smooth complex structure  
on $\widehat{B}$ that agrees with $\widehat{J}_{X}$ on a dense open subset (the complement of a line)
by standard ODE theory, hence by continuity agrees on all of $\widehat{B}\setminus \{0^2\}$.  $\overline{J}$ is the desired extension of $\widehat{J}_{X}$ over $0$ with $\nabla^{\widehat{g}_X}\overline{J}\equiv 0$ on $\widehat{B}$. An application of the Newlander-Nirenberg theorem then gives the desired holomorphic orbifold chart. 
\end{proof}

\subsection{$H_n$-centers and holomorphic maps}

Let $(\widetilde{M}^n,(\widetilde{g}_t)_{t\in [0,T)}),\,T<\infty,$ be a K\"ahler-Ricci flow on a compact $n$-dimensional K\"ahler manifold $\widetilde{M}^n$ developing a finite time singularity at $t=T$ and suppose that there exists a holomorphic map $\pi:\widetilde{M} \to N$ onto a compact K\"ahler manifold $(N,\,g_{N})$ of possibly lower dimension.
Let $\widetilde{\omega}_t$ and $\omega_{N}$ denote the K\"ahler form of $\widetilde{g}_{t}$ and $g_{N}$, and suppose
that $[\widetilde{\omega}_0]-Tc_1(\widetilde{M})=\pi^{\ast} [\omega_N]$ in $H^{1,\,1}(\widetilde{M},\mathbb{R})$. The following is a consequence of the parabolic Schwarz lemma.

\begin{lemma}
[\protect{\cite[Lemma 3.7.3]{songnotes}}] \label{schwartz} 
There exists $C_0 >0 $ such that for all $t\in [0,T)$,
$$\widetilde{\omega}_t \geq \frac{1}{C_0}\pi^{\ast}\omega_{N}.$$
In particular, $\pi:(\widetilde{M},\widetilde{g}_t) \to (N,g_{N})$ is $\sqrt{C_0}$-Lipschitz for each $t\in [0,\,T)$.
\end{lemma}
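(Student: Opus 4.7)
Plan: This is a standard application of the parabolic Schwarz lemma of Song-Tian. Set $u := \operatorname{tr}_{\widetilde{g}_{t}}\pi^{*}g_{N}$; since $\pi^{*}g_{N}$ is positive semi-definite, the claimed inequality $\widetilde{\omega}_{t} \geq C_{0}^{-1}\pi^{*}\omega_{N}$ is pointwise equivalent to a uniform upper bound on $u$ on $\widetilde{M}\times[0,T)$. A Bochner-type computation for holomorphic maps between K\"ahler manifolds, applied under the K\"ahler-Ricci flow, yields the pointwise differential inequality
\[
\left(\partial_{t}-\Delta_{\widetilde{g}_{t}}\right)\log u \;\leq\; C_{N}\,u
\]
wherever $u > 0$, with $C_{N}$ any upper bound for the holomorphic bisectional curvature of $(N,g_{N})$.

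To convert this into an a priori bound on $u$, I would exploit the cohomological hypothesis, which gives $[\widetilde{\omega}_{t}] = \pi^{*}[\omega_{N}] + (T-t)c_{1}(\widetilde{M})$ for all $t \in [0,T)$. Fix a smooth positive volume form $\Omega$ on $\widetilde{M}$, let $\chi := -\sqrt{-1}\partial\bar{\partial}\log\Omega \in c_{1}(\widetilde{M})$, and define the reference form $\hat{\omega}_{t} := \pi^{*}\omega_{N} + (T-t)\chi$. Writing $\widetilde{\omega}_{t} = \hat{\omega}_{t} + \sqrt{-1}\partial\bar{\partial}\varphi_{t}$, the K\"ahler-Ricci flow reduces (after a harmless time-dependent normalization) to the parabolic complex Monge-Amp\`ere equation $\dot{\varphi}_{t} = \log(\widetilde{\omega}_{t}^{n}/\Omega)$, and standard $C^{0}$ estimates for the K\"ahler-Ricci flow (Tian-Zhang, Song-Tian) provide uniform bounds on $\varphi_{t}$ and $\dot{\varphi}_{t}$ on $[0,T)$. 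Using the identity $\Delta_{\widetilde{g}_{t}}\varphi_{t} = n - u - (T-t)\operatorname{tr}_{\widetilde{\omega}_{t}}\chi$, one computes for the test function $H := \log u - (C_{N}+1)\varphi_{t}$ that
\[
\left(\partial_{t}-\Delta_{\widetilde{g}_{t}}\right)H \;\leq\; -u + C,
\]
modulo control of the auxiliary term $(T-t)\operatorname{tr}_{\widetilde{\omega}_{t}}\chi$, which I would handle by combining with a secondary Schwarz-type estimate for the identity map $(\widetilde{M},\widetilde{g}_{t})\to(\widetilde{M},\widetilde{g}_{0})$ to dominate $|\chi|_{\widetilde{g}_{t}}$. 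At a spacetime maximum of $H$ on $\widetilde{M}\times[0,t_{0}]$, the above forces $u \leq C'$ at that maximum, and combined with the $C^{0}$ bound on $\varphi_{t}$, this yields a uniform upper bound $u \leq C_{0}$ throughout $\widetilde{M}\times[0,T)$. The Lipschitz statement is then immediate from $|\pi_{*}v|_{g_{N}}^{2} = (\pi^{*}g_{N})(v,v) \leq C_{0}\,|v|_{\widetilde{g}_{t}}^{2}$.

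The main technical obstacle is controlling the auxiliary trace term $(T-t)\operatorname{tr}_{\widetilde{\omega}_{t}}\chi$ appearing in the maximum principle step: while $\chi$ is a fixed smooth form, the inverse metric $\widetilde{g}_{t}^{-1}$ has no a priori bound, and indeed its eigenvalues diverge along the collapsing locus of the K\"ahler-Ricci flow. The secondary Schwarz bound for $\operatorname{tr}_{\widetilde{\omega}_{t}}\widetilde{\omega}_{0}$ serves as a barrier absorbing this term at the cost of slightly increasing the coefficient of $\varphi_{t}$ in $H$.
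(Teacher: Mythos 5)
The paper does not actually prove this lemma: it is quoted verbatim from \cite[Lemma 3.7.3]{songnotes}, and your overall strategy (the Song--Tian parabolic Schwarz inequality for $u=\operatorname{tr}_{\widetilde{\omega}_t}\pi^{*}\omega_N$ combined with a maximum principle involving the Monge--Amp\`ere potential) is indeed the standard route behind that citation. The problem is a genuine gap in how you close the maximum principle: you assert that ``standard $C^0$ estimates provide uniform bounds on $\varphi_t$ and $\dot{\varphi}_t$ on $[0,T)$.'' What is standard in this finite-time, degenerate-limit-class setting is $|\varphi_t|\le C$ and $\dot{\varphi}_t\le C$ only; there is no uniform \emph{lower} bound on $\dot{\varphi}_t=\log(\widetilde{\omega}_t^{\,n}/\Omega)$, and one should not expect one, since the volume form degenerates along the contracted $(-1)$-curves as $t\to T$ (their $\widetilde{\omega}_t$-area tends to zero; in the Type I/FIK picture $\widetilde{\omega}_t^{2}\sim (T-t)^2$ there, so $\dot{\varphi}_t\to-\infty$). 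In your computation for $H=\log u-(C_N+1)\varphi_t$ the term $-(C_N+1)\partial_t\varphi_t$ appears with exactly the sign that requires $\dot{\varphi}_t\ge -C$, so the asserted inequality $(\partial_t-\Delta)H\le -u+C$ is not justified. The standard proofs are engineered to avoid this: for instance, one takes the reference forms $\hat{\omega}_t=\frac{T-t}{T}\widetilde{\omega}_0+\frac{t}{T}\pi^{*}\omega_N$ and a test function in which $\dot{\varphi}$ enters only through controlled quantities such as $(T-t)\dot{\varphi}_t$, which is bounded below by the minimum principle applied to $(T-t)\dot{\varphi}_t+\varphi_t+nt$ (its heat operator equals $\operatorname{tr}_{\widetilde{\omega}_t}\pi^{*}\omega_N\ge 0$); together with $|\varphi_t|\le C$ and $\dot{\varphi}_t\le C$ this closes the argument, whereas a direct two-sided bound on $\dot{\varphi}_t$ does not exist.

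A secondary issue is your treatment of the term $(T-t)\operatorname{tr}_{\widetilde{\omega}_t}\chi$. A ``secondary Schwarz-type estimate for the identity map'' only yields $(\partial_t-\Delta)\log\operatorname{tr}_{\widetilde{\omega}_t}\widetilde{\omega}_0\le C\operatorname{tr}_{\widetilde{\omega}_t}\widetilde{\omega}_0$, which has the same unfavorable sign and produces no a priori bound; moreover $\operatorname{tr}_{\widetilde{\omega}_t}\widetilde{\omega}_0$ genuinely blows up near $\widetilde{E}$ as $t\to T$, so it cannot serve as a barrier. The correct and simpler fix is to choose the representative $\chi=\frac{1}{T}(\widetilde{\omega}_0-\pi^{*}\omega_N)\in c_1(\widetilde{M})$, i.e.\ the reference form above, which is nonnegative; then $-(T-t)\operatorname{tr}_{\widetilde{\omega}_t}\chi=-\frac{T-t}{T}\operatorname{tr}_{\widetilde{\omega}_t}\widetilde{\omega}_0+\frac{T-t}{T}u$, whose first part has a favorable sign and whose second part merely weakens the coefficient of $-u$ from $A$ to $A\,t/T$, harmless once $t$ is bounded away from $0$ (the interval $[0,T/2]$ is trivial by smoothness of the flow there). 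With that choice of $\hat{\omega}_t$ and the $\dot{\varphi}$ term handled as indicated, the proof closes; as written, it does not.
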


Using this estimate, we next show that the image under $\pi$ of the $H_{2n}$-centers of conjugate heat kernels on $\widetilde{M}$ are close to those of their basepoints. Note that this result also applies to conjugate heat kernels based at the singular time. The proof uses the argument of \cite[Lemma 6.7]{JST}.  

\begin{lemma} \label{nodrift}
There exists $A>0$ such that for all $(x_0,t_0)\in \widetilde{M}\times [0,T]$ and $t\in [0,t_0)$, any $H_{2n}$-center $(z_{t},t)$ of 
$(x_0,t_0)$
is contained
in $\pi^{-1}\left(B_{g_N}(\pi(x_0),A\sqrt{t_0-t})\right)$. 
\end{lemma}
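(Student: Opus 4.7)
Following the scheme of \cite[Lemma 6.7]{JST}, I would split the distance $d_{g_N}(\pi(z_t),\pi(x_0))$ by the triangle inequality under the probability measure $\nu_{x_0,t_0;t}$ as
\begin{equation*}
d_{g_N}(\pi(z_t),\pi(x_0))\leq\int d_{g_N}(\pi(z_t),\pi(y))\,d\nu_{x_0,t_0;t}(y)+\int d_{g_N}(\pi(x_0),\pi(y))\,d\nu_{x_0,t_0;t}(y).
\end{equation*}
By Lemma \ref{schwartz}, $\pi:(\widetilde{M},\widetilde{g}_t)\to(N,g_N)$ is $\sqrt{C_0}$-Lipschitz, so Cauchy--Schwarz combined with the defining $H_{2n}$-concentration inequality controls the first term:
\begin{equation*}
\int d_{g_N}(\pi(z_t),\pi(y))\,d\nu_{x_0,t_0;t}\leq\sqrt{C_0}\Bigl(\int d_t^2(z_t,y)\,d\nu_{x_0,t_0;t}\Bigr)^{1/2}\leq\sqrt{C_0 H_{2n}(t_0-t)}.
\end{equation*}

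For the ``drift'' term, I would use the elementary identity
\begin{equation*}
\frac{d}{ds}\int(h\circ\pi)\,d\nu_{x_0,t_0;s}=-\int\Delta_{\widetilde{g}_s}(h\circ\pi)\,d\nu_{x_0,t_0;s},
\end{equation*}
which follows from the conjugate heat equation for $K(x_0,t_0;\cdot,\cdot)$ and holds for any $h\in C^2(N)$. Since $\pi$ is holomorphic between K\"ahler manifolds, the tension-field contribution vanishes and $\sqrt{-1}\partial\bar\partial(h\circ\pi)=\pi^*\sqrt{-1}\partial\bar\partial h$, so $\Delta_{\widetilde{g}_s}(h\circ\pi)=\tr_{\widetilde{g}_s}\pi^*(\sqrt{-1}\partial\bar\partial h)$; combined with $\widetilde{\omega}_s\geq C_0^{-1}\pi^*\omega_N$ from Lemma \ref{schwartz}, this yields $|\Delta_{\widetilde{g}_s}(h\circ\pi)|\leq C\sup_N|\nabla^2 h|_{g_N}$, hence
\begin{equation*}
\Bigl|\int(h\circ\pi)\,d\nu_{x_0,t_0;t}-h(\pi(x_0))\Bigr|\leq C\sup_N|\nabla^2 h|_{g_N}\cdot(t_0-t)
\end{equation*}
after integrating from $t$ to $t_0$ and using $\nu_{x_0,t_0;t_0}=\delta_{x_0}$. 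I would then fix a smooth embedding $\iota:N\hookrightarrow\mathbb{R}^K$ and apply the above to the test functions $h_j(y):=(\iota_j(y)-\iota_j(\pi(x_0)))^2$ for each ambient coordinate $\iota_j$: these are globally smooth on $N$ with uniformly bounded Hessian, vanish at $\pi(x_0)$, and sum to $|\iota(\cdot)-\iota(\pi(x_0))|^2_{\mathbb{R}^K}$. Summing and applying Cauchy--Schwarz gives $\int|\iota(\pi(y))-\iota(\pi(x_0))|_{\mathbb{R}^K}\,d\nu_{x_0,t_0;t}\leq C\sqrt{t_0-t}$, and the uniform comparison $d_{g_N}(p,q)\leq C_N|\iota(p)-\iota(q)|$ on the compact manifold $N$ converts this into $\int d_{g_N}(\pi(x_0),\pi(y))\,d\nu_{x_0,t_0;t}\leq C'\sqrt{t_0-t}$. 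Adding this to the first-term bound produces $d_{g_N}(\pi(z_t),\pi(x_0))\leq A\sqrt{t_0-t}$ for some $A=A(n,C_0,N,g_N)$.

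The case $t_0=T$ then follows by applying the previous argument with $t_0$ replaced by $t_0^i$ for a sequence $t_0^i\nearrow T$ and passing to the limit, using that $\nu_{x_0,t_0^i;t}\to\nu_{x_0,T;t}$ in $W_1$ and that a subsequence of the corresponding $H_{2n}$-centers converges to an $H_{2n}$-center of $(x_0,T)$. The main technical subtlety lies in the drift step: the intrinsic squared distance $d_{g_N}(\pi(x_0),\cdot)^2$ is not globally $C^2$ across the cut locus, so one must employ globally smooth test functions, and the extrinsic Hessian bound afforded by a smooth embedding of $N$ is a convenient way to finesse this. It is equally crucial that $\pi$ is holomorphic and the manifolds are K\"ahler: without the cancellation that reduces $\Delta(h\circ\pi)$ to $\tr_{\widetilde{g}_s}\pi^*(\sqrt{-1}\partial\bar\partial h)$, the second derivatives of $\pi$ would appear and could not be controlled uniformly along a possibly singular K\"ahler-Ricci flow.
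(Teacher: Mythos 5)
Your argument is correct in substance, but it is a genuinely different route from the paper's. The paper works with $\ell$-centers: it takes a minimizing $\mathcal{L}$-geodesic $\gamma$ from $(x_0,t_0)$ to an $\ell$-center, uses the Schwarz lemma plus Cauchy--Schwarz to bound $\operatorname{length}_{g_N}(\pi\circ\gamma)\leq C\sqrt{t_0-t}$ (absorbing the scalar curvature term via the lower bound $R_{\widetilde g_t}\geq\min_{\widetilde M}R_{\widetilde g_0}$), and then invokes \cite[Proposition 5.6]{CMZ} to pass from $\ell$-centers to $H_{2n}$-centers; the singular time is handled by comparing any $H_{2n}$-center of $(x_0,T)$ with $H_{2n}$-centers of $(x_0,t_i)$ through $W_1$-distances. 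You instead stay entirely at the level of the conjugate heat kernel measure: the concentration inequality defining an $H_{2n}$-center controls $\int d_{g_N}(\pi(z_t),\pi(\cdot))\,d\nu$, and the drift of $\pi_*\nu_{x_0,t_0;t}$ is controlled by the identity $\frac{d}{ds}\int (h\circ\pi)\,d\nu_{x_0,t_0;s}=-\int\Delta_{\widetilde g_s}(h\circ\pi)\,d\nu_{x_0,t_0;s}$ together with $\Delta_{\widetilde g_s}(h\circ\pi)=\operatorname{tr}_{\widetilde\omega_s}\pi^{*}(\sqrt{-1}\partial\bar\partial h)$ and the Schwarz trace bound, with the embedding trick $h_j=(\iota_j-\iota_j(\pi(x_0)))^2$ neatly sidestepping the non-smoothness of $d_{g_N}^2$ across the cut locus. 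This buys independence from the reduced-distance machinery (no $\ell$-centers, no \cite[Lemma 7.50]{Chow1}, no \cite[Proposition 5.6]{CMZ}, and no scalar curvature lower bound, since the $R$-term cancels in the drift identity), at the price of the holomorphicity/K\"ahler cancellation and the extrinsic Hessian bookkeeping; the paper's route is shorter given that those cited tools are used elsewhere anyway. One small loose end: for $t_0=T$ you say a subsequence of $H_{2n}$-centers of $(x_0,t_0^i)$ converges to an $H_{2n}$-center of $(x_0,T)$, but the lemma concerns \emph{every} $H_{2n}$-center of $(x_0,T)$. This is easily repaired: your concentration estimate applies verbatim to any given $H_{2n}$-center of $(x_0,T)$ (it only uses the defining variance bound for $\nu_{x_0,T;t}$), and the drift bound for $\nu_{x_0,T;t}$ follows from the bounds at $t_0^i$ by $W_1$-convergence since $h_j\circ\pi$ is bounded and Lipschitz; alternatively, note that any two $H_{2n}$-centers of $(x_0,T)$ at time $t$ are within $2\sqrt{H_{2n}(T-t)}$ of each other in $d_t$, so the conclusion transfers after enlarging $A$.
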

\begin{proof} First let $t_0 <T$ and let $(z_t',t)$ be an $\ell$-center of $(x_0,t_0)$, by which we mean
$\ell_{(x_{0},t_0)}(z_t',t)\leq n$. Such a point exists by \cite[Lemma 7.50]{Chow1}. Let $\gamma:[t,t_0]\to \widetilde{M}$ be a minimizing $\mathcal{L}_{t_0}$-geodesic
from $(x_{0},t_0)$ to $(z_t',t)$. 
Then by \eqref{Llengthdefinition}, together with Lemma \ref{schwartz}, followed by an application of
Cauchy's inequality, we find that
\begin{equation*}
\begin{split}
n\geq &\  \frac{1}{2\sqrt{t_0-t}}\mathcal{L}_{t_0}(\gamma) \\ = &\ \frac{1}{2\sqrt{t_{0}-t}}\int_{t}^{t_{0}}
\sqrt{t_0
-s}\left(R_{\widetilde{g}_{s}}(\gamma(s))+|\dot{\gamma}(s)|_{\widetilde{g}_{s}}^{2}\right)ds\\
\geq & \ T\min_{\widetilde{M}}R_{\tilde{g}_{0}}+\frac{1}{2C_0^2\sqrt{t_{0}-t}}\int_{t}^{t_{0}}\sqrt{t_0-s}|\dot{\gamma}(s)|_{\pi^{\ast}g_N}^{2}ds\\
= & -C(T,\widetilde{g}_{0})+\frac{1}{4C_0^2(t_{0}-t)}\left(\int_{t}^{t_{0}}\frac{1}{\sqrt{t_0-s}}ds\right)\left(\int_{t}^{t_{0}}\sqrt{t_{0}-s}|\dot{\gamma}(s)|_{\pi^{\ast}g_N}^{2}ds\right)\\
\geq & -C(T,\widetilde{g}_{0})+\frac{1}{4C_0^2(t_{0}-t)}\left(\int_{t}^{t_{0}}|\dot{\gamma}(s)|_{\pi^{\ast}g_N}ds\right)^{2},
\end{split}
\end{equation*}
where $C_0$ is as in Lemma \ref{schwartz}. Because $\pi\circ\gamma$ is a curve from $\pi(x_0)$ to $\pi(z_t')$, it follows that
\begin{equation*}
\begin{split}
d_{g_N}(\pi(z_t'),\pi(x_0))\leq & \ \operatorname{length}_{g_N}(\pi\circ\gamma)=\int_{t}^{t_0}|\dot{\gamma}(s)|_{\pi^{\ast}g_N}\,ds\\
\leq &\ C(T,\widetilde{g}_{0},C_0)\sqrt{t_0-t}.
\end{split}
\end{equation*}
By \cite[Proposition 5.6]{CMZ}, there exists $C=C(\widetilde{g}_0)>0$ such that any $H_{2n}$-center
$(z_t,t)$ of $(x_0,t_0)$ satisfies $d_{\widetilde{g}_{t}}(z_t',z_t)\leq C\sqrt{t_0-t}$. Because $\pi:(\widetilde{M},\widetilde{g}_{t})\to(N,g_N)$ is $\sqrt{C_0}$-Lipschitz by Lemma \ref{schwartz}, this yields the bound
$$d_{g_{N}}(\pi(z_t'),\pi(z_t))\leq C(\widetilde{g}_0)\sqrt{C_0} \sqrt{t_0-t}.$$
Combining estimates gives the claim for $t_0<T$. If instead $t_0 = T$, then by Definition \ref{conjsingtime}, there exist $t_i \nearrow T$ such that $\lim_{i\to \infty} d_{W_1}^{\widetilde{g}_t}(\widetilde{\nu}_{x_0,t_i;t},\widetilde{\nu}_{x_0,T;t})=0$. Letting $(z_{i,t},t)$ be an $H_{2n}$-center of $(x_0,t_i)$ and $(z_t,t)$ an $H_{2n}$-center of $(x_0,T)$, we conclude that 
$$d_{\widetilde{g}_t}(z_t,z_{i,t})\leq d_{W_1}^{\widetilde{g}_t}(\delta_{z_t},\widetilde{\nu}_{x_0,t_i;t}) + d_{W_1}^{\widetilde{g}_t}(\widetilde{\nu}_{x_0,t_i;t},\widetilde{\nu}_{x_0,T;t}) + d_{W_1}^{\widetilde{g}_t}(\delta_{z_{i,t}},\widetilde{\nu}_{x_0,t_i;t})\leq C(n)\sqrt{T-t}$$ for all $i \geq \underline{i}(t)$, so because $\pi$ is uniformly Lipschitz, the claim follows from the case $t_0<T$.
\end{proof}

For the next result, we show that any tangent flow $X$ of a finite time singularity of a K\"ahler-Ricci flow as described above based at a point $x_0 \in \widetilde{M}$ corresponds to regions of the rescaled flow $(\widetilde{M},\widetilde{g}_{i,t})$ lying entirely in any given tubular neighborhood of the exceptional set. 

\begin{lemma} \label{containedinV} Let $(\widetilde{M}^2,(\widetilde{g}_t)_{t\in [0,T)}),\,T<\infty,$ be a K\"ahler-Ricci flow on a compact K\"ahler surface $\widetilde{M}^2$ developing a finite time singularity at $t=T$. Suppose that $\pi:\widetilde{M} \to N$ is a holomorphic map with connected fibers, where $(N,\,g_{N})$ is a compact K\"ahler manifold. Let $\mathcal{V} \subseteq \widetilde{M}$ be a neighborhood of a fiber $\widetilde{E}$ of $\pi$. Fix $x_0 \in \widetilde{E}$, and let $X$ be a tangent flow based at $(x_0,T)$ as in Theorem \ref{bamconvergence}. 
For any compact subset $L \subseteq X_{\operatorname{reg}} \times [-2,-1]$, the image $\psi_i(L) \subseteq \mathcal{V}\times \mathbb{R}$ for all $i \geq \underline{i}(L,\mathcal{V})$ sufficiently large.
\end{lemma}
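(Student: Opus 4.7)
The plan is to reduce the claim to showing that the $\pi$-images $\pi(\psi_i(L))\subseteq N$ contract to $\pi(x_0)$ as $i\to\infty$, then combine Lemmas \ref{schwartz} and \ref{nodrift} to verify this. Compactness of $\widetilde{M}$ makes $\pi$ proper, and since $\widetilde{E}$ is a fiber of $\pi$ and $\pi$ has connected fibers, $\widetilde{E}=\pi^{-1}(\pi(x_0))$. The compact set $\pi(\widetilde{M}\setminus\mathcal{V})\subseteq N$ therefore does not contain $\pi(x_0)$, so there exists $r>0$ with $\pi^{-1}(B_{g_N}(\pi(x_0),r))\subseteq \mathcal{V}$. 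It hence suffices to show that $\pi(\psi_i(y,t))\in B_{g_N}(\pi(x_0),r)$ for every $(y,t)\in L$ and all $i$ large.

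The core intermediate step is to bound the rescaled distance $d_{\widetilde{g}_{i,t}}(z_{i,t},\psi_i(y,t))$, where $(z_{i,t},T+(T-t_i)t)$ denotes an $H_4$-center of $(x_0,T)$ in the unrescaled flow. By the smooth convergence $\psi_i^{\ast}\widetilde{f}_i\to f$ in $C^{\infty}_{\operatorname{loc}}(\mathcal{R})$ from Theorem \ref{bamconvergence}(v)(d), together with compactness of $L\subseteq X_{\operatorname{reg}}\times[-2,-1]$, there exists $C_1=C_1(L)<\infty$ with $\widetilde{f}_i(\psi_i(y,t))\leq C_1$ uniformly in $(y,t)\in L$ for $i\geq\underline{i}(L)$. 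On the other hand, the Gaussian upper bound from Proposition \ref{bamheatkernel}(i), translated to the normalization $d\widetilde{\nu}_t^i=(2\pi|t|)^{-2}e^{-\widetilde{f}_i}\,d\widetilde{g}_{i,t}$ used in Theorem \ref{bamconvergence}, produces a pointwise lower bound of the form
\begin{equation*}
\widetilde{f}_i(y)\geq \frac{d_{\widetilde{g}_{i,t}}^2(z_{i,t},y)}{9|t|}-C'
\end{equation*}
for a universal constant $C'$. Combining these estimates on $L$ and using $|t|\leq 2$ yields the uniform rescaled bound $d_{\widetilde{g}_{i,t}}(z_{i,t},\psi_i(y,t))\leq D_0(L)$.

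Unrescaling gives $d_{\widetilde{g}_s}(z_{i,t},\psi_i(y,t))\leq D_0\sqrt{T-t_i}$, where $s=T+(T-t_i)t$. Applying the $\sqrt{C_0}$-Lipschitz bound for $\pi$ from Lemma \ref{schwartz} along with Lemma \ref{nodrift} (noting $T-s=(T-t_i)|t|\leq 2(T-t_i)$), I would conclude
\begin{equation*}
d_{g_N}(\pi(\psi_i(y,t)),\pi(x_0))\leq \sqrt{C_0}\,D_0\sqrt{T-t_i}+A\sqrt{2(T-t_i)}=C(L)\sqrt{T-t_i},
\end{equation*}
which is smaller than $r$ once $i$ is sufficiently large, finishing the argument.

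The main obstacle is turning the smooth $C^{\infty}_{\operatorname{loc}}$-convergence of the potentials $\widetilde{f}_i$ on $\mathcal{R}$ into a uniform rescaled distance bound between $\psi_i(L)$ and an $H_4$-center of the rescaled conjugate heat kernel. This is handled cleanly by pairing the uniform upper bound on $\widetilde{f}_i$ (from convergence) with the quadratic lower bound on $\widetilde{f}_i$ given by the Gaussian heat kernel estimate; the remaining ingredients---Lemma \ref{schwartz}, Lemma \ref{nodrift}, and the properness of $\pi$---assemble routinely.
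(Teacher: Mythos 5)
Your proposal is correct and follows essentially the same route as the paper: a uniform bound on $\widetilde{f}_i$ over $\psi_i(L)$ from the smooth convergence of the potentials, paired with the Gaussian upper bound of Proposition \ref{bamheatkernel}(i), gives a uniform rescaled distance to the $H_4$-centers, and then Lemmas \ref{nodrift} and \ref{schwartz} locate $\pi(\psi_i(L))$ within $C(L)\sqrt{T-t_i}$ of $\pi(x_0)$. The only difference is cosmetic: you conclude via the triangle inequality to $\pi(x_0)$ and the inclusion $\pi^{-1}(B_{g_N}(\pi(x_0),r))\subseteq\mathcal{V}$, whereas the paper routes the same estimates through an intermediate neighborhood $\mathcal{W}\Subset\mathcal{V}$ of $\widetilde{E}$.
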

\begin{proof} For each $i\in \mathbb{N}$ and $t\in [-2,-1]$, let $(z_{i,t},t)$ be an $H_4$-center of $(\widetilde{\nu}_{s}^i)_{s\in [-(T-t_i)^{-1}t_i,0)}$, which is the parabolic rescaling of the conjugate heat kernel $(\widetilde{\nu}_t)_{t\in [0,T)}$ based at $(x_0,T)$. As the next claim shows, the $H_4$-centers of $\widetilde{\nu}_s^i$ are not too far from the image $\psi_{i}(L)$. 

\begin{claim} \label{claim-usedincontainedinV} There exists $D=D(L)>0$ such that whenever $i\geq \underline{i}(L)$, it holds that
$$\sup_{(x,t)\in \psi_i(L)}d_{\widetilde{g}_t}(x,z_{i,t})\leq D(L).$$
\end{claim}

\begin{proof} Recalling from Theorem \ref{bamconvergence} that $d\widetilde{\nu}_t^i = (2\pi |t|)^{-2}e^{-\widetilde{f}_i}d\widetilde{g}_{i,t}$, we know that $\psi_i^{\ast}\widetilde{f}_i \to f$ in $C_{\operatorname{loc}}^{\infty}(X_{\textnormal{reg}}\times (-\infty,0))$, so for sufficiently large $i\geq \underline{i}(L)$, we have that
$$\sup_{\psi_i(L)} \widetilde{f}_{i} \leq 2\sup_{L} f<\infty.$$
By Proposition \ref{bamheatkernel}(i), we know that
$$\frac{1}{(2\pi |t|)^{2}} e^{-\widetilde{f}_{i,t}}(x) \leq \frac{C}{|t|^2} \exp \left( -\frac{d_{\widetilde{g}_{i,t}}^2(x,z_{i,t})}{10|t|} \right)$$
for all $x\in \widetilde{M}$ and $t\in [-2,-1]$.
Hence for all 
$(x,t)\in\psi_i(L),$
it follows that
$$2\sup_L f \geq \widetilde{f}_{i,t}(x)\geq \frac{d_{\widetilde{g}_{i,t}}^2(x,z_{i,t})}{20}-C$$
whenever $i\geq \underline{i}(L)$.
\end{proof}

Let $A>0$ be as in Lemma \ref{nodrift}. Because $\pi$ has connected fibers, we can fix a neighborhood $\mathcal{W}\subseteq \widetilde{M}$ of $\widetilde{E}$ such that $\mathcal{W} \Subset \mathcal{V}$ and $\pi(\mathcal{W})\cap \pi(\widetilde{M} \setminus \mathcal{V}) = \emptyset$. Moreover, we have $\pi^{-1}(B_{g_N}(\pi(x_0),r))\subseteq \mathcal{W}$ for sufficiently small $r>0$, so Lemma \ref{nodrift} gives that
$$z_{i,t} \in \pi^{-1}\left(B_{g_N}(\pi(x_0),A\sqrt{(T-t_i)|t|})\right) \subseteq \mathcal{W}$$
for all $t\in [-2,-1]$ when $i\geq \underline{i}(\mathcal{W})$ is sufficiently large. Then Claim \ref{claim-usedincontainedinV} gives
$$ \sup_{(x,t)\in \psi_i(L)} d_{\widetilde{g}_{i,t}}(x,\mathcal{W}) \leq D(L)$$
whenever $i\geq \underline{i}(L)$ is large. By Lemma \ref{schwartz}, we have that
$$d_{\widetilde{g}_{i,t}}(x,\mathcal{W}) = \frac{d_{\widetilde{g}_{T+(T-t_i)t}}(x,\mathcal{W})}{\sqrt{T-t_i}} \geq \frac{1}{\sqrt{C_0(T-t_i)}} d_{g_N}(\pi(x),\pi(\mathcal{W})).$$
Combining expressions then gives us that

$$\sup_{(x,t)\in\psi_i(L)} d_{g_N}(\pi(x),\pi(\mathcal{W})) \leq C(C_0,L)\sqrt{T-t_i},$$
so the claim holds for all $i\in \mathbb{N}$ sufficiently large such that
$$C(C_0,L)\sqrt{T-t_i} \leq d_{g_N}(\pi(\mathcal{W}),\pi(\widetilde{M}\setminus \mathcal{V})).$$
\end{proof}

\subsection{Topological results}
We next recall some topological definitions and results that will be used in later sections. 

\begin{definition} \label{ALEdef} 
Suppose that $\Gamma\subseteq U(2)$ is a finite subgroup acting freely on $\mathbb{C}^2\setminus\{0\}$ and let $(g_0,J_0)$ be the flat K\"ahler orbifold structure on $\mathbb{C}^2/\Gamma$. We say that a K\"ahler manifold $(M,\,g,\,J)$ is an \emph{ALE K\"ahler manifold asymptotic to $\mathbb{C}^2/\Gamma$} if there exists a compact subset $K \subseteq M$ and a diffeomorphism $\Phi: M\setminus K \to (\mathbb{C}^2/\Gamma)\setminus B_{g_0}(0^2,\,R)$ for some $R \in (0,\infty)$ such that if $r$ denotes the radius function on $\mathbb{C}^2/\Gamma$, then there exist $C_i >0$ such that
$$|(\nabla^{g_0})^i ((\Phi^{-1})^{*}g-g_0)|_{g_0} + |(\nabla^{g_0})^i ((\Phi^{-1})^{*}J-J_0)|_{g_0} \leq C_i r^{-4-i}\qquad\textrm{for all $i\in\mathbb{N}$}.$$ 
\end{definition}

ALE Calabi-Yau manifolds are potential singularity models for finite time singularities of the K\"ahler-Ricci flow. However, there are several known topological obstructions restricting how such singularities can develop. These we now recall.

\begin{prop} \label{topologicalrestrictions}
\begin{enumerate}
    \item If $\Sigma$ is a smooth manifold diffeomorphic to $\mathbb{S}^3/\Gamma$ for some $\Gamma \subseteq U(2)$ acting freely on $\mathbb{C}^2\setminus \{0\}$ and if there exists a smooth embedding $\Sigma \hookrightarrow \mathbb{R}^4$, then $\Gamma \subseteq SU(2)$. 
    \item If $(M,g,J)$ is an ALE Calabi-Yau manifold asymptotic to $\mathbb{C}^2/\Gamma$ for some $\Gamma \subseteq SU(2)$ acting freely on $\mathbb{C}^{2}\setminus\{0\}$, then there is an embedded two-sphere $E \hookrightarrow M$ with $E\cdot E=-2$.
\end{enumerate}
\end{prop}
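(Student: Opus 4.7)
The plan is to prove the two parts by independent arguments: part (i) via classical 4-manifold topology, and part (ii) via the classification of ALE Calabi--Yau surfaces.

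For part (i), I would first extend the embedding to $\iota:\Sigma \hookrightarrow S^4 = \mathbb{R}^4 \cup \{\infty\}$. Since $\Sigma$ is closed and orientable, it separates $S^4$ into two open regions $U_1, U_2$ with $\overline{U_1} \cap \overline{U_2} = \Sigma$. A standard Mayer--Vietoris computation on $S^4 = \overline{U_1} \cup \overline{U_2}$, combined with $H_\ast(\mathbb{S}^3/\Gamma;\mathbb{Q}) \cong H_\ast(\mathbb{S}^3;\mathbb{Q})$ (a consequence of the finiteness of $\Gamma$) and Poincar\'e--Lefschetz duality, shows that each $\overline{U_i}$ is a smooth rational homology $4$-ball bounded by $\Sigma$. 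The next step would be to invoke classification results for spherical $3$-manifolds that smoothly bound rational homology $4$-balls: in the cyclic case, Lisca's classification of lens spaces that embed smoothly in $S^4$, together with the classification (up to $U(2)$-conjugation) of free cyclic $U(2)$-actions on $\mathbb{S}^3$, forces the generator of $\Gamma$ to be conjugate to an element of $SU(2)$; in the non-cyclic case, Wolf's list reduces matters to finitely many families of binary polyhedral and prismatic groups, where analogous rational homology cobordism arguments (in the spirit of Fintushel--Stern) rule out the non-$SU(2)$ possibilities.

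For part (ii), I would appeal to Kronheimer's classification of ALE hyperk\"ahler four-manifolds asymptotic to $\mathbb{C}^2/\Gamma$ with $\Gamma \subseteq SU(2)$, which asserts that any such $M$ is diffeomorphic to the minimal resolution of $\mathbb{C}^2/\Gamma$. Because $\Gamma \subseteq SU(2)$ acts freely off the origin, this quotient singularity is a Du Val (ADE) rational double point, whose minimal resolution carries an exceptional set which is a connected tree of smoothly embedded rational $(-2)$-curves. Any irreducible component of this exceptional set then provides the desired embedded $E \cong S^2$ with $E \cdot E = -2$.

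The main obstacle is the case analysis in part (i) for $\Gamma \subseteq U(2) \setminus SU(2)$: while the Alexander-duality step is clean and uniform, the input from the classification of spherical space forms that bound rational homology balls requires treating several families (cyclic, binary polyhedral, prismatic) separately. A cleaner uniform alternative I would attempt in parallel is an Atiyah--Patodi--Singer $\rho$-invariant argument: for each non-trivial character $\alpha$ of $\Gamma$, the $\rho$-invariant $\rho_\alpha(\mathbb{S}^3/\Gamma)$ has an explicit formula (in the cyclic case, a Dedekind-sum expression), and pairing this against the vanishing of the twisted signature of the filling $\overline{U_i}$ (which holds because $\overline{U_i}$ is a rational homology ball) should yield arithmetic constraints that reduce to the triviality of the determinant character $\det:\Gamma \to U(1)$, i.e., $\Gamma \subseteq SU(2)$.
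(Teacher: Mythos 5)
Your part (ii) is essentially the paper's argument: one appeals to Kronheimer's classification to identify $M$ diffeomorphically with the minimal resolution of the Du Val singularity $\mathbb{C}^2/\Gamma$, whose exceptional set is a tree of smooth rational $(-2)$-curves, any component of which gives $E$. (Like the paper, you pass quickly over the point that an ALE Calabi--Yau surface asymptotic to $\mathbb{C}^2/\Gamma$ with $\Gamma\subseteq SU(2)$ is hyperK\"ahler, so that Kronheimer's theorem applies; the paper handles this elsewhere by citing Suvaina-type results.)

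For part (i), the paper's proof is a one-line citation: since $\mathbb{S}^3/\Gamma$ is a rational homology sphere, the claim is exactly the content of Gilmer--Livingston's obstruction for rational homology spheres smoothly embedded in $\mathbb{R}^4$ (their Theorem 2.1(1)); the sharper statement that $\Gamma$ is trivial or $Q_8$ is Crisp--Hillman. Your opening step -- compactify to $S^4$ and use Mayer--Vietoris plus Poincar\'e--Lefschetz duality to split $S^4$ into two rational homology balls glued along $\Sigma$ -- is correct and is the standard starting point for all such results, but the way you propose to finish has genuine gaps. First, Lisca's theorem classifies lens spaces \emph{bounding rational homology balls}, not lens spaces embedding in $S^4$, and the one-sided condition is strictly too weak for your conclusion: the links of Wahl singularities, for instance $L(9,2)=\mathbb{S}^3/\tfrac{1}{9}(1,2)$ (precisely the singularities this paper later has to work with), bound rational homology balls even though the corresponding cyclic group is not conjugate into $SU(2)$ and $L(9,2)$ is not diffeomorphic to any $SU(2)$-quotient. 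So the argument must genuinely exploit the two-sided splitting, and that step is not carried out. Second, the non-cyclic case is only gestured at (``in the spirit of Fintushel--Stern''); this is where most of the work lies and is exactly what the cited theorem of Gilmer--Livingston (or Crisp--Hillman, or Donald's classification of Seifert manifolds embedding in $S^4$) supplies. Third, your proposed ``cleaner'' $\rho$-invariant alternative fails as stated: a character of $\Gamma=\pi_1(\Sigma)$ need not extend over a rational homology ball filling, so Atiyah--Patodi--Singer/Casson--Gordon arguments yield constraints only for characters vanishing on a metabolizer of the linking form, not ``for each non-trivial character''; this is precisely why results of this type ultimately rest on Donaldson-theoretic input rather than $\rho$-invariants alone. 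In short: your step one reproduces the standard reduction, but the remaining steps either misstate the available classification theorems or would not go through as described, whereas the paper simply invokes the packaged result.
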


\begin{proof}
\begin{enumerate}
    \item Because $\mathbb{S}^3/\Gamma$ is a rational homology sphere, this follows from \cite[Theorem 2.1(1)]{GilLiv}. 
    \item Because $M$ is hyperK\"ahler, it is diffeomorphic to the minimal resolution of $\mathbb{C}^2/\Gamma$ \cite[Theorem 1.2]{Kron}, and so contains at least one two-sphere $E$ with self-intersection $-2$ \cite[Theorem 7.5.1]{ishii}
\end{enumerate}
\end{proof}

\begin{remark} One can strengthen Proposition \ref{topologicalrestrictions}(i) to state that $\Gamma$ is either trivial or the quaternion group $Q_{8} = \{ \pm 1, \pm i, \pm j, \pm k\} \subseteq SU(2)$ \cite[second paragraph of p.296]{crisp}, but this is a much deeper result and does not simplify any of our arguments. The assumption in Proposition \ref{topologicalrestrictions}(ii) that $\Gamma \subseteq SU(2)$ is essential, since for example the $\mathbb{Z}_2$-quotient of the Eguchi-Hanson metric does not contain any two-sphere with non-trivial self-intersection. 
\end{remark}

We will also use the following consequence of the generalized Jordan-Brouwer separation theorem.

\begin{lemma} \label{JordanBrouwer} Let $M$ be a smooth four-dimensional manifold and $\mathcal{V} \subseteq M$ an open subset diffeomorphic to the total space of the tautological bundle $\mathcal{O}_{\mathbb{P}^{1}}(-1)\to\mathbb{P}^{1}$. Let $\Sigma \hookrightarrow \mathcal{V}$ be a smoothly embedded compact connected hypersurface. Then:
\begin{enumerate}
    \item $\mathcal{V}\setminus \Sigma$ has exactly two connected components.
    \item $M \setminus \Sigma$ has exactly two connected components, one of which is in common with one of (i). 
\end{enumerate}
\end{lemma}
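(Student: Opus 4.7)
For part (i), my plan is to apply the long exact sequence of the pair $(\mathcal{V},\mathcal{V}\setminus \Sigma)$ with $\mathbb{Z}_{2}$ coefficients, combined with excision onto a tubular neighborhood $N$ of $\Sigma$ and the Thom isomorphism $H_{k}(N,N\setminus \Sigma;\mathbb{Z}_{2})\cong H_{k-1}(\Sigma;\mathbb{Z}_{2})$ for the normal line bundle. Working modulo $2$ sidesteps any two-sidedness issue for $\Sigma$, since the Thom isomorphism then requires no orientation data. Using that $\mathcal{V}$ is homotopy equivalent to the zero section $\mathbb{P}^{1}\cong \mathbb{S}^{2}$, one has $H_{1}(\mathcal{V};\mathbb{Z}_{2})=0$ and $H_{0}(\mathcal{V};\mathbb{Z}_{2})=\mathbb{Z}_{2}$, while connectedness of $\Sigma$ gives $H_{0}(\Sigma;\mathbb{Z}_{2})=\mathbb{Z}_{2}$. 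The relevant segment then collapses to
\[
0 \longrightarrow \mathbb{Z}_{2} \longrightarrow H_{0}(\mathcal{V}\setminus \Sigma;\mathbb{Z}_{2}) \longrightarrow \mathbb{Z}_{2} \longrightarrow 0,
\]
where $H_{0}(\mathcal{V},\mathcal{V}\setminus \Sigma;\mathbb{Z}_{2})=0$ because $\mathcal{V}\setminus \Sigma$ meets the unique component of $\mathcal{V}$. This forces $H_{0}(\mathcal{V}\setminus\Sigma;\mathbb{Z}_{2})\cong \mathbb{Z}_{2}^{2}$, i.e., exactly two connected components.

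For part (ii), I would begin by observing that $\mathcal{V}$ has a single end: realizing $\mathcal{V}$ as the interior of the closed unit disk bundle of $\mathcal{O}_{\mathbb{P}^{1}}(-1)$, whose boundary is the connected Hopf bundle $\mathbb{S}^{3}\to \mathbb{S}^{2}$, exhibits the end as connected. Since $\Sigma$ is compact, one component of $\mathcal{V}\setminus \Sigma$, call it $A_{1}$, contains a neighborhood of infinity, while the other, $A_{2}$, is relatively compact in $\mathcal{V}$. Consequently, $\overline{A_{2}}^{M}=\overline{A_{2}}^{\mathcal{V}}$ is a compact subset of $\mathcal{V}$, so $A_{2}$ is both open in $M$ and closed in $M\setminus \Sigma$, and hence constitutes one connected component of $M\setminus \Sigma$.

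It remains to show that $(M\setminus \mathcal{V})\cup A_{1}$ is connected, and therefore forms the second component. The key observation is that every $p\in \partial \mathcal{V}$ lies in $\overline{A_{1}}^{M}$: such a $p$ is a limit of points in $\mathcal{V}$ which, once sufficiently close to $p$, lie outside the compact set $\overline{A_{2}}^{\mathcal{V}}$ and hence in $A_{1}$. Restricting to the connected component of $M$ containing $\mathcal{V}$ (so that without loss of generality $M$ itself is connected), I claim each connected component $C$ of $M\setminus \mathcal{V}$ meets $\partial \mathcal{V}$: otherwise $C\subset M\setminus \overline{\mathcal{V}}$ would be simultaneously open and closed in $M$, contradicting connectedness of $M$. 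Picking $p\in C\cap \overline{A_{1}}^{M}$, a short separation argument (any decomposition $C\cup A_{1}=U\sqcup V$ with $A_{1}\subset U$ and $C\subset V$ would place $p\in V\cap \overline{U}$, violating that $V$ is closed in $C\cup A_{1}$) shows that $C\cup A_{1}$ is connected. Taking the union over all such $C$ yields the desired conclusion. The main technical subtlety lies in this last connectedness step: one must carefully exploit that $\Sigma\subset \mathcal{V}$ is disjoint from $\partial \mathcal{V}$, so that passage from $M\setminus \mathcal{V}$ into $A_{1}$ through boundary points never encounters $\Sigma$.
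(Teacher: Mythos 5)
Your proof is correct, and it differs from the paper's in how the two parts are handled. For (i), the paper simply invokes the generalized Jordan--Brouwer separation theorem (using that $\mathcal{V}$ is simply connected), whereas you reprove that theorem in this special case by the long exact sequence of the pair together with excision and the $\mathbb{Z}_2$-Thom isomorphism for the normal line bundle; the inputs $H_1(\mathcal{V};\mathbb{Z}_2)=0$, $H_0(\Sigma;\mathbb{Z}_2)=\mathbb{Z}_2$ and the vanishing of the relative $H_0$ are all as you say, so this is a self-contained substitute for the citation. For (ii), both arguments share the same first step: using the compact region $\{|v|_h\le R\}$ (equivalently, that $\mathcal{V}$ has one connected end) to conclude that one component of $\mathcal{V}\setminus\Sigma$ -- your $A_2$, the paper's $\mathcal{C}_1$ -- is precompact in $\mathcal{V}$ and hence clopen in $M\setminus\Sigma$. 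From there the routes diverge: the paper sets $\widehat{\mathcal{C}}_2:=M\setminus\overline{\mathcal{C}_1}$ and deduces its connectedness from the connectedness of $\Sigma=\partial\mathcal{C}_1$, while you instead show directly that $(M\setminus\mathcal{V})\cup A_1$ is connected by proving that every component of $M\setminus\mathcal{V}$ meets $\partial\mathcal{V}\subseteq\overline{A_1}$ and gluing along such boundary points; your version avoids having to know that the topological boundary of the precompact component is exactly $\Sigma$ and avoids the somewhat delicate ``connected boundary implies connected complement'' step, at the cost of a more hands-on point-set argument. Note also that both proofs tacitly use connectedness of $M$ (the statement of (ii) is false otherwise); you make this explicit with your ``without loss of generality'' reduction, which is harmless since in the paper's application $M=\widetilde{M}$ is a connected compact K\"ahler surface, but strictly speaking this hypothesis should be recorded.
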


\begin{proof} 
\begin{enumerate}
    \item Because $\mathcal{V}$ is simply connected, this is a consequence of the generalized Jordan-Brouwer theorem \cite[Theorem 4.4.5]{Hirsch}. 
    
    \item Let $\mathcal{C}_{1},\,\mathcal{C}_{2}$ denote the two connected components from part (i). Then because $\Sigma$ is compact, if we identify $\mathcal{V}$ with the total space of $\mathcal{O}_{\mathbb{P}}(-1)\to\mathbb{P}^{1}$ and let $h$ denote the standard Hermitian metric on $\mathcal{O}_{\mathbb{P}}(-1)\to\mathbb{P}^{1}$, then for $R>0$ sufficiently large, the set 
    $$A:= \{ v\in \mathcal{O}_{\mathbb{P}}(-1)| |v|_h \geq R \}$$
    satisfies $A \cap \Sigma = \emptyset$. Then $A \subseteq \mathcal{V} \setminus \Sigma$ is connected, and so we can assume without loss of generality that $\mathcal{C}_1 \subseteq \mathcal{V}\setminus A \Subset \mathcal{V}$. Thus, the boundary of $\mathcal{C}_1$ in $M$ is also $\Sigma$. Set $\widehat{\mathcal{C}}_2 := M\setminus \overline{\mathcal{C}_1}$ so that $\mathcal{C}_1,\widehat{\mathcal{C}}_2$ are open sets sharing a common boundary $\Sigma$ and $M\setminus \Sigma = \mathcal{C}_1 \cup \widehat{\mathcal{C}}_2$. Because $\Sigma = \partial \mathcal{C}_1$ is connected, we also know $\widehat{\mathcal{C}}_2$ is connected, hence $\mathcal{C}_1,\widehat{\mathcal{C}}_2$ are the two connected components of $M\setminus \Sigma$.
\end{enumerate}
\end{proof}

Finally, we recall the definition of the linking number between two curves in $\mathbb{S}^3/\Gamma$. First, suppose that $\sigma_1, \sigma_2$ are disjoint smooth 1-chains in $\mathbb{S}^3/\Gamma$. Because $H_1(\mathbb{S}^3/\Gamma;\mathbb{Z})$ is torsion, we can choose $k\in \mathbb{N}_{>0}$ such that $k\sigma_1$ is null-homologous in $\mathbb{S}^3/\Gamma$. We can therefore find a smooth 2-chain $S$ in $\mathbb{S}^3/\Gamma$ such that $\partial S_1 = k \sigma$. The linking number of $\sigma_1$ and $\sigma_2$ is then defined to be
$$\operatorname{link}_{\mathbb{S}^3/\Gamma}(\sigma_1,\sigma_2):= \frac{1}{k}S_1\cdot \sigma_2.$$
We now recall some well-known properties of the linking number.

\begin{prop} [\protect{\textnormal{\cite[Section 77]{seifert}}}] \label{linkingnumberproperties}
The following hold true.
\begin{enumerate}
    \item $\operatorname{link}_{\mathbb{S}^3/\Gamma}(\sigma_1,\sigma_2)$ is independent of the choice of $k\in \mathbb{N}_{>0}$ and $S_1$.
    \item $\operatorname{link}_{\mathbb{S}^3/\Gamma}$ is symmetric.
    \item If $\sigma_1,\sigma_1'$ are 1-chains which are homologous in $(\mathbb{S}^3/\Gamma)\setminus \sigma_2$, then $\operatorname{link}_{\mathbb{S}^3/\Gamma}(\sigma_1,\sigma_2)=\operatorname{link}_{\mathbb{S}^3/\Gamma}(\sigma_1',\sigma_2)$.
\end{enumerate}
\end{prop}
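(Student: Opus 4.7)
The plan is to proceed using standard smooth-chain intersection theory on a rational homology 3-sphere. The key input is that $\mathbb{S}^3/\Gamma$ is a closed orientable 3-manifold with finite fundamental group, so $H_1(\mathbb{S}^3/\Gamma;\mathbb{Z})$ is torsion (this is why the integer $k$ exists in the first place) and $H_2(\mathbb{S}^3/\Gamma;\mathbb{Z})=0$. The other ingredient is the Leibniz identity
$$\partial(A \cap B) \;=\; (\partial A) \cap B \,\pm\, A \cap (\partial B)$$
(with appropriate signs) for smooth chains $A, B$ meeting transversally, together with its instance that transverse intersections of a 1-chain with the boundary of a 3-chain cancel in signed pairs. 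Throughout, I would arrange all chains and their boundaries in general position by small smooth perturbation; since the intersection numbers appearing are homological invariants, such perturbations leave the linking numbers unchanged.

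For (i), let $(k_1,S_1)$ and $(k_2,S_1')$ be two admissible pairs with $\partial S_1 = k_1\sigma_1$ and $\partial S_1' = k_2\sigma_1$. Then $Z := k_2 S_1 - k_1 S_1'$ is a 2-cycle, so by $H_2(\mathbb{S}^3/\Gamma;\mathbb{Z})=0$ we may write $Z = \partial T$ for some 3-chain $T$. The intersection $Z \cdot \sigma_2 = (\partial T)\cdot \sigma_2$ then vanishes by the boundary-cancellation principle, giving $k_2(S_1\cdot \sigma_2) = k_1(S_1'\cdot \sigma_2)$, which is the desired independence.

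For (ii), choose admissible $(k_1,S_1)$ for $\sigma_1$ and $(k_2,S_2)$ for $\sigma_2$ in general position. Applying the Leibniz rule to the 1-chain $S_1 \cap S_2$, its algebraic boundary equals $k_1(\sigma_1\cdot S_2) - k_2(S_1\cdot \sigma_2)$ (as a 0-chain of signed points), and since the total degree of the boundary of a 1-chain is zero, we obtain $k_1(\sigma_1\cdot S_2) = k_2(S_1\cdot \sigma_2)$. Dividing by $k_1k_2$ yields symmetry.

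For (iii), if $\sigma_1-\sigma_1' = \partial C$ in $(\mathbb{S}^3/\Gamma)\setminus \sigma_2$ and $k\sigma_1 = \partial S_1$, then $k\sigma_1' = \partial(S_1 - kC)$, so by (i) we may compute
$$\operatorname{link}_{\mathbb{S}^3/\Gamma}(\sigma_1',\sigma_2) \;=\; \tfrac{1}{k}(S_1 - kC)\cdot \sigma_2 \;=\; \operatorname{link}_{\mathbb{S}^3/\Gamma}(\sigma_1,\sigma_2) - C\cdot \sigma_2 \;=\; \operatorname{link}_{\mathbb{S}^3/\Gamma}(\sigma_1,\sigma_2),$$
the last equality using that $C$ is disjoint from $\sigma_2$. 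The only real subtlety is the transversality check that makes all signed intersection numbers simultaneously well-defined, but this is entirely routine and presents no genuine obstacle; the substance of the proposition is already contained in the two topological inputs $H_1$ is torsion and $H_2$ vanishes.
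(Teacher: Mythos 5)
Your proof is correct, and in substance it is the standard chain-level argument that the paper itself does not reproduce: Proposition \ref{linkingnumberproperties} is stated with a citation to the classical reference (Seifert--Threlfall, Section 77) and no proof is given in the paper, so there is no in-paper argument to diverge from. Your two topological inputs are exactly the right ones: $H_1(\mathbb{S}^3/\Gamma;\mathbb{Z})$ finite (which is why $k$ exists) and $H_2(\mathbb{S}^3/\Gamma;\mathbb{Z})=0$ (which, if you want to be self-contained, you should justify in one line via Poincar\'e duality and universal coefficients: $H_2\cong H^1\cong\operatorname{Hom}(H_1,\mathbb{Z})=0$ since $H_1$ is finite). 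Two small points deserve explicit mention. First, the boundary-cancellation step in (i), namely $(\partial T)\cdot\sigma_2=0$, uses that $\sigma_2$ is a \emph{closed} 1-chain (a 1-cycle); this is implicit in the paper's setup, where the $\sigma_j$ come from closed curves, but your argument should state it since "1-chain" alone does not suffice. Second, in (ii) the Leibniz identity produces a sign, and one should check that for two 1-cycles in an oriented 3-manifold the sign comes out so as to give symmetry rather than antisymmetry (it does, since the relevant sign is $(-1)^{(p+1)(q+1)}=+1$ for $p=q=1$); as written, "dividing by $k_1k_2$" sweeps this under the rug. With those two remarks added, your write-up is a complete and correct proof of the cited facts.
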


Suppose that $\gamma_1,\gamma_2: \mathbb{S}^1 \to \mathbb{S}^3/\Gamma$ are smooth immersions that have disjoint images. Define $\rho: [0,1] \to \mathbb{S}^1$, $t\mapsto e^{2\pi\sqrt{-1}t}$, so that $\widetilde{\gamma}_i := \gamma_i \circ \rho: [0,1]\to \mathbb{S}^3$ are smooth 1-chains. We then define the linking number of $\gamma_1,\gamma_2$ to be $$\operatorname{link}_{\mathbb{S}^3/\Gamma}(\gamma_1,\gamma_2):=\operatorname{link}_{\mathbb{S}^3/\Gamma}(\widetilde{\gamma}_1,\widetilde{\gamma}_2) .$$

\begin{remark} \label{remark-homotopy}
By Proposition \ref{linkingnumberproperties}(iii), it follows that if $\gamma_1,\gamma_1'$ are homotopic in $(\mathbb{S}^3/\Gamma)\setminus \gamma_2(\mathbb{S}^1)$, then $\operatorname{link}_{\mathbb{S}^3/\Gamma}(\gamma_1,\gamma_2)=\operatorname{link}_{\mathbb{S}^3/\Gamma}(\gamma_1',\gamma_2)$. 
\end{remark}

For any normal complex analytic surface $X$, there is a well-defined intersection form defined on Weil divisors of $X$ \cite[Section II(b)]{mumford}. In \cite{ReTy}, it was shown that this intersection number $E_1 \cdot E_2$ of Weil divisors in $X$ has a topological characterization in terms of linking numbers. This we now recall. 

\begin{prop} \label{linkingequalsintersection} Let $X$ 
be a complex orbifold surface with isolated singularities and let $p\in X$ be an (isolated) singular point. For any curves $E_1,E_2$ in $X$ with $E_1 \cap E_2 =\{p\}$, it holds true that
$$E_1 \cdot E_2 = \operatorname{link}_{\mathbb{S}}( E_1 \cap \mathbb{S}, E_2 \cap \mathbb{S}),$$
where $\mathbb{S}\subseteq X$ is the intersection of $X$ with the boundary of a small ball centered at $p$. Here, $E_j \cap \mathbb{S}$ is equipped with the orientation induced from the embedding $E_j \cap \mathbb{S} \hookrightarrow \mathbb{S}$ as in \cite[p.100]{GuiPol}.
\end{prop}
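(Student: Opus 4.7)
The plan is to reduce the identity to the classical relation between local intersection multiplicity and linking number at a smooth point of $\mathbb{C}^2$, by lifting both sides through the local orbifold uniformizing chart at $p$. First I would fix a small orbifold chart $\eta: \widetilde{B} \to B \subseteq X$ centered at $p$, where $\widetilde{B} \subset \mathbb{C}^2$ is a round ball and $\Gamma \subseteq U(2)$ is the (finite) isotropy group acting freely on $\mathbb{C}^2 \setminus \{0\}$. Shrinking $B$ if necessary, I may assume $E_1 \cap E_2 \cap \overline{B} = \{p\}$ and that $\mathbb{S} = \eta(\partial \widetilde{B}) = \mathbb{S}^3/\Gamma$. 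Let $\widetilde{E}_i := \eta^{-1}(E_i \cap B)$; these are $\Gamma$-invariant analytic curves in $\widetilde{B}$ meeting only at $0$. Set $\gamma_i := E_i \cap \mathbb{S}$ and $\widetilde{\gamma}_i := \widetilde{E}_i \cap \mathbb{S}^3 = (\eta|_{\mathbb{S}^3})^{-1}(\gamma_i)$.

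Next I would establish two identifications under the free $|\Gamma|$-fold cover $\eta|_{\mathbb{S}^3}: \mathbb{S}^3 \to \mathbb{S}$. On the intersection side, I claim
\[ E_1 \cdot E_2 \;=\; \frac{1}{|\Gamma|}\,\widetilde{E}_1 \cdot_{0} \widetilde{E}_2, \]
where the right-hand side is the usual local intersection multiplicity at $0 \in \widetilde{B}$. This should follow from the definition of the Mumford intersection form \cite[Section II(b)]{mumford} by comparing a resolution of the singularity at $p$ with its $\Gamma$-equivariant base change to $\widetilde{B}$ and applying the projection formula (equivalently, by the general principle that intersection numbers transform by the degree under a finite quotient). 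On the linking side, given $k \in \mathbb{N}_{>0}$ and a smooth 2-chain $S$ in $\mathbb{S}$ with $\partial S = k\gamma_1$, its lift $\widetilde{S} := (\eta|_{\mathbb{S}^3})^{-1}(S)$ is a 2-chain in $\mathbb{S}^3$ with $\partial \widetilde{S} = k\widetilde{\gamma}_1$. Since $\eta|_{\mathbb{S}^3}$ is an orientation-preserving $|\Gamma|$-fold covering (as $\eta$ is holomorphic), each transverse intersection of $S$ with $\gamma_2$ lifts to $|\Gamma|$ transverse intersections of $\widetilde{S}$ with $\widetilde{\gamma}_2$ of identical sign, giving
\[ \operatorname{link}_{\mathbb{S}^3}(\widetilde{\gamma}_1, \widetilde{\gamma}_2) \;=\; \frac{1}{k}\,\widetilde{S} \cdot \widetilde{\gamma}_2 \;=\; \frac{|\Gamma|}{k}\, S \cdot \gamma_2 \;=\; |\Gamma|\,\operatorname{link}_{\mathbb{S}}(\gamma_1, \gamma_2). \]

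Finally I would invoke the classical smooth identity --- for analytic curves meeting only at $0 \in \mathbb{C}^2$, the local intersection multiplicity $\widetilde{E}_1 \cdot_{0} \widetilde{E}_2$ equals $\operatorname{link}_{\mathbb{S}^3}(\widetilde{\gamma}_1, \widetilde{\gamma}_2)$ --- and combine the three displays to conclude. The main obstacle I anticipate is the orbifold intersection identity $E_1 \cdot E_2 = |\Gamma|^{-1}\,\widetilde{E}_1 \cdot_{0} \widetilde{E}_2$, since the Mumford intersection form is phrased in terms of a resolution of $X$ rather than an orbifold chart, so a careful diagram chase (or direct appeal to intersection theory on Deligne--Mumford stacks) is required to match the two. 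Apart from this, the orientation bookkeeping in the linking computation is automatic from holomorphicity of $\eta$, and the argument reduces to the purely classical statement about plane curves in $\mathbb{C}^2$.
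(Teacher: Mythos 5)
Your argument is correct, but it is a genuinely different route from the paper's. The paper does not lift to the uniformizing chart at all: it observes that both sides depend only on the analytic germ at $p$, which is algebraic, and then quotes Reeve--Tyrrell (\cite[Theorem 7.2]{ReTy}, together with the remark in \cite{ReTyAdd} that their intersection multiplicity agrees with Mumford's) and the direct topological proof in \cite[Theorem 2.4]{Hunt}; that argument works for an arbitrary normal surface singularity. You instead exploit the quotient structure: lift through $\eta:\widetilde{B}\to\widetilde{B}/\Gamma$, use the classical plane-curve identity $\widetilde{E}_1\cdot_0\widetilde{E}_2=\operatorname{link}_{\mathbb{S}^3}(\widetilde{\gamma}_1,\widetilde{\gamma}_2)$, and show that both sides scale by $|\Gamma|$ under the cover. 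This is more elementary and self-contained, at the cost of applying only to quotient singularities (which suffices here). Two points deserve to be firmed up. First, the identity $E_1\cdot E_2=\tfrac{1}{|\Gamma|}\,\widetilde{E}_1\cdot_0\widetilde{E}_2$ is easiest not via an equivariant base change of a resolution (the fibre product need not be normal, so the diagram chase is awkward) but via $\mathbb{Q}$-Cartierness: some multiple $mE_1$ is Cartier near $p$, cut out by a function $h$, and Mumford's local number is $\tfrac{1}{m}\operatorname{ord}_p\bigl(h|_{E_2}\bigr)$; since $\eta^{\ast}(mE_1)=m\widetilde{E}_1$ is cut out by $h\circ\eta$ and $\widetilde{E}_2\to E_2$ has degree $|\Gamma|$ (the action being free off the origin), the order of vanishing multiplies by $|\Gamma|$, giving the formula. (Equivalently, this is the finite-pullback property of the intersection product in \cite{sakai}.) Second, the ``lift'' of the $2$-chain $S$ should be the transfer chain (sum of lifts of each singular simplex under the genuine covering $\eta|_{\mathbb{S}^3}$), which satisfies $\partial\widetilde{S}=k\widetilde{\gamma}_1$ and $\widetilde{S}\cdot\widetilde{\gamma}_2=|\Gamma|\,S\cdot\gamma_2$ since the cover is orientation preserving; note also that $\widetilde{E}_i$ and $\widetilde{\gamma}_i$ may be reducible, so the classical identity is invoked for possibly reducible germs, which is harmless by bilinearity. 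With these clarifications your reduction to the smooth case is complete and yields the proposition.
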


\begin{proof} Because $E_1 \cdot E_2$ and $\operatorname{link}_{\mathbb{S}}(E_1 \cap \mathbb{S}_1,E_2 \cap \mathbb{S}_2)$ only depend on the analytic germs of $X,E_1,E_2$ at $p$, and the germ of $X$ at $p$ is complex algebraic, we can assume that $X$ is algebraic. The proposition then follows from \cite[Theorem 7.2]{ReTy} and the fact that the notion of intersection multiplicity for curves in normal surfaces used in \cite{ReTy}
agrees with that defined by Mumford \cite{mumford} (cf.~\cite[p.624]{ReTyAdd}). A direct proof of the proposition is given in \cite[Theorem 2.4]{Hunt}. 
\end{proof}

\section{Finitude of orbifold singularities}

Let $(\widetilde{M}, (\widetilde{g}_{t})_{t\in[0,T)},\widetilde{J}),\,T<\infty,$ be a K\"ahler-Ricci flow on
a compact K\"ahler surface developing a finite time singularity at $t=T$ with $$\lim_{t\nearrow T}\text{vol}_{\widetilde{g}_{t}}(\widetilde{M})>0.$$
In this section, we will show that any tangent flow of $\widetilde{M}$ has at most finitely many orbifold singularities.  

By \cite[Theorem 3.8.3]{songnotes}, there are finitely many pairwise-disjoint $(-1)$-curves $\widetilde{E}_{1},...,\widetilde{E}_{m}$ such that $\widetilde{g}_{t}\to \widetilde{g}_{T}$ in $C_{\text{loc}}^{\infty}(\widetilde{M}\setminus\cup_{j=1}^{m}\widetilde{E}_{j})$ as $t\to T$.
For each $j\in\{1,...,m\}$, there is a neighborhood $\mathcal{V}_{j}$ of $\widetilde{E}_{j}$ which is biholomorphic to the unit disk subbundle of the total space of $\mathcal{O}_{\mathbb{P}^{1}}(-1)\to\mathbb{P}^{1}$ via a biholomorphism mapping $\widetilde{E}_{j}$ to the zero section. Moreover, there is a smooth K\"ahler surface $N$ together with a blowdown map $\pi:\widetilde{M}\to N$ whose exceptional set is precisely $\cup_{j=1}^m \widetilde{E}_j$, and such that $\pi|_{\mathcal{V}_j}$ is holomorphically equivalent to the restriction to a neighborhood of the zero section of the blowdown map $\mathcal{O}_{\mathbb{P}^1}(-1) \to \mathbb{C}^2$.

Given any $(x,t)\in \widetilde{M} \times [0,T)$, recall that $(\widetilde{\nu}_{x,t;s})_{s\in [0,t)}$ denotes the conjugate heat kernel based at $(x,t)$. Fix $x_{0}\in \widetilde{E}_{j}$ and let $(\tilde{\nu}_{x_{0},T;t})_{t\in[0,T)}$ be a conjugate
heat kernel based at $(x_{0},T)$ as in Definition \ref{conjsingtime}. Then there exists $Y_0>0$ such that for all $(x,t) \in \widetilde{M}\times [0,T)$ and $\tau\in (0,t]$, we have $\mathcal{N}_{x_0,T}(\tau)\geq -Y_0$ \cite[Proposition 4.35]{Bam3}.

Fix any sequence $t_{i}\nearrow T$, and set $\widetilde{g}_{i,t}:=(T-t_{i})^{-1}\widetilde{g}_{T+(T-t_{i})t}$
and $\widetilde{\nu}_{t}^{i}:=\widetilde{\nu}_{x_{0},T;T+(T-t_{i})t}$ for $t\in[-(T-t_{i})^{-1}T,0)$. After passing to a subsequence, we can find a correspondence $\mathfrak{C}$ such that 
\begin{equation} \label{Fconverge}
(\widetilde{M},(\widetilde{g}_{i,t})_{t\in[-(T-t_{i})^{-1}T,0)},(\widetilde{\nu}_{t}^{i})_{t\in[-(T-t_{i})^{-1}T,0)})\xrightarrow[i\to\infty]{\mathbb{F},\mathfrak{C}}(X,(g_t)_{t\in(-\infty,0)},(\nu_{t})_{t\in(-\infty,0)})
\end{equation}
as in Theorem \ref{bamconvergence}, where $(X,g,J,f)$ is an orbifold shrinking gradient K\"ahler-Ricci soliton with
smooth Cheeger-Gromov convergence on $X_{\textnormal{reg}} \times (-\infty ,0)$, and $d \nu_t = (4\pi |t|)^{-2}e^{-f}dg_t$. This means that there is a precompact exhaustion $(U_i)$ of $X_{\textnormal{reg}} \times (-\infty ,0)$ along with time-preserving diffeomorphisms $\psi_i:U_i \to V_i \subseteq \widetilde{M} \times (-\infty,0)$ realizing the smooth convergence in the sense that
\begin{equation} \label{smoothconverge} \psi_i^{\ast}\widetilde{g}_i \to g, \qquad\psi_i^{\ast}\widetilde{J} \to J, \qquad \psi_i^{\ast} \widetilde{f}_i \to f, \end{equation}
in $C_{\text{loc}}^{\infty}(X_{\textnormal{reg}}\times (-\infty,0))$ as $i\to \infty$, where $\widetilde{f}_i$ are defined by $d\widetilde{\nu}_t^i = (4\pi (T-t))^{-2} e^{-\widetilde{f}_i} d\widetilde{g}_i$. We let $\widetilde{\nu}_{x,t;s}^i := \widetilde{\nu}_{x,T+(T-t_i)t;T+(T-t_i)s}$ be the conjugate heat kernels corresponding to the rescaled flows.

For simplicity, we assume that $j=1$ and write $\widetilde{E} := \widetilde{E}_1$, $\mathcal{V}:=\mathcal{V}_1$, although the argument easily extends to the general case. Set $V_i := \psi_{i}(U_i)\subseteq \widetilde{M} \times \mathbb{R},$ $V_{i,-1}:= V_i \cap (\widetilde{M} \times \{-1\})$, and 
\begin{equation} \label{eq: Edef} E_i := \psi_{i,-1}^{-1}(V_{i,-1} \cap \widetilde{E}),\end{equation} so that $E_i$ is a $(\psi_{i,-1}^{\ast}\widetilde{J})$-holomorphic curve in $U_{i,-1}$ for each $i\in \mathbb{N}$. After passing to a subsequence, we can apply \cite[Theorem 7.3.8]{Burago} and a diagonal argument to obtain a locally closed subset $E^{\circ}$ of $X_{\textnormal{reg}}$ such that for any compact subset $K \subseteq X_{\textnormal{reg}}$, we have $E_i \cap K \to E^{\circ} \cap K$ in the Hausdorff distance with respect to $g_{-1}$. This is an analytic set in $X_{\textnormal{reg}}$.

\begin{lemma} \label{Hausdorff} $E^{\circ}$ is an analytic set in $(X_{\textnormal{reg}},J)$.
\end{lemma}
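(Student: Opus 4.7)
Since analyticity is a local property, it suffices to show that for each $p \in X_{\textnormal{reg}}$ there is a neighborhood $W$ of $p$ in $X_{\textnormal{reg}}$ such that $E^{\circ}\cap W$ is a $J$-analytic subset of $W$. Fix such a $p$ and choose a $J$-holomorphic chart $W$ around $p$. For sufficiently large $i$ we have $W \times \{-1\} \subseteq U_i$, and then $E_i \cap W$ is a pure $1$-dimensional analytic subvariety with respect to $J_i := \psi_{i,-1}^{\ast}\widetilde{J}$, Hausdorff-converging to $E^{\circ}\cap W$ with respect to $g_{-1}$. Using the smooth convergence $J_i \to J$ on $W$ together with Appendix~A, after shrinking $W$ we may choose $J_i$-holomorphic coordinates on $W$ that converge smoothly to our chosen $J$-holomorphic coordinates. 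In these charts the $E_i$ become honest $1$-dimensional complex-analytic subvarieties of a fixed domain $\Omega \subseteq \mathbb{C}^2$, Hausdorff-converging to the image of $E^{\circ}\cap W$.

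Next, I would invoke the classical theorem of Bishop--Stoll (see e.g.~Chirka, \emph{Complex Analytic Sets}, Ch.~V, \S18): a Hausdorff limit of pure $k$-dimensional complex-analytic subvarieties of a domain in $\mathbb{C}^n$, whose $2k$-dimensional Hausdorff measures are locally uniformly bounded, is itself a pure $k$-dimensional complex-analytic subvariety. It therefore remains to verify a uniform local area bound for the $E_i$. This follows from global cohomological considerations: since $\widetilde{E}$ is a $(-1)$-curve in $\widetilde{M}$, adjunction yields $c_1(\widetilde{M})\cdot [\widetilde{E}]=1$, while the evolution of the K\"ahler class along the flow gives $[\widetilde{\omega}_t]=[\widetilde{\omega}_0]-t\,c_1(\widetilde{M})$. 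Combined with $[\widetilde{\omega}_T]\cdot[\widetilde{E}]=0$ at the contraction time (since $\pi_{\ast}[\widetilde{E}]=0$), this yields $\int_{\widetilde{E}}\widetilde{\omega}_t = T-t$, so after Type~I rescaling
\begin{equation*}
\operatorname{area}_{\psi_{i,-1}^{\ast}\widetilde{g}_{i,-1}}(E_i \cap W) \;\leq\; \operatorname{area}_{\widetilde{g}_{i,-1}}(\widetilde{E}) \;=\; \frac{\int_{\widetilde{E}}\widetilde{\omega}_{t_i}}{T-t_i} \;=\; 1,
\end{equation*}
uniformly in $i$. Bishop--Stoll then concludes that $E^{\circ}\cap W$ is a pure $1$-dimensional $J$-analytic subvariety of $W$.

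The principal subtle point is that the $E_i$ are holomorphic with respect to a \emph{varying} complex structure $J_i$, rather than a fixed ambient one; Appendix~A is what lets us convert the $E_i$ into genuine complex-analytic subsets of a common domain in $\mathbb{C}^2$ so that the Bishop--Stoll theorem applies directly. The area bound itself is an easy consequence of the finite-time singularity hypothesis and intersection theory on $\widetilde{M}$, and the pure $1$-dimensionality that Bishop--Stoll outputs is a slightly stronger conclusion than what the statement of the lemma requires.
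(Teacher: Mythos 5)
Your proposal is correct and follows essentially the same route as the paper: localize, use Appendix~A (Lemma~\ref{coordlemma}) to produce $J_i$-holomorphic coordinates converging to $J$-holomorphic ones so the $E_i$ become honest analytic subvarieties of a fixed domain in $\mathbb{C}^2$, bound their areas uniformly by the (rescaled) area of $\widetilde{E}$, and apply Bishop's theorem on Hausdorff limits of analytic sets with locally bounded $\mathcal{H}^2$-measure. The only cosmetic differences are that the paper simply quotes $\int_{\widetilde{E}}\widetilde{\omega}_{i,-1}=2\pi$ where you derive the cohomological identity, and that it records explicitly the uniform comparison between $\psi_{i,-1}^{\ast}\widetilde{g}_{i,-1}$ and the coordinate flat metrics needed to convert your area bound into the Euclidean one required by Bishop's theorem.
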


\begin{proof} Because the claim is local, it suffices to show that for some neighborhood $\mathcal{U}\subseteq X_{\textnormal{reg}}$ of a given point $y_0 \in E^{\circ}$, $\mathcal{U}\cap E^{\circ}$ is an analytic subset of $\mathcal{U}$. To this end, choose an open coordinate neighbourhood $B\subset \mathcal{U}$ of $y_{0}$ and let $(z_{1},\,z_{2})$ denote $J$-holomorphic coordinates on $B$. Write $g_{0}$ for the standard Euclidean metric in these coordinates, with complex structure $\widehat{J}$ and K\"ahler form $\omega_0$. Because $(\psi_{i,-1}^{\ast}\widetilde{g}_{i,-1},\psi_{i,-1}^{\ast}\widetilde{J})\to (g,J)$ in $C_{\text{loc}}^{\infty}(X_{\textnormal{reg}}\times \{-1\})$, Lemma \ref{coordlemma} allows us to construct holomorphic coordinates $z^{(i)}=(z_1^{(i)},z_2^{(i)})$  with respect to $J_i :=\psi_{i,-1}^{\ast}\widetilde{J}$ on $B_g(y_0,r) \subseteq B$ for some $r>0$, which converge locally smoothly on this ball to $(z_{1},\,z_{2})$ as $i\to\infty$.
Let $\omega^{(i)}_{0}$ denote the flat metric with respect to the coordinates $z_{i}$, i.e., $$\omega^{(i)}_{0}=\frac{\sqrt{-1}}{2}\partial_{J_{i}}\bar{\partial}_{J_{i}}\left(|z^{(i)}_{1}|^{2}+|z_{2}^{(i)}|^{2}\right).$$ Then $\omega^{(i)}_{0}\to\omega_{0}$ locally smoothly on $B_g(y_0,r)$ as $i\to\infty$. In particular, because $g$ is equivalent to the flat metric on $B$, there exists $C>0$ such that for all $i$ sufficiently large, 
$$C^{-1}g_{0}^{(i)} \leq \psi_{i,-1}^{\ast}\widetilde{g}_{i,-1} \leq C g_{0}^{(i)},$$
where $g_{0}^{(i)}$ denotes the K\"ahler metric associated to $\omega_{0}^{(i)}$. Via the holomorphic coordinates just introduced,
we can define a holomorphic isometry $z^{(i)}:(B_g(y_0,r),\,J_i,\, g_0^{(i)})\to(z^{(i)}(B_g(y_0,r)),\, \widehat{J},\,g_{0})$ for sufficiently large $i\in \mathbb{N}$.

We can now consider 
 $E_i' := z^{(i)}(B_g(y_0,r)\cap E_i)\subseteq \mathbb{C}^2$ as a sequence of smooth $\widehat{J}$-holomorphic submanifolds of an open subset of $\mathbb{C}^2$. The aforementioned equivalence of metrics then yields for $i$ sufficiently large,
$$\int_{E_i'} \omega_0 = \int_{E_i \cap B_{g}(y_0,2r)} \omega_0^{(i)} \leq C \int_{E_i \cap B_g(y_0,2r)} \psi_{i,-1}^{\ast} \widetilde{\omega}_{i,-1} \leq C \int_{\widetilde{E}} \widetilde{\omega}_{i,-1} = 2\pi C,$$
which in turn allows us to assert that 
$$\limsup_{i \to \infty} \mathcal{H}_{g_{0}}^2(E_i') <\infty.$$
Because $E_i$ Hausdorff converge to $E^{\circ}$ with respect to $g$,
$E_i'$ Hausdorff converge to $z(E^{\circ}\cap B_g(y_0,r)) \subseteq  \mathbb{C}^2$ with respect to the flat metric $g_0$. \cite[Theorem 1]{bishop} then tells us from the above Hausdorff measure bound that this latter variety is an analytic subset of $z(B_g(y_0,r))$, so that $E^{\circ} \cap B_g(y_0,r)$ is an analytic subset of $B_g(y_0,r)$. 
\end{proof}

Fix a smooth volume form $\Omega_N \in \mathcal{A}^{2,2}(N)$ such that $\operatorname{Ric}(\Omega_N)=0$ in a neighborhood of $\pi(\widetilde{E})$. By shrinking $\mathcal{V}$, we can thus assume that $\vartheta := \operatorname{Ric}(\Omega_N)$ satisfies $\pi^{\ast}\vartheta =0$ on $\mathcal{V}$. Define 
\begin{equation} \label{defineu} \widetilde{u}_t := \log \left( \frac{\pi^{\ast}\Omega_N}{\widetilde{\omega}_t^2} \right),\end{equation}
which is smooth on $(\widetilde{M}\setminus \widetilde{E})\times [0,T]$, but has a logarithmic singularity along $\widetilde{E}$. We now prove some crucial properties of $\widetilde{u}_{t}$.

\begin{prop} \label{functionu}
\begin{enumerate}
\item There is a defining section $s\in H^0 (\widetilde{M},\mathcal{O}_{\widetilde{M}}(\widetilde{E}))$ and a smooth time-dependent Hermitian metric $h_t$ on $\mathcal{O}_{\widetilde{M}}(\widetilde{E})$ such that $\widetilde{u}_t = \log |s|_{h_t}^2$.
\item $\sqrt{-1} \partial \bar{\partial} \widetilde{u}_t =\operatorname{Ric}_{\widetilde{\omega}_t} - \pi^{\ast} \vartheta +2\pi [\widetilde{E}]$, where $[\widetilde{E}]$ is the current of integration along $\widetilde{E}$.
\item $(\partial_t - \Delta) \widetilde{u}_t = \mathrm{tr}_{\widetilde{\omega}_t}(\pi^{\ast}\vartheta) - 2\pi\mathcal{H}_{\widetilde{g}_t}^2|_{\widetilde{E}}$, where $\mathcal{H}_{\widetilde{g}_t}^2$ denotes the two-dimensional Hausdorff measure with respect to the metric $d_{\widetilde{g}_t}$.
\item For any $\epsilon>0$, there exists $C(\epsilon)>0$ such that any $(x,t) \in (\widetilde{M}\setminus \widetilde{E})\times [\frac{T}{2},T)$ with 
$$P^{\ast-}(x,t;\epsilon \sqrt{T-t}) \cap (\widetilde{E} \times [0,T)) =\emptyset$$ satisfies 
$|\nabla \widetilde{u}_t|(x) \leq \frac{C(\epsilon)}{\sqrt{T-t}}$.
\end{enumerate}
\end{prop}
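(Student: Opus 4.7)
\emph{Parts (i)--(iii).} For (i), the plan is to work in local coordinates near $\widetilde{E}$ adapted to the blowdown $\pi: \widetilde{M} \to N$, in which $\pi$ takes the form $(u,v)\mapsto (u,uv)$ with $\widetilde{E}=\{u=0\}$. Then $\pi^{\ast}\Omega_{N}$ acquires a Jacobian factor $|u|^{2}$, so $\pi^{\ast}\Omega_{N} = |u|^{2}\cdot H\cdot \widetilde{\omega}_{t}^{2}$ for some smooth positive $H$ locally. Letting $s$ be the defining section of $\mathcal{O}_{\widetilde{M}}(\widetilde{E})$ and setting $|s|_{h_{t}}^{2} := \pi^{\ast}\Omega_{N}/\widetilde{\omega}_{t}^{2}$ gives the desired globally defined smooth Hermitian metric $h_{t}$. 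For (ii), on $\widetilde{M}\setminus \widetilde{E}$ the identity follows from $-\sqrt{-1}\partial\bar{\partial}\log\widetilde{\omega}_{t}^{2} = \operatorname{Ric}_{\widetilde{\omega}_{t}}$ together with $-\sqrt{-1}\partial\bar{\partial}\log\pi^{\ast}\Omega_{N} = \pi^{\ast}\vartheta$ (both interpreted through local volume-form representations), while the distributional current $2\pi[\widetilde{E}]$ along $\widetilde{E}$ arises from the Poincar\'e--Lelong formula applied to (i). For (iii), tracing (ii) against $\widetilde{\omega}_{t}$ (using $\operatorname{tr}_{\widetilde{\omega}_{t}}[\widetilde{E}] = \mathcal{H}^{2}_{\widetilde{g}_{t}}|_{\widetilde{E}}$ for a smooth complex curve in a K\"ahler surface) produces $\Delta\widetilde{u}_{t}$ modulo a scalar curvature term, while differentiating $\widetilde{u}_{t}=\log(\pi^{\ast}\Omega_{N}/\widetilde{\omega}_{t}^{2})$ in $t$ along the K\"ahler--Ricci flow gives $\partial_{t}\widetilde{u}_{t} = R_{\widetilde{g}_{t}}$; these cancel upon subtraction.

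\emph{Part (iv): Duhamel reduction.} Fix $(x,t_{0})$ satisfying the hypothesis, set $A := \epsilon\sqrt{T-t_{0}}$ and $s := t_{0}-A^{2}$ (which lies in $[0,t_{0}]$ after shrinking $\epsilon$), and let $K := K(x,t_{0};\cdot,\cdot)$ be a conjugate heat kernel based at $(x,t_{0})$. Duhamel's principle applied to (iii) over $[s,t_{0}]$ yields the reproducing identity
\begin{equation*}
\widetilde{u}_{t_{0}}(x) = \int \widetilde{u}_{s}\, K\, dg_{s} + \int_{s}^{t_{0}}\!\!\int \operatorname{tr}_{\widetilde{\omega}_{t}}(\pi^{\ast}\vartheta)\, K\, dg_{t}\, dt - 2\pi \int_{s}^{t_{0}}\!\!\int_{\widetilde{E}} K\, d\mathcal{H}^{2}_{\widetilde{g}_{t}}|_{\widetilde{E}}\, dt,
\end{equation*}
which can be justified rigorously by mollifying $\widetilde{u}_{t}$ near $\widetilde{E}$ and passing to the limit. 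Differentiating in $x$ reduces the gradient estimate to controlling the three resulting integrals against $|\nabla_{x}K|$. Proposition~\ref{bamheatkernel}(ii), applied at an $H_{4}$-center $(z_{t},t)$ of $(x,t_{0})$, supplies a Gaussian factor $\exp(-d_{t}^{2}(z_{t},\cdot)/(10(t_{0}-t)))$; combined with the triangle inequality for $W_{1}$ and the standard bound $d_{W_{1}}^{g_{t}}(\delta_{y},\nu_{y,t;s}) \leq \sqrt{H_{4}(t-s)}$ for $y\in\widetilde{E}$, the $P^{\ast-}$ hypothesis converts into the uniform lower bound $d_{\widetilde{g}_{t}}(z_{t},\widetilde{E}) \gtrsim A$ for all $t\in [s,t_{0}]$.

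\emph{Part (iv): estimates and the main obstacle.} The regular source term is bounded using $|\operatorname{tr}_{\widetilde{\omega}_{t}}(\pi^{\ast}\vartheta)| \leq C$ from Lemma~\ref{schwartz} and the standard heat-kernel identity $\int |\nabla_{x}K|\, dg_{t} \leq C(t_{0}-t)^{-1/2}$, giving a contribution of order $A = \epsilon\sqrt{T-t_{0}}$, which is dominated by $C(\epsilon)(T-t_{0})^{-1/2}$ on $[T/2,T)$. The initial-data term uses the global upper bound $\widetilde{u}_{t}\leq C$ (which follows from $\partial_{t}\widetilde{u}_{t} = R_{\widetilde{g}_{t}}$ and the maximum principle for scalar curvature under Ricci flow), together with the local integrability of the logarithmic singularity of $\widetilde{u}_{s}$ against a heat kernel whose mass lies at distance $\gtrsim A$ from $\widetilde{E}$. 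The main obstacle is the singular source term: one combines the cohomological area bound $\mathcal{H}^{2}_{\widetilde{g}_{t}}(\widetilde{E}) \leq C(T-t)$ (from $[\widetilde{\omega}_{t}] = [\widetilde{\omega}_{0}] - 2tc_{1}(\widetilde{M})$ and the vanishing $\int_{\widetilde{E}}\widetilde{\omega}_{T}=0$ at the contraction time) with the Gaussian suppression on $\widetilde{E}$, reducing the estimate to an integral in $\tau = t_{0}-t$ whose convergence relies entirely on the exponential decay dominating the polynomial blow-up $\tau^{-(n+1)/2}$; after the natural change of variable $u = A^{2}/\tau$ this produces exactly the desired bound $C(\epsilon)(T-t_{0})^{-1/2}$, with the constant $C(\epsilon)$ necessarily degenerating as $\epsilon\searrow 0$.
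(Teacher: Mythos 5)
Parts (i)--(iii) and your handling of the two source terms in (iv) are essentially the paper's argument: the local normal form of the blowdown plus Poincar\'e--Lelong, $\partial_t\widetilde{u}_t=R_{\widetilde{g}_t}$, and, for the divisor term, the gradient heat kernel bound of Proposition \ref{bamheatkernel}(ii) with the Gaussian factor at an $H_4$-center (the $P^{\ast-}$ hypothesis giving $d_t(z_t,\widetilde{E})\gtrsim \epsilon\sqrt{T-t_0}$), the area decay $\int_{\widetilde{E}}\widetilde{\omega}_s\le C(T-s)$, and the change of variables; your computation there reproduces the paper's bound. The structural difference is where you anchor the Duhamel formula: the paper integrates (iii) from time $0$, so its ``initial-data'' term is $\nabla_x\int\widetilde{u}_0\,d\nu_{x,t;0}$ with $t\ge T/2$, which is bounded by an absolute constant via Cauchy--Schwarz, $\int|\nabla_x\log K|^2\,d\nu\lesssim t^{-1}$, $K(x,t;\cdot,0)\le Ct^{-2}$, and the fixed $L^2$ bound on $\widetilde{u}_0$; you instead start at $s=t_0-\epsilon^2(T-t_0)$, which pushes all of the difficulty into that term.

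That term is where your sketch has a genuine gap. First, the asserted global bound $\widetilde{u}_t\le C$ does not follow from ``$\partial_t\widetilde{u}_t=R$ and the maximum principle for scalar curvature'': the maximum principle gives $R\ge -C$, hence the lower bound $\widetilde{u}_t\ge\widetilde{u}_0-Ct$, not an upper bound (the upper bound is true, but it comes from squaring the Schwarz inequality $\widetilde{\omega}_t\ge C_0^{-1}\pi^{\ast}\omega_N$ of Lemma \ref{schwartz}, which gives $\widetilde{\omega}_t^2\ge C_0^{-2}\pi^{\ast}\omega_N^2\ge c\,\pi^{\ast}\Omega_N$). Second, and more seriously, the conjugate heat kernel's mass is \emph{not} localized at distance $\gtrsim A:=\epsilon\sqrt{T-t_0}$ from $\widetilde{E}$: the hypothesis only places its $H_4$-center there, and by the variance bound plus Chebyshev a definite (non-small) fraction of the mass of $\nu_{x,t_0;s}$ may lie within distance $O(A)$ of $\widetilde{E}$, where $|\widetilde{u}_s|$ is controlled only by the fixed-metric logarithmic distance to $\widetilde{E}$; the Schwarz lemma bounds $d_{\widetilde{g}_s}(\cdot,\widetilde{E})$ from \emph{below} by the fixed distance, which is the wrong direction for suppressing the singularity, and the Gaussian factor at scale $A$ only yields a constant-factor suppression on that region. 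With the tools you invoke, the best available estimate is Cauchy--Schwarz together with $K(x,t_0;\cdot,s)\le C(t_0-s)^{-2}$ and the volume form bound $\widetilde{\omega}_s^2\le C\Omega$, which gives $|\nabla_x\int\widetilde{u}_s\,d\nu_{x,t_0;s}|\lesssim A^{-1}\cdot A^{-2}=\epsilon^{-3}(T-t_0)^{-3/2}$ --- larger than the required $C(\epsilon)(T-t_0)^{-1/2}$ by a factor of $(T-t_0)^{-1}$. Closing this would require quantitative control of $\widetilde{u}_s$, or of the geometry near $\widetilde{E}$, at times $s$ close to $T$, which is essentially what the proposition is being used to establish. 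The paper's choice of anchoring the representation formula at $t=0$ is precisely what makes this term harmless; as written, your part (iv) is incomplete at this step.
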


\begin{proof} 
\begin{enumerate}[label=\textnormal{(\roman{*})}, ref=(\roman{*})]
    \item Because $\pi|_{\mathcal{V}}$ is holomorphically equivalent to the blowdown map near the exceptional divisor, we
can choose local holomorphic coordinates on $\widetilde{M}$ and $N$ based at
any point of $\widetilde{E}$ such that the coordinate representation of $\pi$
becomes $\pi((z_{1},z_{2}))=(z_{1},z_{1}z_{2})$, where $\widetilde{E}=\{z_{1}=0\}$.
In particular, we see that
\[
(\pi^{\ast}\omega_{N})^{2}=|z_{1}|^{2}v_t \,\widetilde{\omega}_{t}^{2}
\]
for some locally defined nonvanishing smooth function $v_{t}$. We also see that
$$\log |s|_h^2 = \log (|z_1|^2 w)$$
for any smooth Hermitian metric $h$ on $\mathcal{O}_{\widetilde{M}}(\widetilde{E})$, where $w$ is a locally defined positive smooth function. On the given coordinate chart, there is a unique choice of $h_t$ such that $\widetilde{u}_t=\log |s|_{h_t}^2$. Thus, the definition of $h_t$ is independent of the choice of coordinates.
\item This follows from (i) and the Poincar\'{e}-Lelong formula.
\item For any $\phi \in C_c^{\infty}(\widetilde{M})$, we have (in the sense of distributions)
$$\int_{\widetilde{M}} (\Delta \widetilde{u}_t)\phi\, \widetilde{\omega}_t^2 = 2 \int_{\widetilde{M}} \sqrt{-1} \partial \bar{\partial} \widetilde{u}_t \wedge \phi \widetilde{\omega}_t = \int_{\widetilde{M}} \phi(R_{\widetilde{\omega}_t}-\text{tr}_{\widetilde{\omega}_t}(\pi^{\ast}\vartheta)) + 2\pi \int_{\widetilde{E}} \phi \,\widetilde{\omega}_t.$$
We also compute that
$$\partial_t \widetilde{u}_t =R_{\widetilde{\omega}_t},$$
from which the claim follows.
\item We can integrate (iii) against a conjugate heat kernel to obtain
$$\widetilde{u}_t(x) = \int_{\widetilde{M}}\widetilde{u}_0\,d\nu_{x,t;0}+\int_{0}^{t}\int_{\widetilde{M}}\text{tr}_{\widetilde{\omega}_t}(\pi^{\ast}\vartheta)\,d\nu_{x,t;s}ds-2\pi\int_{0}^{t}\int_{\widetilde{E}}K(x,t;y,s)\omega_{s}(y)ds.$$
We first use \cite[Proposition 4.2]{Bam1} to estimate
\begin{equation*}
\begin{split}
\left|\nabla_{x}\int_{\widetilde{M}}\widetilde{u}_0 d\nu_{x,t;0}\right|_{\widetilde{g}_t}&\leq  \int_{\widetilde{M}} |\widetilde{u}_0| (y)|_{\widetilde{g}_t}\nabla_{x}\log K(x,t;y,0)|_{\widetilde{g}_t}\,d\nu_{x,t;0}(y)\\
&\leq \left(\int_{\widetilde{M}} \widetilde{u}_0^2 d\nu_{x,t;0}\right)^{\frac{1}{2}} \left(\int_{\widetilde{M}}|\nabla_{x}\log K(x,t;y,0)|_{\widetilde{g}_t}^{2}d\nu_{x,t;0}(y)\right)^{\frac{1}{2}}\\
&\leq  \frac{2}{\sqrt{t}}\left(\frac{C}{t^{2}}\int_{\widetilde{M}} \widetilde{u}_0^2 \widetilde{\omega}_{t}^{2}\right)^{\frac{1}{2}}\\
&\leq  C
\end{split}
\end{equation*}
for all $(x,t)\in \widetilde{M}\times[\frac{T}{2},T)$. Next, we recall that $|\text{tr}_{\widetilde{\omega}_t}(\pi^{\ast}\vartheta)|\leq C$ by Lemma \ref{schwartz},
so another application of \cite[Proposition 4.2]{Bam1} yields the bound
$$\left|\nabla_x \int_{0}^{t}\int_{\widetilde{M}}\text{tr}_{\omega_{t}}(\pi^{\ast}\vartheta)d\nu_{x,t;s}ds\right|_{\widetilde{g}_t}\leq  C\int_{0}^{t}\int_{\widetilde{M}}|\nabla_{x}\log K(x,t;y,s)|_{\widetilde{g}_t}d\nu_{x,t;s}ds\leq C$$
for all $(x,\,t)\in \widetilde{M}\times[\frac{T}{2},T)$. For $\delta \in (0, T-t]$ to be
determined, we have
$$T-s=(T-t)+(t-s) \leq \frac{(T-t)}{\delta}(t-s) + (t-s) \leq 2\frac{(T-t)}{\delta}(t-s)$$
for all $s\in [0,t-\delta]$, and $T-s\leq 2(T-t)$ for all $s\in [t-\delta,t]$. We use Proposition \ref{bamheatkernel} to estimate:
\begin{equation*}
\begin{split}
\left|\nabla_{x}\int_{0}^{t}\int_{\widetilde{E}}K(x,t;y,s)\omega_{s}(y)ds\right|_{\widetilde{g}_t} &\leq  \int_{t-\delta}^{t}\frac{C}{(t-s)^{\frac{5}{2}}}\exp\left(-\frac{d_{s}^{2}(z_{s},\widetilde{E})}{10(t-s)}\right)\int_{\widetilde{E}}\omega_{s}ds\\
&\qquad+\int_{0}^{t-\delta}\frac{C}{(t-s)^{\frac{5}{2}}}\int_{\widetilde{E}}\omega_{s}ds\\
&\leq  \frac{C(T-t)}{\inf_{s\in [t-\delta,t]}d_{s}^{5}(z_{s},\widetilde{E})}\int_{t-\delta}^{t} \left(\frac{d_{s}^{2}(z_{s},\widetilde{E})}{10(t-s)}\right)^{\frac{5}{2}} \exp\left(-\frac{d_{s}^{2}(z_{s},\widetilde{E})}{10(t-s)}\right)ds\\ &\qquad+C\int_{0}^{t-\delta} \frac{T-s}{(t-s)^{\frac{5}{2}}}ds \\
&\leq   \frac{C(T-t)\delta}{\inf_{s\in [t-\delta,t]}d_{s}^{5}(z_{s},\widetilde{E})} + C\frac{T-t}{\delta}\int_0^{t-\delta} \frac{1}{(t-s)^{\frac{3}{2}}}\\
&\leq  C\left(\frac{(T-t)\delta}{\inf_{s\in [t-\delta,t]} d_{s}^{5}(z_{s},\widetilde{E})} +\frac{T-t}{\delta^{\frac{3}{2}}} \right)
\end{split}
\end{equation*}
for any $H_{4}$-centers $(z_{s},s)$ of $(x,t)$ satisfying $\inf_{s\in [t-\delta,t]} d_{s}(z_{s},\widetilde{E})>0$. In particular, if $$P^{\ast-}(x,t;\epsilon \sqrt{T-t})\cap (\widetilde{E}\times[0,T)) =\emptyset,$$ then we can choose $\delta = \delta_0^2 \epsilon^2 (T-t)$, where $\delta_0>0$ is independent of time and $\epsilon$, such that
$$\inf_{s\in [t-\delta,t]}d_s(z_s,\widetilde{E}) \geq \delta_0 \epsilon \sqrt{T-t}.$$
Combining expressions then yields
$$|\nabla \widetilde{u}_t|(x)\leq \frac{C}{\delta_0^3 \epsilon^3 \sqrt{T-t}}. $$
\end{enumerate}

\end{proof}

It is well-known that the curvature is bounded uniformly on the complement of any neighborhood of $\widetilde{E}$ \cite{SoWe1}. In the following proposition, we quantify this by showing that outside a time-dependent family of neighborhoods of $\widetilde{E}$ which shrink at a specified rate, there is a Type I curvature bound. This will be used to show that any orbifold singularity of $X$ is a limit of points in $\widetilde{E}$. The idea is to use the estimates from Proposition \ref{functionu} to show that if there is a sequence of points outside of these neighborhoods whose curvature blows up at the Type II rate, then there exists a non-flat singularity model that is Ricci-flat. This leads to a contradiction due to the topological restrictions discussed in Proposition \ref{topologicalrestrictions}.

\begin{prop} \label{NoncollapsedTypeI}
For any $\epsilon>0$, there exists $c=c(\epsilon)>0$ such that
for all $(x,t)\in \widetilde{M}\times[0,T)$ with $P^{\ast}(x,t;\epsilon\sqrt{T-t})\cap (\widetilde{E} \times [0,T))=\emptyset$,
it holds that
$$r_{\Rm}(x,t)\geq c\sqrt{T-t}.$$
\end{prop}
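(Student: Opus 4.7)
The plan is to argue by contradiction. Suppose the conclusion fails: then for some $\epsilon>0$, there exists a sequence $(x_i,t_i)\in\widetilde{M}\times[0,T)$ with $P^{\ast}(x_i,t_i;\epsilon\sqrt{T-t_i})\cap(\widetilde{E}\times[0,T))=\emptyset$, yet $r_{\Rm}(x_i,t_i)=o(\sqrt{T-t_i})$. Necessarily $t_i\to T$. Set $Q_i:=r_{\Rm}(x_i,t_i)^{-2}$, so $Q_i(T-t_i)\to\infty$, and consider the rescaled flows $g_s^i:=Q_i\widetilde{g}_{t_i+Q_i^{-1}s}$ on $\widetilde{M}$. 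A standard Hamilton-style point-picking inside $P^{\ast}(x_i,t_i;\tfrac{\epsilon}{2}\sqrt{T-t_i})$, together with a triangle inequality for $P^{\ast}$-neighborhoods (which preserves the avoidance of $\widetilde{E}$ up to a definite factor in $\epsilon$), allows me to replace $(x_i,t_i)$ by new basepoints for which the rescaled curvature is uniformly bounded on parabolic balls of radius $A_i\to\infty$. The volume lower bound on $(\widetilde{M},\widetilde{g}_t)$ together with Perelman's $\mu$-entropy monotonicity yields uniform non-collapsing; Hamilton's compactness theorem, combined with Appendix A to handle the complex structures, then produces a pointed smooth Cheeger-Gromov limit $(X_\infty,(g_\infty(s))_{s\in\mathbb{R}},J_\infty)$: a complete ancient K\"ahler-Ricci flow of complex dimension $2$, with bounded curvature on compact sets and $|\Rm_{g_\infty(0)}|(x_\infty)=1$.

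The next step is to push the estimate of Proposition \ref{functionu}(iv) to the limit. By the triangle-type inequality for $W_1$-distances between conjugate heat kernels, there exists $\epsilon'=\epsilon'(\epsilon)>0$ such that every $(y,s)$ in the point-picked parabolic neighborhood of $(x_i,t_i)$ satisfies $P^{\ast-}(y,s;\epsilon'\sqrt{T-s})\cap(\widetilde{E}\times[0,T))=\emptyset$. Proposition \ref{functionu}(iv) then gives $|\nabla^{\widetilde{g}_s}\widetilde{u}_s|(y)\leq C(\epsilon')/\sqrt{T-s}$, and rescaling produces
\[
|\nabla^{g_s^i}\widetilde{u}|_{g_s^i}\leq \frac{C}{\sqrt{Q_i(T-t_i)-s}}\longrightarrow 0
\]
locally uniformly as $i\to\infty$. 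Hence $\widetilde{u}-\widetilde{u}(x_i,t_i)\to 0$ locally smoothly under the Cheeger-Gromov identifications, so in particular $\sqrt{-1}\partial\bar{\partial}\widetilde{u}\to 0$ in $C^0_{\operatorname{loc}}$. Using Proposition \ref{functionu}(ii) (which is valid away from $\widetilde{E}$, as arranged) together with $|\pi^{\ast}\vartheta|_{g_s^i}\leq CQ_i^{-1}\to 0$, I conclude $\operatorname{Ric}_{g_\infty(s)}\equiv 0$ on $X_\infty$. An ancient Ricci flow with vanishing Ricci tensor is stationary, so $(X_\infty,g_\infty,J_\infty)$ is a complete, non-flat, non-collapsed Ricci-flat K\"ahler surface.

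By Bando-Kasue-Nakajima, $(X_\infty,g_\infty,J_\infty)$ is then an ALE Calabi-Yau surface asymptotic to $\mathbb{C}^2/\Gamma$ for some finite $\Gamma\subseteq U(2)$ acting freely on $\mathbb{C}^2\setminus\{0\}$. Applying Lemma \ref{containedinV} to the original avoidance condition, every compact subset $L\subseteq X_\infty\times[-2,-1]$ embeds into $\mathcal{V}\times\mathbb{R}$ for all large $i$, and by the avoidance of $\widetilde{E}$ these embeddings miss $\widetilde{E}$. Since $\mathcal{V}\setminus\widetilde{E}$ is diffeomorphic to $\mathbb{C}^2\setminus\{0\}$, choosing $L$ large enough to contain the link $\mathbb{S}^3/\Gamma$ of the ALE end, and (in case $\Gamma\neq\{1\}$) an embedded $(-2)$-sphere produced by Proposition \ref{topologicalrestrictions}(ii), I obtain an embedding $\mathbb{S}^3/\Gamma\hookrightarrow\mathbb{R}^4$, which by Proposition \ref{topologicalrestrictions}(i) forces $\Gamma\subseteq SU(2)$. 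If $\Gamma$ is non-trivial, the $(-2)$-sphere then sits in $\mathbb{C}^2\setminus\{0\}$ and is null-homologous there (since $H_2(\mathbb{C}^2\setminus\{0\};\mathbb{Z})=0$), contradicting self-intersection $-2$. If $\Gamma=\{1\}$, Kronheimer's classification of ALE hyperK\"ahler four-manifolds forces $X_\infty$ to be flat $\mathbb{C}^2$, contradicting $|\Rm_{g_\infty(0)}|(x_\infty)=1$.

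The step I expect to be the main obstacle is the transfer of the analytic control from Proposition \ref{functionu}(iv) to \emph{every} point of the rescaled parabolic neighborhood (not merely the basepoints) and the subsequent upgrade from vanishing gradient to vanishing Hessian in the limit, which together force $\operatorname{Ric}_{g_\infty}\equiv 0$. This requires a clean triangle inequality for Bamler's $P^{\ast}$-neighborhoods, as well as a careful accounting of the scaling of both $\sqrt{-1}\partial\bar{\partial}\widetilde{u}$ and the inhomogeneous term $\pi^{\ast}\vartheta$. The supporting points -- uniform non-collapsing of the rescaled flows, compatibility of the Cheeger-Gromov convergence with the complex structures, and the topological identification of $\mathcal{V}\setminus\widetilde{E}$ with $\mathbb{C}^2\setminus\{0\}$ -- are more routine but must be executed carefully before the contradiction with Proposition \ref{topologicalrestrictions} can be drawn.
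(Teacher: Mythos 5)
Your overall strategy coincides with the paper's: contradiction, Bamler-style point-picking, a Type II rescaling limit that is shown to be Ricci-flat via Proposition \ref{functionu}(iv) (gradient of $\widetilde{u}$ vanishing after rescaling, then parabolic regularity to upgrade to the Hessian), identification of the limit as a non-flat ALE Calabi--Yau surface, and a contradiction through Proposition \ref{topologicalrestrictions} and the topology of $\mathcal{V}\setminus\widetilde{E}\cong\mathbb{R}^4\setminus\{0\}$. The step you flag as the main obstacle (transferring the estimate of Proposition \ref{functionu}(iv) to all points of the point-picked neighborhood and concluding $\operatorname{Ric}_{g_\infty}\equiv 0$) is handled in the paper exactly as you sketch, via the containment properties of $P^{\ast}$-neighborhoods and local parabolic regularity, so that part is sound.

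The genuine gap is your appeal to Lemma \ref{containedinV} to conclude that compact subsets of the blow-up limit (the link $\mathbb{S}^3/\Gamma$ and the $(-2)$-sphere) map into $\mathcal{V}$. That lemma concerns the Type I tangent flow $X$ based at a point $x_0\in\widetilde{E}$ at the singular time, with the rescalings $(T-t_i)^{-1}\widetilde{g}_{T+(T-t_i)t}$ and the Cheeger--Gromov maps $\psi_i$ of that convergence; its proof rests on Lemma \ref{nodrift}, i.e.\ on the fact that the $H_4$-centers of the conjugate heat kernel based at $(x_0,T)$ project under $\pi$ to points near $\pi(x_0)\in\pi(\widetilde{E})$. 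None of this applies to your Type II blow-up: the basepoints $(x_i',t_i')$ are constrained only to avoid $\widetilde{E}$ in the $P^{\ast}$-sense, the scale $Q_i\gg (T-t_i')^{-1}$ is different, and the maps $\zeta_i$ are not the $\psi_i$ of Lemma \ref{containedinV}; a priori $\pi(x_i')$ need not be anywhere near $\pi(\widetilde{E})$. Avoidance of $\widetilde{E}$ alone only tells you the images miss $\widetilde{E}$, not that they lie in $\mathcal{V}$, and without the latter you cannot embed $\mathbb{S}^3/\Gamma$ into $\mathbb{R}^4\setminus\{0\}$ nor run the null-homology argument for the $(-2)$-sphere. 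The missing idea (Claim \ref{claim-containedinV} in the paper) is that the limit is non-flat, so some point has curvature blowing up along the sequence; by the uniform curvature bound away from any neighborhood $\mathcal{W}$ of $\widetilde{E}$ \cite{SoWe1}, that point must lie in $\mathcal{W}$ for large $i$, and since the $\widetilde{g}_{t_i'}$-diameters of the images of compact sets are $O(Q_i^{-1/2})\to 0$ while $d_{\widetilde{g}_t}(\mathcal{W},\widetilde{M}\setminus\mathcal{V})$ is bounded below uniformly in $t$ by the Schwarz lemma (Lemma \ref{schwartz}), the whole image stays inside $\mathcal{V}$. With that argument substituted for the citation of Lemma \ref{containedinV}, your proof goes through; your variant of the final contradiction (null-homology of the $(-2)$-sphere in $\mathbb{R}^4\setminus\{0\}$, plus the separate treatment of $\Gamma=\{1\}$ via volume rigidity) is a harmless alternative to the paper's use of $H_2(\mathcal{V};\mathbb{Z})$ being generated by a $(-1)$-class.
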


\begin{proof} Suppose by way of contradiction that there exist $(\bar{x}_{i},\overline{t}_{i})\in \widetilde{M}\times[0,T)$
such that 
$$P^{\ast}(\bar{x}_{i},\overline{t}_{i};\epsilon\sqrt{T-\overline{t}_{i}})\cap (\widetilde{E} \times [0,T))=\emptyset\qquad\textrm{and}\qquad\lim_{i\to \infty} \frac{r_{\Rm}(\bar{x}_i,\overline{t}_i)}{\sqrt{T-\overline{t}_i}}=0.$$
Then we can choose $\Lambda_i \to \infty$ such that 
$$\lim_{i\to \infty} \frac{\Lambda_i r_{\Rm}(\bar{x}_i,\overline{t}_i)}{\sqrt{T-\overline{t}_i}}=0.$$
We can apply \cite[Claim 10.5]{Bam1} with $A$ replaced by 
$\Lambda_i$, and $(x,t)$ replaced by $(\bar{x}_i,\overline{t}_i)$, to obtain a sequence of points $(x_i',t_i')\in P^{\ast -}(\bar{x}_i,\overline{t}_i, 10\Lambda_i r_{\Rm}(\bar{x}_i,\overline{t}_i))$ such that $r_{\Rm}(x_i',t_i')\leq r_{\Rm}(\bar{x}_i,\overline{t}_i)$ and
$$\inf_{P^{\ast-}(x_i',t_i';\Lambda_i r_{\Rm}(x_i',t_i'))} r_{\Rm} \geq \tfrac{1}{10} r_{\Rm}(x_i',t_i').$$
Because $\lim_{i\to \infty} (T-\bar t_i)^{-\frac{1}{2}}\Lambda_i r_{\Rm}(\bar{x}_i,\overline{t}_i)=0$, we can apply \cite[Proposition 9.4]{Bam1} to conclude that
$$P_i^{\ast} := P^{\ast-}(x_i',t_i';\Lambda_i r_{\Rm}(x_i',t_i')) \subseteq P^{\ast-}(\bar{x}_i,\overline{t}_i; \tfrac{\epsilon}{2}\sqrt{T-\overline{t}_i}).$$
In particular, we have that $P_i^{\ast}\cap (\widetilde{E} \times [0,T)) = \emptyset$ for sufficiently large $i\in \mathbb{N}$. 
Set $Q_i := r_{\Rm}^{-2}(x_i',t_i')$, and define $\widehat g_{i,t}:=Q_{i} \widetilde{g}_{t_{i}'+Q_{i}^{-1}t}$, $\widehat{\omega}_{i,t}:= Q_i \widetilde{\omega}_{t_i'+Q_{i}^{-1}t}$, so that after passing to a subsequence, we have the convergence
\[
(\widetilde{M},(\widehat g_{i,t})_{t\in[-Q_it_i',0]},x_{i}')\to(M_{\infty},(g_{\infty,t})_{t\in(-\infty,0]},x_{\infty})
\]
in the smooth pointed Cheeger-Gromov sense, where
$(M_{\infty},(g_{\infty,t})_{t\in(-\infty,0]})$ is an ancient Ricci
flow which is $\kappa$-noncollapsed at all scales for some $\kappa>0$, where
$r_{\Rm}^{g_{\infty}} \geq \frac{1}{10}$, and where $r_{\Rm}^{g_{\infty}}(x_{\infty},0)=1$. Let $\zeta_i:U_i \to V_i \subseteq \widetilde{M}$ be diffeomorphisms realizing the convergence, where $(U_i)$ is a precompact open exhaustion of $M_{\infty}$. That is to say, $\zeta_i^{-1}(x_i')\to x_{\infty}$, $\zeta_i^{\ast}\widehat{g}_i \to g_{\infty}$ in $C_{\operatorname{loc}}^{\infty}(M_{\infty}\times (-\infty,0])$, and $\zeta_i^{\ast}\widetilde{J} \to J_{\infty}$ in $C_{\operatorname{loc}}^{\infty}(M_{\infty})$, where $J_{\infty}$ is a complex structure on $M_{\infty}$ that makes $(M_{\infty},g_{\infty,t},J_{\infty})$ K\"ahler for each $t\in (-\infty,0]$. 

Recall that $\widetilde{u}_{t}$ is defined by \eqref{defineu}, and set  $$\widehat{u}_{i,t}:=\log\left(\frac{\lambda_{i}(\pi^{\ast}\omega_{N})^{2}}{\widehat{\omega}_{i,t}^{2}}\right),$$
where $\lambda_{i}>0$ are chosen so that $\widehat{u}_{i}(x_{i},0)=0$. The following claim shows that $\widehat{u}_{t}$ converge to a constant as $i\to \infty$. In conjunction with Proposition \ref{functionu}, this will be used to show that the limit $M_{\infty}$ is Calabi-Yau.

\begin{claim} \label{claim-Pstarparabolicdistance}
For any $D>0$, we have 
\[
\lim_{i\to\infty}\sup_{P^{\ast-}_{\hat g_i}(x_{i}',0;D)} |\nabla \widehat{u}_i|_{\widehat{g}_i} =0.
\]
\end{claim}

\begin{proof}[Proof of Claim \ref{claim-Pstarparabolicdistance}]
When $i\geq i(D)$ is sufficiently large, we have
$$P_{\widehat{g}_i}^{\ast -}(x_i',0;D) = P_{\widetilde{g}}^{\ast -}(x_i',t_i';r_{\Rm}^{\widetilde{g}}(x_i',t_i')D) \subseteq P_i^{\ast}$$
and $P_i^{\ast} \cap (\widetilde{E}\times [0,T))=\emptyset$. Thus, Proposition \ref{functionu}(iv) gives that
$$\sup_{P^{\ast-}_{\hat g_i}(x_{i}',0;D)}\left|\nabla \widehat{u}_i\right|_{\hat g_i} 
= \sup_{P^{\ast-}_{\widetilde{g}}(x_{i}',t_i';DQ_i^{-\frac{1}{2}})}Q_i^{-\frac{1}{2}}\left|\nabla \widetilde{u}\right|_{\widetilde{g}} \leq \frac{C}{\sqrt{Q_i(T-t_i')}},$$
and the claim follows from the fact that $\lim_{i\to \infty} Q_i(T-t_i')=\infty$.
\end{proof}

Because $P_{\widehat{g}_i}^{\ast -}(x_i',0;D) \cap (\widetilde{E}\times [0,T)) = \emptyset$ for $i\geq i(D)$, we know that
$$
\sqrt{-1}\partial\bar{\partial}\widehat{u}_{i,t}=\Ric_{\hat\omega_{i,t}}\qquad\textrm{and}\qquad
(\partial_{t}-\Delta)\widehat{u}_{i}=0$$
on $P_{\hat g_{i}}^{\ast-}(x_{i}',0;D)$. We also know that
$$|\partial_t \widehat{u}_{i}| =|R_{\widehat{g}_{i}}| \leq C$$
on $P_{\widehat{g}_i}^{\ast -}(x_i',0;D)$ for sufficiently large $i\geq i(D)$.
Combining expressions and using the fact that $\widetilde{u}_{i,0}(x_i')=0$, we conclude that $\widehat{u}_i$ is uniformly bounded on $P_{\widehat{g}_i}^{\ast -}(x_i',0,D)$ for any $D>0$. Thus, local parabolic
regularity theory allows us to extract a subsequence such that $\zeta_i^{\ast}\widehat{u}_{i}$
converge in $C_{\text{loc}}^{\infty}(M_{\infty}\times (-\infty,0])$
to a solution $u_{\infty}$ of the heat equation which is constant
on each time slice. This means that
$u_{\infty}\equiv 0$, so that ${\operatorname{Ric}}_{g_{\infty}}\equiv0$ and
$(M_{\infty},g_{\infty},J_{\infty})$ is a smooth 
ALE Calabi-Yau manifold, diffeomorphic to a finite quotient of a hyperK\"ahler ALE manifold \cite{Suv}. Let $\mathbb{C}^{2}/\Gamma$ denote its tangent cone at infinity, where $\Gamma \leq U(2)$ is a finite subgroup acting freely on $\mathbb{C}^2\setminus \{0\}$. Then there exists a real hypersurface $\Sigma \subseteq M_{\infty}$ diffeomorphic to $\mathbb{S}^{3}/\Gamma$. Moreover, for $i\in \mathbb{N}$ sufficiently large, we see that $\Sigma \subseteq U_i$ and $\zeta_i(\Sigma) \subseteq P_i^{\ast}$, so that $\zeta_i(\Sigma)\cap \widetilde{E} = \emptyset$. We now show that $\zeta_i$ maps $\Sigma$ into the neighborhood $\mathcal{V}$ of $\widetilde{E}$. 

\begin{claim} \label{claim-containedinV} For sufficiently large $i\in \mathbb{N}$, $\zeta_i(\Sigma) \subseteq \mathcal{V}.$
\end{claim}

\begin{proof} 
Because $M_{\infty}$ is not flat, there exists $y_{\infty}\in M_{\infty}$ such that $|\Rm_{g_{\infty,0}}|_{g_{\infty,0}}(y_{\infty})\neq 0$. Then
$$\lim_{i \to \infty} |\Rm_{\widetilde{g}_{t_i'}}|_{\widetilde{g}_{t_i'}}(\zeta_i(y_{\infty}))= \lim_{i \to \infty} Q_i|\Rm_{\widehat{g}_{i,0}}|_{\widehat{g}_{i,0}}(\zeta_i(y_{\infty}))  = \infty.$$
Let $\mathcal{W} \Subset \mathcal{V}$ be a neighborhood of $\widetilde{E}$. Then because 
$$\sup_{t\in [0,T)} \sup_{x\in \widetilde{M}\setminus \mathcal{W}} |\Rm_{\widetilde{g}_t}|_{\widetilde{g}_t}(x)<\infty,$$ we see that $\zeta_i(y_{\infty}) \in \mathcal{W}$ for sufficiently large $i\in \mathbb{N}$. By Lemma \ref{schwartz}, there exists $C_0>0$ such that $\widetilde{g}_t \geq C_0^{-1}\pi^{\ast}g_N$ for all $t\in [0,T)$. Since $d_{g_N}(\pi(\mathcal{W}),\pi(\widetilde{M}\setminus \mathcal{V}))>0,$
we consequently find that
$$\liminf_{t \to T} d_{\widetilde{g}_t}(\mathcal{W},\widetilde{M}\setminus \mathcal{V})>0.$$
Combining this with the fact that
$$\limsup_{i\to \infty} \sup_{x\in \Sigma} d_{\widetilde{g}_{t_i'}}(\zeta_i(x),\zeta_i(y_{\infty})) = \limsup_{i \to \infty} Q_i^{-\frac{1}{2}} \sup_{x\in \Sigma} d_{\widehat{g}_{i,0}}(\zeta_i(x),\zeta_i(y_{\infty})) = 0,$$
and $\zeta_i(y_{\infty})\in \mathcal{W}$, we arrive at the desired claim.
\end{proof}

For large $i\in \mathbb{N}$, we thus have that $\zeta_i(\Sigma) \subseteq \mathcal{V} \setminus \widetilde{E}$. However, $\mathcal{V}\setminus \widetilde{E}$ is diffeomorphic to $\mathbb{R}^4 \setminus \{0\}$, so that $\mathbb{S}^3/\Gamma$ smoothly embeds into $\mathbb{R}^4$. By Proposition \ref{topologicalrestrictions}(i), it follows that $\Gamma \subseteq SU(2)$. Moreover, by Proposition \ref{topologicalrestrictions}(ii), $M_{\infty}$ contains at least one embedded two-sphere $E_{\infty}$ with self-intersection $-2$. Note that $\zeta_i(E_{\infty}) \subseteq P_i^{\ast}$ for large $i\in \mathbb{N}$, so that $\zeta_i(E_{\infty})\cap \widetilde{E}=\emptyset$. We can in addition argue as in Claim \ref{claim-containedinV} to obtain the inclusion $\zeta_i(E_{\infty}) \subseteq \mathcal{V} \setminus \widetilde{E}$. That is, $\zeta_i(E_{\infty})$ represents a homology class in $H_{2}(\mathcal{V};\mathbb{Z})$
with self-intersection $-2$. However, we know that $H_{2}(\mathcal{V};\mathbb{Z})$
is spanned by the class of a $(-1)$-curve, and so we reach a contradiction. 
\end{proof}

Our next goal is to apply Proposition \ref{NoncollapsedTypeI} to show that either the curve $E^{\circ}$ from Lemma \ref{Hausdorff} is empty or all orbifold singularities of $X$ lie along its closure $E:= \overline{E}^{\circ}$. The former case should correspond to $\widetilde{E}$ being entirely pulled into one of the orbifold singularities forming along the rescaled sequence $(\widetilde{M},\widetilde{g}_{i,t_i})$. To make this precise, we will now study the behavior of these time slices where the orbifold singularities develop.

Suppose that $x_{\ast}\in X\setminus X_{\textnormal{reg}}$. By \cite[Theorem 6.45]{Bam2}, there exist $x_{i}'\in \widetilde{M}$ such that $(x_{i}',-1)\xrightarrow[i\to\infty]{\mathfrak{C}}
(x_{\ast},-1)
$ with respect to the convergence (\ref{Fconverge}).
By Proposition \ref{NoncollapsedTypeI}, there exist $\epsilon_{i}\searrow0$ such that $P_{\widetilde{g}_{i}}^{\ast-}(x_{i}',-1;\epsilon_{i})\cap (\widetilde{E} \times \mathbb{R}) \neq\emptyset$ for all $i\in \mathbb{N}$.
Thus, any choice of points $(x_{i},s_{i})\in P_{\widetilde{g}_{i}}^{\ast-}(x_{i}',-1;\epsilon_{i})\cap (\widetilde{E} \times \mathbb{R})$
satisfy
\[
(x_{i},s_{i})\xrightarrow[i\to\infty]{\mathfrak{C}}(x_{\ast},-1).
\]
For $r>0$ small, choose $y_{\ast}\in\partial B(x_{\ast},r) \cap X_{\text{reg}}$ and
set $y_{i}:=\psi_{i,-1-\frac{1}{2}r^{2}}(y_{\ast})$, so that 
\[
(y_{i},-1-\tfrac{1}{2}r^{2})\xrightarrow[i\to\infty]{\mathfrak{C}}(y_{\ast},-1-\tfrac{1}{2}r^{2}).
\]
Then since
\[
(y_{\ast},-1-\tfrac{1}{2}r^{2})\in P^{\ast}(x_{\ast},-1;\tfrac{1}{2}Cr,-\tfrac{3}{4}r^2,0)
\]
for some $C>0$ independent of $r\in (0,1]$, \cite[Lemma 15.8]{Bam3}
gives us that 
\[
(y_{i},-1-\tfrac{1}{2}r^{2})\in P_{\widetilde{g}_i}^{\ast}(x_{i},s_{i};Cr,-r^2,0)
\]
for sufficiently large $i\geq i(r)$. In particular, if $(w_{i},-1-r^2)$
is an $H_{4}$-center of $(y_{i},-1-\frac{r^{2}}{2})$ and $(z_{i},-1-r^{2})$
is an $H_{4}$-center of $(x_{i},s_{i})$, then 
\[
d_{\widetilde{g}_{i,-1-r^{2}}}(w_{i},z_{i})\leq Cr+d_{W_{1}}^{\widetilde{g}_{i,-1-r^{2}}}(\widetilde{\nu}_{y_{i},-1-\frac{r^{2}}{2};-1-r^{2}}^{i},\widetilde{\nu}_{x_{i},s_{i};-1-r^2}^{i})\leq 2Cr
\]
for all $i\ge\underline{i}(r)$. By \cite[Proposition 5.6]{CMZ}, this is also true if 
$(w_{i},-1-r^2)$ is instead an $\ell$-center of $(y_{i},-1-\frac{r^{2}}{2})$
and $(z_{i},-1-r^{2})$ is an $\ell$-center of $(x_{i},s_{i})$,
so we henceforth assume instead that these points are $\ell$-centers. 

As a consequence of Lemma \ref{containedinV}, we know that $\psi_{i,-1}(\partial B(x_{\ast},r))\subseteq \mathcal{V}$ for all $r>0$ sufficiently small and $i\geq \underline{i}(r)$ sufficiently large. For each small $r>0$, we can then let $\mathcal{C}_{r,i}$ be the connected
component of $\mathcal{V}\setminus \psi_{i,-1}(\partial B(x_{\ast},r))$ containing $\psi_{i,-1}(\partial B(x_{\ast},\frac{r}{2}))$
when $i\geq \underline{i}(r)$ is sufficiently large. We now show that the sequence $x_i \in \widetilde{M}$ approximating the singular point $x_{\ast}$ must be contained in the corresponding ``bubble" $\mathcal{C}_{r,i}$.

\begin{lemma} \label{disappearing}
There exists $A>0$ such that for any $r>0$ sufficiently small,
if $i\geq \underline{i}(r)$ is sufficiently large, then $x_{i} \in \mathcal{C}_{Ar,i}$.
\end{lemma}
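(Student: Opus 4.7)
Plan.

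I aim to show $x_i\in \mathcal{C}_{Ar,i}$ by producing, for a universal $A>2$ and all sufficiently small $r$ and large $i\geq \underline{i}(r)$, a continuous path $\Gamma_i$ in $\mathcal{V}\setminus \psi_{i,-1}(\partial B(x_*,Ar))$ connecting $x_i$ to $\psi_{i,-1}(y_*)$. By Lemma \ref{JordanBrouwer} (applicable since $\psi_{i,-1}(\partial B(x_*,Ar))\subseteq \mathcal{V}$ for large $i$ by Lemma \ref{containedinV}), such a path forces $x_i$ and $\psi_{i,-1}(y_*)$ into the same component of $\mathcal{V}\setminus \psi_{i,-1}(\partial B(x_*,Ar))$. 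Since the singularities of $X$ are isolated, for $r$ small we have that $B(x_*,Ar)\setminus\{x_*\}\subseteq X_{\textnormal{reg}}$ is connected; its image under $\psi_{i,-1}$ lies in $\mathcal{V}\setminus \psi_{i,-1}(\partial B(x_*,Ar))$ and contains $\psi_{i,-1}(\partial B(x_*,Ar/2))$, so this image sits entirely in $\mathcal{C}_{Ar,i}$. For $A>2$ it also contains $\psi_{i,-1}(y_*)$.

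The path $\Gamma_i$ is constructed by concatenating the spatial projections of the following four spacetime curves: a minimizing $\mathcal{L}$-geodesic from $(x_i,s_i)$ to $(z_i,-1-r^2)$; a minimizing $\widetilde{g}_{i,-1-r^2}$-geodesic from $z_i$ to $w_i$ of length at most $2Cr$ by the lemma's preamble; a reversed minimizing $\mathcal{L}$-geodesic from $(y_i,-1-r^2/2)$ to $(w_i,-1-r^2)$; and the curve $t\mapsto \psi_{i,t}(y_*)$ on $t\in [-1-r^2/2,-1]$. The reduced-distance inequality $\ell\leq n$, combined with Cauchy-Schwarz as in the proof of Lemma \ref{nodrift} and the uniform-in-$(i,t)$ (as $i\to\infty$) lower scalar curvature bound $R_{\widetilde{g}_{i,t}}\geq (T-t_i)\min_{\widetilde{M}} R_{\widetilde{g}_0}\to 0$, gives each $\mathcal{L}$-geodesic segment total spatial $\widetilde{g}_{i,t}$-length $O(r)$. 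The final segment has $\widetilde{g}_{i,t}$-length $O(r)$ for large $i$ by the smooth convergence $\psi_i^{\ast}\widetilde{g}_i\to g$ from Theorem \ref{bamconvergence}(v).

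The main technical hurdle is to translate these time-varying spatial length bounds into a single $\widetilde{g}_{i,-1}$-length bound, so that $\Gamma_i$ can be compared to the fixed hypersurface $\psi_{i,-1}(\partial B(x_*,Ar))$. Since all relevant times lie in $[-1-\epsilon_i^2-r^2,-1]$ (a tiny interval for large $i$ and small $r$), the issue reduces to comparing $\widetilde{g}_{i,-1}$ and $\widetilde{g}_{i,t}$ along $\Gamma_i$. Away from $\widetilde{E}$, Proposition \ref{NoncollapsedTypeI} supplies a Type I curvature bound giving uniform metric equivalence of $\widetilde{g}_{i,t}$ and $\widetilde{g}_{i,-1}$ on the $P^{\ast}$-parabolic neighborhoods swept out by the interior of $\Gamma_i$. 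The delicate case is the initial portion of $\Gamma_i$ near $x_i\in \widetilde{E}$, where curvature may blow up; I plan to apply the Schwarz estimate (Lemma \ref{schwartz}) to track the $\pi$-projection of $\Gamma_i$ in $(N,g_N)$ and combine it with the Lemma \ref{nodrift}-style argument to show that, in the unrescaled metric, this subsegment cannot wander far from $\widetilde{E}$, from which the desired $\widetilde{g}_{i,-1}$ bound follows.

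Concluding the argument, one obtains $\Gamma_i\subseteq B_{\widetilde{g}_{i,-1}}(x_i, A_0 r)$ for some universal $A_0>0$. Using Lemma \ref{containedinV} applied to a suitable compact subset of $X_{\textnormal{reg}}\times\{-1\}$, and the smooth convergence of $\psi_i^{\ast}\widetilde{g}_{i,-1}$ to $g_X$ on compact subsets of $X_{\textnormal{reg}}$, choosing $A:=3A_0$ ensures that $B_{\widetilde{g}_{i,-1}}(x_i,A_0 r)\subseteq \mathcal{V}$ and is disjoint from $\psi_{i,-1}(\partial B(x_*,Ar))$ for all sufficiently large $i$, yielding $x_i\in \mathcal{C}_{Ar,i}$.
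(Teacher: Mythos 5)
Your overall topological framing (connect $x_i$ to $\psi_{i,-1}(y_*)$ by a path avoiding $\psi_{i,-1}(\partial B(x_{\ast},Ar))$, then invoke the separation Lemma \ref{JordanBrouwer}) is sound, and the $\mathcal{L}$-geodesic length bounds at the time-varying metrics, as well as the $d_{\widetilde{g}_{i,-1-r^2}}(z_i,w_i)\leq Cr$ bound from the preamble, are all fine. The genuine gap is exactly the step you flag as the ``main technical hurdle'': converting those time-$t$ length bounds, $t\in[-1-r^2,-1]$, into a single $\widetilde{g}_{i,-1}$-length bound along the portion of $\Gamma_i$ near $\widetilde{E}$, and then concluding that $\Gamma_i$ misses the shell. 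Proposition \ref{NoncollapsedTypeI} only gives a Type I bound at points whose $P^{\ast}$-neighborhood at scale $\epsilon\sqrt{T-t}$ avoids $\widetilde{E}$, with a constant degenerating as $\epsilon\to 0$, so there is no uniform metric equivalence between $\widetilde{g}_{i,-1-r^2}$ and $\widetilde{g}_{i,-1}$ near $\widetilde{E}$ --- which is precisely where the initial segment (starting at $x_i\in\widetilde{E}$), and possibly the geodesic from $z_i$ to $w_i$, live. The proposed fallback --- use Lemma \ref{schwartz} to show this subsegment ``cannot wander far from $\widetilde{E}$'' --- does not close the gap for two reasons. First, the Schwarz estimate only controls the projection to $(N,g_N)$; it says nothing about displacement in the fiber directions, and $\widetilde{E}$ has $\widetilde{g}_{i,-1}$-diameter of order $1$ (its rescaled area is $2\pi$), not order $r$, so ``near $\widetilde{E}$'' is a macroscopic constraint that yields no $O(r)$ length bound in $\widetilde{g}_{i,-1}$. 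Second, and more fundamentally, staying near $\widetilde{E}$ does not keep the path on one side of $\psi_{i,-1}(\partial B(x_{\ast},Ar))$: when $E^{\circ}\neq\emptyset$ (the situation of Proposition \ref{dichotomy}(ii)), $\widetilde{E}$ itself crosses that shell, so a path hugging $\widetilde{E}$ can pass freely between $\mathcal{C}_{Ar,i}$ and its complement, and your separation argument collapses.

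The paper's proof is designed to avoid ever measuring anything near $\widetilde{E}$ or comparing metrics at different times there. It argues by contradiction: assuming $x_i\notin\mathcal{C}_{Ar,i}$, Claim \ref{claim-usedindisappearing} shows the $\ell$-center $z_i$ of $(x_i,s_i)$ must lie outside $\mathcal{C}_{\frac{A}{2}r,i}$, because otherwise the minimizing $\mathcal{L}$-geodesic would have to traverse the annulus $\psi_{i,-1}(\overline{B}(x_{\ast},Ar)\setminus B(x_{\ast},\tfrac{1}{2}Ar))$, a compact subset of $X_{\textnormal{reg}}$ on which the Cheeger--Gromov convergence gives $\psi_i^{\ast}\widetilde{g}_{i,t}\geq\tfrac{1}{2}g_{-1}$ uniformly for $t\in[-1-r^2,-1]$, forcing $\mathcal{L}\gtrsim A^2 r$, contradicting $\mathcal{L}\leq Cr$; Claim \ref{claim-theproofissimilar} similarly places $w_i$ inside $\mathcal{C}_{\frac{A}{4}r,i}$. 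Since the two shells at radii $\tfrac{A}{4}r$ and $Ar$ are separated by distance $\gtrsim Ar$ in $\widetilde{g}_{i,-1-r^2}$ (again only using the annulus in $X_{\textnormal{reg}}$), this contradicts $d_{\widetilde{g}_{i,-1-r^2}}(z_i,w_i)\leq Cr$ for $A$ large. All comparisons occur at the single time $-1-r^2$ and only on regions where smooth convergence holds, which is what your construction cannot arrange. To repair your approach you would essentially have to reorganize it into this contradiction scheme rather than build and measure an explicit path through the high-curvature region.
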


\begin{proof}
For $A>0$ to be determined, suppose by way of contradiction
that $x_{i}\in\mathcal{V}\setminus\mathcal{C}_{Ar,i}$ for infinitely
many $i\in\mathbb{N}$. In the next claim we show that the contradiction hypothesis would imply that the $\ell$-centers of $(x_i,s_i)$ lie outside a slightly smaller bubble $\mathcal{C}_{\frac{A}{2}r,i}$. 

\begin{claim} \label{claim-usedindisappearing}
If $A>0$ is sufficiently large, then any $\ell$-center $(z_i,-1-r^2)$ of $(x_i,s_i)$ satisfies $z_{i}\notin\mathcal{C}_{\frac{A}{2}r,i}$.
\end{claim}

\begin{proof}[Proof of Claim \ref{claim-usedindisappearing}]
Suppose by way of contradiction
this is not the case, and let $\gamma:[-1-r^{2},s_{i}]\to \widetilde{M}$ be a
minimizing $\mathcal{L}_{s_i}$-geodesic from $(x_{i},s_{i})$ to $(z_{i},-1-r^{2})$. By Lemma \ref{containedinV}, we see that
$$\psi_{i,-1}\left(B(x_{\ast},100Ar)\setminus B(x_{\ast},\tfrac{1}{100}r)\right) \subseteq \mathcal{V}$$
for sufficiently small $r>0$ and large $i\geq \underline{i}(A,r)\in \mathbb{N}$. Thus, $\psi_{i,-1}(\partial B(x_{\ast},Ar))$ and $\psi_{i,-1}(\partial B(x_{\ast},\frac{A}{2}r))$
disconnect $\mathcal{V}$, there is a subinterval $[t_{-},t_{+}]\subseteq[-1-r^{2},s_{i}]$ such that 
$$\gamma([t_{-},t_{+}])\subseteq\psi_{i,-1}(\overline{B}(x_{\ast},Ar)\setminus B(x_{\ast},\tfrac{1}{2}Ar)),$$
and $$\gamma(t_{-})\in\psi_{i,-1}(\partial B(x_{\ast},\tfrac{1}{2}Ar)),\qquad 
\gamma(t_{+})\in\psi_{i,-1}(\partial B(x_{\ast},Ar)).$$ 
Then the convergence $\psi_{i}^{\ast}\widetilde{g}_{i}\to g$
in $C_{\text{loc}}^{\infty}(X_{\textnormal{reg}} \times (-\infty,0))$ implies that $\psi_{i}^{\ast}\widetilde{g}_{i,t}\geq \frac{1}{2} g_{-1}$ on \linebreak $\overline{B}(x_{\ast},Ar)\setminus B(x_{\ast},\frac{1}{2}Ar)$ for all $t\in[-1-r^{2},-1]$ if $r=r(A)>0$ is sufficiently small and $i\geq \underline{i}(r,A)$. Recalling that $\liminf_{i\to\infty}\inf_{\widetilde{M}\times[-2,-1]}R_{\widetilde{g}_{i}}\geq0$,
we can apply the Cauchy-Schwarz inequality to get that $\widetilde{\gamma}:=\psi_{i,-1}^{-1}\circ\gamma|_{[t_{-},t_{+}]}$
satisfies 
\begin{align*}
Cr\geq & \mathcal{L}_{s_i}(\gamma)\\
\geq & \int_{t_{-}}^{t_{+}}\sqrt{s_i-s}|\dot{\gamma}(s)|_{\widetilde{g}_{i,s}}^{2}ds-Cr\\
\geq & \frac{1}{2}\int_{t_{-}}^{t_{+}}\sqrt{s_i-s}|\dot{\widetilde{\gamma}}(s)|_{g}^{2}ds-Cr\\
\geq & \frac{1}{2r}\left(\int_{t_{-}}^{t_{+}}\frac{1}{\sqrt{s_i-s}}ds\right)\left(\int_{t_{-}}^{t_{+}}\sqrt{s_i-s}|\dot{\widetilde{\gamma}}(s)|_{g}^{2}ds\right)-Cr\\
\geq & \frac{1}{2r}\left(\int_{t_{-}}^{t_{+}}|\dot{\widetilde{\gamma}}(s)|_{g}ds\right)^{2}-Cr\\
\geq & \frac{1}{2r}\left(\frac{1}{2}Ar\right)^{2}-Cr,
\end{align*}
so that $A^{2}r\leq 16Cr$, a contradiction if $A>0$ is chosen
sufficiently large.
\end{proof}

The next claim shows that the $\ell$-centers of $(y_i,-1-\frac{1}{2}r^2)$ lie in a bubble smaller than that from Claim \ref{claim-usedincontainedinV}.

\begin{claim}
\label{claim-theproofissimilar} If $A>0$ is sufficiently large, then any $\ell$-center $(w_i,-1-r^2)$ of $(y_i,-1-\frac{1}{2}r^2)$ satisfies $w_{i}\in\mathcal{C}_{\frac{A}{4}r,i}$.
\end{claim}

\begin{proof}[Proof of Claim \ref{claim-theproofissimilar}] 
Let $\sigma, \sigma_i:[-1-\frac{1}{2}r^2,-1]\to X_{\text{reg}}\times [-1-\frac{1}{2}r^2,-1]$ be the integral curves of $\partial_{\mathfrak{t}}, (\psi_i^{-1})_{\ast}\partial_t$ respectively which satisfy $\sigma(1-\frac{1}{2}r^2) =\sigma_i(1-\frac{1}{2}r^2) = (y_{\ast},-1-\frac{1}{2}r^2)$. Then as $(\psi_{i}^{-1})_{\ast}\partial_t \to \partial_{\mathfrak{t}}$ in $C_{\operatorname{loc}}^{\infty}(X_{\text{reg}}\times (-\infty,0))$, the continuous dependence of ODE solutions on parameters implies that $\sigma_i$ is well-defined for sufficiently large $i\in \mathbb{N}$  (in particular, its trajectory stays in $U_i$), and $\sigma_i \to \sigma$ uniformly as $i\to \infty$. Since $\sigma(-1)=(y_{\ast},-1)$ and $\sigma_i(-1)=\left( (\psi_{i,-1}^{-1}\circ \psi_{i,-1-\frac{1}{2}r^2})(y_{\ast}), -1\right)$, it therefore follows that $(\psi_{i,-1}^{-1}\circ \psi_{i,-1-\frac{1}{2}r^2})(y_{\ast}) \to (y_{\ast},-1)$. Consequently, 
$$\psi_{i,-1-\frac{1}{2}r^2}(y_{\ast}) \in \psi_{i,-1}(B(x_{\ast},2r)\setminus \overline{B}(x_{\ast},\tfrac{1}{2}r)),$$
and in particular $y_i \in \mathcal{C}_{\frac{A}{8}r,i}$ for $i\geq \underline{i}(A,r)$ sufficiently large. As $\psi_{i,-1}(\partial B(x_{\ast},\frac{A}{4}r))$
and $\psi_{i,-1}(\partial B(x_{\ast},2r))$ disconnect $\mathcal{V}$,
the remainder of the proof 
follows that of Claim \ref{claim-usedindisappearing}. 
\end{proof}

Again using Lemma \ref{containedinV},
observe that since $\psi_{i,-1}(\partial B(x_{\ast},\frac{A}{4}r))$
and $\psi_{i,-1}(\partial B(x_{\ast},Ar))$ disconnect $\mathcal{V}$
for sufficiently large $i\in\mathbb{N}$, and because $\psi_{i}^{\ast}\widetilde{g}_{i}\to g$
in $C_{\text{loc}}^{\infty}(X_{\textnormal{reg}} \times (-\infty,0))$, we have that
\[
d_{\widetilde{g}_{i,-1-r^{2}}}\left(\mathcal{C}_{\frac{A}{4}r,i},\mathcal{V}\setminus\mathcal{C}_{Ar,i}\right)\geq cAr,
\]
for $r =r(A)>0$ sufficiently small, $i\geq \underline{i}(A,r)$ is sufficiently large, and $c>0$ independent of $r,A,i$. By Claims \ref{claim-usedindisappearing} and \ref{claim-theproofissimilar}, we thus find that $d_{\widetilde{g}_{i,-1-r^{2}}}(z_{i},w_{i})\geq cAr$,
which contradicts that fact that $d_{\widetilde{g}_{i,-1-r^{2}}}(w_{i},z_{i})\leq Cr$ if we
choose $A>0$ large, $r=r(A)>0$ small, and then $i\geq \underline{i}(A,r)$. 
\end{proof}

Using Lemma \ref{disappearing}, we now show that either every orbifold singularity lies on $E$, or $E$ is pulled entirely into some orbifold singularity. Recall that $E^{\circ}$ is the Hausdorff limit of $E_i$ in $X_{\textnormal{reg}}$ as defined in \eqref{eq: Edef}, and that $E$ is the closure of $E^{\circ}$ in $X$.

\begin{prop} \label{dichotomy}
\begin{enumerate}
    \item If $E^{\circ}=\emptyset$, then $X \setminus X_{\textnormal{reg}}$ is a single point $x_{\ast}$, and for any $r>0$, $\widetilde{E}\subseteq\mathcal{C}_{r,i}$
for sufficiently large $i\geq \underline{i}(r)$, where $\mathcal{C}_{r,i}$
is the connected component of $\mathcal{V}\setminus\psi_{i,-1}(\partial B(x_{\ast},r))$
containing $\psi_{i,-1}(\partial B(x_{\ast},\frac{1}{2}r))$. 

\item If $E^{\circ}\neq\emptyset$, then $X \setminus X_{\textnormal{reg}} \subseteq E = \overline{E^{\circ}}$. 
\end{enumerate}
\end{prop}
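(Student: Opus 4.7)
My approach is to reduce both parts of the proposition to a common bubble-containment claim:

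\textbf{(*)} \emph{For any $x_\ast \in X \setminus X_{\textnormal{reg}}$ with $A = A(x_\ast)$ as in Lemma \ref{disappearing}, and any $r > 0$ small enough that $B(x_\ast, 2Ar) \setminus \{x_\ast\} \subseteq X_{\textnormal{reg}}$ and $\partial B(x_\ast, Ar) \cap E^\circ = \emptyset$, we have $\widetilde{E} \subseteq \mathcal{C}_{Ar, i}$ for all sufficiently large $i$.}

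To prove (*), I would combine Lemma \ref{disappearing}, which furnishes $x_i \in \widetilde{E} \cap \mathcal{C}_{Ar, i}$, with Lemma \ref{JordanBrouwer}(i), which guarantees that $\mathcal{V} \setminus \psi_{i,-1}(\partial B(x_\ast, Ar))$ has exactly two components (note $\psi_{i,-1}(\partial B(x_\ast, Ar))$ is connected, since for small $r$ it is diffeomorphic to $\mathbb{S}^3/\Gamma$). Any $y_i \in \widetilde{E} \setminus \mathcal{C}_{Ar, i}$ would lie in the outer component $\mathcal{V} \setminus \overline{\mathcal{C}_{Ar, i}}$ as $\widetilde{E} \subseteq \mathcal{V}$, and connectedness of $\widetilde{E}$ would force a path in $\widetilde{E} \subseteq \mathcal{V}$ from $x_i$ to $y_i$ to cross the common boundary $\psi_{i,-1}(\partial B(x_\ast, Ar))$; but the Hausdorff convergence $E_j \cap K \to E^\circ \cap K$ applied to the compact set $K = \partial B(x_\ast, Ar) \subseteq X_{\textnormal{reg}}$ gives $\widetilde{E} \cap \psi_{i,-1}(\partial B(x_\ast, Ar)) = \emptyset$ for all large $i$, a contradiction.

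For part (ii), I would assume $E^\circ \neq \emptyset$ and suppose by contradiction some $x_\ast \in (X \setminus X_{\textnormal{reg}}) \setminus E$. Choosing $r$ small enough that $\overline{B(x_\ast, 2Ar)} \cap E = \emptyset$ makes (*) applicable, so $\widetilde{E} \subseteq \mathcal{C}_{Ar, i}$. Picking $y_\ast \in E^\circ$ with approximating $y_i^\circ \in E_i$, $y_i^\circ \to y_\ast$, for large $i$ we have $y_i^\circ \in X_{\textnormal{reg}} \setminus \overline{B(x_\ast, Ar)}$. Because $\psi_{i,-1}$ is a diffeomorphism on $U_{i,-1}$ and the connected open set $\psi_{i,-1}(B(x_\ast, Ar) \setminus \{x_\ast\})$ is contained in the inner component $\mathcal{C}_{Ar, i}$, a path-crossing argument places $\psi_{i,-1}(y_i^\circ) \in \mathcal{V} \setminus \overline{\mathcal{C}_{Ar, i}}$; since $\psi_{i,-1}(y_i^\circ) \in \widetilde{E}$, this contradicts $\widetilde{E} \subseteq \mathcal{C}_{Ar, i}$. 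For the uniqueness half of part (i), when $E^\circ = \emptyset$ the hypothesis of (*) holds automatically for every orbifold point and every small $r$. Were there two distinct orbifold points $x_\ast \neq x_\ast'$, the bubbles $\mathcal{C}_{A(x_\ast)r, i}$ and $\mathcal{C}_{A(x_\ast')r, i}'$ would shrink in Hausdorff distance around the distinct points $\psi_{i,-1}(x_\ast), \psi_{i,-1}(x_\ast')$, hence become disjoint for small $r$ and large $i$, contradicting $\widetilde{E}$'s simultaneous containment in both. The final containment $\widetilde{E} \subseteq \mathcal{C}_{r, i}$ for arbitrary $r > 0$ then follows by applying (*) at scale $r/A(x_\ast)$ together with the monotonicity $\mathcal{C}_{r', i} \subseteq \mathcal{C}_{r, i}$ for $r' \leq r$.

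The main obstacle is proving the existence of at least one orbifold singularity when $E^\circ = \emptyset$, without which the conclusion of (i) is empty. Were $X = X_{\textnormal{reg}}$ smooth, the smooth Cheeger-Gromov convergence on all of $X \times (-\infty, 0)$ guaranteed by Theorem \ref{bamconvergence}(v), together with Lemma \ref{containedinV} and the fact that $\operatorname{vol}_{\widetilde{g}_{i,-1}}(\widetilde{E}) \equiv 1$ is constant (by adjunction, since $c_1(\widetilde{M}) \cdot [\widetilde{E}] = 1$ yields $\operatorname{vol}_{\widetilde{g}_t}(\widetilde{E}) = T-t$, which rescales to $1$ at rescaled time $-1$), would force the holomorphic curves $E_i$ to Hausdorff subconverge to a nontrivial analytic curve in $X$ via the Bishop-type compactness argument already used in the proof of Lemma \ref{Hausdorff}, contradicting $E^\circ = \emptyset$.
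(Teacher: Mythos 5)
Your claim (*) and its proof are essentially the paper's argument for part (i): Lemma \ref{disappearing} places a point of $\widetilde{E}$ inside the bubble, the Hausdorff convergence of the $E_i$ keeps $\widetilde{E}$ off $\psi_{i,-1}(\partial B(x_\ast,Ar))$, and connectedness of $\widetilde{E}\subseteq\mathcal{V}$ then forces $\widetilde{E}\subseteq\mathcal{C}_{Ar,i}$; your part (ii) runs the same dichotomy as a contradiction argument, whereas the paper argues directly that $E^{\circ}$ must meet every small sphere about $x_\ast$ (otherwise $\widetilde{E}\subseteq\mathcal{C}_{r,i}$ for all small $r$ would force $E^{\circ}=\emptyset$). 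Two of your steps are not justified as written, though. In (ii), the set $\psi_{i,-1}(B(x_\ast,Ar)\setminus\{x_\ast\})$ is not meaningful: $B(x_\ast,Ar)\setminus\{x_\ast\}$ is not precompact in $X_{\textnormal{reg}}$, hence is contained in no $U_{i,-1}$; what you actually need is that $\psi_{i,-1}^{-1}(V_{i,-1}\cap\mathcal{C}_{Ar,i})\subseteq B(x_\ast,Ar)$ (so that no point of $E_i$ near $y_\ast$ can land in the bubble), which is exactly the separation statement the paper records, and a naive path argument does not give it because paths in $\mathcal{V}$ joining $\psi_{i,-1}(y_i^{\circ})$ to the inner sphere need not stay inside $V_{i,-1}$. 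Similarly, your uniqueness argument in (i) relies on the bubbles "shrinking in Hausdorff distance around $\psi_{i,-1}(x_\ast)$"; this is undefined ($x_\ast\notin X_{\textnormal{reg}}$ is not in the domain of $\psi_{i,-1}$), and for fixed $r$ the bubbles do not shrink to points: they contain precisely the regions not captured by the convergence, whose rescaled size is not controlled at this stage. The paper instead cuts along the image of a single compact region $\Omega_r=\overline{B}(x_\ast,r^{-1})\setminus B(X\setminus X_{\textnormal{reg}},r)$ surrounding all singular points, so the two bubbles become distinct components of one and the same complement, each meeting the connected set $\widetilde{E}$, which is already a contradiction with no diameter estimate needed.

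The genuine gap is your final paragraph. The constancy of $\int_{\widetilde{E}}\widetilde{\omega}_{i,-1}$ (equal to $2\pi$ in the paper's normalization) gives only an upper bound on the area of $E_i$ in compact regions, never a lower bound, and nothing in Theorem \ref{bamconvergence} or Lemma \ref{containedinV} forces $\widetilde{E}$ to intersect $\psi_{i,-1}(K)$ for any fixed compact $K$: Lemma \ref{containedinV} pushes captured regions into $\mathcal{V}$, not $\widetilde{E}$ into captured regions. So even if $X$ were smooth, your Bishop-type argument yields no contradiction with $E^{\circ}=\emptyset$, since the curve could a priori lie entirely outside every $V_{i,-1}$ -- which is exactly the "disappearing" scenario that case (i) is designed to describe. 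Note also that the paper's own proof does not establish existence of a singular point when $E^{\circ}=\emptyset$: it fixes an arbitrary $x_\ast\in X\setminus X_{\textnormal{reg}}$ and proves there is at most one, plus the containment $\widetilde{E}\subseteq\mathcal{C}_{r,i}$, and only these facts are used later. The nonemptiness statements that actually matter ($X$ not flat, and ultimately $E\neq\emptyset$) are obtained in Proposition \ref{nocones}, via Bamler's characterization of tangent flows at singular points (a flat tangent flow must be a nontrivial quotient $\mathbb{C}^2/\Gamma$) together with the symplectic-filling and Remmert-reduction arguments, not from Proposition \ref{dichotomy}. Your attempted strengthening is therefore both unnecessary for the paper's scheme and, as it stands, unproved.
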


\begin{proof}
\begin{enumerate}
    \item Fix any $x_{\ast}\in X \setminus X_{\textnormal{reg}}$. For small $r>0$, set $\Omega_{r}:=\overline{B}(x_{\ast},r^{-1})\setminus B(X \setminus X_{\textnormal{reg}},r)$.
Then $E^{\circ}=\emptyset$ implies that for $i\geq \underline{i}(r)$
sufficiently large, we have $\psi_{i,-1}(\Omega_{r})\cap \widetilde{E}=\emptyset.$
After slightly modifying $\Omega_r$, we may assume that $\Omega_{r}$ has smooth boundary. By Lemma \ref{containedinV}, we see that $\psi_{i,-1}(\Omega_r) \subseteq \mathcal{V}$ for all $r>0$ small and $i\geq \underline{i}(r)$ large, so that each of its connected components disconnects $\mathcal{V}$ by Lemma \ref{JordanBrouwer}. By Lemma \ref{disappearing}, we also have that $\widetilde{E} \cap\mathcal{C}_{r,i}\neq\emptyset$ when $i\geq \underline{i}(r)$. Because $\widetilde{E} \cap\psi_{i,-1}(\Omega_{r})=\emptyset$, this means that $\widetilde{E}\subseteq\mathcal{C}_{r,i}$
for sufficiently large $i \geq \underline{i}(r)$. 

Suppose by way of contradiction that there exists $x_{\ast}' \in X\setminus X_{\text{reg}}$ distinct from $x_{\ast}$. Letting $\mathcal{C}_{r,i}'$ be the connected component of $\mathcal{V}\setminus \psi_{i,-1}(\partial B(x_{\ast}',r))$ containing $\psi_{i,-1}(\partial B(x_{\ast}',\frac{1}{2}r))$ for $r>0$ sufficiently small, we then find that $\widetilde{E} \cap  \mathcal{C}_{r,i}' \neq \emptyset$ by replacing $x_{\ast}$ with $x_{\ast}'$ in Lemma \ref{disappearing}. That is, $\mathcal{C}_{r,i}'$, $\mathcal{C}_{r,i}$ are disjoint connected components of $\mathcal{V}\setminus \psi_{i,-1}(\Omega_{r})$ which each intersect the connected set $\widetilde{E}$. This is a contradiction.

\item Fix $x_{\ast}\in X \setminus X_{\textnormal{reg}}$. By Lemma \ref{disappearing}, for
each $r>0$, we know that $\mathcal{C}_{r,i}\cap \widetilde{E} \neq\emptyset$ for all
large $i\geq \underline{i}(r).$ If for all small $r>0$, it was true that $\widetilde{E} \subseteq\mathcal{C}_{r,i}$ when $i\geq \underline{i}(r)$ is sufficiently large, we would have that
$$E_i=\psi_{i,-1}^{-1}(V_{i,-1} \cap \widetilde{E}) \subseteq \psi_{i,-1}^{-1}(V_{i,-1} \cap \mathcal{C}_{r,i}) \subseteq B(x_{\ast},r),$$
since $\psi_{i,-1}^{-1}(V_{i,-1} \cap \mathcal{C}_{r,i})$ is contained in the connected component of $X\setminus \partial B(x_{\ast},r)$ containing $\partial B(x_{\ast},\frac{1}{2}r)$. This implies that $E^{\circ} = \emptyset$, a contradiction. Hence, for any sufficiently small $r>0$, we conclude that $\widetilde{E}\cap\psi_{i,-1}(\partial B(x_{\ast},r))\neq\emptyset$
for sufficiently large $i\geq \underline{i}(r)$. This means that $E^{\circ}\cap\partial B(x_{\ast},r)\neq\emptyset$
for all sufficiently small $r>0$, so that $x_{\ast}\in E$. 
\end{enumerate}
\end{proof}

Let $(\varphi_{t})_{t\in(-\infty,0)}$ be the diffeomorphisms defined in Remark \ref{holoflow} so that $g_t=|t|\varphi_{t}^{\ast}g_{-1}$. We will often identify the $t=-1$ time slice of the metric flow from \eqref{Fconverge} with the orbifold soliton $X$.  Recall that $Y_0>0$ was chosen so that for all $(x,t) \in \widetilde{M}\times [0,T)$ and $\tau\in (0,t]$, we have $\mathcal{N}_{x_0,T}(\tau)\geq -Y_0$. Fix $x_{\ast}\in\text{argmin}_{X}f$. We will next show that the flow of $\partial_t -\nabla f \in \mathfrak{X}(X \times (-\infty ,0))$ maps geodesic balls in a given time slice into a corresponding $P^{\ast}$-parabolic neighborhood.

\begin{lemma} \label{Pstarcontainment}
There exists $c=c(Y_0)>0$ such that the following holds. For any $x\in X\setminus B_{g}(x_{\ast},-1,1)$
and $r\in(0,cd_{-1}^{-1}(x,x_{\ast})]$, 
\[
\bigcup_{t\in[-1-r^{2},-1]}\left(\varphi_{t}\left(B_{g}(x,-1,cr)\right)\times\{t\}\right)\subseteq P^{\ast}(x,-1;c^{-1}r,-r^{2},0).
\]
\end{lemma}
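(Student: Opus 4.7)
The plan is to bound the $W_{1}$-distance defining the $P^{\ast}$-neighborhood using the triangle inequality, inserting the intermediate spacetime points $(y,-1)$ and $(y,t)$:
\[
d_{W_{1}}^{g_{-1-r^{2}}}\!\bigl(\nu_{x,-1;-1-r^{2}},\,\nu_{\varphi_{t}(y),t;-1-r^{2}}\bigr)\leq \mathrm{I}+\mathrm{II}+\mathrm{III},
\]
where $\mathrm{I}$ compares the measures based at $(x,-1)$ and $(y,-1)$, $\mathrm{II}$ those based at $(y,-1)$ and $(y,t)$, and $\mathrm{III}$ those based at $(y,t)$ and $(\varphi_{t}(y),t)$. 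The non-expansiveness of heat flow in the $W_{1}$-distance immediately yields $\mathrm{I}\leq d_{g_{-1}}(x,y)\leq cr$ and $\mathrm{III}\leq d_{g_{t}}(y,\varphi_{t}(y))$.

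For $\mathrm{III}$, I will use the standard Cao--Zhou-type estimate $|\nabla f|_{g_{-1}}\lesssim 1+d_{g_{-1}}(\cdot,x_{\ast})$, which follows from the shrinker identity $|t|(R_{g}+|\nabla f|_{g}^{2})=f-W$ and quadratic growth of $f$, with constants depending only on $Y_{0}$. The curve $s\mapsto\varphi_{s}(y)$ has $g_{-1}$-tangent vector $\tfrac{1}{|s|}\nabla f\circ\varphi_{s}$ by Remark \ref{holoflow}, and since $\varphi_{s}$ fixes $x_{\ast}$ and satisfies $\varphi_{s}^{\ast}g_{-1}=g_{s}/|s|$, the point $\varphi_{s}(y)$ remains in a controlled $g_{-1}$-neighborhood of $y$ for $s\in[t,-1]$. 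Integrating, together with the hypotheses $r\leq c\,d_{-1}^{-1}(x,x_{\ast})$ and $d_{-1}(x,x_{\ast})\geq 1$,
\[
d_{g_{-1}}(y,\varphi_{t}(y))\leq\int_{t}^{-1}\frac{|\nabla f|_{g_{-1}}(\varphi_{s}(y))}{|s|}\,ds\leq C\,d_{-1}(x,x_{\ast})\,r^{2}\leq Ccr,
\]
and the same bound holds for $d_{g_{t}}(y,\varphi_{t}(y))$ since $g_{t}$ and $g_{-1}$ are comparable on this short time interval.

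For $\mathrm{II}$, the semigroup identity $\nu_{y,-1;-1-r^{2}}=\int_{X}\nu_{w,t;-1-r^{2}}\,d\nu_{y,-1;t}(w)$ combined with $W_{1}$-non-expansiveness gives $\mathrm{II}\leq\int d_{g_{t}}(w,y)\,d\nu_{y,-1;t}(w)$. Let $(y^{\ast},t)$ be an $\ell$-center of $(y,-1)$ with $\ell(y^{\ast},t)\leq 2$, and let $\gamma:[t,-1]\to X$ be a minimizing $\mathcal{L}$-geodesic from $(y^{\ast},t)$ to $(y,-1)$, so that $\mathcal{L}_{-1}(\gamma)\leq 4\sqrt{|t+1|}$. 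Since $R_{g_{s}}\geq 0$ on the soliton, the Cauchy--Schwarz argument of Lemma \ref{nodrift} yields $d_{g_{t}}(y,y^{\ast})\leq Cr$. By \cite[Proposition 5.6]{CMZ}, any $H_{4}$-center $(y^{\ast\ast},t)$ satisfies $d_{g_{t}}(y^{\ast},y^{\ast\ast})\leq C\sqrt{|t+1|}\leq Cr$, and the concentration bound $\int d_{g_{t}}^{2}(w,y^{\ast\ast})\,d\nu_{y,-1;t}(w)\leq H_{4}|t+1|\leq H_{4}r^{2}$ then gives $\mathrm{II}\leq Cr$ by the triangle inequality.

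Summing the three bounds yields $d_{W_{1}}^{g_{-1-r^{2}}}(\cdots)\leq C(Y_{0})r$, and choosing $c=c(Y_{0})>0$ small enough that $c<1/C(Y_{0})$ gives the required containment in $P^{\ast}(x,-1;c^{-1}r,-r^{2},0)$. The main technical obstacle is the $H_{4}$-center drift bound in $\mathrm{II}$: it relies crucially on the non-negativity of $R_{g_{s}}$ on the soliton in order to apply the $\mathcal{L}$-length Cauchy--Schwarz estimate, in the manner of Lemma \ref{nodrift}.
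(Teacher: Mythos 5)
Your overall scaffolding (triangle inequality, $W_1$-contraction of the conjugate heat flow, self-similarity for the last term) is reasonable and terms I and the reduction of II to $\int d_{g_t}(w,y)\,d\nu_{y,-1;t}(w)$ are fine, but the key step of II has a genuine gap. You bound the $\mathcal{L}$-length of a minimizing geodesic from an $\ell$-center $(y^{\ast},t)$ to $(y,-1)$ and then invoke "the Cauchy--Schwarz argument of Lemma \ref{nodrift}" to conclude $d_{g_t}(y,y^{\ast})\leq Cr$. But Cauchy--Schwarz only controls the spacetime length $\int_{t}^{-1}|\dot{\gamma}(s)|_{g_s}\,ds$ with respect to the \emph{time-varying} metrics; in Lemma \ref{nodrift} this is converted into an honest distance only because the parabolic Schwarz lemma provides a fixed comparison metric $\pi^{\ast}g_N$ bounded above by $\widetilde{g}_s$ uniformly in $s$. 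No such fixed metric exists here, and turning $\int|\dot{\gamma}|_{g_s}\,ds$ into a bound on $d_{g_t}(y,y^{\ast})$ (or on $d_{g_{-1}}$) requires comparability of $g_s$ and $g_t$ on the relevant region over $[-1-r^2,-1]$, i.e.\ a curvature bound at scale $r$ near $x$. At this point of the paper no such bound is available — boundedness of $|{\Rm}|$ on $X$ is only proved later (Theorem \ref{bddcurve}) using results downstream of this lemma, so assuming it would be circular; and the region $X\setminus B_g(x_\ast,-1,1)$ at scale $r\sim d_{-1}^{-1}(x,x_\ast)$ is precisely where curvature could a priori be much larger than $r^{-2}$. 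The same unjustified comparability appears in your III ("$g_t$ and $g_{-1}$ are comparable on this short time interval"); that one is repairable, since $d_{g_t}(y,\varphi_t(y))=|t|^{1/2}d_{g_{-1}}(\varphi_t^{-1}(y),y)$ by exact self-similarity and the backward flow is controlled by the same potential estimate, but II cannot be fixed this way because $y^{\ast}$ is not a flow image of a controlled point. (A secondary issue: existence of $\ell$-centers and minimizing $\mathcal{L}$-geodesics on the noncompact orbifold soliton also needs justification, via \cite[Definition 22.1]{Bam3} rather than the compact-manifold statement used in Lemma \ref{nodrift}.)

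The paper's proof is designed exactly to avoid this metric-comparison problem. Instead of an abstract minimizing geodesic, it uses the soliton trajectory $\gamma(t)=\varphi_t(y)$ itself as a test curve, whose $\mathcal{L}$-length is computed \emph{exactly} by self-similarity: $R_{g_t}(\gamma(t))=\tfrac{1}{|t|}R_{g_{-1}}(y)$ and $|\dot{\gamma}(t)|^2_{g_t}=\tfrac{1}{|t|}|\nabla f|^2_{g_{-1}}(y)$, so the identity $|t|(R+|\nabla f|^2)=f-W$ and the quadratic growth of $f$ give $\mathcal{L}_{-1}(\gamma)\leq C(Y_0)r^{3}d_{-1}^{2}(y,x_\ast)$. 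This reduced-distance bound is then converted into the estimate $d_t(y_t,\varphi_t(y))\leq C(Y_0)\sqrt{-1-t}$, where $(y_t,t)$ is an $H_4$-center of $(y,-1)$, not through any length-to-distance comparison but through Bamler's heat-kernel bounds (\cite[Lemmas 15.9 \& 22.2]{Bam3}): the reduced-distance bound yields a lower bound on $K(y,-1;\varphi_t(y),t)$, which against the Gaussian upper bound centered at the $H_4$-center forces $\varphi_t(y)$ to lie within distance $C(Y_0)r$ of $y_t$ at time $t$. With that estimate in hand the $W_1$ bound follows from a single application of contraction plus a triangle inequality at time $t$. To complete your argument you would need to replace the Cauchy--Schwarz step in II by this heat-kernel mechanism (or supply a curvature bound you do not have); as written, the step would fail.
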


\begin{proof}
For $y\in B_{{g}}(x,-1,cr)$, we define $\gamma(t):=\varphi_{t}(y)$ for
$t\in[-1-r^{2},-1]$. Recall from Theorem \ref{bamconvergence} that
$$|t|(R_{{g}_t}+|\nabla {f}|_{{g}_t}^{2})=f_t-W$$
on $X \times (-\infty,0)$, where $W\geq -Y_0$. Then the fact that $R_{{g}_t}(\gamma(t))=\frac{1}{|t|}R_{{g}_{-1}}(y)$ and $|\dot{\gamma}(t)|_{{g}_{t}}^{2}=\frac{1}{|t|}|\nabla^{{g}_{-1}}f|_{{g}_{-1}}^{2}(y)$
allows us to estimate: 
\begin{equation*}
    \begin{split}
\mathcal{L}_{-1}(\gamma)&= \int_{-1-r^{2}}^{-1}\sqrt{-1-t}\left(R_{{g}_t}(\gamma(t))+|\dot{\gamma}(t)|_{g_{t}}^{2}\right)dt\\
&\leq  r\int_{-1-r^{2}}^{-1}\left(R_{{g}_{-1}}(y)+|\nabla^{{g}_{-1}}{f}|_{{g}_{-1}}^{2}(y)\right)dt\\
&=  r^{3}\left({f}_{-1}(y)-W\right) \\
&\leq  C(Y_0)r^3 {d}_{-1}^2(y,x_{\ast}),
\end{split}
\end{equation*}
where we have used the fact that $f_{-1}(y)-W\leq \frac{1}{4}(d_{-1}(y,x_{\ast})+C)^2$ for all $y\in X$, where $C>0$ is universal \cite[Theorem 1.1]{CMZ}. The reader is directed to \cite[Definition 22.1]{Bam3} for the definition of $\mathcal{L}$-length on a singular soliton. By \cite[Lemmas 15.9 \& 22.2]{Bam3}, we thus have the bounds
$$\frac{C(Y_0)^{-1}}{(-1-t)^2} \leq K(y,-1;\varphi_t(y),t) \leq \frac{C(Y_0)}{(-1-t)^{2}}\exp \left( -\frac{d_t^2(\varphi_t(y),y_t)}{C(Y_0)(-1-t)} \right)$$
for any $t\in [-1-r^2,-1]$, where $(y_t,t)$ is an $H_4$-center of $(y,-1)$. That is, 
$$d_{t}(y_{t},\varphi_{t}(y))\leq C(Y_0)\sqrt{-1-t}\leq C(Y_0)r$$
for all $t\in[-1-r^{2},-1]$. Therefore for all $t\in[-1-r^{2},-1]$, we have
\begin{equation*}
    \begin{split}
d_{W_{1}}^{g_{-1-r^{2}}}(\nu_{x,-1;-1-r^{2}},\nu_{\varphi_{t}(y),t;-1-r^{2}})&\leq   d_{W_{1}}^{g_t}(\nu_{x,-1;t},\delta_{\varphi_{t}(y)})\\
&\leq \ d_{W_{1}}^{g_t}(\nu_{x,-1;t},\delta_{x_{t}})+d_{t}(x_{t},\varphi_{t}(x))+d_{t}(\varphi_{t}(x),\varphi_{t}(y))\\
&\leq   C(Y_0)r,
\end{split}
\end{equation*}
where $(x_t,t)$ is an $H_4$-center of $(x,-1)$, and where we used the fact that $d_t(\varphi_t(x),\varphi_t(y)) = |t|^{\frac{1}{2}}d_{-1}(x,y)$.
\end{proof}

Using Lemma \ref{Pstarcontainment}, we can show that the high-curvature region of a ball in $X$ has small $\frac{1}{2}$-dimensional Minkowski content. This is roughly a consequence of Bamler's $L^p$-curvature estimates ($p>\frac{3}{2}$) and the self-similarity of the Ricci flow generated by $X$. 

\begin{prop} \label{highcurvaturesmall}
There exist $c=c(Y_0)>0$ and $D=D(Y_0)>0$ such that for all $x\in X \setminus\nolinebreak B_{{g}}(x_{\ast},1)$,
$${\rm vol}_{{g}}\left(\left\{ r_{{\Rm}}^{(X_{\textnormal{reg}},g)}< csr\right\} \cap B(x,r)\right)\leq Ds^{\frac{7}{2}}{\rm vol}_{{g}}\left(B(x,r)\right)$$
for all $s\in(0,1]$, where $r:=d_g^{-1}(x,x_{\ast})$.  
\end{prop}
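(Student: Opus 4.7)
The plan is to invoke the self-similar structure of the soliton Ricci flow on $X$ to transfer the desired time-slice volume bound to a quantitative partial regularity statement on a $P^{\ast}$-parabolic neighborhood in the induced spacetime, where Bamler's theory applies directly. Throughout, the uniform Nash entropy lower bound $\mathcal{N}_{x_0,T}\geq-Y_0$ from \cite[Proposition 4.35]{Bam3} provides the necessary non-collapsing constants.

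First, I would apply Lemma \ref{Pstarcontainment} at the point $x$ with scale parameter comparable to $r=d_g^{-1}(x,x_{\ast})$. After replacing $r$ in the lemma by a fixed multiple $c_0 r$ with $c_0=c_0(Y_0)$ small enough that the hypothesis $r\leq c\,d_g^{-1}(x,x_{\ast})$ is satisfied, one obtains an inclusion
$$\bigcup_{t\in[-1-c_0^2r^{2},-1]}\varphi_{t}(B_{g}(x,c_0^2r))\times\{t\}\subseteq P^{\ast}(x,-1;c_0^{-1}r,-c_0^2 r^{2},0).$$
Because $g_t=|t|\varphi_t^{\ast}g_{-1}$ differs from $g_{-1}$ by a factor close to one on $[-1-c_0^2r^2,-1]$, and because $\nabla f$ is uniformly controlled on $B_g(x,r)$ by the standard soliton potential estimate $f_{-1}\leq\tfrac{1}{4}(d_{-1}(\cdot,x_{\ast})+C)^2$ \cite[Theorem 1.1]{CMZ}, this inclusion shows that the intrinsic Riemannian curvature scale $r_{\Rm}^{(X_{\textnormal{reg}},g)}(y)$ at a point $y\in B_g(x,c_0^2 r)$ is comparable, up to a multiplicative constant depending only on $Y_0$, to the Ricci flow curvature scale at $(y,-1)$ in the induced spacetime $\mathcal{R}$.

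Second, I would apply Bamler's quantitative volume estimate for the low-curvature-scale set (\cite[Theorem 10.2]{Bam1}; see also \cite[Theorem 2.41]{Bam3}) inside the $P^{\ast}$-neighborhood just obtained. Together with the Nash entropy lower bound this yields, for any $\delta>0$,
$$\operatorname{vol}_g\bigl(\{r_{\Rm}^{(X_{\textnormal{reg}},g)}<c_0 sr\}\cap B_g(x,c_0^2r)\bigr)\leq C(\delta,Y_0)\,s^{4-\delta}\,r^4.$$
The same Nash entropy bound also gives non-collapsing $\operatorname{vol}_g(B_g(x,r))\gtrsim r^4$ (after the trivial reduction to the case $r_{\Rm}^{(X_{\textnormal{reg}},g)}(x)\geq c_1 r$, the opposite case being handled by enlarging $D$). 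Covering $B_g(x,r)$ by a uniformly bounded number of translates of $B_g(\cdot,c_0^2r)$ on each of which the above bound applies, and setting $\delta=\tfrac{1}{2}$, yields the stated estimate with exponent $\tfrac{7}{2}$.

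The main obstacle I anticipate is that Bamler's quantitative partial regularity estimates are formulated for smooth compact Ricci flows, whereas the soliton $X$ is only an orbifold K\"ahler-Ricci flow and its induced spacetime $\mathcal{R}$ is incomplete. I would handle this by applying the smooth estimates to the approximating flows $(\widetilde{M},\widetilde{g}_{i,t})$ from \eqref{Fconverge} in the corresponding smooth $P^{\ast}$-neighborhoods in $\widetilde{M}\times\mathbb{R}$, then passing to the limit using the $C^{\infty}_{\operatorname{loc}}$ convergence $\psi_i^{\ast}\widetilde{g}_i\to g$ on $X_{\textnormal{reg}}\times(-\infty,0)$ together with the $\mathbb{F}$-convergence within the correspondence $\mathfrak{C}$. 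Points of $B_g(x,r)$ where $r_{\Rm}^{(X_{\textnormal{reg}},g)}$ is small either correspond under $\psi_i$ to high-curvature regions of the smooth Ricci flow $\widetilde{g}_i$, or lie near an orbifold singular point of $X$ (where they again arise as limits of high-curvature points in the approximating flows), and in either case the smooth quantitative estimates capture them, yielding the desired bound in the limit.
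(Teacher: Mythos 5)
Your overall scaffolding is right in places (Lemma \ref{Pstarcontainment} with a radius $\lesssim d_g^{-1}(x,x_\ast)$, non-collapsing of $B_g(x,r)$ from the Nash entropy bound together with the quadratic bound on $R_g$, and a covering argument to pass from a small ball to $B_g(x,r)$), but the central step is not justified. The estimate you quote as ``Bamler's quantitative volume estimate for the low-curvature-scale set,''
$$\operatorname{vol}_g\bigl(\{r_{\Rm}^{(X_{\textnormal{reg}},g)}<c_0 sr\}\cap B_g(x,c_0^2r)\bigr)\leq C(\delta,Y_0)\,s^{4-\delta}\,r^4,$$
is a \emph{fixed-time-slice} volume bound, and no such statement is available off the shelf: Bamler's partial regularity results ([Bam1, Thm.~10.2] is an $\epsilon$-regularity statement, and the quantitative estimates in \cite{Bam3} are spacetime statements, e.g.\ the $L^p$-bound $\int_{P^{\ast}(x,-1;r)} r_{\Rm}^{-7/2}\,dg_t\,dt\leq C(Y_0)r^{5/2}$ of \cite[Theorem 2.31]{Bam3}) control the \emph{parabolic} measure $dg_t\,dt$ of the low-regularity set inside a $P^{\ast}$-neighborhood. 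A spacetime measure bound gives no information about a single time slice by itself — the bad set could a priori concentrate at $t=-1$. Your use of the soliton structure, namely the remark that the static scale $r_{\Rm}^{(X_{\textnormal{reg}},g)}(y)$ is comparable to the parabolic scale at $(y,-1)$, does not fix this: comparability of scales pointwise does not convert a spacetime measure bound into a slice volume bound.

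This conversion is exactly where the self-similarity must be used quantitatively, and it is the heart of the paper's proof: Lemma \ref{Pstarcontainment} places the tube $\bigcup_{t\in[-1-r^2,-1]}\varphi_t(B_g(x,-1,cr))\times\{t\}$ inside $P^{\ast}(x,-1;c^{-1}r)$, and then the change of variables $g_t=|t|\varphi_t^{\ast}g_{-1}$ (with $r_{\Rm}^{(g_t)}\circ\varphi_t^{-1}$ scaling like $|t|^{1/2}r_{\Rm}^{(g_{-1})}$ and $|t|$ close to $1$) shows that the spacetime integral over the tube dominates $r^2\int_{B_g(x,cr)}\bigl(r_{\Rm}^{(X_{\textnormal{reg}},g)}\bigr)^{-7/2}dg$; combined with \cite[Theorem 2.31]{Bam3} this gives $\int_{B_g(x,cr)}r_{\Rm}^{-7/2}dg\leq C(Y_0)r^{1/2}$, and Chebyshev then yields precisely the $s^{7/2}$ bound (the exponent $7/2$ comes from the admissible $L^p$ range, not from choosing $\delta=\tfrac12$ in a $4-\delta$ bound). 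Your proposal skips this transport-and-integrate step entirely, so as written it assumes the conclusion in the form of a nonexistent time-slice theorem. A secondary point: the reduction ``handle $r_{\Rm}(x)<c_1 r$ by enlarging $D$'' is not needed and does not work as stated; the volume lower bound $\operatorname{vol}_g(B_g(x,r))\geq\kappa(Y_0)r^4$ follows directly from the scalar curvature bound $R_g\lesssim d_g^2(\cdot,x_\ast)\sim r^{-2}$ on $B_g(x,r)$ and Bamler's no-local-collapsing, and it is needed in all cases. Finally, note that \cite[Theorem 2.31]{Bam3} already applies to the $\mathbb{F}$-limit $X$ itself, so the detour through the approximating smooth flows $(\widetilde{M},\widetilde{g}_{i,t})$ is unnecessary for this step.
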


\begin{proof}
By \cite[Theorem 2.31]{Bam3} with $r:=d_{g}^{-1}(x,x_{\ast})=d_{-1}^{-1}(x,x_{\ast})$,
we have that
$$\int_{-1-r^{2}}^{-1}\int_{P_{{g}}^{\ast}(x,-1;r)\cap(X\times\{t\})}\left( r_{\text{Rm}}^{(X_{\textnormal{reg}},g_t)}(x) \right)^{-\frac{7}{2}}d{g}_{t}(x)dt\leq C(Y_0)r^{\frac{5}{2}}.$$
Because 
\[
\sup_{y\in B_{{g}}(x,r)}R_{{g}_{-1}}(y)\leq C{d}_{-1}^{2}(x,x_{\ast}) = Cr^{-2}
\]
for some universal constant $C>0$, we know that
\begin{equation} \label{kappanoncollapsing}
\mbox{vol}_{{g}}(B(x,r))\geq\kappa(Y_0)r^{4},
\end{equation}
so that by Lemma \ref{Pstarcontainment},
\begin{equation*}
    \begin{split}
C(Y_0)r^{\frac{5}{2}}&\geq  \int_{-1-r^{2}}^{-1}\int_{P_{{g}}^{\ast}(x,-1;r)\cap(X\times\{t\})} \left( r_{\text{Rm}}^{(X_{\textnormal{reg}},g_t)}(y) \right)^{-\frac{7}{2}}dg_{t}(y)dt\\
&\geq  \int_{-1-r^{2}}^{-1}\int_{\varphi_{t}(B_{{g}}(x,-1,cr))} \left( r_{\text{Rm}}^{(X_{\textnormal{reg}},g_t)}(y) \right)^{-\frac{7}{2}}(y,t)dg_{t}(y)dt\\
&=  \int_{-1-r^{2}}^{-1}|t|^{\frac{1}{2}}\int_{\varphi_{t}(B_{{g}}(x,-1,cr))}\left( r_{\text{Rm}}^{(X_{\textnormal{reg}},g_{-1})}(\varphi_{t}(y)) \right)^{-\frac{7}{2}} {d}(\varphi_{t}^{\ast}{g}_{-1})(y)dt\\
&\geq  \int_{-1-r^{2}}^{-1}\int_{B_{{g}}(x,-1,cr)}\left( r_{\text{Rm}}^{(X_{\textnormal{reg}},g_{-1})}(y) \right)^{-\frac{7}{2}}d{g}_{-1}(y)dt\\
&=  r^{2}\int_{B_{{g}}(x,cr)} \left( r_{\text{Rm}}^{(X_{\textnormal{reg}},g)}(y) \right)^{-\frac{7}{2}}d{g}(y).
\end{split}
\end{equation*}
In other words, we have that
\begin{equation*}
    \begin{split}
\text{vol}_{{g}}\left(\left\{ r_{\text{Rm}}^{(X_{\textnormal{reg}},g)}<csr\right\} \cap B_{{g}}(x,r)\right) &\leq  (csr)^{\frac{7}{2}}\int_{B_{{g}}(x,r)} \left(r_{\text{Rm}}^{(X_{\textnormal{reg}},g)}(y) \right)^{-\frac{7}{2}}d{g}(y)\\
&\leq  C(Y_0)(sr)^{\frac{7}{2}}r^{\frac{1}{2}}\\
&\leq  C(Y_0)s^{\frac{7}{2}}\text{vol}_{{g}}(B(x,r)).
\end{split}
\end{equation*}
\end{proof}

If a connected holomorphic curve $\Sigma \subseteq X_{\textnormal{reg}}$ passes through a point $x \in X_{\textnormal{reg}}$ and intersects $\partial B(x,r)$, where $r \lesssim {d}^{-1}(x,x_{\ast})$, then a covering argument shows that $\Sigma$ has one-dimensional Minkowski content bounded below, so it must contain a point which is not too close to the high-curvature region. This is made precise in the following proposition, which makes use of a well-known lower bound for the area of holomorphic curves in $\mathbb{C}^2$.

\begin{prop} \label{arealowerbound}
Let $\Sigma\subseteq X_{\textnormal{reg}}$ be a ${J}$-holomorphic curve
and suppose that $x\in\Sigma\setminus B_{{g}}(x_{\ast},1)$ satisfies $\partial(\overline{\Sigma}\cap B_{{g}}(x,r))\subseteq\partial B_{{g}}(x,r)$
for some $r\in(0,\frac{c}{{d}(x,x_{\ast})}]$, where $x_{\ast}\in {\rm argmin}{f}$ and $c=c(Y_0)$. Then 
\[
\int_{\Sigma \cap B_{{g}}(x,r)} {\omega}
\geq c(Y_0)r^{2}.
\]
\end{prop}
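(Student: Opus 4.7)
The strategy is a local Bishop/Lelong area estimate for the holomorphic curve $\Sigma$, applied on a ball where the ambient K\"ahler metric is nearly Euclidean; the existence of such a ball meeting $\Sigma$ will be extracted from Proposition \ref{highcurvaturesmall}. First I rescale $g\mapsto r^{-2}g$ and reduce to the case $r=1$, as both hypothesis and conclusion are scale-invariant. The measure bound of Proposition \ref{highcurvaturesmall} then reads
\[
\mathrm{vol}_{g}\bigl(\{r_{\Rm}^{(X_{\textnormal{reg}},g)}<cs\}\cap B_{g}(x,1)\bigr)\leq Ds^{7/2}\,\mathrm{vol}_{g}(B_{g}(x,1))
\]
for $s\in(0,1]$, and the non-collapsing bound \eqref{kappanoncollapsing} gives $\mathrm{vol}_{g}(B_{g}(x,1))\geq \kappa(Y_0)$.

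Next, I fix a small universal constant $s_0=s_0(Y_0)\in(0,1)$ so that the ``good set'' $G:=\{r_{\Rm}^{(X_{\textnormal{reg}},g)}\geq c s_0\}\cap B_{g}(x,\tfrac12)$ occupies at least half of the volume of $B_{g}(x,\tfrac12)$. On $G$, the sectional curvature is bounded by $Cs_0^{-2}$ and the injectivity radius is bounded below by $cs_0$, so Lemma \ref{coordlemma} provides $J$-holomorphic coordinates on a ball of radius $\sim s_0$ centered at any $y\in G$ in which the K\"ahler form $\omega$ is $C^0$-comparable to the flat form $\omega_0$, with constants depending only on $Y_0$. If $y\in\Sigma\cap G$, then in such coordinates the Euclidean lower area bound for holomorphic curves (equivalent to the Lelong-number estimate $\nu_{[\Sigma]}(y)\geq 1$, cf.~\cite{bishop}) yields
\[
\int_{\Sigma\cap B_{g}(y,c's_0)}\omega \;\geq\; c''(Y_0),
\]
and undoing the rescaling gives $\int_{\Sigma\cap B_{g}(x,r)}\omega\geq c(Y_0)r^2$ as required.

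The main obstacle is therefore Step 3, namely producing a point $y\in\Sigma\cap G$, since the bound of Proposition \ref{highcurvaturesmall} controls only the ambient $4$-dimensional volume of the high-curvature set, not its intersection with the $2$-dimensional curve $\Sigma$. My plan is a contradiction and blow-up argument at the point of $\Sigma\cap B_{g}(x,\tfrac12)$ minimizing $r_{\Rm}$: if this minimum $\rho$ were taken to zero along a hypothetical sequence of counterexamples, rescaling by $\rho$ and extracting a smooth pointed Cheeger--Gromov limit (using Lemma \ref{coordlemma} to pass the complex structure to the limit) would produce a complete non-flat K\"ahler manifold with unit curvature scale at the basepoint and a nontrivial limit holomorphic curve through it. The Euclidean Bishop lower bound applied to this limit would force the rescaled area on a unit ball to be bounded below by an absolute constant, which after transferring back gives the quadratic lower bound $\int_\Sigma\omega\geq c\rho^2$ near the minimizing point; combining this with the boundary hypothesis $\partial(\overline\Sigma\cap B_{g}(x,1))\subseteq\partial B_{g}(x,1)$, which forces $\Sigma$ to meet every distance sphere about $x$, and with the Vitali covering of $\Sigma\cap B_{g}(x,\tfrac12)$ at the local curvature scale, closes the argument and produces a point of $\Sigma$ in $G$.
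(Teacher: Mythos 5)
Your overall frame (rescale to $r=1$, find a point $y\in\Sigma$ at which the curvature scale is comparable to $1$, then apply the Euclidean/Lelong lower area bound in the resulting almost-flat ball) is the same as the paper's, but the step you yourself identify as the main obstacle — producing a point of $\Sigma$ in the good set $G$ — is exactly where your argument has a genuine gap. The blow-up-by-contradiction at the point of $\Sigma\cap B_g(x,\tfrac12)$ minimizing $r_{\Rm}$ does not work: at this stage of the paper $X$ is not yet known to have bounded curvature (that is Theorem \ref{bddcurve}, proved later), so the minimum $\rho$ of $r_{\Rm}$ along $\Sigma$ can legitimately tend to $0$ (e.g.\ for curve points approaching an orbifold singularity), the rescaled Cheeger--Gromov limit is a perfectly good non-flat space, and no contradiction arises. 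Moreover, what that blow-up would give you is an area lower bound $\int_{\Sigma\cap B_g(y_\rho,C\rho)}\omega\gtrsim\rho^2$ at the scale $\rho$, which is far weaker than the needed bound at scale $r$ when $\rho\ll r$; and one cannot repair this by summing over a Vitali cover of $\Sigma$ at the local curvature scale, since $\sum_i r_i\gtrsim r$ (from the sphere-meeting property) does not imply $\sum_i r_i^2\gtrsim r^2$ without a bound on the number of small balls. The sentence ``combining this with the boundary hypothesis \ldots and with the Vitali covering \ldots closes the argument'' is therefore not a proof but a restatement of the missing step.

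The missing ingredient is a quantitative counting argument that plays the non-collapsing lower bound \eqref{kappanoncollapsing} against the measure bound of Proposition \ref{highcurvaturesmall}. This is what the paper does: assuming $\Sigma\cap B_g(x,(1-\alpha)r)\subseteq\{r_{\Rm}^{(X_{\textnormal{reg}},g)}<2\alpha r\}$, choose a maximal $\alpha r$-separated set $\{x_i\}_{i=0}^N\subseteq\Sigma\cap B_g(x,(1-\alpha)r)$; the hypothesis $\partial(\overline{\Sigma}\cap B_g(x,r))\subseteq\partial B_g(x,r)$ forces $\Sigma$ to meet $\partial B_g(x,s)$ for (all but finitely many) $s\in[0,r]$, so the intervals $[d(x_i,x)-2\alpha r,d(x_i,x)+2\alpha r]$ cover $[0,(1-\alpha)r]$ and hence $N\geq(4\alpha)^{-1}$; the disjoint balls $B_g(x_i,\alpha r)$ then have total volume at least $c(Y_0)\alpha^{3}r^{4}$, which for $\alpha\leq\overline{\alpha}(Y_0)$ exceeds the bound $D\alpha^{7/2}\mathrm{vol}_g(B_g(x,r))$ on the bad set from Proposition \ref{highcurvaturesmall}; so some $B_g(x_i,\alpha r)$ meets $\{r_{\Rm}^{(X_{\textnormal{reg}},g)}\geq4\alpha r\}$, and the $1$-Lipschitz property of $r_{\Rm}$ gives $r_{\Rm}^{(X_{\textnormal{reg}},g)}(x_i)\geq2\alpha r$ at a point of $\Sigma$. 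It is the exponent comparison $\alpha^{3}$ versus $\alpha^{7/2}$ (equivalently: at most $C\alpha^{-1/2}$ disjoint balls of radius $\alpha r$ can fit in the bad set, while the sphere-meeting property forces at least $c\alpha^{-1}$ of them centered on $\Sigma$) that your proposal never supplies; once a point $y\in\Sigma$ with $r_{\Rm}(y)\gtrsim\alpha r$ is in hand, your final step (the lower area bound for holomorphic curves in an almost-Euclidean ball, which the paper cites from \cite{holocurves} rather than via Lemma \ref{coordlemma}) does finish the proof as you describe.
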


\begin{proof}
We begin with the following claim which shows that some point in $\Sigma$ has a curvature bound in a ball of controlled radius.
\begin{claim} \label{claim-containmentwithcurvaturebounds}
For $\alpha\leq\overline{\alpha}(Y_0)$, there exists
$y\in\Sigma$ such that $B_{{g}}(y,\alpha r)\subseteq B_{{g}}(x,r)$ and
$r_{\text{Rm}}^{(X_{\textnormal{reg}},g)}(y)\geq 2\alpha r$. 
\end{claim}

\begin{proof}[Proof of Claim \ref{claim-containmentwithcurvaturebounds}]
Suppose that this fails so that $\Sigma\cap B_{{g}}(x,(1-\alpha)r)\subseteq\{r_{\text{Rm}}^{(X_{\textnormal{reg}},g)}<2\alpha r\}$.
By Proposition \ref{highcurvaturesmall}, we have that
\[
\text{vol}_{{g}} \left( \left\{ r_{\text{Rm}}^{(X_{\textnormal{reg}},g)}<4\alpha r\right\} \cap B_{{g}}(x,r)\right) \leq D\alpha^{\frac{7}{2}}\text{vol}_{{g}}(B_{{g}}(x,r))
\]
if $\alpha\leq \overline{\alpha}(Y_0)$ is sufficiently small. Choose a maximal set $\{x_{i}\}_{i=0}^{N}\subseteq\Sigma\cap B_{{g}}(x,(1-\alpha)r)$
containing $x$ such that ${d}(x_{i},x_{j})\geq\alpha r$ whenever
$i\neq j$, so that $\bigcup_{i=0}^{N}B_{{g}}(x_{i},2\alpha r)\supseteq\Sigma\cap B_{{g}}(x,(1-\alpha)r)$. 

By assumption, we have $\Sigma\cap\partial B_{{g}}(x,s)\neq\emptyset$ for
all but finitely many $s\in[0,r]$. This means that $$([{d}(x_{i},x)-2\alpha,{d}(x_{i},x)+2\alpha])_{i=1}^N$$
covers $[0,1-\alpha]$, resulting in the lower bound $N\geq (4\alpha)^{-1}$. By combining the volume lower bound \eqref{kappanoncollapsing}, a volume upper bound \cite[Corollary 1.3]{CMZ}, and Proposition \ref{highcurvaturesmall}, we obtain
\begin{equation*}
\begin{split}
\text{vol}_{{g}} \left( \bigcup_{i=0}^{N}B_{{g}}(x_{i},\alpha r)\right) &\geq (4\alpha)^{-1}\kappa(Y_0)(\alpha r)^{4}\geq c'(Y_0)\alpha^{3}r^4>D\alpha^{\frac{7}{2}}\text{vol}_{{g}}(B_{{g}}(x,r))\\ 
& \geq \text{vol}_{{g}} \left( \left\{ r_{\text{Rm}}^{(X_{\textnormal{reg}},g)} <4\alpha r\right\} \cap B_{{g}}(x,r)\right)
\end{split}
\end{equation*}
if $\alpha\leq\overline{\alpha}(Y_0)$. This implies that there must
exist $i\in\{0,\ldots,N\}$ such that 
$$B_{{g}}(x_{i},\alpha r)\cap\{r_{\text{Rm}}^{(X_{\textnormal{reg}},g)} \geq 4 \alpha r\}\neq\emptyset.$$
Since $r_{\text{Rm}}^{(X_{\textnormal{reg}},g)}$ is $1$-Lipschitz, this gives us that $r_{\text{Rm}}^{(X_{\textnormal{reg}},g)}(x_{i})\geq2\alpha r$.
\end{proof}

Let $y\in\Sigma$ be as in Claim \ref{claim-containmentwithcurvaturebounds}. By
\cite[Comment 1, p.178, and Proposition 4.3.1(ii)]{holocurves}
(see also \cite[Lemma 5.2]{CJL}), there exists $c'=c'(Y_0)>0$ such that 
$$
\int_{\Sigma \cap B_{{g}}(y,c'\alpha r)} {\omega}
 \geq \frac{\pi}{4}(c'\alpha r)^{2},
$$
so the proposition follows from the fact that $B_{{g}}(y,c'\alpha r)\subseteq  B_{{g}}(x,r)$. 
\end{proof}

In order to use Proposition \ref{arealowerbound}, we need to show that if $E$ is unbounded, then we can construct a holomorphic curve with finite volume but infinite diameter.  

\begin{prop} \label{bigcurve}
If $E$ is unbounded, then there exists a holomorphic curve $E'\subseteq X_{\textnormal{reg}}$ satisfying
the following:
\begin{enumerate}
    \item $\int_{E'} {\omega} \leq2\pi$.
\item There is an (at most countable) subset $\{x_{i}\,|\,i\in\mathcal{I}\}\subseteq X \setminus X_{\textnormal{reg}}$ such that $E'\cup\{x_{i}\,|\,i\in\mathcal{I}\}$ is connected.
\item $E'$ is unbounded.
\end{enumerate}
Moreover, if $E$ is bounded, then it is connected.
\end{prop}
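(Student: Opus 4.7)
The plan is to take $E' := E^\circ$ and $\{x_i : i \in \mathcal{I}\} := X \setminus X_{\textnormal{reg}}$ (at most countable since the orbifold points of $X$ are isolated by Theorem \ref{bamconvergence}), then verify each conclusion.

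For (i), I would first compute $\int_{\widetilde{E}}\widetilde{\omega}_{i,-1}=2\pi$ from a standard cohomological argument: adjunction for the $(-1)$-curve $\widetilde{E}$, the K\"ahler-Ricci flow evolution of $[\widetilde{\omega}_t]$, the assumption $[\widetilde{\omega}_0]-Tc_1(\widetilde{M})=\pi^{\ast}[\omega_N]$, and $\pi_{\ast}[\widetilde{E}]=0$. Since $E_i=\psi_{i,-1}^{-1}(V_{i,-1}\cap\widetilde{E})$ is a subset of the $\psi_{i,-1}^{-1}$-preimage of $\widetilde{E}$, we get $\int_{E_i}\psi_{i,-1}^{\ast}\widetilde{\omega}_{i,-1}\leq 2\pi$ for each $i$. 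For any compact $K\subseteq X_{\textnormal{reg}}$, the Hausdorff convergence $E_i\cap K\to E^\circ\cap K$ combined with the locally smooth convergence $\psi_{i,-1}^{\ast}\widetilde{g}_{i,-1}\to g$ and lower semicontinuity of mass for positive analytic currents (as in the proof of Lemma \ref{Hausdorff}, via Bishop's theorem) yields $\int_{E^\circ\cap K}\omega\leq 2\pi$, whence (i) after exhausting $X_{\textnormal{reg}}$. Property (iii) is immediate since $E\setminus E^\circ$ is a discrete subset of $X\setminus X_{\textnormal{reg}}$, so unboundedness of $E$ forces $E^\circ$ to be unbounded.

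The heart of the proposition is (ii), for which I plan to show that $E = E^\circ \cup (X\setminus X_{\textnormal{reg}})$ is itself connected. Since $E^\circ\neq\emptyset$ (as $E$ is unbounded), Proposition \ref{dichotomy}(ii) places every orbifold point inside $E$. Fix $r>0$ small enough that the closed balls $\overline{B(p_k,r)}$ about distinct orbifold points $p_k$ of $X$ are pairwise disjoint. By Lemma \ref{containedinV} and Lemma \ref{disappearing}, for all large $i$ the set $\widetilde{E}\setminus V_{i,-1}$ is contained in finitely many pairwise disjoint bubble components $\mathcal{C}_{r,i}^{(k)}$, each associated uniquely to a distinct $p_k$, while the connected components $\widetilde{C}_{i,\alpha}$ of $\widetilde{E}\cap V_{i,-1}$ have $\psi_{i,-1}^{-1}$-images that are compact connected subsets of $E_i$ abutting $\bigcup_k \partial B(p_k,r)$. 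Since $\widetilde{E}$ is connected, the bipartite incidence graph between the $\{\widetilde{C}_{i,\alpha}\}$ and the $\{\mathcal{C}_{r,i}^{(k)}\}$ is connected. Letting $i\to\infty$ and then $r\to 0$, each $\psi_{i,-1}^{-1}(\widetilde{C}_{i,\alpha})$ Hausdorff-converges to a connected subset of $E^\circ$ while each bubble collapses to its associated $p_k$; any separation $E = A\sqcup B$ into nonempty disjoint relatively open parts would disconnect this bipartite graph for $i$ large, contradicting the connectedness of $\widetilde{E}$.

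For the moreover clause, the same argument gives connectedness of bounded $E$: either $E^\circ=\emptyset$ (hence $E=\emptyset$, vacuously connected), or the argument above applies verbatim with $X\setminus X_{\textnormal{reg}}$ now necessarily finite by boundedness of $E\supseteq X\setminus X_{\textnormal{reg}}$. The main obstacle is the limiting procedure in the third paragraph: verifying that distinct bubbles $\mathcal{C}_{r,i}^{(k)}$ remain disjoint in $\widetilde{M}$ for small fixed $r>0$ and all large $i$, that each component $\widetilde{C}_{i,\alpha}$ interacts with only finitely many bubbles in a stable way, and that the bipartite incidence structure survives to the limit in a manner consistent with $E$-level connectedness. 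These facts should follow from Lemmas \ref{containedinV} and \ref{disappearing} together with the $C^\infty_{\textnormal{loc}}$ convergence of the charts $\psi_i$, but the passage from the $\widetilde{E}$-level graph to an $E$-level one will require careful bookkeeping.
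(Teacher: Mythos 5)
Your parts (i) and (iii) are fine and essentially match the paper (the paper likewise gets $\int_{E'}\omega\le 2\pi$ from $\int_{\widetilde E}\widetilde\omega_{i,-1}=2\pi$ together with local current convergence of the $E_i$, and (iii) is immediate once $E'$ is chosen correctly). The problem is (ii), where you choose $E':=E^{\circ}$ and try to prove the strictly stronger statement that all of $E$ is connected when $E$ is unbounded. The paper does not prove this and deliberately avoids claiming it: it instead forms the graph whose vertices are the connected components $E^{v}$ of $E^{\circ}$, with an edge when two closures meet, shows by contradiction that no connected component of this graph can have \emph{bounded} union (enclose such a bounded union in a precompact neighborhood $\mathcal{B}\subseteq X$ with smooth boundary in $X_{\textnormal{reg}}$ avoiding the remaining $\overline{E^{w}}$; then $E_i\cap\mathcal{B}\Subset\mathcal{B}$ while $E_i$ also has points far from $\mathcal{B}$, so the \emph{compact} hypersurface $\psi_{i,-1}(\partial\mathcal{B})$, which lies in $\mathcal{V}$ by Lemma \ref{containedinV}, separates $\widetilde E$ -- contradicting its connectedness), and then takes $E'$ to be the union of the $E^{v}$ over one graph component. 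Conclusion (ii) is formulated exactly so that this weaker output suffices.

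Your proposed route to connectedness of unbounded $E$ has two genuine gaps. First, a circularity: you fix a uniform radius $r$ so that the balls $\overline{B(p_k,r)}$ around \emph{all} orbifold points are pairwise disjoint, but at this stage the singular set is only known to be discrete, not finite or uniformly separated -- finiteness is precisely what Proposition \ref{finiteorb} later deduces \emph{from} this proposition, and Lemma \ref{disappearing} is proved for a fixed singular point, not uniformly. Second, and more fundamentally, the limiting step ``any separation $E=A\sqcup B$ would disconnect the bipartite graph for $i$ large'' fails when the pieces are unbounded: the connectedness of $\widetilde E$ at stage $i$ can be realized through portions of $\widetilde E$ that map near spatial infinity of $X$ or leave the precompact chart domains $U_{i,-1}$ altogether, and such connections are invisible in the local Hausdorff limit (a connected component $\widetilde C_{i,\alpha}$ of $\widetilde E\cap V_{i,-1}$ can have a locally Hausdorff limit that is disconnected, since local Hausdorff limits of non-compact connected sets need not be connected). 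The paper's argument works only because the set being separated off is bounded, so the separating hypersurface is compact, lies in the charts, and is eventually disjoint from $E_i$; without compactness neither Lemma \ref{containedinV} nor the local convergence gives you control of $\widetilde E$ near $\psi_{i,-1}(\partial\mathcal{B})$. What you flag as ``careful bookkeeping'' is in fact the missing idea, and the fix is to weaken the goal as the paper does: work with the component graph of $E^{\circ}$ and only rule out bounded graph components.
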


\begin{proof} We will show that (i)-(iii) hold for a holomorphic curve $E'$ which is a union of connected components of $E^{\circ}$. Recall that $\psi_{i}:U_{i}\to V_{i}\subseteq M$ are the open embeddings realizing
Cheeger-Gromov convergence and that by Proposition \ref{dichotomy}, $E_i$ converges locally in the Hausdorff sense to a holomorphic curve $E^{\circ} \subseteq X_{\textnormal{reg}}$ with $X \setminus X_{\textnormal{reg}} \subseteq E = \overline{E^{\circ}}$. In particular, (iii) is satisfied. Because $\mathcal{H}_{{\widetilde{g}}_{i,-1}}^2(\widetilde{E}) =\int_{\widetilde{E}}\widetilde{\omega}_{i,-1}= 2\pi$ for all $i\in \mathbb{N}$ and $\psi_{i,-1}^{\ast}{\widetilde{g}}_{i,-1} \to {g}$ in $C_{\operatorname{loc}}^{\infty}(X_{\operatorname{reg}})$, we know that for any compact subset $L \subseteq X \setminus X_{\textnormal{reg}}$, we have 
\begin{equation} \label{eq:limsup} \limsup_{i\to \infty} \mathcal{H}_{{g}}^2(E_i \cap L)\leq 2\pi.\end{equation}
Next, recall from \cite[Section 16.1, Proposition 1]{Chirka} that any sequence of holomorphic curves in $\mathbb{C}^2$ with locally uniformly bounded area also subsequentially converges in the sense of currents to a holomorphic 2-chain\footnote{A holomorphic $p$-chain is an integer linear combination of currents of integration along irreducible $p$-dimensional analytic sets.} supported on the Hausdorff limit of these curves. Using this together with the fact that convergence in the sense of currents is local, arguing as in the proof of Lemma \ref{Hausdorff} allows us to conclude that this property also holds for a sequence of K\"ahler structures where the complex structures and metrics are converging locally smoothly. Thus, from \eqref{eq:limsup} we obtain (i).

Set $\{x_i\,|\, i\in \mathcal{I} \} := X \setminus X_{\textnormal{reg}}$. Let $(\mathbb{V},\mathbb{E})$ be the graph with vertex set $\mathbb{V}$ equal to the connected components $E^v$ of $E^{\circ}$, and with edge set $\mathbb{E}$, with an edge between $v_1,v_2 \in \mathbb{V}$ if and only if $\overline{E^{v_1}} \cap \overline{E^{v_2}} \neq \emptyset$. We may assume that each $v\in \mathbb{V}$ corresponds to a bounded connected component $E^v$ of $E^{\circ}$, for otherwise $E^v$ already satisfies (i)--(iii). Suppose by way of contradiction that $(\mathbb{V},\mathbb{E})$ has a connected component $(\mathbb{V}',\mathbb{E}')$ such that $\cup_{v\in \mathbb{V}'}E^v$ is bounded. Then there exists a neighborhood $\mathcal{B}$ of $\overline{\cup_{v\in \mathbb{V}'}E^v}$ with smooth boundary contained in $X_{\textnormal{reg}}$ such that $\mathcal{B}\cap \overline{E^w}=\emptyset$ for all $w \in \mathbb{V}\setminus \mathbb{V}'$. We know that $\mathbb{V}\setminus \mathbb{V}'\neq \emptyset$ since $E^{\circ}$ is unbounded. Hence, for $i\in \mathbb{N}$ sufficiently large, we have $E_i \cap \mathcal{B} \Subset \mathcal{B}$ so that $\psi_{i,-1}(\partial \mathcal{B})$ separates $\mathcal{V}$ into at least two connected components with $\widetilde{E}$ not contained in any one connected component. This contradicts the fact that $\widetilde{E}$ is connected. For any connected component $(\mathbb{V}',\mathbb{E}')$ of $(\mathbb{V},\mathbb{E})$, we may therefore take $E':=\cup_{v\in \mathbb{V}'}E^v$. 

If $E$ is bounded, then the argument of the previous paragraph shows that it is connected.
\end{proof}

We now show that a curve $E'$ as in Proposition \ref{bigcurve} cannot exist using the area lower bound from Proposition \ref{arealowerbound}. This will allow us to conclude that there are at most finitely many orbifold singularities.

\begin{prop} \label{finiteorb} $E$ is compact. Moreover, $X$ has at most finitely many orbifold singularities.
\end{prop}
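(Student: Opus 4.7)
I would establish the compactness of $E$ by contradiction and then deduce the finiteness of orbifold singularities directly from Proposition \ref{dichotomy}. Suppose for contradiction that $E$ is unbounded. Proposition \ref{bigcurve} then produces a holomorphic curve $E' \subseteq X_{\textnormal{reg}}$ with $\int_{E'}\omega \leq 2\pi$, an at-most-countable set $\{x_i\}_{i\in\mathcal{I}}\subseteq X\setminus X_{\textnormal{reg}}$ with $E'\cup\{x_i\}$ connected, and $E'$ unbounded. The idea is to show that $E'$ carries a definite amount of $\omega$-area in each dyadic annulus around the minimum $x_\ast\in\operatorname{argmin}_X f$ of the potential, which contradicts the finite-area bound.

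Let $c=c(Y_0)>0$ be the constant from Proposition \ref{arealowerbound}. Because $d_g(\cdot,x_\ast)$ is continuous on the connected unbounded set $E'\cup\{x_i\}$, its image in $[0,\infty)$ is an unbounded interval, so for every sufficiently large $r$ the sphere $\partial B_g(x_\ast,r)$ meets $E'\cup\{x_i\}$. Since $\{d_g(x_i,x_\ast):i\in\mathcal{I}\}$ is countable, for a.e.\ sufficiently large $r$ one can select a point $z_r\in E'$ with $d_g(z_r,x_\ast)=r$. For each large integer $k$, set $R_k:=2^k$ and $r_k:=c/(4R_k)$, and pick radii $r_k^{(0)},\dots,r_k^{(N_k)}\in[R_k,2R_k]$ with $|r_k^{(j)}-r_k^{(j')}|\geq 4r_k$ for $j\neq j'$ and with $z_j^{(k)}:=z_{r_k^{(j)}}\in E'$; the measure-zero set of bad radii allows this with $N_k\gtrsim R_k/r_k\asymp 4^k$. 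The triangle inequality gives $d_g(z_j^{(k)},z_{j'}^{(k)})\geq 4r_k$, so the balls $B_g(z_j^{(k)},r_k)$ are pairwise disjoint, and since $d_g(z_j^{(k)},x_\ast)\in[R_k,2R_k]$ we have $r_k\leq c/d_g(z_j^{(k)},x_\ast)$, so Proposition \ref{arealowerbound} applies at each $z_j^{(k)}$:
\[
\int_{E'\cap B_g(z_j^{(k)},r_k)}\omega \,\geq\, c(Y_0)\,r_k^2.
\]
The hypothesis $\partial(\overline{E'}\cap B_g(z_j^{(k)},r_k))\subseteq\partial B_g(z_j^{(k)},r_k)$ holds because $\overline{E'}=E'\cup(\overline{E'}\cap\{x_i\})$ is a closed pure one-dimensional analytic subvariety of the complex orbifold $X$, the orbifold points in its closure appearing as singular points of the subvariety (via the local orbifold charts) rather than as free endpoints. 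Summing the disjoint contributions inside $A_k:=B_g(x_\ast,2R_k)\setminus\overline{B_g(x_\ast,R_k)}$ yields
\[
\int_{E'\cap A_k}\omega \,\geq\, N_k\cdot c(Y_0)\,r_k^2 \,\gtrsim\, 4^k\cdot 4^{-k} \,\gtrsim\, 1,
\]
and summing over $k$ contradicts $\int_{E'}\omega\leq 2\pi$. Hence $E$ is bounded, and as $E=\overline{E^\circ}$ is closed in the complete orbifold $X$, it is compact.

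For the second assertion, Proposition \ref{dichotomy}(i) handles the case $E^\circ=\emptyset$, in which $X\setminus X_{\textnormal{reg}}$ is a single point; otherwise Proposition \ref{dichotomy}(ii) gives $X\setminus X_{\textnormal{reg}}\subseteq E$, and a discrete subset of the compact set $E$ is finite. The main technical subtlety I anticipate is verifying the ``no-free-boundary'' hypothesis of Proposition \ref{arealowerbound} when the ball contains orbifold points of $\overline{E'}$; this reduces to identifying $\overline{E'}$ as a closed one-dimensional analytic subvariety of the orbifold $X$, which in turn uses that its orbifold accumulation points are isolated and hence are genuine (possibly singular) points of the subvariety.
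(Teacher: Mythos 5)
Your argument is correct and follows essentially the same route as the paper: assume $E$ unbounded, invoke Proposition \ref{bigcurve} to obtain the unbounded finite-area curve $E'$, pack $\gtrsim 4^k$ disjoint balls of radius $\asymp c\,2^{-k}$ centered on $E'$ in the $k$-th dyadic annulus about $x_{\ast}$ (the paper does this via a maximal separated set and the $\tfrac12$-Lipschitz function $1/r(x)$, you via points at prescribed radii supplied by connectedness), apply Proposition \ref{arealowerbound} to each ball, and contradict $\int_{E'}\omega\leq 2\pi$; finiteness of the singular set then follows from Proposition \ref{dichotomy}, discreteness, and compactness, exactly as in the paper. The only slip is cosmetic: a ball centered at radius exactly $R_k$ or $2R_k$ need not lie inside $A_k$, so either restrict the chosen radii to $[R_k+r_k,\,2R_k-r_k]$ or allow the bounded overlap between adjacent annuli (the paper's factor $\tfrac13$ plays the same role).
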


\begin{proof}
Suppose by way of contradiction that $E$ is unbounded, so that Proposition \ref{bigcurve} can be applied to produce $E'$. Set 
$$r(x):=\frac{{d}(x,x_{\ast})+1}{\min(c,\frac{1}{2})}$$ for $x\in X$, where $c=c(Y_0)$ is as in Proposition \ref{arealowerbound}. By Proposition \ref{bigcurve}(ii) and (iii), we know
that $r(\overline{E'})=[r_{\min},\infty)$ for some $r_{\min}\geq0$.
Choose a maximal collection $\{y_{i}\}_{i\in\mathbb{N}}$ in $E '\cap (X_{\textnormal{reg}} \setminus B(x_{\ast},r_{\min}))$ such that if $s_{i}:=r(y_{i})^{-1}$, then $\{B(y_{i},s_{i})\}$ are pairwise disjoint. Because $x\mapsto \frac{1}{r(x)}$ is $\frac{1}{2}$-Lipschitz, it follows that $E' \subseteq\bigcup_{i}B(y_{i},10s_{i})$,
so that $\bigcup_{i}r\left(B(y_{i},10s_{i})\right)\supseteq[r_{\min},\infty)$.
Fix $j_0\in \mathbb{N}$ such that $r_{\min}\leq2^{j_0}$, and let $\mathcal{I}_{j}\subseteq\mathbb{N}$
be the set of $i\in\mathbb{N}$ satisfying $r\left(B(y_{i},10r_{i})\right)\cap[2^{j},2^{j+1}]\neq\emptyset$.
If $j\geq j_0$, we then have that $|\mathcal{I}_{j}|\geq c2^{2j}$, since $[2^j,2^{j+1}] \subseteq \cup_{i \in \mathcal{I}_j} [r(y_i)-10 s_i,r(y_i)+10s_i]$ and  $s_{i}\leq2^{-j}$
for any $i\in\mathcal{I}_{j}$. Moreover, we see that $\mathcal{I}_j \cap \mathcal{I}_k= \emptyset$ whenever $|j-k|\geq 2$. It subsequently follows from Proposition \ref{arealowerbound} that 
\begin{equation*}
\begin{split}
\int_{E'} \omega\geq  \sum_{i} \int_{E'\cap B(y_i,s_i)} \omega \geq  \frac{1}{3}\sum_{j \geq j_0}\sum_{i\in\mathcal{I}_{j}} \int_{E'\cap B(y_i,s_i)} \omega \geq  \sum_{j\geq j_0}\sum_{i\in\mathcal{I}_{j}}c(Y_0)2^{-2j}\geq  \sum_{j \geq j_0}c(Y_0)=\infty,
\end{split}
\end{equation*}
thereby contradicting Proposition \ref{bigcurve}(i). 
Finally, if $X$ had infinitely many orbifold singularities, then by Proposition \ref{dichotomy}, they would all lie along $E$. The discreteness of the singular set would then imply that $E$ is unbounded. This is a contradiction.
\end{proof}

\section{Boundedness of Curvature}

In this section, we first study steady gradient K\"ahler-Ricci soliton surfaces arising as singularity models of the Ricci flow on compact K\"ahler surfaces and show that they must be Ricci-flat. Indeed, in the general Riemannian setting, the asymptotics of such solitons at spatial infinity are well understood  \cite{BCDMZ}. However, as we shall soon see, the assumption that the soliton is K\"ahler further limits the asymptotics and forces the soliton either to be Calabi-Yau or asymptotic to $\mathbb{P}^1 \times \mathbb{C}$.  
Standard facts about compact holomorphic curves in K\"ahler surfaces then allows us rule out the latter case.

The precise result is the following.
Recall that a singularity model of Ricci flow is defined to be a blow-up model of a compact Ricci flow at a finite time singularity; see \cite[Definition 1.1]{BCDMZ}. 

\begin{prop} \label{nosteadies}
    Every steady gradient K\"ahler-Ricci soliton surface { with bounded curvature} is an ALE Calabi-Yau manifold.
\end{prop}

\begin{proof}
Let $(X,g,f,J)$ be a steady gradient K\"ahler-Ricci soliton surface so that
$$\operatorname{Ric}_g+\nabla^2 f = 0\qquad\textrm{and}\qquad\mathcal{L}_{\nabla f}J=0,$$ and assume that $(X,\,g)$ is a singularity model as in the statement of the proposition. We first apply \cite[Theorem 1.3 \& Proposition 3.1]{BCDMZ} and the fact that any complete shrinking gradient K\"ahler-Ricci soliton splitting $\mathbb{R}$ actually splits $\mathbb{R}^2$ isometrically to deduce that the tangent flow at infinity of the steady soliton is unique and is diffeomorphic to either $\IP^1\times\IC$ or $\IC^2/\Gamma$ for some finite subgroup $\Gamma \subseteq U(2)$ acting freely on $\mathbb{C}^{2}\setminus\{0\}$. If the tangent flow at infinity is $\IC^2/\Gamma$, then $(X,g)$ is an ALE Calabi-Yau manifold by \cite[Proposition 3.1]{BCDMZ}.

Suppose by way of contradiction that the  tangent flow at infinity is diffeomorphic to $\mathbb{P}^1 \times \mathbb{C}$. Being a shrinking gradient K\"ahler-Ricci soliton, \cite[Theorem A]{CCD} guarantees that the tangent flow is biholomorphic to $\IP^1\times\IC$. Let $(g_t)_{t\in \mathbb{R}}$ be the canonical Ricci flow associated to $(X,\,g,\,f)$ with $g_0=g$. Fix any $x_0 \in X$, a sequence $\lambda_i \searrow 0$, and let $(w_i,-\lambda_i^{-1})$ be $H_4$-centers of $(x_0,0)$. Then we can apply \cite[Theorem 9.31]{Bam3} to obtain a precompact exhaustion $(U_i)$ of $\mathbb{P}^1 \times \mathbb{C}$, along with diffeomorphisms $\Psi_i: U_i \to V_i \subseteq X$, such that 
\[
    g_i:=\lambda_i\Psi_i^* g_{-\lambda_i^{-1}} \to \bar g,
    \qquad
    J_i:= \Psi_i^*J
    \to \bar J,
    \qquad
    \Psi_i^{-1}(w_i)\to w_{\infty},
\]
in $C_{\operatorname{loc}}^{\infty}(\mathbb{P}^1 \times \mathbb{C})$, where $\bar g$ and $\bar J$ denote the standard metric and complex structure on $\IP^1\times \IC$ respectively, and $w_{\infty} \in \mathbb{P}^1 \times \{0\}$.

Note that $(\IP^1,j)\hookrightarrow (\IP^1\times \IC,\bar J), z\mapsto (z,0)$ is a smooth $\bar J$-holomorphic curve with trivial self-intersection, where $j$ denotes the standard complex structure on $\IP^1.$ 
By \cite[Corollary 2.3]{CCD}, for $i$ sufficiently large, there are smooth $J_i$-holomorphic curves $v_i:(\IP^1,j)\to (U_i,J_i)$ with trivial self-intersection and $w_{\infty} \in v_i(\mathbb{P}^1)$. Thus, on the steady soliton $(X,J),$ there are $J$-holomorphic curves $\IP^1 \cong C := (\Psi_i \circ v_i)(\mathbb{P}^1)$ with trivial self-intersection.
By the adjunction formula and the steady K\"ahler-Ricci soliton equation, we derive from this that
\begin{align*}
    -2 &= \text{deg}(K_C)
    = K_X \cdot C + C \cdot C\\
    &= -\int_C[\operatorname{Ric}_{\omega}]+0
    = -\int_C [\sqrt{-1}\partial\bar\partial f] = 0,
\end{align*}
where $\operatorname{Ric}_{\omega}$ denotes the Ricci form of $g$. This is a contradiction.
\end{proof}

We now return to the study of the shrinking orbifold soliton $X$ obtained as the $\mathbb{F}$-limit \eqref{Fconverge}. We will show that the curvature of $X$ is bounded. (If $X$ was known to be smooth, then we would already know this by \cite{LiWang}.) The idea is to first rule out faster-than-quadratic curvature growth using a topological argument. Next, we show that the curvature actually grows slower than quadratic, for otherwise an appropriate rescaled sequence converges to a non-Ricci-flat steady gradient K\"ahler-Ricci soliton singularity model, the existence of which contradicts Proposition \ref{nosteadies}. Finally, we show that the curvature cannot blow up at a slower-than-quadratic rate, since then a rescaled sequence converges to $\mathbb{R}$ times a three-dimensional $\kappa$-solution\footnote{A three-dimensional $\kappa$-solution is a complete, ancient, non-flat three-dimensional solution to the Ricci flow with bounded nonnegative curvature, which is $\kappa$-noncollapsed at all scales for some $\kappa>0$.} which can be ruled out using the classification of such flows. 
{ 

\begin{theorem} \label{bddcurve} Let $X$ be an orbifold shrinking gradient K\"ahler-Ricci soliton arising as a tangent flow of a Ricci flow on a compact K\"ahler surface, as in Theorem \ref{bamconvergence}. If $X$ has only finitely many singularities, then $X$ has bounded curvature. 
\end{theorem}
}

\begin{proof} As usual, we identify $X$ with the $t=-1$ time slice of the metric flow limit from \eqref{Fconverge}. Recall that the regular part of the metric flow corresponds to the induced K\"ahler-Ricci flow spacetime given in Definition \ref{standardsoliton}, and that with respect to this identification, ${g}_t =|t|\varphi_{t}^{\ast}{g}$, where $\partial_{t}\varphi_{t}(x)=\frac{1}{|t|}\nabla^{{g}}{f}(\varphi_{t}(x))$. We also set ${f}_{t}:=\varphi_{t}^{\ast}{f}$. Suppose by way of contradiction that there is a sequence $\bar{x}_{i}\in X$ diverging to spatial infinity such that $\lim_{i\to \infty} r_{\Rm}^{g}(\bar{x}_i,-1)=0$. 

The next claim is a point-picking argument which finds appropriate basepoints at which a rescaled sequence of the flow converges to a smooth limit.

\begin{claim} \label{claim-pointpicking}
There are sequences $A_i \nearrow \infty$ and $(x_{i},t_{i})\in X\times(-\infty,-1]$ such that the following hold:
\begin{enumerate}
\item $r_{i}:=r_{{\Rm}}^{{g}}(x_{i},t_{i})\leq r_{{\Rm}}^{{g}}(\bar{x}_{i},-1)$;
\item $\sup_{P_{{g}}^{\ast-}(x_i,t_i;A_i r_i)}r_{{\Rm}}^{{g}}\geq\frac{1}{10}r_{i}$;
\item $(x_{i},t_{i})\in P_{{g}}^{\ast-}(\bar{x}_{i},-1;10 A_{i}r_{\Rm}^{{g}}(\bar{x}_i,-1))$;
\item $\lim_{i\to \infty} A_i r_i = \lim_{i \to \infty} A_i r_{\text{Rm}}^g(\bar{x}_i,-1)=0$.
\end{enumerate}
\end{claim}

\begin{proof}[Proof of Claim \ref{claim-pointpicking}]
Choose $A_i \to \infty$ such that $\lim_{i\to \infty}A_i r_{\Rm}^{g}(\bar{x}_i,-1) =0$. The hypotheses of \cite[Claim 10.5]{Bam1} with $A$ replaced by $A_i$ and $(x,t)$ replaced by $(\bar{x}_i,-1)$ hold for sufficiently large $i\in \mathbb{N}$. This yields $(x_i,t_i)$ satisfying (i)--(iii), and thus also (iv). 
\end{proof}

Define ${g}_{i,t}:=r_{i}^{-2}{g}_{r_{i}^2t+t_{i}}$ for $t\in(-\infty,0]$,
so that by Claim \ref{claim-pointpicking}(ii), we have
\[
\sup_{P_{{g}_{i}}^{\ast -}(x_i,0;A_i)}r_{{\Rm}}^{{g}_{i}}\geq\frac{1}{10}
\]
for all $i\in\mathbb{N}$, and $r_{{\Rm}}^{g_{i}}(x_{i},0)=1$. By no-local collapsing \cite[Theorem 6.1]{Bam1}, we can pass to a subsequence to obtain smooth
pointed Cheeger-Gromov convergence 
\[
(X,({g}_{i,t})_{t\in(-\infty,0]},{J},x_{i})\to(X_{\infty},(g_{\infty,t})_{t\in(-\infty,0]},J_{\infty},x_{\infty})
\]
to a complete K\"ahler-Ricci flow satisfying $r_{{\Rm}}^{g_{\infty}}(x_{\infty},0)=1$
and 
\[
\sup_{X_{\infty}\times(-\infty,0]}r_{{\Rm}}^{g_{\infty}}\geq\frac{1}{10}.
\]
By passing to a subsequence, we may assume that we are in one of the following three cases.\\ 

\begin{description}
 \item[Case 1] $\lim_{i\to\infty}r_{i}^{2}{f}_{t_i}(x_{i})=0$. \\

\noindent In this case there exist $y_i \in X$ such that 
$$\lim_{i\to \infty} |{\Rm}_{g_{t_i}}|_{{g}_{t_i}}^{-1}(y_i) f_{t_i}(y_i) =0.$$ 
However,
$$(\partial_t - \nabla {f})\left( |{\Rm}_{g_t}|_{{g}_t}^{-1}{f_t} \right) \leq 0,$$
since $\mathcal{L}_{\partial_{\mathfrak{t}}-\nabla f}g_t = -\frac{1}{|t|} g_t$. As a result, $z_i := \varphi_{t_i}(y_i)$ satisfy  
$$\lim_{i\to \infty} |{\Rm}_{g_{-1}}|_{{g}_{-1}}^{-1}(z_i){f}_{-1}(z_i)\leq \liminf_{i \to \infty}|{\Rm}_{g_{t_i}}|_{{g}_{t_i}}^{-1}(y_i){f}_{t_i}(y_i)=0.$$
By \cite[Proof of Proposition 8]{CFSZ}, there is a sequence of disjoint domains $\Omega_i \subseteq X$, each diffeomorphic to a fixed non-flat ALE Calabi-Yau manifold $Z$ {
 such that for any compact subset $L\subseteq X$, we have $L\cap \Omega_i =\emptyset$ for sufficiently large $i\geq \underline{i}(L)$. In particular, by passing to a subsequence, we may assume that $\Omega_i \subseteq X_{\textnormal{reg}}$ for all $i\in \mathbb{N}$. For any $N\in \mathbb{N}$, if $i\geq \underline{i}(N)$ is sufficiently large, then $\psi_i$ restricts to a smooth embedding  $\sqcup_{i=1}^N \Omega_i \hookrightarrow \widetilde{M}$. This contradicts \cite[Theorem 12]{CFSZ} (see also \cite[Proposition 1]{CFSZcorrection}).}\\

\item[Case 2] $\lim_{i\to\infty}r_{i}^{2}{f}_{t_i}(x_{i})\in(0,\infty)$.\\

\noindent By the same argument as Case 1, we obtain that $|\Rm_g|_g \leq C(f+C)$ for some $C>0$. Furthermore, replacing $y_i$ with $\varphi_{t_i}(y_i)$ again allows us to assume that $t_i=-1$. Then \cite[Proof of Proposition 10]{CFSZ} implies the existence
of a complete noncompact non-Ricci-flat steady gradient K\"ahler-Ricci soliton which is also a singularity model in the sense of \cite[Definition 1.1]{BCDMZ}. The existence of such a soliton contradicts Proposition \ref{nosteadies}.\\

\item[Case 3] $\lim_{i\to\infty}r_{i}^{2}{f}_{t_i}(x_{i})=\infty$. \\

\noindent Define $\lambda_{i}:=r_{i}|\nabla f_{t_i}|_{g_{t_i}}(x_{i})$, and set
$${f}_{i,t}(x):=\lambda_{i}^{-1}({f}_{t_i+r_i^2 t}(x)-{f}_{t_i}(x_{i})),$$
so that ${f}_{i,0}(x_{i})=0$,
\[
|\nabla {f}_{i,0}|_{{g}_{i,0}}^{2}(x_{i})=r_{i}^{2}\lambda_{i}^{-2}|\nabla {f}_{t_i}|_{{g}_{t_i}}^{2}(x_{i})=1,
\]
and
\[
\lambda_{i}^{-1}\operatorname{Ric}_{{g}_{i,0}}+\nabla^{g_{i,0}} \nabla^{g_{i,0}}{f}_{i,0}=\lambda_{i}^{-1}r_{i}^{2}{g}_{i,0}.
\]
From $R_{{g}}+|\nabla {f}|_{{g}}^{2}={f}$ and Case 1, we have that
\[
\lim_{i\to\infty}\lambda_{i}^{2}=\lim_{i\to\infty}r_{i}^{2}|\nabla {f}|_{{g}}^{2}(x_{i},t_{i})\geq\frac{1}{2}\lim_{i\to\infty}r_{i}^{2}f(x_{i},t_{i})=\infty.
\]
We can therefore pass to a subsequence and appeal to local elliptic regularity
to obtain $C_{\text{loc}}^{\infty}(X_{\infty})$ convergence of ${f}_{i}$
to $f_{\infty}\in C^{\infty}(X_{\infty})$. Moreover, it is clear that
\[
f_{\infty}(x_{\infty},0)=0,\,\,\,\,\,\,\,|\nabla^{g_{\infty}} f_{\infty}|(x_{\infty},0)=1,\,\,\,\,\,\,\,\,\nabla^{g_{\infty}}\nabla^{g_{\infty}}f_{\infty}\equiv0.
\]
Bochner's formula then gives
\[
0=\Delta|\nabla^{g_{\infty}} f_{\infty}|^{2}=2{\operatorname{Ric}_{g_{\infty}}}(\nabla^{g_{\infty}} f_{\infty},\nabla^{g_{\infty}} f_{\infty})=2{\operatorname{Ric}_{g_{\infty}}}(J\nabla^{g_{\infty}} f_{\infty},J\nabla^{g_{\infty}} f_{\infty}).
\]
Because $(X_{\infty},g_{\infty,0})$ splits as a product, \cite[Theorem
1.1]{CK20} and the remark thereafter implies that $(X_{\infty},(g_{\infty,t})_{t\in(-\infty,0]})=(X_{\infty}'\times\mathbb{R},(g_{\infty,t}'+g_{\mathbb{R}})_{t\in(-\infty,0]})$
for some non-flat complete ancient three-dimensional solution $(X_{\infty}',(g_{\infty,t}')_{t\in(-\infty,0]})$
of Ricci flow with bounded curvature which is $\kappa$-noncollapsed
at all scales for some $\kappa>0$. By \cite[Corollary 2.4]{CH09},
$g_{\infty,t}'$ has nonnegative curvature for all $t>0$, so is a
$\kappa$-solution of Ricci flow. In particular, it must be the Bryant
soliton, a quotient of the round cylinder, a quotient of the round $\mathbb{S}^3$, or Perelman's type II oval, by \cite[Theorem 1.5]{BDS21} and \cite[Theorem 1.3]{brendle}.  However, because ${\operatorname{Ric}}_{g_{\infty,t}'}$
has at least one zero eigenvalue at any point of $X_{\infty}'$, we
know that $g_{\infty,t}'$ cannot be the Bryant soliton, Perelman's type II oval, or a quotient of the round $\mathbb{S}^3$, hence must be the cylinder $\mathbb{S}^2 \times \mathbb{R}$ or $\mathbb{S}^{2}\tilde{\times}\mathbb{R}$ equipped with the standard round metric, where 
\[
\mathbb{S}^{2}\tilde{\times}\mathbb{R}=(\mathbb{S}^{2}\times\mathbb{R})/\sim
\]
and $(\theta,x)\sim(\theta',x')$ if and only if $(\theta',x')=(-\theta,-x)$.
In the latter case, the universal cover
\[
\mathbb{S}^{2}\times\mathbb{R}^{2}\to(\mathbb{S}^{2}\tilde{\times}\mathbb{R})\times\mathbb{R}=X_{\infty}
\]
equipped with the complex structure $\overline{J}$ obtained from pulling back $J_{\infty}$ is K\"ahler, so by \cite[Claim 3.3]{CCD}, it must be the standard complex structure $\overline{J}$ on $\mathbb{S}^{2}\times\mathbb{R}^{2}$. However, $\overline{J}$ is not preserved
by deck transformations of the covering map $(\theta,x,y)\mapsto(-\theta,-x,y)$ (in fact, the antipodal
map of $\mathbb{S}^{2}\cong\mathbb{P}^{1}$ in homogeneous coordinates
is $[z_{0}:z_{1}]\mapsto[-\overline{z}_{1}:\overline{z}_{0}]$), which is not possible. We consequently deduce that $(X_{\infty},(g_{\infty,t})_{t\in(-\infty,0]},J_{\infty})$
is biholomorphic and isometric to the round shrinking $\mathbb{P}^1\times\mathbb{C}$. 

Let $\phi_{i}:X_{\infty}\supseteq W_{i}\to X$ be the Cheeger-Gromov
open embeddings, so that $\phi_{i}^{\ast}{g}_{i} \to g_{\infty}$, $\phi_{i}^{\ast}{J}\to J_{\infty}$ in $C_{\text{loc}}^{\infty}(X_{\infty}\times(-\infty,0])$. In particular, for any choice of $\epsilon>0$ and any compact subset $L\subseteq X_{\infty}$, we have that
\[
||r_{i}^{-2}\phi_{i}^{\ast}g_{t_i}-g_{\infty,0}||_{C^{\lfloor\epsilon^{-1}\rfloor}(L,g_{\infty,0})}\leq\epsilon,\,\,\,\,\,\,\,\,\,||\phi_{i}^{\ast} J-J_{\infty}||_{C^{\lfloor\epsilon^{-1}\rfloor}(L,g_{\infty,0})}\leq\epsilon
\]
for sufficiently large $i=i(\epsilon,L) \in\mathbb{N}$. Choose $L$ to be the closure of any neighborhood of the $J_{\infty}$-holomorphic curve $\mathbb{P}^1 \times \{0\}$. 
{ For $i\in \mathbb{N}$ sufficiently large, \cite[Corollary 2.3]{CCD} yields a $\phi_i^{\ast}J$-holomorphic curve $\Sigma_i \subseteq L$ with trivial self-intersection that is homologous to $\mathbb{P}^1 \times \{0\}$. Thus, $\phi_i(\Sigma_i) \subseteq X$ is a $J$-holomorphic curve with trivial self-intersection with
$$r_i^{-2} \text{vol}_{g_{t_i}}(\phi_i(\Sigma_i)) = r_i^{-2} \int_{\phi_i(\Sigma_i)} \omega_{t_i} = r_i^{-2} \int_{\Sigma_i} \phi_i^{\ast} \omega_{t_i} = \int_{\mathbb{P}^1\times\{0\}} r_i^{-2}\phi_i^{\ast} \omega_{t_i}.$$
Because $r_i^{-2}\phi_i^{\ast} \omega_{t_i}\to \omega_{\infty,\,0}$ in $C_{\text{loc}}^{\infty}(X_{\infty})$, we deduce that $\lim_{i\to \infty} \text{vol}_{g_{t_i}}(\phi_i(\Sigma_i)) = 0$. Recalling that $\lim_{i\to \infty} t_i =-1$, this implies that $\lim_{i\to \infty} \text{vol}_{g_{-1}}(\phi_i(\Sigma_i))=0$ so that 
$$\text{vol}_{g_{-\frac{1}{2}}}(\phi_i(\Sigma_i)) \leq \text{vol}_{g_{-1}}(\phi_i(\Sigma_i)) - \pi <0$$
for all $i\in \mathbb{N}$
sufficiently large. This is a contradiction.
}
\end{description}
\end{proof}

We now indicate how to combine the proof of Theorem \ref{bddcurve} and \cite[Theorem 1.1]{JunshengSong} to obtain a new proof of the following result, which was previously established in \cite{LiWang}. 

\begin{theorem}\label{2d}
Every complete two-dimensional shrinking gradient K\"ahler-Ricci soliton has bounded curvature.  
\end{theorem}

\begin{proof}
By the proof of Theorem \ref{bddcurve}, it suffices to show that $$\lim_{x\to\infty}\frac{|{\Rm_g}|_{g}(x)}{f(x)}<\infty.$$ 
Suppose, for sake of a contradiction, that this is not the case. Then, by the proof of Case 1 of Theorem \ref{bddcurve}, there exists a sequence $(\Omega_{i})_{i\,\in\,\mathbb{N}}$ of pairwise disjoint domains, each diffeomorphic to a fixed non-flat Calabi-Yau ALE manifold $Z$. 

By \cite[Theorem 1.1]{JunshengSong}, $X$ is quasiprojective and so is biholomorphic to an open subset of the regular set $\overline{X}_{\operatorname{reg}}$ of a projective variety $\overline{X}$. By choosing the irreducible component of $\overline{X}$ containing $X$, we may assume that $\overline{X}$ has complex dimension two. Taking a resolution of singularities
$\rho:\widehat{X}\to\overline{X}$, we then obtain a smooth compact projective surface $\widehat{X}$.


Since $X\subseteq\overline{X}_{\text{reg}}$, it follows
that $X$ is diffeomorphic to an open subset of $\widehat{X}$,
a smooth, compact, four-dimensional manifold. The sets $\Omega_{i}$ therefore form a sequence of pairwise disjoint open subsets of $\widehat{X}$, each diffeomorphic to $Z$. This contradicts \cite[Theorem 12]{CFSZ} (see also \cite{CFSZcorrection}). 
\end{proof}

\section{Classification of Tangent Flows}

Throughout this section, we assume that $(X,g,J,f)$ is an orbifold shrinking gradient K\"ahler-Ricci soliton arising as a limit as in \eqref{Fconverge}, and that $\mathcal{V}\subseteq \widetilde{M}$ is a neighborhood of $\widetilde{E}$ as in Section 3. We also recall the existence of Cheeger-Gromov diffeomorphisms $\psi_i$ realizing smooth local convergence on $X_{\textnormal{reg}}$ so that \eqref{smoothconverge} holds. Any complex orbifold with isolated singularities may be viewed as a normal complex analytic space with quotient singularities. Recall from Lemma \ref{Hausdorff} that $E \cap X_{\textnormal{reg}} = E^{\circ}$ is an analytic subvariety of $X_{\textnormal{reg}}$. Because $X \setminus X_{\textnormal{reg}}$ comprises finitely many points, we can appeal to the extension theorem for analytic sets \cite[Theorem 2, Section 9.4]{sheaves} to conclude that $E$ is a compact analytic subset of $X$. Also recall from Proposition \ref{dichotomy} that all orbifold points of $X$ are contained in $E$ unless $E=\emptyset$, in which case there is only one orbifold point. 

We show in the next proposition that $E$ is in fact the maximal compact analytic subset of $X$. Our argument uses the function $\widetilde{u}_{t}$ defined in \eqref{defineu}. Using the estimates of Proposition \ref{functionu}, we will show that, after pulling back by the diffeomorphisms $\psi_i$, appropriate normalizations of $\widetilde{u}_t$ locally smoothly converge on $X\setminus E$ to a smooth function $u$. Because $\widetilde{u}_{t}$ is approximately a Ricci potential away from $\widetilde{E}$ and $X$ is a shrinking soliton, $u+f$ will be strictly plurisubharmonic on $X\setminus E$. We will also show that $u+f$ is bounded above near $E$ and extends to a plurisubharmonic function on all of $X$. A straightforward application of the maximum principle then tells us that $E$ is the maximal compact analytic subset of $X$. 

\begin{prop} \label{ExcSet}
Any compact holomorphic curve in $X$ is contained in $E$.
\end{prop}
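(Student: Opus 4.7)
The plan is to build a plurisubharmonic function $u\colon X\to[-\infty,\infty)$ with $u+f$ strictly plurisubharmonic on $X\setminus E$ and $(u+f)\equiv -\infty$ on $E$, and then invoke the strong maximum principle for subharmonic functions on a compact Riemann surface to rule out any compact holomorphic curve not contained in $E$.

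First I would construct $u$ as a $C^{\infty}_{\textnormal{loc}}$ limit on $X_{\textnormal{reg}}\setminus E$ of normalized pullbacks of $\widetilde{u}_{t_i}$. Rescaling Proposition \ref{functionu}(iv) yields $|\nabla^{\widetilde{g}_{i,-1}}\widetilde{u}_{t_i}|_{\widetilde{g}_{i,-1}}\leq C(\epsilon)$ outside an $\epsilon$-parabolic tube about $\widetilde{E}$ in the rescaled flow; combined with Lemma \ref{containedinV} and the Hausdorff convergence $E_i\to E^{\circ}$, this hypothesis is satisfied at $\psi_{i,-1}(K)$ for any compact $K\subseteq X_{\textnormal{reg}}\setminus E$ once $i$ is large. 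Setting $\hat u_i:=\psi_{i,-1}^{\ast}\widetilde{u}_{t_i}+c_i$ with $c_i$ chosen to pin down $\hat u_i$ at a fixed basepoint yields uniform $C^0$-bounds on compacta, and Proposition \ref{functionu}(ii) identifies $\sqrt{-1}\partial\bar\partial \hat u_i$ with $\operatorname{Ric}_{\psi_{i,-1}^{\ast}\widetilde{\omega}_{i,-1}}-\psi_{i,-1}^{\ast}\pi^{\ast}\vartheta$; the latter form has $\widetilde{g}_{i,-1}$-norm of order $T-t_i$ and hence vanishes in the rescaled limit. Local elliptic regularity upgrades this to $C^{\infty}_{\textnormal{loc}}$-compactness, and a diagonal subsequence produces $u$ with $\sqrt{-1}\partial\bar\partial u=\operatorname{Ric}_{\omega}$ on $X_{\textnormal{reg}}\setminus E$. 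The soliton identity $\operatorname{Ric}_{\omega}+\sqrt{-1}\partial\bar\partial f=\omega$ at $t=-1$ then yields $\sqrt{-1}\partial\bar\partial(u+f)=\omega>0$.

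Next I would extend $u+f$ plurisubharmonically across $E$. Proposition \ref{functionu}(i) writes $\hat u_i$ as $\log|s|^2_{h_{t_i}}+c_i$ for the fixed defining section $s$ of $\widetilde{E}$, so $u$ is locally bounded above near $E\cap X_{\textnormal{reg}}$ with $u\to-\infty$ along $E^{\circ}$. The classical extension theorem for plurisubharmonic functions across the thin analytic set $E^{\circ}$ provides a plurisubharmonic extension to $X_{\textnormal{reg}}$. The finitely many orbifold singularities of $X$ all lie in $E$ by Proposition \ref{dichotomy}(ii) (and in the case $E=\emptyset$, the single orbifold point absorbs $\widetilde{E}$ by Proposition \ref{dichotomy}(i) and plays the role of $E$ for this argument). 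These are handled by lifting to an orbifold chart, where the pullback is a $\Gamma$-invariant plurisubharmonic function on the complement of an analytic set through $0\in\mathbb{C}^2$, locally bounded above, so extends across $0$ by the standard removable singularity theorem and descends to the orbifold.

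Finally, let $C\subseteq X$ be a compact analytic curve; we may assume it is irreducible. Supposing for contradiction that $C\not\subseteq E$, let $\pi\colon\widetilde C\to C$ denote the normalization. Then $h:=\pi^{\ast}(u+f)$ is upper semicontinuous on the compact Riemann surface $\widetilde C$, not identically $-\infty$, subharmonic (being the pullback of a plurisubharmonic function by a holomorphic map), and strictly subharmonic on the open dense set $\widetilde C\setminus\pi^{-1}(E\cup C_{\textnormal{sing}})$ since $\sqrt{-1}\partial\bar\partial(u+f)|_{TC}=\omega|_{TC}>0$ at smooth points of $C\setminus E$. By upper semicontinuity and compactness, $h$ attains its supremum at some $x_0\in\widetilde C$ with $h(x_0)>-\infty$, so $x_0\notin\pi^{-1}(E)$. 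The strong maximum principle would then force $h$ to be constant near $x_0$, contradicting strict subharmonicity. Hence $C\subseteq E$. The main technical obstacle is the first paragraph, namely translating Lemma \ref{containedinV} and the Hausdorff convergence $E_i\to E^{\circ}$ into a uniform verification of the $P^{\ast-}$-hypothesis in Proposition \ref{functionu}(iv) at $\psi_{i,-1}(K)$ on the rescaled flow; once this analytic convergence is established, the extension and maximum principle steps are essentially routine.
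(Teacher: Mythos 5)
Your overall strategy coincides with the paper's (construct $u$ as a limit of normalized $\widetilde{u}_{t_i}$, show $u+f$ is plurisubharmonic with $\sqrt{-1}\partial\bar\partial(u+f)=\omega$ off $E$, extend across $E$ and the orbifold points, then apply a maximum principle to any compact curve), but there is a genuine gap at the extension step. You assert that Proposition \ref{functionu}(i) ("$\widetilde{u}_t=\log|s|^2_{h_t}$ with $h_t$ smooth") implies that $u$ is locally bounded above near $E\cap X_{\textnormal{reg}}$. It does not: for each fixed $t$ the smoothness of $h_t$ gives a local upper bound for $\widetilde{u}_t$ with a constant depending on $h_t$, but nothing controls $h_{t_i}$ (equivalently $\widetilde{u}_{t_i}-\log|z_1|^2$) uniformly as $t_i\to T$, nor its interaction with your normalizing constants $c_i$; likewise the gradient estimate of Proposition \ref{functionu}(iv) degenerates as one approaches $\widetilde{E}$, so it only yields bounds on compact subsets of $X_{\textnormal{reg}}\setminus E$ with constants deteriorating near $E$. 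The uniform upper bound near $E$ is precisely the nontrivial input, and the paper proves it (for $u+f$, not for $u$ alone) in Claim \ref{claim-upperboundonPSH}: one observes that $\psi_{i,-1}^{\ast}(\widetilde{u}_{i,-1}+\widetilde{f}_{i,-1})$ is plurisubharmonic because the Poincar\'e--Lelong term $2\pi[\widetilde{E}]$ contributes a positive current, and then runs the maximum principle on holomorphic disks transverse to $E$ (constructed via the converging holomorphic coordinates of Lemma \ref{coordlemma}), whose boundary circles remain in a compact region of $X_{\textnormal{reg}}\setminus E$ where the uniform bound \eqref{upperboundforu} applies. Without this slicing argument (or a substitute) your plurisubharmonic extension of $u+f$ across $E^{\circ}$ is unjustified, so the step you call "essentially routine" is in fact the heart of the proof.

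Two smaller points. First, your claim that $(u+f)\equiv-\infty$ on $E$ is also unproven, but it is unnecessary: once $u+f$ extends plurisubharmonically to $X$, subharmonicity on the compact (normalized or not) irreducible curve already forces constancy, which contradicts strict plurisubharmonicity at a regular point of $A\setminus E$; this is exactly how the paper concludes, so you should drop the detour through $x_0\notin\pi^{-1}(E)$. Second, the verification of the $P^{\ast-}$-hypothesis of Proposition \ref{functionu}(iv) at $\psi_{i,-1}(K)$, which you flag as the main obstacle, does require an argument beyond time-$(-1)$ Hausdorff convergence: one must locate $\widetilde{E}$ at earlier times inside the parabolic neighborhood (the paper does this using the convergence $(\psi_i^{-1})_{\ast}\partial_t\to\partial_{\mathfrak{t}}$ to get Hausdorff convergence uniformly in $t\in[-2,-1]$) and then compare $P^{\ast}$-neighborhoods with metric balls via curvature-scale bounds; you correctly identify what is needed there, but the unflagged gap in the extension step is the serious one.
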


\begin{proof}
We begin with the following claim stating that points in $X_{\textnormal{reg}}$ correspond to points in the Ricci flow on $\widetilde{M}$ that are not too close to $\widetilde{E}$ in a $P^{\ast}$-parabolic sense.

\begin{claim} \label{claim-nointersection}
For any $x\in X_{\textnormal{reg}}\setminus E$,
there exists $r=r(x)>0$ such that $P_{\widetilde{g}_{i}}^{\ast-}(\psi_{i,-1}(x),-1;r)\cap(\widetilde{E}\times\mathbb{R})=\emptyset$
for sufficiently large $i\in\mathbb{N}$.
\end{claim}

\begin{proof}[Proof of Claim \ref{claim-nointersection}]
Let $A=A(x)>0$ to be determined later. We can find $r>0$ such that $P^{\ast-}(x,-1;2A^2r)\subseteq X_{\textnormal{reg}}$. Let $U\Subset X_{\text{reg}}$ be arbitrary. Recall from Theorem \ref{bamconvergence} that $(\psi_i^{-1})_{\ast} \frac{\partial}{\partial t} \to \partial_{\mathfrak{t}}$ in $C_{\text{loc}}^{\infty}(X_{\text{reg}}\times (-\infty,0))$, where $\partial_{\mathfrak{t}}$ is the standard vector field on the second factor of $X \times (-\infty,0)$. Moreover, $E_{i,-1}:= \psi_{i,-1}^{-1}(\widetilde{E}\cap \psi_{i,-1}(U)) \to E\cap U$ in the Hausdorff sense by Lemma \ref{Hausdorff}, so that for all $t\in [-2,-1]$, the time $(t+1)$-flow $\Theta_{i,t}: E_{i,-1}\to X_{\text{reg}} \times \{t\}$ of $(\psi_{i}^{-1})_{\ast}\frac{\partial}{\partial t}$ is well-defined for $i\geq \underline{i}(U)$ sufficiently large. Consequently, the images $E_{i,t}$ of $\Theta_{i,t}$ converge to $(E \cap U)\times \{t\}$ as $i\to \infty$, uniformly in $t\in [-2,-1]$. However, because $\frac{\partial}{\partial t}$ is tangent to $\widetilde{E} \times \mathbb{R}$, we know that the image of $\Theta_{i,t}$ coincides with $\psi_{i,t}^{-1}(\widetilde{E} \cap \psi_{i,t}(U))\times \{t\}$, and so we find that $\psi_{i,t}^{-1}( \widetilde{E} \cap \psi_{i,t}(U))$ converges in the Hausdorff sense (uniformly in $t\in [-2,-1]$) to $E\cap U$ as $i\to \infty$. In particular, if $x\in X_{\textnormal{reg}}\setminus E$, we can shrink $r>0$ in order to assume that
\[
P^{\ast-}(x,-1;A^2r)\cap\bigcup_{t\in[-2,-1]}\psi_{i,t}^{-1}(V_{i}\cap \widetilde{E})=\emptyset
\]
for sufficiently large $i\in\mathbb{N}$. By further shrinking $r>0$, we can assume in addition that $r_{\Rm}^{g}(x,-1)\geq 2A^2 r$, so that $r_{\Rm}^{\widetilde{g}_i}(\psi_{i,-1}(x),-1) \geq A^2 r$ for sufficiently large $i \in \mathbb{N}$ \cite[Lemma 15.16 \& Corollary 15.47]{Bam3}. 
By \cite[Corollary 9.6]{Bam1} and \cite[Proposition 9.17]{Bam2}, it follows that
\begin{align*}
P_{\widetilde{g}_{i}}^{\ast-}(\psi_{i,-1}(x),-1;r) &\subseteq B_{\widetilde{g}_i}(\psi_{i,-1}(x),-1,Ar)\times [-1-r^2,-1] \\ &\subseteq \psi_i(B_g(x,-1,2Ar)\times [-1-r^2,-1]) \\ &\subseteq \psi_i\left(P^{\ast-}(x,-1;A^2r)\right)
\end{align*} 
if we choose $A>0$ large and $i \geq \underline{i}(A)$ large. This observation yields the desired claim.
\end{proof}

Next suppose that $L\subseteq X_{\textnormal{reg}}$ is any compact and connected set and $B>0$. For any $t\in [-B,-B^{-1}]$ and $x\in\psi_{i,t}(L)$, Claim \ref{claim-nointersection} gives us that 
\[
P_{\widetilde{g}_{i}}^{\ast-}(x,t;r)\cap(\widetilde{E}\times\mathbb{R})=\emptyset
\]
for some $r=r(L)>0$, whenever $i\geq \underline{i}(L)$ is sufficiently large. Recall the function $\widetilde{u}_{t}$ defined by \eqref{defineu}, and define $\widetilde{u}_{i,t}:= \widetilde{u}_{T+(T-t_i)t}+\lambda_i$, where $\lambda_i := - \widetilde{u}_{t_i}(\psi_{i,-1}(x_0))$ for some fixed $x_0 \in L$. By Proposition \ref{functionu}(iv) and the fact that 
$$P_{\widetilde{g}}^{\ast-}(x,T+(T-t_i)t;B^{-\frac{1}{2}}r\sqrt{(T-t_i)|t|}) \cap (\widetilde{E}\times \mathbb{R}) = \emptyset$$
for any $t\in [-B,-1]$ and $x\in \psi_{i,(T-t_i)^{-1}(t-T)}(L)$, we find that
$$\sup_{t\in [-B,-1]} \sup_{y\in \psi_{i,t}(L)} |\nabla \widetilde{u}_{i,t}|_{\widetilde{g}_i,t}(y) = \sqrt{T-t_i} \sup_{t \in [T-(T-t_i)B,t_i]} \sup_{y\in \psi_{i,(T-t_i)^{-1}(t-T)}(L)} |\nabla \widetilde{u}_t|_{\widetilde{g}}(y) \leq C(L,B).$$
Similarly, we have that
$$\sup_{t\in [-B,-1]} |\partial_t \widetilde{u}_{i,t}|(\psi_{i,t}(x_0)) = \sup_{t\in [-B,-1]} |R_{\widetilde{g}_{T+(T-t_i)t}}|(\psi_{i,t}(x_0)) = \sup_{t\in [-B,-1]} |R_{\widetilde{g}_{i,t}}|(\psi_{i,t}(x_0))\leq C(B).$$
In summary, we have just shown that any compact connected subset $L \subseteq X_{\textnormal{reg}} \setminus E$ and any $B>0$, we have 
\begin{equation} \label{upperboundforu}
\sup_{t\in [-B,-1]}\sup_{\psi_{i,t}(L)}|\widetilde{u}_{i}|\leq C(L,B).
\end{equation}

By Lemma \ref{containedinV}, we know that $\psi_i(L\times [-B,-1])\subseteq \mathcal{V}\times \mathbb{R}$ for $i\geq \underline{i}(L,B)$ large. By Proposition \ref{functionu}, it follows that
\[
\sqrt{-1}\partial\bar{\partial}\widetilde{u}_{i,t}={\operatorname{Ric}}_{\widetilde{\omega}_{i,t}},
\]
\[
(\partial_{t}-\Delta)\widetilde{u}_{i}=0,
\]
on $\psi_i(L\times [-B,-1])$ for sufficiently large $i\geq \underline{i}(L,B)$.
Local parabolic regularity theory then allows us to extract a subsequence
such that $\widetilde{u}_{i,-1} \circ \psi_{i,-1}$ converges in $C_{\text{loc}}^{\infty}(X_{\textnormal{reg}} \setminus E)$
to a function $u$ 
satisfying $\sqrt{-1}\partial\bar{\partial}u={\operatorname{Ric}}_{\omega}$
on $X_{\textnormal{reg}}\setminus E$. The function $\varphi:=u+f\in C^{\infty}(X_{\textnormal{reg}}\setminus E)$
then satisfies
\[
\sqrt{-1}\partial\bar{\partial}\varphi={\operatorname{Ric}}_{\omega}+\sqrt{-1}\partial\bar{\partial}f=\omega
\]
on $X_{\textnormal{reg}}\setminus E$. 
The following claim will be used to show that $\varphi$ is locally bounded above away from the singularities of $E$ and $X$. We let $E_{\textnormal{reg}}$ denote the regular set of $E$ (as an analytic set).

\begin{claim} \label{claim-upperboundonPSH}
Every point $x \in E_{\textnormal{reg}} \cap X_{\textnormal{reg}}$ has a neighborhood $U$ such that 
$$\limsup_{i \to \infty} \sup_U \psi_{i,-1}^{\ast}(\widetilde{f}_{i,-1}+\widetilde{u}_{i,-1})<\infty.$$
\end{claim}

\begin{proof}[Proof of Claim \ref{claim-upperboundonPSH}]
Fix $x \in E_{\textnormal{reg}} \cap X_{\textnormal{reg}}$ so that there exist holomorphic slice coordinates $(z^1,z^2)$ for $E_{\textnormal{reg}}$ in some neighborhood $U'$ of $x$. In other words,
$E \cap U' = \{ z\in U'\,|\,z^2=0 \}$. By Lemma \ref{coordlemma}, there exist $\psi_{i,-1}^{\ast}\widetilde{J}$-holomorphic coordinates $(z_i^1,z_i^2)$ converging locally smoothly to $(z^1,z^2)$ in a neighborhood of $x$. For $w\in \mathbb{C}$ with $|w|$ sufficiently small, and $\epsilon>0$ small, we may define
$$S_{i,w} := \{ z\in U'\,|\, z_i^1=w,\quad |z_i^2|=\epsilon \} \subseteq U'.$$
Recalling that $E_i = \psi_{i,-1}^{-1}(\widetilde{E} \cap V_{i,-1})$ satisfies $E_i\cap U' \to E \cap U'$ in the Hausdorff sense (cf.~\eqref{eq: Edef}), we see that
$$E_i \cap \{z \in U'\,|\,  |z_i^1|<\epsilon \text{ and } |z_i^2|=\epsilon \} = \emptyset$$
for sufficiently large $i \geq \underline{i}(\epsilon)$. Furthermore, $\psi_{i,-1}^{\ast}(\widetilde{u}_{i,-1}+\widetilde{f}_{i,-1})$ is $\psi_{i,-1}^{\ast}\widetilde{J}$-plurisubharmonic on $U'$ for large $i \geq \underline{i}(U')$. This is because, by Proposition \ref{functionu},
$$\sqrt{-1}\partial_{\psi_{i,-1}^{\ast}\widetilde{J}} \bar{\partial}_{\psi_{i,-1}^{\ast}\widetilde{J}} \left( \psi_{i,-1}^{\ast}(\widetilde{u}_{i,-1}+\widetilde{f}_{i,-1}) \right) = \psi_{i,-1}^{\ast} (\Ric_{\widetilde{\omega}_{i}}+\sqrt{-1}\partial_{\widetilde{J}} \bar{\partial}_{\widetilde{J}} \widetilde{f}_{i,-1})+ [\psi_{i,-1}^{-1}(\widetilde{E})]$$
in the sense of currents, where $\psi_{i,-1}^{\ast} (\Ric_{\widetilde{\omega}_{i}}+\sqrt{-1}\partial_{\widetilde{J}} \bar{\partial}_{\widetilde{J}} \widetilde{f}_{i,-1}) \to \omega$ and $[\psi_{i,-1}^{-1}(\widetilde{E})]$ is a positive $(1,\,1)$-current with respect to the complex structure $\psi_{i,-1}^{\ast}J$. Therefore upon setting
$$D_{i,w}:= \{ z\in U'\,|\,z_i^1=w,\quad|z_i^2|<\epsilon \},$$
the maximum principle tells us that
$$\sup_{D_{i,w}} \psi_{i,-1}^{\ast}(\widetilde{u}_{i,-1}+\widetilde{f}_{i,-1}) \leq \sup_{S_{i,w}} \psi_{i,-1}^{\ast}(\widetilde{u}_{i,-1}+\widetilde{f}_{i,-1}).$$
By taking the supremum over $|w| < \epsilon$, we thus conclude that for $i\in \mathbb{N}$ sufficiently large, 
\begin{align*} \sup_{|z^1|<\frac{\epsilon}{2}} \sup_{|z^2|<\frac{\epsilon}{2}} \psi_{i,-1}^{\ast} (\widetilde{u}_{i,-1}+\widetilde{f}_{i,-1}) &\leq \sup_{|z_i^1|<\epsilon} \sup_{|z_i^2|<\epsilon} \psi_{i,-1}^{\ast} (\widetilde{u}_{i,-1}+\widetilde{f}_{i,-1}) \\ & \leq \sup_{|w|<\epsilon} \sup_{S_{i,w}} \psi_{i,-1}^{\ast}(\widetilde{u}_{i,-1}+\widetilde{f}_{i,-1}).
\end{align*}
However, the right-hand side is bounded uniformly in $i\in \mathbb{N}$ by \eqref{upperboundforu} and the fact that 
$$\bigcup_{|w|<\epsilon} S_{i,w}\subseteq \left\{ z\in U'\,|\, |z^1|<2\epsilon \text{ and } |z^2|\geq \tfrac{\epsilon}{2}\right\} \Subset  X_{\textnormal{reg}}\setminus E$$
for sufficiently large $i\geq \underline{i}(\epsilon)$. 
\end{proof}

Claim \ref{claim-upperboundonPSH} implies that $\varphi$ is a plurisubharmonic function on $X_{\text{reg}}\setminus E$ which is bounded above in a neighborhood of any point of $X_{\text{reg}}\cap E_{\text{reg}}$. By \cite[Théorème 1]{GraRem}, $\varphi$ uniquely extends to a plurisubharmonic function on $X_{\textnormal{reg}} \cap E_{\textnormal{reg}}$. Because $X \setminus (X_{\textnormal{reg}} \cap E_{\textnormal{reg}})$ has complex codimension 2, $\varphi$ further extends to a plurisubharmonic function on all of $X$ by \cite[Théorème 2]{GraRem}. Suppose by way of contradiction that $A\subseteq X$ is a compact holomorphic curve not entirely contained in $E$. Without loss of generality, we can assume that $A$ is irreducible. Applying the maximum principle for weakly pseudoconvex functions (cf.~\cite[Section 6.3, Theorem]{Chirka}) then allows us to conclude that $\varphi|_A$ is constant, contradicting the fact that at any regular point of $A \setminus E$, we have
$$\sqrt{-1}\partial \bar{\partial}(\varphi|_A) = (\sqrt{-1}\partial \bar{\partial} \varphi)|_A >0.$$
\end{proof}

We now show that $X$ has quadratic curvature decay and use this to determine that $X$ is asymptotic at infinity to a K\"ahler cone which coincides with the Remmert reduction of $X$ (cf.~\cite[p.221]{sheaves}). This observation, together with Proposition \ref{dichotomy}, allow us to rule out flat cones as tangent flows and to show that $E \neq \emptyset$. 

\begin{prop} \label{nocones} 
\begin{enumerate}
    \item $(X,g)$ is not flat.
    \item There is a birational morphism $\rho: X \to \mathbb{C}^2$ with exceptional set $\rho^{-1}(0)=E\neq \emptyset$. In particular, $X$ is quasiprojective. 
\end{enumerate}
\end{prop}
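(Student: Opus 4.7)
The plan is to first show that $X$ has quadratic curvature decay, and hence is asymptotic at infinity to a K\"ahler cone; then identify this cone biholomorphically with $\mathbb{C}^2$, which via the Remmert reduction produces the morphism $\rho:X\to\mathbb{C}^2$ of (ii); and finally combine this with Bamler's partial regularity theorem \cite[Theorem 2.37]{Bam3} and the rigidity result of \cite{CDS} to prove (i) and the non-emptiness of $E$. For the quadratic curvature decay, I would adapt the strategy of \cite{CCD} to the orbifold setting. Thanks to Theorem \ref{bddcurve} and Proposition \ref{finiteorb}, $X$ is smooth at spatial infinity and has uniformly bounded curvature. Arguing by contradiction, a point-picking argument analogous to Claim \ref{claim-pointpicking} produces basepoints $x_i\to\infty$ and rescalings whose pointed Cheeger-Gromov limit $X_\infty$ is a non-trivial complete ancient K\"ahler-Ricci flow; since the $x_i$ eventually leave every compact set containing $E$, the limit $X_\infty$ is smooth. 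Exactly as in the three cases of Theorem \ref{bddcurve}, any such $X_\infty$ must be either an ALE Calabi-Yau manifold (ruled out by Proposition \ref{topologicalrestrictions} combined with Lemma \ref{containedinV}), a non-Ricci-flat steady K\"ahler-Ricci soliton singularity model (ruled out by Proposition \ref{nosteadies}), or an isometric product of a line with a three-dimensional $\kappa$-solution (ruled out exactly as in Case 3). With quadratic curvature decay in hand, standard arguments for non-collapsed shrinking gradient K\"ahler-Ricci solitons give that $X$ is asymptotic at infinity to a unique K\"ahler cone $(C,g_C,J_C)$ with smooth link $\Sigma$.

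To identify $C$ with $\mathbb{C}^2$: for $R$ sufficiently large, Lemma \ref{containedinV} embeds the compact annular region $A_R:=\{R\le d_g(\cdot,x_\ast)\le 2R\}\subseteq X_{\textnormal{reg}}\setminus E$ via $\psi_{i,-1}$ into $\mathcal{V}\setminus\widetilde{E}\cong\mathbb{R}^4\setminus\{0\}$ for all $i\ge\underline{i}(R)$. The resulting embedding of $\Sigma$ into $\mathbb{R}^4\setminus\{0\}$, together with the topological restrictions of Proposition \ref{topologicalrestrictions} and the simple connectivity of $\mathbb{R}^4\setminus\{0\}$, forces $\Sigma\cong\mathbb{S}^3$; the corresponding Calabi-Yau cone $C$ with round link $\mathbb{S}^3$ is then biholomorphic to $\mathbb{C}^2$. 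By Proposition \ref{ExcSet}, $E$ is the maximal compact analytic subset of $X$, so the Remmert reduction $\rho:X\to Y$ is a proper holomorphic map with connected fibers onto a normal Stein surface $Y$, contracting $E$ (which is connected by Proposition \ref{bigcurve}) to a single point and restricting to a biholomorphism from $X\setminus E$ onto $Y\setminus\rho(E)$. The asymptotic-cone description, combined with the Steinness of $Y$, identifies $Y$ biholomorphically with $\mathbb{C}^2$ sending $\rho(E)$ to the origin. This furnishes the desired morphism $\rho:X\to\mathbb{C}^2$ of (ii) (modulo $E\ne\emptyset$), and $X$ is quasi-projective as a proper birational modification of $\mathbb{C}^2$.

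For (i), if $(X,g)$ were flat then $(X,g,J)$ would be a flat K\"ahler orbifold whose asymptotic cone is $\mathbb{C}^2$, forcing $X\cong\mathbb{C}^2$ isometrically; the tangent flow at $(x_0,T)$ would then be the static Gaussian on flat Euclidean space, contradicting Bamler's characterization \cite[Theorem 2.37]{Bam3} of the regular set of the Ricci flow together with the fact that $x_0\in\widetilde{E}$ lies in the singular set (since $|{\Rm_{\widetilde{g}_t}}|_{\widetilde{g}_t}$ blows up along $\widetilde{E}$ as $t\to T$). Finally, if $E=\emptyset$, then $\rho$ is a biholomorphism, so $X$ is biholomorphic to $\mathbb{C}^2$; as a non-collapsed shrinking gradient K\"ahler-Ricci soliton whose asymptotic cone is the flat one, the rigidity result of \cite{CDS} would then force $X$ to be isometric to flat Euclidean space, contradicting (i). Hence $E\ne\emptyset$, completing (ii). The main obstacle, I expect, is the rigorous execution of the quadratic curvature decay in the orbifold regime—verifying the smoothness of the rescaled limits at infinity, carrying out the three-case analysis from Theorem \ref{bddcurve}—together with the precise matching of the complex structure on the asymptotic cone with the Remmert reduction target so that $Y$ can indeed be identified biholomorphically with $\mathbb{C}^2$.
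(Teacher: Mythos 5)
Your overall skeleton for (ii) matches the paper's: quadratic curvature decay (via a contradiction/blow-up argument at infinity plus the cases already handled in Theorem \ref{bddcurve}), then the asymptotic K\"ahler cone and the Remmert reduction, then $E\neq\emptyset$ via the rigidity of \cite{CDS}. However, there is a genuine gap at the step where you identify the cone, and the same gap undermines your proof of (i). You claim that the embedding of the link $\Sigma$ into $\mathcal{V}\setminus\widetilde{E}\cong\mathbb{R}^4\setminus\{0\}$, ``together with the topological restrictions of Proposition \ref{topologicalrestrictions} and the simple connectivity of $\mathbb{R}^4\setminus\{0\}$, forces $\Sigma\cong\mathbb{S}^3$.'' It does not: Proposition \ref{topologicalrestrictions}(i) only yields $\Gamma\subseteq SU(2)$, and simple connectivity of the ambient space imposes no constraint on $\pi_1$ of an embedded hypersurface. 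Indeed the paper's own remark points out that even the sharpest embedding obstruction leaves the quaternion group $Q_8$ alive, so $\mathbb{S}^3/Q_8$ (for instance) cannot be excluded by these means. The paper eliminates nontrivial $\Gamma$ by a genuinely different mechanism: it first shows (using non-flatness, hence $R>0$, and Belgun's classification plus Gray stability) that the induced contact structure on the link is the standard one on $\mathbb{S}^3/\Gamma$, and then runs a symplectic-filling argument — Jordan--Brouwer to split $\widetilde{M}\setminus\Sigma_i$, McDuff's blowdown when $\widetilde{E}$ lies inside, and the Ohta--Ono classification of minimal symplectic fillings, which forces the filling to be diffeomorphic to the minimal resolution of $\mathbb{C}^2/\Gamma$ and hence to contain a $(-2)$-sphere — contradicting the fact that $H_2(\mathcal{V};\mathbb{Z})$ is generated by a single $(-1)$-class. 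None of this appears in your proposal, and without it the triviality of $\Gamma$ is unproved.

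The same omission affects (i) and creates an ordering problem. In the paper, if $(X,g)$ were flat then $X=\mathbb{C}^2/\Gamma$ with $\Gamma$ \emph{nontrivial} (Bamler's theorem rules out the Gaussian at a singular basepoint), and this nontrivial flat quotient is excluded precisely by the symplectic-filling argument above applied to $\psi_{i,-1}(\partial B(o,1))$; your Bamler-characterization contradiction only disposes of the case $\Gamma=\{1\}$, i.e.\ flat $\mathbb{C}^2$, so the nontrivial case is left open. Moreover, the paper needs (i) \emph{before} identifying the cone: non-flatness gives $R_g>0$, hence positive scalar curvature of the tangent cone at infinity, which is what allows Belgun's theorem (and then Gray stability) to pin down the link's contact structure so that the filling classification applies. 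Your plan proves (i) \emph{after} the cone identification, so even repairing the $\Sigma\cong\mathbb{S}^3$ step would require restructuring the argument (or finding an independent route to the standard contact structure on the link) to avoid circularity.
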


\begin{proof}
\begin{enumerate}
\item Suppose by way of contradiction that $X$ is flat. Then $X=\mathbb{C}^{2}/\Gamma$ for $\Gamma$ a non-trivial finite subgroup of $U(2)$ acting freely on $\mathbb{C}^{2}\setminus\{0\}$ \cite[Theorem 2.46]{Bam3}. Let $o\in X$ denote the vertex. Because $X$ does not contain any compact holomorphic curves, we see that $E=\emptyset$ so that $\psi_{i,-1}(\partial B(o,1))\cap \widetilde{E}=\emptyset$ for $i\in \mathbb{N}$ sufficiently large. By Lemma \ref{containedinV}, we have that $\Sigma_i := \psi_{i,-1}(\partial B(o,1))\subseteq \mathcal{V}\setminus \widetilde{E}$. Also note that $\mathcal{V} \setminus \widetilde{E}$ is diffeomorphic to $\mathbb{R}^4 \setminus \{ 0\}$. By Lemma \ref{topologicalrestrictions}(i), we therefore find that $\Gamma \subseteq SU(2)$. From Lemma \ref{JordanBrouwer}, we know that $\widetilde{M}\setminus \Sigma_i$ and $\mathcal{V} \setminus \Sigma_i$ each have exactly two connected components. One of these connected components is clearly in common. Denote this particular component by $\mathcal{C}_i$, and equip $\Sigma_i$ with the contact structure induced from $\psi_{i,-1}$ and the standard contact structure of $\mathbb{S}^3/\Gamma \cong \partial B(o,1)$. 

First assume that $\widetilde{E}\subseteq \mathcal{C}_i$. Let $\widetilde{\mathcal{C}}_i$ denote the blowdown of $(\mathcal{C}_i,\widetilde{\omega}_{i,-1})$ along $\widetilde{E}$, which admits a symplectic form $\omega'$ such that the blowdown map $(\mathcal{C}_i,\widetilde{\omega}_{i,-1})\to(\widetilde{\mathcal{C}}_i,\omega')$ is a symplectomorphism near $\partial \mathcal{C}_i$ (cf.~\cite[Lemma 3.2]{mcduffminusone} and its proof). As $\omega_{-1}$ is positive on the distribution corresponding to the contact structure of $\partial B(o,1)$, and since $\psi_{i,-1}^{\ast}\widetilde{\omega}_{i,-1} \to \omega_{-1}$ in $C_{\textnormal{loc}}^{\infty}(X\setminus \{o\})$, we see that $\widetilde{\omega}_{i,-1}$ restricts to a positive symplectic form on the contact structure that we are considering on $\Sigma_i$. $(\widetilde{\mathcal{C}}_i,\omega')$ therefore defines a minimal symplectic filling of $\Sigma_i$. By \cite[Main Theorem]{OhtaOno}, $\widetilde{\mathcal{C}}_i$ is diffeomorphic to the minimal resolution of $\mathbb{C}^2/\Gamma$, hence contains a two-sphere with self-intersection $-2$. This then yields such a surface in the blowdown of $\mathcal{V}$ along $\widetilde{E}$, which is diffeomorphic to $\mathbb{R}^4$. This is a clear contradiction.

If instead $\widetilde{E} \cap \mathcal{C}_i = \emptyset$, then arguing as above, we deduce that $(\mathcal{C}_i,\widetilde{\omega}_{i,-1})$ is diffeomorphic to a minimal symplectic filling of $\mathbb{S}^3/\Gamma$ with its standard contact structure. This leads to a contradiction as above.

\item Recall from Proposition \ref{bddcurve} that $X$ has bounded curvature. Fix $x_0 \in X$. We now show that the curvature goes to zero at spatial infinity.

\begin{claim} \label{claim:curvaturetozero} $\lim_{r \to \infty} \sup_{X\setminus B_g(x_0,r)} |{{\Rm_g}}|_g = 0$.
\end{claim}

\begin{proof} Suppose by way of contradiction that there is a sequence $(x_\alpha)_{\alpha \in \mathbb{N}}$ in $X$ diverging to spatial infinity such that 
$$\liminf_{\alpha \to \infty} |{{\Rm_g}}|_g(x_{\alpha}) >0.$$
$(X,(g_t)_{t\in (-\infty,-1]})$ having a uniform curvature bound and $X$ containing only finitely many orbifold singularities allows us pass to a subsequence so that $(X,(g_t)_{t\in (-\infty,-1]},x_{\alpha})$ converges in the smooth pointed Cheeger-Gromov sense to a complete non-flat ancient solution \linebreak $(X_{\infty},(g_{\infty,t})_{t\in (-\infty,-1]})$ of the K\"ahler-Ricci flow with bounded curvature. Furthermore, applying \cite[Lemma 4.1]{nabersoliton} to the $(-1)$-time slice with soliton potential function $f$, using that fact that $|\nabla f|^2\geq f-R$ (cf.~Theorem \ref{bamconvergence}(vi)), we find a metric splitting of $g_{\infty,-1}$ so that by \cite[Theorem 1.1]{CK20}, $g_{\infty,\,t}$ splits for all times $t\leq-1$. In other words, the flow on $X_{\infty}$ splits as a product of a $\kappa$-solution and $\mathbb{R}$. Arguing as in Case 3 in the proof of Theorem \ref{bddcurve}, we conclude that $X_{\infty}$ is biholomorphic to the complex cylinder $(\mathbb{P}^1 \times \mathbb{C},g_{\text{cyl}},J_{\text{cyl}})$.

Let $\zeta_{\alpha}$ be the Cheeger-Gromov diffeomorphisms mapping from an exhaustion of $\mathbb{P}^1\times \mathbb{C}$ into neighborhoods of $x_{\alpha}$ in $X$. By a diagonal argument, for $\alpha \in \mathbb{N}$ large, we can find $i(\alpha)\in \mathbb{N}$ such that
$$\left( (T-t_{i(\alpha)})^{-1} (\psi_{i(\alpha),-1} \circ \zeta_{\alpha})^{\ast} \widetilde{g}_{i(\alpha),-1}, (\psi_{i(\alpha),-1}\circ \zeta_{\alpha})^{\ast} \widetilde{J} \right) \to  (g_{\text{cyl}},J_{\text{cyl}}).$$
By \cite[Proof of Theorem B]{CCD}, there exists a compact subset $L\subseteq \mathbb{P}^1 \times \mathbb{C}$ such that for $\alpha\in \mathbb{N}$ sufficiently large, there is a smooth, rational $(\psi_{i(\alpha),-1}\circ \zeta_{\alpha})^{\ast} \widetilde{J}$-holomorphic curve $C_{\alpha} \subseteq L$ in $\mathbb{P}^1\times \mathbb{C}$ with trivial self-intersection. After possibly increasing $i(\alpha)$ for each $\alpha\in \mathbb{N}$, we can also guarantee that $(\psi_{i(\alpha),-1} \circ \zeta_{\alpha})(C_{\alpha})\subseteq \mathcal{V}$ by Lemma \ref{containedinV}.  However, $\mathcal{V}$ does not contain any holomorphic curve with trivial self-intersection, and we derive a contradiction. 
\end{proof}

We next improve the curvature decay in Claim \ref{claim:curvaturetozero} to quadratic curvature decay.

\begin{claim} \label{claim:fastcurvaturedecay} 
There exists $C>0$ such that  $|{\Rm_g}|_g(x)f(x) \leq C$ for all $x\in X$.
\end{claim}

\begin{proof} Let $(\varphi_{t})_{t\in(-\infty,0)}$ be the biholomorphisms from Remark \ref{holoflow} which satisfy $\partial_{t}\varphi_{t}(x)=\frac{1}{|t|}\nabla f(\varphi_{t}(x))$,
so that $g_{t}=|t|\varphi_{t}^{\ast}g$ is the canonical Ricci flow satisfying
$g_{-1}=g$. Recalling the identity
\[
R+|\nabla f|^{2}=f-W
\]
from Theorem \ref{bamconvergence}(vi), we compute that
\[
\frac{d}{dt}f(\varphi_{t}(x))=\frac{1}{|t|}|\nabla f|_{g}^{2}(\varphi_{t}(x))=\frac{1}{|t|}\left(f(\varphi_{t}(x))-W-R_{g}(\varphi_{t}(x))\right).
\]
Thus, there exists some large $r_{0}>0$ such that for all $x\in X$
and $t\in(-\infty,0)$ with $f(\varphi_{t}(x))\geq r_{0}$, we have 
\[
\frac{d}{dt}f(\varphi_{t}(x))\geq\frac{1}{2|t|}f(\varphi_{t}(x)).
\]
After possibly increasing $r_0$, we can ensure that $\{\nabla f=0\}\subseteq\{f<r_0\}$ and $\{f\geq r_0\} \subseteq X_{\textnormal{reg}}$. Set $\Sigma:=f^{-1}(r_{0})$ and define $\eta:\Sigma\times[-1,0)\to X$,
$(x,t)\mapsto\varphi_{t}(x)$. Since $\frac{d}{dt}f(\varphi_{t}(x))\geq0$
for all $(x,t)\in X\times (-\infty,0)$, $\eta$ maps into $\{f\geq r_{0}\}$.
Conversely, for any $x\in\{f\geq r_{0}\}$ and for any $t\in(-\infty,-1)$
such that $f(\varphi_{t}(x))\geq r_{0}$, we can integrate the lower bound for $\frac{d}{dt}f(\varphi_t(x))$ from $t$ to $-1$ to obtain the bound
\[
\frac{f(x)}{f(\varphi_{t}(x))}\geq\sqrt{|t|}
\]
so that $f(\varphi_{t}(x))\leq\frac{f(x)}{\sqrt{|t|}}$. We can therefore
find $t\in(-\infty,-1)$ such that $f(\varphi_{t}(x))=r_{0}$, hence
$\varphi_{t}(x)\in\Sigma$ and $x=\eta(\varphi_{t}(x),\frac{1}{t})$. This
means that $\eta:\Sigma\times[-1,0)\to \{ f \geq r_0\}$ is surjective. 

Fix $\Lambda>0$ large to be determined. By Claim \ref{claim:curvaturetozero}, after possibly increasing $r_{0}$ if necessary, for all $x\in \{f\geq r_{0}\}$ we have that
\[
\sup_{B(x,\Lambda)}|{\Rm_g}|_{g}\leq\Lambda^{-2}.
\]
A variant of Perelman's pseudolocality theorem \cite[Theorem 1.2]{PengLu} then gives $\epsilon>0$ only depending on the flow $(\widetilde{M},(\widetilde{g}_t)_{t\in [0,T)})$ such that 
\[
|{\Rm_g}|_g(x,t)\leq\frac{1}{(\epsilon \Lambda)^{2}}
\]
for all $x\in \{ f \geq r_0\}$ and $t\in[-1,\min\{0,-1+(\epsilon \Lambda)^{2}\})$. This theorem applies to our situation because the flow $(X,(g_t)_{t\in (-\infty ,0)})$ is a singularity model of a compact Ricci flow with smooth Cheeger-Gromov convergence away from a compact subset of $X$. If we
choose $\Lambda > 2\epsilon^{-1}$ sufficiently large, we obtain a bound of the form $|{\Rm_g}|_g(x,t)\leq1$ for all $(x,\,t)\in\linebreak \{f\geq r_{0}\}\times [-1,0)$.
Recall that $|{\Rm_g}|_g(x,t)=\frac{1}{|t|}|{\Rm_g}|_g(\varphi_{t}(x))$.
Integrating the ODI\linebreak
$\frac{d}{dt}\log(f(\varphi_{t}(x))-W)\leq\frac{d}{dt}\log\frac{1}{|t|}$
also gives us that
\[
f(\varphi_{t}(x))-W\leq\frac{1}{|t|}(f(x)-W).
\]
Combining these expressions yields
\begin{align*}
|{\Rm_g}|_g(\varphi_{t}(x))(f(\varphi_{t}(x))-W)&=|t|\cdot|{\Rm_g}|_g(x,t)\cdot(f(\varphi_{t}(x))-W)\\ &\leq(f(x)-W)|{\Rm_g}|_g(x,t)\leq f(x)-W=r_{0}-W
\end{align*}
for all $(x,\,t)\in \Sigma \times[-1,0)$. In light of the surjectivity proved above, the claim now follows.
\end{proof}

Next, the proof of \cite[Lemma 3.1]{CDS} is point-wise and so works in uniformizing charts of the orbifold. This results in a continuous non-negative function 
$$q := \lim_{t\nearrow 0} |t|(f_t(x)-W):X\to\mathbb{R}_{\geq0}.$$

It therefore follows from \cite[Lemma 3.4]{CDS} that $q$ is proper and bounded from below. For $a>0$, consider the set $$X_{a}:=\{x\in X\,|\,q(x)\geq a\}.$$ This set is preserved by the flow of $\nabla f$. Choose $a$ large enough so that $X_{a}$ doesn't contain any orbifold singularities of $X$. Then \cite[Lemma 3.2]{CDS} tells us that $q$ is smooth on $X_{a}$, and \cite[Lemma 3.6]{CDS} then implies that $X$ has only finitely many ends. 

Along each (of the finite) end of $X$, \cite[Proposition 3.10]{CDS} applies and tells us that there exists a biholomorphism $\rho:X\setminus K\to C_{0}\setminus K'$, where $K\subset X$ is a compact subset of $X$ containing the orbifold singularities, $C_{0}$ is a smooth K\"ahler cone smooth outside a compact subset, and $K'\subset C_{0}$ is a compact subset containing  the apex of $C_{0}$. The fact that $C_{0}$ has only an isolated singularity at the apex is because the 
cone is smooth at infinity, hence has a smooth link. This biholomorphism pushes forward the soliton vector field to the radial vector field $r\partial_{r}$ of the cone, where $r$ is the radial function of the cone, and identifies the soliton metric with the cone metric up to terms of order $O(r^{-2})$; see \cite[Theorem 3.8]{CDS} for details.

Let $\pi:\widetilde{X}\to X$ denote the minimal resolution of $X$. As a result of what has just been said, the radial functions of the cones along each end can be extended to $\widetilde{X}$ in such a way that $\widetilde{X}$ is $1$-convex; see \cite[Lemma 2.15]{ACCY1} for a proof. This implies that $\widetilde{X}$ has a Remmert reduction $\widetilde{X}'$. Because $\widetilde{X}'$ is biholomorphic to a K\"ahler cone along one of its ends at infinity, and since $\widetilde{X}'$ has only one end \cite[p.454]{rossi}, the proof of \cite[Claim 3.12]{CDS} implies that $C_{0}$ is biholomorphic to $\widetilde{X}'$.

Since the soliton vector field $Y := \nabla^{g} f$ vanishes along the orbifold singularities of $X$, it lifts to a real holomorphic vector field $\widetilde{Y}$ on $\widetilde{X}$. The proof of \cite[Claim 3.13]{CDS} now applies with $M$ replaced with $\widetilde{X}$ and demonstrates that $\rho \circ \pi:\widetilde{X}\to C_{0}$ is a resolution, equivariant with respect to the flows of $\widetilde{Y}$ and $r\partial_{r}$, and therefore with respect to 
$J\widetilde{Y}$ and $J_{0}r\partial_{r}$, where $J$ and $J_{0}$ are the complex structure on $\widetilde{X}$ and $C_{0}$, respectively. By construction, it follows that $\rho:X\to C_{0}$ is a partial resolution of $C_{0}$ with, by Proposition \ref{ExcSet}, exceptional set $E$.

Now, in general we have that $R_{g}\geq0$ and 
if $R_{g}=0$ at one point, then the soliton is flat
\cite[Proposition 3.7]{li1}  (see also \cite[Theorem 2.46]{Bam3}). By (i), the latter cannot hold, hence $R_{g}>0$ everywhere. Then
\cite[Theorem 6.2]{CDS} holds true for shrinking orbifolds,
and so \cite[Corollary 6.3]{CDS} tells us that the tangent cone at infinity has strictly positive scalar curvature. By \cite[Theorem 8]{belgun} (see also \cite[Theorem 10.1.3]{sasakian}), it must be biholomorphic to $\mathbb{C}^{2}/\Gamma$ for $\Gamma$ a finite subgroup of $U(2)$ acting freely on $\mathbb{C}^{2}\setminus\{0\}$. Moreover, the induced Sasakian structure on $\mathbb{S}^3/\Gamma \hookrightarrow C_0$ is a deformation of the standard Sasaskian structure on $\mathbb{S}^3/\Gamma$  \cite[Theorem 8]{belgun}. By Gray's Stability Theorem \cite[Theorem 2.2.2]{contact}, $\mathbb{S}^3/\Gamma \hookrightarrow C_0$ is contactomorphic to the standard contact structure on $\mathbb{S}^3/\Gamma$. 

We next show that the finite subgroup $\Gamma$ is actually trivial. To this end, recall that $r$ denotes the radial function of the cone $C_0$. By choosing $r_0>0$ such that $E\subseteq \{\rho^{*}r < \frac{1}{2}r_0 \}$ and then appealing to Lemma \ref{containedinV}, we can ensure that the closed 3-fold $\Sigma_i:=\psi_{i,-1}(\{ \rho^{*}r = r_0\}) \cong \mathbb{S}^3/\Gamma$ is contained in $\mathcal{V} \setminus \widetilde{E} \cong \mathbb{R}^4\setminus \{0\}$ for sufficiently large $i\in \mathbb{N}$. By Proposition \ref{topologicalrestrictions}(i), it follows that $\Gamma \subseteq SU(2)$. From Lemma \ref{JordanBrouwer}, we know that $\widetilde{M}\setminus \Sigma_i$ and $\mathcal{V} \setminus \Sigma_i$ each have exactly two connected components. One of these connected components is clearly in common. Denote this particular component by $\mathcal{C}_i$. Because $(\Sigma_i,\widetilde{\omega}_{i,-1}|_{\Sigma_i})$ is arbitrarily $C^0$-close to a scaling of the link of $C_0$,  after possibly increasing $r_0$, we can guarantee as in part (i) that either $\mathcal{C}_i$ or its blowdown along $\widetilde{E}$ (if $\widetilde{E}\subseteq \mathcal{C}_i$) is diffeomorphic to a symplectic filling of $\Sigma_i \cong \mathbb{S}^3/\Gamma$ equipped with the contact structure induced from $C_0$. Since $\Sigma_i$ is contactomorphic to $\mathbb{S}^3/\Gamma$ with its standard contact structure, the remaining arguments in the proof of (i) yield a contradiction.

Finally, we show that $E\neq \emptyset$. If $E = \emptyset$, then $X$ would contain no compact holomorphic curves, so would be biholomorphic to its Remmert reduction, i.e., to $\mathbb{C}^2$. \cite[Theorem E(1)]{CDS} would then tell us that $g$ is flat which contradicts part (i).
\end{enumerate}
\end{proof}

We next apply several classical results in the theory of complex surfaces to better understand $E$. First, recall that because $(X,J)$ is a two-dimensional normal analytic variety, there is a well-defined intersection product on Weil divisors (see \cite[Section 2(b)]{mumford} or \cite[Section 1]{sakai} for its definition and basic properties) which takes values in $\mathbb{Q}$.

\begin{prop} \label{minusone} If $\pi: \widetilde{X}\to X$ is the minimal resolution of $X$, then the proper transform of any irreducible component of $E$ is a $(-1)$-curve. Moreover, $E\cap X_{\textnormal{reg}}$ is smooth.
\end{prop}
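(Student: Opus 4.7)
The plan is to study the proper birational morphism $\rho\circ\pi:\widetilde{X}\to\mathbb{C}^2$ provided by Proposition \ref{nocones}. Since $\widetilde{X}$ and $\mathbb{C}^2$ are both smooth (quasi-)projective surfaces and the composition is proper, the classical factorization theorem for birational morphisms between smooth surfaces (see, e.g., Matsuki's \emph{Introduction to the Mori Program}, Thm.~1-8-15) lets us write
\[
\rho\circ\pi = \sigma_1\circ\cdots\circ\sigma_n : \widetilde{X} = Y_n \to Y_{n-1}\to\cdots\to Y_0 = \mathbb{C}^2,
\]
with each $\sigma_\ell$ a blowup of a single point. Because the exceptional divisor of a point blowup is a smooth rational curve, and the proper transform of a smooth curve under a blowup at a smooth point remains smooth, every irreducible component of $(\rho\circ\pi)^{-1}(0)$ — which consists exactly of the $F_i$'s together with the proper transforms $\widetilde{E}_j$ of the irreducible components of $E$ — is a smooth rational curve in $\widetilde{X}$. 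Since $\pi$ is an isomorphism over $X_{\textnormal{reg}}$ and each $\widetilde{E}_j$ is smooth, this immediately yields that $E\cap X_{\textnormal{reg}}$ is smooth.

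The plan for the main claim combines the factorization with minimality of $\pi$. First, by minimality, each $F_i$ satisfies $F_i^2\leq -2$, so no $F_i$ can be a $(-1)$-curve in $\widetilde{X}$. Second, any two $(-1)$-curves in $(\rho\circ\pi)^{-1}(0)$ must be disjoint in $\widetilde{X}$: indeed, if $\widetilde{C}_j$ and $\widetilde{C}_{j'}$ (with $j<j'$ in the factorization order) both had self-intersection $-1$ in $\widetilde{X}$ and intersected, then the center of $\sigma_{j'}$ would lie on the proper transform of $\widetilde{C}_j$ in $Y_{j'-1}$, forcing $\widetilde{C}_j^2<-1$ in $\widetilde{X}$, a contradiction. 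Consequently, the $(-1)$-curves in $(\rho\circ\pi)^{-1}(0)$ form a pairwise disjoint subcollection of $\{\widetilde{E}_1,\ldots,\widetilde{E}_k\}$.

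It remains to show that \emph{every} $\widetilde{E}_j$ is a $(-1)$-curve. I would proceed by induction on $n$. Pick a $(-1)$-curve, necessarily some $\widetilde{E}_{j_0}$ by the above, and contract it to obtain a smooth surface $Z$ together with an induced proper birational morphism $Z\to\mathbb{C}^2$ whose exceptional set consists of the images of the remaining $\widetilde{E}_j$'s and the $F_i$'s. If one can verify that no $F_i$ becomes a $(-1)$-curve in $Z$, then the inductive hypothesis, applied to a suitable partial resolution of $\mathbb{C}^2$ realized by $Z$, concludes the argument. The $(-1)$-curves of $Z$ are again proper transforms of some $\widetilde{E}_{j'}$'s, and iterating shows that every $\widetilde{E}_j$ must have been a $(-1)$-curve in $\widetilde{X}$ to begin with.

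The main obstacle is justifying this inductive step — precisely that contracting $\widetilde{E}_{j_0}$ does not cause any $F_i$ to acquire self-intersection $-1$ in $Z$. Equivalently, one must show that $\widetilde{E}_{j_0}$ is disjoint from every $F_i$ with $F_i^2=-2$ in $\widetilde{X}$ that meets it transversally (or is otherwise affected). The natural strategy is a careful combinatorial analysis of the blowup factorization together with the negative definiteness of the intersection form on $(\rho\circ\pi)^{-1}(0)$: one argues that if $F_i \cdot \widetilde{E}_{j_0}\geq 1$ with $F_i^2=-2$, the global intersection matrix fails to arise from a valid blowup sequence compatible with $\pi$ being the minimal resolution. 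Once this combinatorial obstruction is ruled out, the induction closes, yielding that each $\widetilde{E}_j$ is a $(-1)$-curve and the collection is disjoint in $\widetilde{X}$.
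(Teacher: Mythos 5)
Your proposal has a genuine gap, and it sits exactly where you flag it: the inductive step that contracting a $(-1)$-curve never turns some $F_i$ into a $(-1)$-curve (equivalently, that no $\widetilde{E}_j$ can meet the $F_i$'s in a way that forces $\widetilde{E}_j^2\leq -2$). This cannot be repaired by "a careful combinatorial analysis of the blowup factorization", because the purely combinatorial statement is false. Concretely: blow up $0\in\mathbb{C}^2$, then a point of the exceptional curve $C_1$, then a point of the new curve $C_2$ away from $C_1$; one gets a smooth surface with a chain $C_1,C_2,C_3$ of self-intersections $-2,-2,-1$. Contracting only $C_1$ produces a normal surface $X$ with a single $A_1$ quotient singularity, the map to $\mathbb{C}^2$ factors through $X$, and $\widetilde{X}\to X$ is the minimal resolution (its exceptional set is the $(-2)$-curve $C_1$). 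Yet the proper transform of the contracted curve coming from $C_2$ is $C_2$ itself, a $(-2)$-curve, not a $(-1)$-curve. So the intersection matrix of a perfectly valid blowup sequence, compatible with minimality of $\pi$, can violate the conclusion; no argument that uses only the factorization of $\rho\circ\pi$, negative definiteness, and minimality can close your induction. (A symptom of this is that your argument never uses that $X$ is a shrinking K\"ahler--Ricci soliton.)

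The missing input is numerical positivity coming from the soliton metric: for every irreducible curve $C$ contracted by $\rho$ one has $K_X\cdot C=-\int_C\operatorname{Ric}_\omega=-\int_C\omega<0$, since $\operatorname{Ric}_\omega=\omega-\sqrt{-1}\partial\bar\partial f$ and $\sqrt{-1}\partial\bar\partial f$ integrates to zero over the compact curve $C$. The paper then writes $K_{\widetilde{X}}=\pi^{\ast}K_X+\sum_i a_iE_i$ with discrepancies $a_i\leq 0$ (minimality), so $K_{\widetilde{X}}\cdot\widetilde{C}\leq K_X\cdot C<0$, hence $K_{\widetilde{X}}\cdot\widetilde{C}\leq -1$; adjunction $2g(\widetilde{C})-2=K_{\widetilde{X}}\cdot\widetilde{C}+\widetilde{C}^2$ together with Grauert's criterion $\widetilde{C}^2<0$ forces $g(\widetilde{C})=0$ and $K_{\widetilde{X}}\cdot\widetilde{C}=\widetilde{C}^2=-1$. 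Note in my example above $K_X\cdot C\geq 0$ for the curve through the $A_1$ point, so positivity of $\omega$ is exactly what excludes such configurations. A secondary gap: your factorization argument shows each irreducible component of $E$ is smooth in $X_{\textnormal{reg}}$, but smoothness of $E\cap X_{\textnormal{reg}}$ also requires that distinct components do not meet there; in the paper this follows because two $(-1)$-curves cannot intersect inside the (negative definite) exceptional set of $\rho\circ\pi$, which again relies on the $(-1)$-curve conclusion you have not yet established.
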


\begin{proof} 
Let $\rho:X\to\mathbb{C}^{2}$ denote the Remmert reduction from Proposition \ref{nocones}, so that the irreducible components of $E$ are exactly the irreducible curves $C\subseteq X$ contracted by $\rho$. Let $K_{X}$ be a canonical divisor for $X$. This is a $\mathbb{Q}$-Cartier divisor because $X$ has quotient singularities. Thus, there exists $k\in \mathbb{N}_{>0}$ such that $K_X^k$ is a line bundle. As $\omega$ is a smooth orbifold metric on $X$, we therefore have that 
$$
K_{X}\cdot C = \frac{1}{k}\text{deg}(K_X^{k}|_C) = \frac{1}{k}\int_C c_1(K_X^k|_C
) =-\int_{C}{\operatorname{Ric}}_{\omega}=-\int_{C}\omega<0.$$

By \cite[Theorem 4-6-2]{matsuki}, we can write
$$
K_{\widetilde{X}}=\pi^{\ast}K_{X}+\sum_{i=1}^{m}a_{i}E_{i}$$
for some $a_{i}\in(-\infty,0]\cap \mathbb{Q}$, where $E_{1},...,E_{m}$ are the exceptional
divisors of $\pi$. 
Using this, we compute that
$$
K_{\widetilde{X}}\cdot\widetilde{C}=(\pi^{\ast}K_{X}+\sum_{i=1}^{m}a_{i}E_{i})\cdot\widetilde{C}=K_{X}\cdot C+\sum_{i=1}^{m}a_{i}\underbrace{E_{i}\cdot\widetilde{C}}_{\geq\,0}\leq K_{X}\cdot C<0,$$
where $\widetilde{C}$ is the proper transform of $C$ with respect
to $\pi$. Since $K_{\widetilde{X}}\cdot\widetilde{C}\in\mathbb{Z}$,
we must have in fact that $K_{\widetilde{X}}\cdot\widetilde{C}\leq-1$. The
adjunction formula for $\widetilde{C}\hookrightarrow\widetilde{X}$
(cf.~\cite[Section II.11, (16)]{surfaces}) then gives us that
$$
2g(\widetilde{C})-2=K_{\widetilde{X}}\cdot\widetilde{C}+\widetilde{C}\cdot\widetilde{C}\leq-1+\widetilde{C}\cdot\widetilde{C},$$
where $g(\widetilde{C}) \geq 0$ \cite[Section II.22(c)]{surfaces} is the arithmetic genus of $\widetilde{C}$. Because $\widetilde{C}$ is an exceptional curve of $\pi$, Grauert's criterion \cite[Theorem III.2.1]{surfaces} tells us that $\widetilde{C} \cdot \widetilde{C}<0$. By combining expressions, we therefore conclude that $\widetilde{C}\cdot \widetilde{C} = -1$ and $K_{\tilde{X}}\cdot \widetilde{C}=-1$, so that $\widetilde{C}$ is a 
$(-1)$-curve \cite[Proposition III.2.2]{surfaces}. In particular, $\widetilde{C}$ is a
nonsingular irreducible rational curve. The fact that two $(-1)$-curves can't intersect in the exceptional set of the map  $\rho\circ\pi:\widetilde{X}\to\mathbb{C}^{2}$ by Grauert's criterion \cite[Theorem 2.1]{surfaces} then implies the last assertion of the proposition. 
\end{proof}

The structure of the irreducible components of $E$ takes the following form.

{
\begin{lemma} \label{intersectiongraph}
$E$ is connected, and any two irreducible components of $E$ intersect in at most one point.
\end{lemma}
}

\begin{proof} 

By Proposition \ref{bigcurve}, $E$ is connected. The map $\rho \circ \pi$, being a birational morphism of smooth complex surfaces restricting to a biholomorphism away from $(\rho \circ \pi)^{-1}(0)$, has connected exceptional set and 
factors as a finite composition of blowdowns along $(-1)$-curves  \cite[Theorem 1-8-2]{matsuki}. The corresponding dual graph is therefore a tree of smooth rational curves.  Because the exceptional curves of $X \to \mathbb{C}^2$ are obtained from those of $\widetilde{X} \to \mathbb{C}^2$ by contracting some of these curves, the claim follows.
\end{proof}

We next collect together some facts to summarize the geometry of $X$ close to its orbifold points and to facilitate notation. These will be used in the proof of Proposition \ref{irreducible} which follows. Recall from Proposition \ref{dichotomy} and Proposition \ref{nocones} that all orbifold singularities of $X$ lie in $E$.

\begin{lemma} \label{gooddomain} Fix $p\in X_{\textnormal{reg}}$, let $x_1,...,x_N \in E$ denote the singular points of $X$, and let $\Gamma_1,...,\Gamma_N$ denote the respective finite subgroups of $U(2)$ acting freely on $\mathbb{C}^2\setminus \{0\}$. Let $(\widehat{g}_{\alpha},\widehat{J}_{\alpha})$ denote the standard flat K\"ahler structure on $C_{\alpha}:=(\mathbb{C}^2\setminus \{0\})/\Gamma_{\alpha}$. For each $\alpha \in \{1,...,N\}$, set $\widehat{W}_{\alpha} := B_{\widehat{g}_{\alpha}}(o_{\alpha},7)\setminus \overline{B}_{\widehat{g}_{\alpha}}(o_{\alpha},\frac{1}{7})$, where $o_{\alpha}$ denotes the vertex of $C_{\alpha}$. Then for all $\epsilon>0$, there exists $r_0=r_0(\epsilon)>0$ such that for all $r\in (0,r_0]$ and $\alpha \in \{1,...,N\}$, there exists a diffeomorphism
$$\zeta_{r,\alpha}: \widehat{W}_{\alpha} \to 
\zeta_{r,\,\alpha}(\widehat{W}_{\alpha}):=
W_{r,\,\alpha} \subseteq X_{\textnormal{reg}}$$
such that the following hold:
\begin{enumerate}
\item $B_g(x_{\alpha},6r) \setminus \overline{B}_g(x_{\alpha},\frac{1}{6}r) \subseteq W_{r,\alpha} \subseteq B_g(x_{\alpha},8r) \setminus \overline{B}_{g}(x_{\alpha},\frac{1}{8}r)$;
\item $\|r^{-2}\zeta_{r,\,\alpha}^{\ast}g - \widehat{g}_{\alpha}\|_{C^{\lfloor \epsilon^{-1} \rfloor}(\widehat{W}_{\alpha},\widehat{g}_{\alpha})} + \|\zeta_{r,\alpha}^{\ast}J - \widehat{J}_{\alpha}\|_{C^{\lfloor \epsilon^{-1} \rfloor}(\widehat{W}_{\alpha},\widehat{g}_{\alpha})} < \epsilon$;

\item If $\mathcal{D}_{r,\alpha}$ denotes the pushforward by $\zeta_{r,\alpha}$ of the standard contact distribution of  $\mathbb{S}^3/\Gamma = \partial B_{\widehat{g}_{\alpha}}(o_{\alpha},1)$ to $\zeta_{r,\alpha}(\partial B_{g_{\alpha}}(o_{\alpha},1))=:\Sigma_{r,\alpha}$, then the restriction of $\omega$ to $\mathcal{D}_{r,\alpha}$ is positive.
\end{enumerate}
\end{lemma}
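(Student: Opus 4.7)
The plan is to build $\zeta_{r,\alpha}$ by composing a holomorphic orbifold chart at $x_\alpha$ with the linear scaling $z\mapsto rz$ on $\mathbb{C}^2$, and then to deduce (i)--(iii) from the fact that after parabolic rescaling a smooth K\"ahler orbifold metric is $C^\infty$-close to its flat tangent cone.

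First I would fix a holomorphic orbifold chart $\eta_\alpha:(\widehat{B}_\alpha,\Gamma_\alpha,0)\to(U_\alpha,x_\alpha)$ with $\widehat{B}_\alpha\subseteq\mathbb{C}^2$ a ball on which $\Gamma_\alpha$ acts linearly. The pullback $\widehat{g}:=\eta_\alpha^{\ast}g$ is a smooth $\Gamma_\alpha$-invariant K\"ahler metric and $\eta_\alpha^{\ast}J$ is the standard complex structure. Since $\widehat{g}(0)$ is a $\Gamma_\alpha$-invariant Hermitian form on $\mathbb{C}^2$, a $\mathbb{C}$-linear isometry $A:(\mathbb{C}^2,\widehat{g}_\alpha(0))\to(\mathbb{C}^2,\widehat{g}(0))$ conjugates $\Gamma_\alpha$ to a (possibly different) subgroup of $U(2)$, and after precomposing the chart with $A$ we may assume $\widehat{g}(0)=\widehat{g}_\alpha(0)$. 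For $r\in(0,R_\alpha/7]$, with $R_\alpha$ the radius of $\widehat{B}_\alpha$, the $\Gamma_\alpha$-equivariant scaling $s_r:z\mapsto rz$ sends the upstairs annulus $\{1/7<|z|<7\}\subseteq\mathbb{C}^2$ into $\widehat{B}_\alpha\setminus\{0\}$, so $\eta_\alpha\circ s_r$ descends to a smooth diffeomorphism $\zeta_{r,\alpha}:\widehat{W}_\alpha\to W_{r,\alpha}\subseteq X_{\textnormal{reg}}$.

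Property (ii) is then a direct Taylor expansion: $\widehat{g}_{ij}(rz)=(\widehat{g}_\alpha)_{ij}+O(r|z|)$ and $\partial_z^k(\widehat{g}_{ij}(rz))=O(r^k)$ for $k\ge 1$, so $r^{-2}s_r^{\ast}\widehat{g}\to\widehat{g}_\alpha$ in $C^k$ on the upstairs annulus for every $k\in\mathbb{N}$, while $s_r^{\ast}\widehat{J}_\alpha=\widehat{J}_\alpha$ exactly. Passing to the quotient by $\Gamma_\alpha$ preserves these estimates. Property (i) then follows from the $C^0$ part of (ii): the rescaled distance $r^{-1}d_g(x_\alpha,\zeta_{r,\alpha}(z))$ converges uniformly on $\widehat{W}_\alpha$ to $|z|_{\widehat{g}_\alpha}\in(1/7,7)$, since any minimizing geodesic from $x_\alpha$ to a point near $x_\alpha$ lifts to a minimizer from $0$ in the chart whose length is controlled by the rescaled metric. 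Hence for $\epsilon$ small, $W_{r,\alpha}$ sits between the geodesic annuli $B_g(x_\alpha,8r)\setminus\overline{B}_g(x_\alpha,r/8)$ and contains $B_g(x_\alpha,6r)\setminus\overline{B}_g(x_\alpha,r/6)$. For (iii), the standard form $\widehat{\omega}_\alpha$ is positive on the $\widehat{J}_\alpha$-invariant contact distribution of $\partial B_{\widehat{g}_\alpha}(o_\alpha,1)$, and the $C^0$-closeness of $r^{-2}\zeta_{r,\alpha}^{\ast}\omega$ to $\widehat{\omega}_\alpha$ preserves this positivity for small $r$.

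No step is individually difficult; the only care needed is in the linear normalization so that $A^{-1}\Gamma_\alpha A$ remains inside $U(2)$, which we achieve by taking $A$ to be an isometry of Hermitian forms. The remainder reduces to the standard fact that after rescaling by $r^{-2}$, a smooth K\"ahler orbifold metric approaches its flat tangent cone with arbitrarily high regularity.
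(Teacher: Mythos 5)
Your proposal is correct and is essentially the paper's argument: the paper simply invokes smooth Cheeger--Gromov convergence of the rescalings at $x_\alpha$ to the flat tangent cone $\mathbb{C}^2/\Gamma_\alpha$ for (i)--(ii) and deduces (iii) from (ii) by the openness of positivity on the (compact) contact distribution, which is exactly what you do. Your chart-plus-scaling construction with the Taylor expansion (and the unitary normalization of $\Gamma_\alpha$) is just an explicit proof of that tangent-cone convergence, so no essential difference.
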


\begin{proof} Because the tangent cone of $X$ at $x_{\alpha}$ is $(\mathbb{C}^2 \setminus \{0\})/\Gamma_{\alpha}$, and convergence to the tangent cone is in the smooth Cheeger-Gromov sense away from $o_{\alpha}$, (i) and (ii) follow. Because the K\"ahler form $\widehat{\omega}_{\alpha}$ of $C_{\alpha}$ is positive on the standard contact distribution of $\mathbb{S}^3/\Gamma$, (iii) follows from (ii). 
\end{proof}

So far, we have shown that $E$ is a tree of rational holomorphic curves whose singularities and intersections all lie within the set of orbifold points and that all singular points of $X$ are contained in $E$. We now show that $E$ is in fact irreducible and that all singularities of $X$ are of a specific type. The proof of both of these statements crucially relies on the fact that $H_2(\mathcal{V},\mathbb{Z})$ is generated by the fundamental class of the $(-1)$-curve $\widetilde{E}$. Indeed, if $E$ had multiple components, then we show that parts of $\widetilde{E}$ can be excised and small caps glued in to obtain two classes in $H_2(\mathcal{V},\mathbb{Z})$ which are positive multiples of $\widetilde{E}$ and intersect positively, thereby obtaining a contradiction. Such a capping procedure is made possible by the relationship between linking number and intersection number given by Proposition \ref{linkingequalsintersection}. Using a similar idea, along with the classification of minimal symplectic fillings of spherical three-manifolds, we also show that any orbifold singularity of $X$ is one of a special class of cyclic singularities. This class consists of precisely the cyclic singularities which admit a smoothing that is a $\mathbb{Q}$-homology ball, sometimes referred to in the literature as Wahl singularities.

\begin{prop} \label{irreducible}
\begin{enumerate}
 \item  $E$ is irreducible.
 \item  Any orbifold singularity of $X$ is a cyclic singularity of type $\frac{1}{n^2}(1,na-1)$ for some $n>a\geq 1$ with $\textnormal{gcd}(n,a)=1$.   
\end{enumerate}
\end{prop}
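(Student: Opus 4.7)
Both statements exploit the topological rigidity of $\mathcal{V}$: the fact that $H_2(\mathcal{V};\mathbb{Z}) = \mathbb{Z}\cdot[\widetilde{E}]$ with self-intersection $[\widetilde{E}]\cdot[\widetilde{E}] = -1$, combined with the Hausdorff convergence of Lemma \ref{Hausdorff} and the intersection-linking identification of Proposition \ref{linkingequalsintersection}. For small $r$ and large $i$, Lemma \ref{gooddomain} gives a smoothly embedded copy $\psi_{i,-1}(\Sigma_{r,\alpha}) \cong \mathbb{S}^3/\Gamma_\alpha$ inside $\mathcal{V}$ carrying the standard contact structure on the link, and by Lemma \ref{JordanBrouwer} this sphere separates $\mathcal{V}$ into a relatively compact component $\mathcal{A}_{i,r,\alpha}$ and a non-compact component $\mathcal{B}_{i,r,\alpha}$.

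For (i), I argue by contradiction. Assume $E$ is reducible; by Lemma \ref{intersectiongraph} and Proposition \ref{minusone}, two irreducible components $C_1,C_2$ of $E$ meet at some orbifold point $x_\alpha$, and they cut $\mathbb{S}^3/\Gamma_\alpha$ in disjoint knots $\gamma_1,\gamma_2$ with $\operatorname{link}_{\mathbb{S}^3/\Gamma_\alpha}(\gamma_1,\gamma_2) = C_1\cdot C_2 > 0$ by Proposition \ref{linkingequalsintersection}. The Hausdorff convergence forces the $(-1)$-curve $\widetilde{E}$ to consist of two lobes (each accumulating to one of the $\psi_{i,-1}(C_j)$ away from the orbifold point) joined through a thin neck that crosses $\psi_{i,-1}(\Sigma_{r,\alpha})$. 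Using the controlled cutting lemma of Appendix B, I excise the neck along the sphere and cap the resulting multi-knots (wrapping $k_j$ times around $\gamma_j$ for some $k_j \in \mathbb{Z}_{>0}$ recording the local degree of $\widetilde{E}$ toward $C_j$) by smooth 2-chains supported in a thin tubular neighborhood of the sphere, producing two disjoint closed integer 2-chains $S_1,S_2 \subset \mathcal{V}$ with $[S_j] = k_j[\widetilde{E}] \in H_2(\mathcal{V};\mathbb{Z})$. The contradiction arises from a double count of $[S_1]\cdot[S_2]$: homologically, $[S_1]\cdot[S_2] = k_1 k_2[\widetilde{E}]\cdot[\widetilde{E}] = -k_1 k_2 < 0$, whereas geometrically the intersection is concentrated in the cap region and has algebraic count $k_1 k_2 \operatorname{link}_{\mathbb{S}^3/\Gamma_\alpha}(\gamma_1,\gamma_2) > 0$.

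For (ii), with $E$ irreducible, fix an orbifold point $x_\alpha$ and consider the compact 4-manifold $\overline{\mathcal{A}_{i,r,\alpha}}$ with boundary $(\mathbb{S}^3/\Gamma_\alpha,\xi_{\textnormal{std}})$, which by Lemma \ref{gooddomain}(iii) is a symplectic filling. The key topological claim is that $\overline{\mathcal{A}_{i,r,\alpha}}$ is a rational homology ball. Applying Mayer--Vietoris with $\mathbb{Q}$ coefficients to $\mathcal{V} = \overline{\mathcal{A}_{i,r,\alpha}} \cup \overline{\mathcal{B}_{i,r,\alpha}}$, the vanishing of $H_1(\mathbb{S}^3/\Gamma_\alpha;\mathbb{Q})$ together with the identification $H_*(\mathcal{V};\mathbb{Q}) \cong H_*(\mathbb{S}^2;\mathbb{Q})$ yields $H_1(\overline{\mathcal{A}_{i,r,\alpha}};\mathbb{Q}) = 0$ and $H_2(\overline{\mathcal{A}_{i,r,\alpha}};\mathbb{Q}) \oplus H_2(\overline{\mathcal{B}_{i,r,\alpha}};\mathbb{Q}) \cong \mathbb{Q}$. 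Since the knot $\widetilde{E} \cap \psi_{i,-1}(\Sigma_{r,\alpha})$ is rationally null-homologous in $\mathbb{S}^3/\Gamma_\alpha$, the pieces of $\widetilde{E}$ inside $\overline{\mathcal{A}_{i,r,\alpha}}$ can be rationally absorbed into the sphere, producing a rational cycle representative of $[\widetilde{E}]$ supported entirely in $\overline{\mathcal{B}_{i,r,\alpha}}$; this forces $H_2(\overline{\mathcal{A}_{i,r,\alpha}};\mathbb{Q}) = 0$, and a short Lefschetz duality argument using $H_1(\overline{\mathcal{A}_{i,r,\alpha}};\mathbb{Q}) = 0$ then gives $H_3(\overline{\mathcal{A}_{i,r,\alpha}};\mathbb{Q}) = 0$. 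Thus $\overline{\mathcal{A}_{i,r,\alpha}}$ is a $\mathbb{Q}$-homology ball symplectic filling of $(\mathbb{S}^3/\Gamma_\alpha,\xi_{\textnormal{std}})$. The classification of minimal symplectic fillings of spherical space forms (Lisca's classification for lens spaces, supplemented by the analogous results for the non-cyclic cases), combined with the characterization that only the cyclic quotient singularities of type $\frac{1}{n^2}(1,na-1)$ with $\gcd(n,a) = 1$ and $n > a \geq 1$ admit $\mathbb{Q}$-homology disk smoothings, then forces $\Gamma_\alpha$ to be cyclic of the claimed Wahl form.

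The main technical hurdle in (i) is the controlled cutting and capping of $\widetilde{E}$, where Appendix B is needed to guarantee that the resulting chains $S_1,S_2$ truly represent positive integer multiples of $[\widetilde{E}]$ with the asserted intersection behavior. The delicate step in (ii) is the rational homology ball verification, where one must ensure the $\mathbb{Q}$-class $[\widetilde{E}]$ admits a representative entirely in $\overline{\mathcal{B}_{i,r,\alpha}}$; the irreducibility from (i) is what makes this setup unambiguous, as it ensures each orbifold point is approached by a single curve whose torsion boundary class in $H_1(\mathbb{S}^3/\Gamma_\alpha)$ can be absorbed onto the sphere.
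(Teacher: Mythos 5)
Your proposal follows essentially the same route as the paper: both parts rest on cutting $\widetilde{E}$ along the orbifold links via the Appendix B capping lemma, converting Mumford intersection numbers into linking numbers (Proposition \ref{linkingequalsintersection}), and playing the resulting positive geometric count against $H_2(\mathcal{V};\mathbb{Z})=\mathbb{Z}[\widetilde{E}]$ with $[\widetilde{E}]\cdot[\widetilde{E}]=-1$, while for (ii) you use the same Mayer--Vietoris rank count together with the classification of minimal symplectic fillings of spherical space forms and the Wahl/$\mathbb{Q}$HD characterization. The only differences are presentational: the paper cuts at two distinct radii $r_1<r_2$ so that the two capped chains intersect transversely in a controlled way, and it derives the positivity of the homology coefficients in (i) (and the nontriviality of the capped class in $H_2(\mathcal{V}\setminus\mathcal{C}_{i,r,\alpha};\mathbb{Q})$ in (ii)) from the small symplectic area of the caps rather than from a local degree count.
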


\begin{proof} 
Set $Q:= \prod_{\alpha = 1}^N |\Gamma_{\alpha}|$. 
\begin{enumerate}
\item Suppose by way of contradiction that $E$ is reducible and comprises $P$ irreducible components $E^{1},\ldots,E^{P}$. Write $E=\sum_{\beta=1}^{P}E^{\beta}$ for the decomposition of $E$
into these components. Recall the notation of Lemma \ref{gooddomain}. $E\setminus (\{x_{1},...,x_{N}\} \cap E) \subset X_{\textnormal{reg}}$
is smooth by Proposition \ref{minusone} and has $P$ connected components, namely $\mathring{E}^{\beta} := E^{\beta}\setminus(\{x_{1},...,x_{N}\}\cap E^{\beta})\subset X_{\textnormal{reg}}$
for $\beta=1,...,P$. We define $\Omega_r := B_g(p,r^{-1})\setminus \bigcup_{\alpha=1}^N \overline{B}_g(x_{\alpha},r)$, where $r>0$ is sufficiently small so that Lemma \ref{gooddomain} applies.

Recall from \eqref{eq: Edef} that $E_i = \psi_{i,-1}^{-1}(V_{i,-1} \cap \widetilde{E})$ Hausdorff converges to $E \cap X_{\textnormal{reg}}$ and by \eqref{eq:limsup} satisfy
$$\limsup_{i\to \infty} \mathcal{H}_{\psi_{i,-1}^{\ast}\widetilde{g}_{i,-1}}^2(E_i \cap \Omega_r) \leq 2\pi$$
for any $r>0$. We claim that the convergence is also in the weak sense of currents. Indeed, any sequence of holomorphic curves in $\mathbb{C}^2$ with a locally uniform upper bound on their area converges in the weak sense of currents to a holomorphic 2-chain \cite[Section 16.1, Proposition 1]{Chirka}. Because this statement is local, we can use Lemma \ref{coordlemma} to show that this result also applies to the sequence $E_i$ of $\psi_{i,-1}^{\ast}\widetilde{J}$ holomorphic curves, since $(\psi_{i,-1}^{\ast}\widetilde{g}_{i,-1},\psi_{i,-1}^{\ast}\widetilde{J}) \to (g,J)$ in $C_{\text{loc}}^{\infty}(X_{\text{reg}})$. The support of any limiting current is the Hausdorff limit $E\cap X_{\text{reg}}$, and the limit current is some positive integer linear combination of the currents of integration corresponding to each of its irreducible components. We therefore have that
\begin{equation} \label{currentconverge} [E_i] \to \sum_{\beta=1}^P a_{\beta}[E^{\beta}]\quad\textnormal{in  $X_{\textnormal{reg}}$}\end{equation}
in the weak sense of currents for some $a_{\beta}\in \mathbb{N}_{>0}$, as claimed. In particular, it follows that there exists $C>0$ such that for any $r>0$ with $\mathcal{A}_{r,\alpha}:= B(x_{\alpha},3r)\setminus \overline{B}(x_{\alpha},\frac{1}{2}r) \subseteq X_{\textnormal{reg}},$ we have that
$$\limsup_{i \to \infty} \mathcal{H}_{\psi_{i,-1}^{\ast}\widetilde{g}_{i,-1}}^2(E_{i} \cap \mathcal{A}_{r,\alpha}) \leq C \mathcal{H}_g^2(E\cap B(x_{\alpha},4 r)).$$
By choosing $r>0$ small enough so that $B(x_{\alpha},r)$ lies in the domain of an isotropy chart centered at $x_{\alpha}$, we can apply the volume-ratio monotonicity for analytic sets \cite[Section 15.1, Proposition 1]{Chirka} to the lift of $E$ by the orbifold chart, yielding
$$\mathcal{H}_g^2(E\cap B(x_{\alpha},4r))\leq Cr^2$$
for all $r>0$ sufficiently small. Combining expressions then gives that
\begin{equation} \label{areasmall} \limsup_{i\to \infty} \mathcal{H}_{\psi_{i,-1}^{\ast} \widetilde{g}_{i,-1}}^2(E_i \cap \mathcal{A}_{r,\alpha}) \leq Cr^2\end{equation}
for all $r>0$ sufficiently small. 

Using this estimate, we apply Lemma \ref{cappinglemma} to prove that for appropriate choices of (small) $r>0$,
$E_i=\psi_{i,-1}^{-1}(\widetilde{E} \cap V_{i,-1})$, that is,
the pullbacks of $\widetilde{E}$ by the Cheeger-Gromov diffeomorphisms $\psi_{i,-1}$ can be cut along the spheres $\Sigma_{r,\alpha} \cong \mathbb{S}^3/\Gamma_{\alpha}$ defined in Lemma \ref{gooddomain} for $i$ large enough to obtain $\psi_{i,-1}^{\ast}\widetilde{J}$-holomorphic curves, some of whose boundaries are positively linked in $\Sigma_{r,\alpha}$. We moreover show that there is a surface of small area in $\Sigma_{r,\alpha}$ which can be used to cap off the components of this boundary (after replacing $\widetilde{E}$ with an appropriate multiple if necessary) to obtain closed 2-chains which will later be shown to intersect positively.

\begin{claim} \label{claim-upperboundonboundarycurve}
There exists $r>0$ such that 
for all $i\geq \underline{i}(r)$ sufficiently large, the following holds.

    There is a partition $E_i \cap \Omega_r = \sqcup_{\beta=1}^{P} E_{i,r}^{\beta}$, where  $E_{i,r}^{\beta}$ itself is a finite union of connected components, such that $\partial E_{i,r}^{\beta} =\sum_{\{\alpha\,|\,x_{\alpha} \in E^{\beta}\}} \sigma_{i,r,\alpha,\beta}$ is a sum of 1-chains $\sigma_{i,r,\alpha,\beta}$, each of which is a finite union of smoothly embedded closed curves in $\Sigma_{r,\alpha}$, satisfying for any distinct $\beta_1,\,\beta_2 \in \{1,...,P\}$ with $x_{\alpha} \in E^{\beta_1} \cap E^{\beta_2}$,  $$\operatorname{link}_{\Sigma_{r,\alpha}}(\sigma_{i,r,\alpha,\beta_1},\sigma_{i,r,\alpha,\beta_2}) \in \mathbb{Q} \cap (0,\infty).$$ 
    Moreover, there exists $C>0$ such that for each $\alpha \in \{1,...,N\}$ and $\beta \in \{1,...,P\}$ satisfying $x_{\alpha}\in E^{\beta}$, there is a smooth 2-chain  $S_{i,r,\alpha,\beta}\subset\Sigma_{r,\alpha}$ such that $\partial S_{i,r,\alpha,\beta} = Q\cdot \sigma_{i,r,\alpha,\beta}$ as 1-chains, where $$\left| \int_{S_{i,r,\alpha,\beta}} \psi_{i,-1}^{\ast} \widetilde{\omega}_{i,-1} \right| \leq Cr^2.$$ 

\end{claim}

\begin{proof}[Proof of Claim \ref{claim-upperboundonboundarycurve}]

Note that the intersection $E\cap \partial B_{g}(x_{\alpha},r)$ is transverse for all sufficiently small $r>0$ (cf.~the first bullet point in \cite[Section 9.2]{Chirka}). Moreover, because  $E_i \cap \mathcal{A}_{r,\alpha}$ can be identified with $\psi_{i,-1}(E_i \cap \mathcal{A}_{r,\alpha})=\widetilde{E} \cap \psi_{i,-1}(\mathcal{A}_{r,\alpha})$, and $\widetilde{E}$ is compact, $E_i \cap \mathcal{A}_{r,\alpha}$ is properly embedded in $\mathcal{A}_{r,\alpha}$. We may therefore apply Lemma \ref{cappinglemma} to $(E_i \cap \mathcal{A}_{r,\alpha},r^{-1}\psi_{i,-1}^{\ast}\widetilde{g}_{i,-1},\psi_i^{\ast}\widetilde{J})$ to conclude that (after replacing $r$ with some $r'\in (r,2r)$) $E_i \cap \Sigma_{r,\alpha}$ is a finite union of smoothly embedded circles, each converging smoothly to $\Sigma_{r,\alpha} \cap E^{\beta}$ for some $\beta \in \{1,...,P\}$ with $x_{\alpha}\in E^{\beta}$. We can thus pass to a subsequence so that the number of components of each $E_i \cap \Sigma_{r,\alpha}$ is constant for $i\geq \underline{i}(r)$ sufficiently large. This implies that the number of connected components of $E_i \cap \Omega_r$ is uniformly bounded, so we may pass to a further subsequence in order to ensure that the number of such components is also constant for $i\geq \underline{i}(r)$ sufficiently large. As $i\to \infty$, each connected component of $E_i \cap \Omega_r$ converges in the Hausdorff sense to a connected subset of $E\cap \Omega_r$. Because the collection $\{ E^{\beta}\cap X_{\textnormal{reg}} | 1\leq \beta \leq P \}$ is pairwise disjoint with union $E \cap X_{\textnormal{reg}}$, it follows that the connected components of $E_i \cap \Omega_r$ each Hausdorff converge to a subset of some $E^{\beta}$. We may therefore partition $E_i \cap \Omega_r$ accordingly, letting $E_{i,r}^{\beta}$ denote the union of the connected components of $E_i \cap \Omega_r$ converging to a subset of $E^{\beta}$. In particular, the boundary components of $E_{i,r}^{\beta}$ converge to circles in $E^{\beta} \cap \Sigma_{r,\alpha}$. Let $\sigma_{i,r,\alpha,\beta}$ denote the union of the connected components of $\partial E_{i,r}^{\beta}$ contained in $\Sigma_{r,\alpha}$, oriented by any outward-pointing normal vector field of $E_{i,r}^{\beta}$ in $E_i$. By Lemma \ref{cappinglemma}(c), the embeddings of $\sigma_{i,r,\alpha,\beta}$ converge smoothly to an $a_{\beta}$-sheeted covering map of the circle $E^{\beta}\cap \Sigma_{r,\alpha}$. In particular, for sufficiently large $i\geq \underline{i}$, the corresponding circles $\sigma_{i,r,\alpha,\beta}$ are homologous to $a_{\beta}[E^{\beta} \cap \Sigma_{r,\alpha}]$ within any arbitrarily small neighborhood of $E^{\beta} \cap \Sigma_{r,\alpha}$ in $\Sigma_{r,\alpha}$. Thus, Remark \ref{remark-homotopy} and Proposition \ref{linkingequalsintersection} yield the fact that
$$\operatorname{link}_{\Sigma_{r,\alpha}}(\sigma_{i,r,\alpha,\beta_1},\sigma_{i,r,\alpha,\beta_2}) = a_{\beta_1} a_{\beta_2} \operatorname{link}_{\Sigma_{r,\alpha}}(E^{\beta_1}\cap \Sigma_{r,\alpha},E^{\beta_2} \cap \Sigma_{r,\alpha}) = a_{\beta_1} a_{\beta_2} E^{\beta_1} \cdot E^{\beta_2},$$
where we observe that both of $\sigma_{i,r,\alpha,\beta_1},\sigma_{i,r,\alpha,\beta_2}$ have been equipped with opposite the orientation from Proposition \ref{linkingequalsintersection}. As $E^{\beta_1},E^{\beta_2}$ have no common irreducible components, we read from \cite[p.17, property (iv)]{mumford} that $E^{\beta_1}\cdot E^{\beta_2}>0$. The existence of $S_{i,r,\alpha,\beta}$ satisfying the desired bound is a consequence of Lemma \ref{cappinglemma}(d). 
\end{proof}

Suppose that $r>0$ is small and satisfies the conclusion of Claim \ref{claim-upperboundonboundarycurve}. If $i\geq \underline{i}(r)$ is sufficiently large, then Claim \ref{claim-upperboundonboundarycurve} tells us that the smooth 2-chain
\[
\widehat{E}_{i,r}^{\beta}:=Q \psi_{i,-1}(E_{i,r}^{\beta}) -\left(\sum_{\{\alpha\,|\,x_{\alpha}\,\in\,E^{\beta}\}} \psi_{i,-1}(S_{i,r,\alpha,\beta})\right)
\]
in $\mathcal{V}$ satisfies $\partial \widehat{E}_{i,r}^{\beta} = 0$. 
If $P\geq 2$, then there exist $\alpha \in \{1,...,N\}$ and $\beta_1,\beta_2 \in \{1,...,P\}$ with $\beta_1 \neq \beta_2$ such that $x_{\alpha} \in E^{\beta_1} \cap E^{\beta_2}$. By Lemma \ref{intersectiongraph}, we then have $E^{\beta_1}\cap E^{\beta_2}=\{x_{\alpha}\}$. Choose $0<r_1 < r_2 < 1$ such that Claim \ref{claim-upperboundonboundarycurve} holds for both. By \cite[Exercise 3.3.14]{GuiPol}, the intersection number of $\psi_{i,-1}(S_{i,r_2,\alpha,\beta_2}) \hookrightarrow \widetilde{M}$ and $\psi_{i,-1}(\widehat{E}_{i,r_1}^{\beta_1})$ in $\widetilde{M}$ is equal to the intersection number of $\psi_{i,-1}(S_{i,r_2,\alpha,\beta_2})$ and $\psi_{i,-1}(E_{i,r_1}^{\beta_1})\cap \psi_{i,-1}(\Sigma_{r_2,\alpha})=\psi_{i,-1}(E_{i,r_1}^{\beta_1}\cap \Sigma_{r_2,\alpha})$ in $\psi_{i,-1}(\Sigma_{r_2,\alpha})$, or equivalently, the intersection number of $S_{i,r_2,\alpha,\beta_2}$ and $E_{i,r_1}^{\beta_1}\cap \Sigma_{r_2,\alpha}$ in $\Sigma_{r_2,\alpha}$, where the orientation of $\Sigma_{r_2,\alpha}$ is the boundary orientation induced from the connected component of $X\setminus \Sigma_{r_2,\alpha}$ containing $x_{\alpha}$, and the orientation of $E_{i,r_1}^{\beta}\cap \Sigma_{r_2,\alpha}$ is induced by the orientations on $E_{i,r_1}^{\beta_1}$ and $\Sigma_{r,\alpha}$ as explained in \cite[p.100]{GuiPol}. This induced orientation of $E_{i,r_1}^{\beta_1}\cap \Sigma_{r_2,\alpha}$ is opposite to that of $\sigma_{i,r_2,\alpha,\beta_1}$. 
Because $E_{i,r_2}^{\beta_2} \cap S_{i,r_1,\alpha,\beta_1}= \emptyset$ and $E_{i,r_1}^{\beta_1} \cap E_{i,r_2}^{\beta_2}=\emptyset$, the topological intersection number of $\widehat{E}_{i,r_1}^{\beta_1}$ and $\widehat{E}_{i,r_2}^{\beta_2}$ in $\mathcal{V}$ is therefore given by 
$$\widehat{E}_{i,r_1}^{\beta_1} \cdot \widehat{E}_{i,r_2}^{\beta_2} =  Q ( \sigma_{i,r_2,\alpha,\beta_1}\cdot_{\Sigma_{r_{2},\alpha}} S_{i,r_2,\alpha,\beta_2}),$$
where $\cdot_{\Sigma_{r,\alpha}}$ denotes the intersection product in $\Sigma_{r,\alpha}$ with its induced orientation. The intersection number $\sigma_{i,r_2,\alpha,\beta_1}\cdot_{\Sigma_{r,\alpha}} S_{i,r_2,\alpha,\beta_2}$ is equal to the linking number of $\sigma_{i,r_2,\alpha,\beta_1} = E_{i,r_1}^{\beta_1} \cap \Sigma_{r_2,\alpha}$ and $\partial S_{i,r_2,\alpha,\beta_2} = Q \sigma_{i,r_2,\alpha,\beta_2}$ in $\Sigma_{r,\alpha}$, which is positive by Claim \ref{claim-upperboundonboundarycurve}. On the other hand, Claim \ref{claim-upperboundonboundarycurve} gives us that for $j=1,2$,
\begin{equation*}
\begin{split}
\int_{\widehat{E}_{i,r_j}^{\beta_j}} \widetilde{\omega}_{i,-1} &= Q \int_{\psi_{i,-1}(E_{i,r_j}^{\beta_j})} \widetilde{\omega}_{i,-1} - \sum_{ \{\alpha \, | \, x_{\alpha}\in E^{\beta}\} } \int_{\psi_{i,-1}(S_{i,r_j,\alpha,\beta_j})} \widetilde{\omega}_{i,-1} \\
& = Q \underbrace{\int_{E_{i,r_j}^{\beta_j}} \psi_{i,-1}^{\ast} \widetilde{\omega}_{i,-1}}_{\xrightarrow[i\to \infty]{\textnormal{\eqref{currentconverge}}} a_{\beta_j} \int_{E^{\beta_j}\cap \Omega_r}\omega \,>\,0} - \sum_{ \{\alpha \, | \, x_{\alpha}\in E^{\beta}\} } \underbrace{\int_{S_{i,r_j,\alpha,\beta_j}} \psi_{i,-1}^{\ast} \widetilde{\omega}_{i,-1}}_{<\,Cr^2} \\ & >0
\end{split}
\end{equation*}
for sufficiently small $r>0$ and $i\geq \underline{i}(r)$ large enough. Because $H_{2}(\mathcal{V};\mathbb{Z})$ is spanned
by $[\widetilde{E}]$, there exist $m_1,m_2 \in \mathbb{Z}$ such that $[\widehat{E}_{i,r_1}^{\beta_1}]=m_1 [\widetilde{E}]$ and $[\widehat{E}_{i,r_2}^{\beta_2}]=m_2 [\widetilde{E}]$. Since
$$0< \int_{\widehat{E}_{i,r_j}^{\beta_j}} \widetilde{\omega}_{i,-1} = m_j \int_{\widetilde{E}} \widetilde{\omega}_i =2\pi m_j$$
for $j=1,2$, we must have $m_1,m_2> 0$. We therefore arrive at the fact that
$$0< \widehat{E}_{i,r_1}^{\beta_1} \cdot \widehat{E}_{i,r_2}^{\beta_2} = m_1 m_2 \widetilde{E} \cdot \widetilde{E} = -m_1 m_2,$$
a contradiction.

\item  Fix $\alpha \in \{1,\ldots,N\}$ and let $\mathcal{C}_{i,r,\alpha} \subseteq \mathcal{V}$ denote the connected component of $\widetilde{M}\setminus \psi_{i,-1}(\Sigma_{r,\alpha})$ which does not intersect $\psi_{i,-1}(\Omega_r)$. Equip $\psi_{i,-1}(\Sigma_{r,\alpha})$ with the pushforward $\mathcal{D}_{i,r,\alpha}:= (\psi_{i,-1})_{\ast} \mathcal{D}_{r,\alpha}$ of the contact structure from Lemma \ref{gooddomain}(iii). For $r>0$ sufficiently small and $i\geq \underline{i}(r)$ large, it follows that $\widetilde{\omega}_{i,-1}>0$ on $\mathcal{D}_{i,r,\alpha}$, where $\widetilde{\omega}_{i,t}$ is the K\"ahler form of $\widetilde{g}_{i,t}$ as defined before \eqref{Fconverge}. Thus, $(\mathcal{C}_{i,r,\alpha},\widetilde{\omega}_{i,-1})$ is a symplectic filling of the contact three-manifold $(\psi_{i,-1}(\Sigma_{i,r,\alpha}),\mathcal{D}_{i,r,\alpha})$, which itself is contactomorphic to $\mathbb{S}^3/\Gamma_{\alpha}$ with its standard contact structure.

Next, arguing as in the proof of Claim \ref{claim-upperboundonboundarycurve} and applying Lemma \ref{cappinglemma}(d) to the rescalings $r^{-2}\widetilde{\omega}_{i,-1}$, keeping in mind \eqref{areasmall}, we can find $r>0$ arbitrarily small and a smooth 2-chain $S_{i,r,\alpha} \subseteq \Sigma_{r,\alpha}$ such that $\partial S_{i,r,\alpha}=Q (E_{i} \cap\Sigma_{r,\alpha})$ and
$$\left| \int_{S_{i,r,\alpha}} \psi_{i,-1}^{\ast} \widetilde{\omega}_{i,-1} \right| \leq Cr^2$$
for $i\geq \underline{i}(r)$ sufficiently large. Set 
\[
\widehat{E}_{i,r,\alpha}:=Q( \widetilde{E}\setminus (\widetilde{E}\cap \mathcal{C}_{i,r,\alpha}))-\psi_{i,-1}(S_{i,r,\alpha}).
\]
This is a smooth 2-chain in $\mathcal{V}$ satisfying $\partial \widehat{E}_{i,r,\alpha}=0$. For sufficiently small $r>0$ and $i\geq \underline{i}(r)$ large enough, we then have that
\begin{equation*}
\begin{split}
\int_{\widehat{E}_{i,r,\alpha}}\widetilde{\omega}_{i,-1}&= Q\int_{\widetilde{E} \setminus \mathcal{C}_{i,r,\alpha}}\widetilde{\omega}_{i,-1} - \int_{\psi_{i,-1}(S_{i,r,\alpha})} \widetilde{\omega}_{i,-1}\\
&=Q\underbrace{\int_{E_{i,r}} \psi_{i,-1}^{\ast}\widetilde{\omega}_{i,-1}}_{\xrightarrow[i\to \infty]{\textnormal{\eqref{currentconverge}}}\,\sum_{\beta=1}^{P} a_{\beta}\int_{E^{\beta}\setminus B(x_{\alpha},r)}\omega\,>\,0}-\underbrace{\int_{S_{i,r,\alpha}} \psi_{i,-1}^{\ast}\widetilde{\omega}_{i,-1}}_{<\,Cr^{2}}\\
&>0.
\end{split}
\end{equation*}
In particular, $\widehat{E}_{i,r,\alpha}$ represents a non-trivial homology class in $H_{2}(\mathcal{V};\mathbb{Z})$. (This also shows that $\widehat{E}_{i,r,\alpha}$ represents a non-trivial class in $H_2(\mathcal{V}\setminus \mathcal{C}_{i,r,\alpha};\mathbb{Q})$.) If $(\mathcal{C}_{i,r,\alpha},\widetilde{\omega}_{i,-1})$ were not a minimal symplectic filling of $\psi_{i,-1}(\Sigma_{r,\alpha}) \cong \mathbb{S}^3/\Gamma_{\alpha}$, then $\mathcal{C}_{i,r,\alpha}$ would contain an embedded smooth surface $\Delta \cong \mathbb{S}^2$ with self-intersection $-1$, disjoint from $\widehat{E}_{i,r,\alpha}$. In other words, $[\Delta],[\widehat{E}_{i,r,\alpha}]\in H_2(\mathcal{V};\mathbb{Z})$ would define two non-trivial homology classes whose intersection is zero, which would again contradict the fact that $H_2(\mathcal{V};\mathbb{Z})$ is spanned by a single class with  self-intersection $-1$. $(\mathcal{C}_{i,r,\alpha},\widetilde{\omega}_{i,-1})$ is therefore a minimal symplectic filling of $\Sigma_{r,\alpha}$.

Suppose by way of contradiction that $\Gamma_{\alpha}$ were not of the claimed type for some $\alpha\in \{1,...,N\}$. By \cite[Main Theorem 4]{milnorfibers}, $\mathcal{C}_{i,r,\alpha}$ would then be diffeomorphic to the Milnor fiber of a smoothing of $\mathbb{C}^2/\Gamma_{\alpha}$. By \cite[Remark 5.10]{LooWahl}, we would then have $H_2(\mathcal{C}_{i,r,\alpha};\mathbb{Q})\neq 0$. Now, because $H_k(\partial \mathcal{C}_{i,r,\alpha};\mathbb{Q})\cong H_k(\mathbb{S}^3/\Gamma;\mathbb{Q}) =0$ for $k=1,2$, the Mayer-Vietoris sequence tells us that 
$$H_2(\mathcal{C}_{i,r,\alpha};\mathbb{Q}) \oplus H_2 (\mathcal{V}\setminus \mathcal{C}_{i,r,\alpha};\mathbb{Q}) \to H_2(\mathcal{V};\mathbb{Q})$$
is an isomorphism, and as remarked above, $\widehat{E}_{i,r,\alpha}$ represents a non-trivial homology class in \linebreak $H_2(\mathcal{V}\setminus \mathcal{C}_{i,r,\alpha};\mathbb{Q})$ so that $\operatorname{rank}(H_2(\mathcal{V}\setminus \mathcal{C}_{i,r,\alpha};\mathbb{Q})) \geq 1$. It then follows from the fact that $H_2(\mathcal{C}_{i,r,\alpha};\mathbb{Q})\neq 0$ and $H_2(\mathcal{V};\mathbb{Q})\cong \mathbb{Q}$ that
$$2\leq \operatorname{rank}(H_2(\mathcal{C}_{i,r,\alpha};\mathbb{Q}))+\operatorname{rank}(H_2(\mathcal{V}\setminus \mathcal{C}_{i,r,\alpha};\mathbb{Q})) = \operatorname{rank}(H_2(\mathcal{V};\mathbb{Q})) = 1,$$
a contradiction.
\end{enumerate}
\end{proof}

We now know that $E$ is irreducible and is the unique exceptional divisor of $X\to \mathbb{C}^2$. Furthermore, we also know that the orbifold singularities of $X$ are contained in $E$ and are of a special type. We will now use these facts to show that $X$ in fact has no orbifold singularities, hence is the FIK shrinker. 

The proof proceeds by computing the trace of the intersection matrix associated to the exceptional set of the resolution $\widetilde{X}\to X\to \mathbb{C}^2$, that is, the composition of the minimal resolution $\widetilde{X}\to X$ with the Remmert reduction $X\to\mathbb{C}^{2}$, in two different ways. On one hand, $\widetilde{X}\to \mathbb{C}^2$ is a birational morphism between smooth varieties, and so is a composition of blowdown maps. On the other hand, a description due to \cite{Kollar} of the minimal resolutions of cyclic singularities of type $\frac{1}{n^2}(1,na-1)$ for $n>a \geq 1$ with $\operatorname{gcd}(a,\,n)=1$ gives us another way to compute the intersection matrix. We then argue that the results of these two computations are incompatible unless $X$ has no orbifold singularities at all. 

\begin{theorem} \label{ShrinkerisFIK} $X$ is the FIK shrinker.
\end{theorem}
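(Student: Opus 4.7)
To prove that $X$ is the Feldman-Ilmanen-Knopf shrinker, the plan is first to show that $X$ has no orbifold singularities, then to invoke the classification in \cite{CDS}. Suppose for contradiction that $X$ has $N \geq 1$ orbifold singularities $x_1, \ldots, x_N$. By Proposition \ref{irreducible}(ii), each $x_\alpha$ is a Wahl singularity of type $\frac{1}{n_\alpha^2}(1, n_\alpha a_\alpha - 1)$ with $n_\alpha \geq 2$. Let $\pi \colon \widetilde{X} \to X$ be the minimal resolution and $\rho \colon X \to \mathbb{C}^2$ the Remmert reduction from Proposition \ref{nocones}(ii). By Koll\'ar's description \cite{Kollar}, the exceptional divisor of $\pi$ above $x_\alpha$ is a chain $D_\alpha = C_1^\alpha + \cdots + C_{\ell_\alpha}^\alpha$ of smooth rational curves with self-intersections $-b_1^\alpha, \ldots, -b_{\ell_\alpha}^\alpha \leq -2$ coming from the Hirzebruch-Jung expansion of $n_\alpha^2/(n_\alpha a_\alpha - 1)$. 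These Wahl chains are generated inductively from $[4]$ by either $[b_1, \ldots, b_\ell] \mapsto [2, b_1, \ldots, b_{\ell-1}, b_\ell + 1]$ or $[b_1, \ldots, b_\ell] \mapsto [b_1 + 1, b_2, \ldots, b_\ell, 2]$, each of which increases $\sum_j b_j$ by $3$. Consequently every Wahl chain satisfies the key identity $\sum_{j=1}^{\ell_\alpha} b_j^\alpha = 3 \ell_\alpha + 1$.

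The composition $f := \rho \circ \pi \colon \widetilde{X} \to \mathbb{C}^2$ is a proper birational morphism of smooth complex surfaces, hence factors as a composition of blowups of smooth points. In the minimal factorization all centers lie above $0 \in \mathbb{C}^2$, so the number of blowups is $M = 1 + \sum_\alpha \ell_\alpha$, equal to the number of irreducible components of $f^{-1}(0) = \widetilde{E}_X \cup \bigcup_\alpha D_\alpha$, where $\widetilde{E}_X$ is the proper transform of $E$ and is a $(-1)$-curve by Proposition \ref{minusone}. I would then compute the trace of the intersection matrix $I$ of $f^{-1}(0)$ in two ways. On one hand, the Wahl identity yields
\[
\tr(I) \;=\; \widetilde{E}_X \cdot \widetilde{E}_X + \sum_\alpha \sum_j C_j^\alpha \cdot C_j^\alpha \;=\; -1 + \sum_\alpha (-3 \ell_\alpha - 1) \;=\; 2 - 3M - N.
\]
On the other hand, in the blowup sequence $\widetilde{X} = Y_M \to Y_{M-1} \to \cdots \to Y_0 = \mathbb{C}^2$, each subsequent blowup at a point lying on the proper transform of the $i$-th exceptional curve decreases its self-intersection by $1$. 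So if $\epsilon_j$ denotes the number of components of the exceptional divisor of $Y_{j-1} \to \mathbb{C}^2$ passing through the $j$-th center $p_j$, then $\tr(I) = -M - \sum_{j=1}^M \epsilon_j$. Since $\epsilon_1 = 0$ and iterated blowups of smooth points produce simple normal crossings divisors (so $\epsilon_j \leq 2$ for $j \geq 2$), this gives
\[
\tr(I) \;\geq\; -M - 2(M - 1) \;=\; 2 - 3M.
\]
Comparing the two values of $\tr(I)$ forces $N \leq 0$, contradicting $N \geq 1$, so $X$ has no orbifold singularities.

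It follows that $X$ is a smooth complete shrinking gradient K\"ahler-Ricci soliton. Proposition \ref{nocones}(ii) then realizes $X$ via $\rho$ as a birational model of $\mathbb{C}^2$ with nonempty exceptional set $E$, which is irreducible (Proposition \ref{irreducible}(i)) and a $(-1)$-curve (Proposition \ref{minusone}, with $\widetilde{X} = X$); hence $X$ is biholomorphic to the total space of $\mathcal{O}_{\mathbb{P}^1}(-1) \to \mathbb{P}^1$. Combined with the quadratic curvature decay and smooth $\mathbb{C}^2$ asymptotic cone from Proposition \ref{nocones}, the classification of shrinking K\"ahler-Ricci soliton surfaces on this underlying complex manifold in \cite{CDS} identifies $X$ isometrically and biholomorphically with the Feldman-Ilmanen-Knopf shrinker on $\mathcal{O}_{\mathbb{P}^1}(-1) \to \mathbb{P}^1$. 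I expect the main obstacle to be the careful bookkeeping of Wahl-chain combinatorics, in particular verifying the identity $\sum_j b_j^\alpha = 3\ell_\alpha + 1$ from Koll\'ar's iterative characterization, and making precise the self-intersection tracking in the second trace computation; once these are in place, the inequality contradiction is immediate.
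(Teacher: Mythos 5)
Your proposal is correct and follows essentially the same route as the paper: rule out orbifold points by computing the trace of the intersection matrix of $\widetilde{X}\to X\to\mathbb{C}^2$ in two ways (via Koll\'ar's Wahl-chain combinatorics, equivalent to the paper's $4+3(m_i-1)$ count, versus the factorization into point blowups), and then identify the smooth soliton via \cite{CDS}. Your blowup bookkeeping $\tr(I)=-M-\sum_j\epsilon_j$ with $\epsilon_1=0$, $\epsilon_j\le 2$ gives a bound one weaker than the paper's ($2-3M$ rather than $3-3M$), but this still forces $N\le 0$ and hence the same contradiction.
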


\begin{proof} By Proposition \ref{irreducible}(ii), any singularity of $X$ is of the form $\frac{1}{n^2}(1,na-1)$ for some $n>a\geq 1$ with $\textnormal{gcd}(n,a)=1$. Let $N$ be the number of orbifold points of $X$ (which are contained in $E$) and let $\pi:\widetilde{X}\to X$ be the minimal resolution of $X$. By \cite[Theorem 7.4.16]{ishii}, the exceptional set of $\pi$ is a disjoint union of $N$ chains of $m_{i}$-curves $\{E_{i,1},...,E_{i,m_i}\}$, where $1\leq i\leq N$. This implies that 
$$E_{i,j}\cdot E_{i,k} = \left\{ \begin{array}{cc}
    0 & |j-k|>1, \\
    1 & |j-k|=1,\\
    -b_{i,j} & j=k,
\end{array} \right.$$
for some integers $b_{i,j}\geq 2$,
and the number of exceptional curves is $\sum_{i=1}^N m_i$.

Thanks to the singularity type, by \cite[Proof of Proposition 3.11]{Kollar} we can obtain each tuple $(b_{i,1},...,b_{i,m_i})$ from the $1$-tuple $(4)$ using operations of the following two types:
\begin{enumerate}
    \item Given $(a_1,...,a_k)$, output $(2,a_1,...,a_{k-1},a_k+1)$;
    \item Given $(a_1 ,...,a_k)$, output $(a_1+1,a_2,...,a_k,2)$.
\end{enumerate}
In particular, the trace of the intersection matrix $(E_{i,j}\cdot E_{i',j'})$ is given by
$$-\sum_{i=1}^N \left( 4 + 3(m_i-1) \right) = -N - 3\sum_{i=1}^N m_i.$$
Recall from Proposition \ref{nocones} that the exceptional set of the Remmert reduction $\rho:X\to \mathbb{C}^2$ is precisely $E$. By Proposition \ref{minusone}, the proper transform $\widehat{E}$ of $E$ with respect to $\pi$ is a $(-1)$-curve. The exceptional curves of $\rho \circ \pi$ are $\widehat{E}$ and the $E_{i,j}$, giving $1+\sum_{i=1}^N m_i$ exceptional curves of $\rho \circ \pi$. Moreover, it follows that the trace of the intersection matrix is $-\left( N+1+3\sum_{i=1}^N m_i \right)$. 

On the other hand, $\rho \circ \pi : \widetilde{X} \to \mathbb{C}^2$, being a birational morphism between quasiprojective smooth surfaces, decomposes as a composition of birational morphisms between smooth varieties \cite[Theorem 1-8-2]{matsuki} 
$$\widetilde{X} = X_m \xrightarrow{\pi_m} X_{m-1} \to \cdots \to X_1 \xrightarrow{\pi_1} \mathbb{C}^2,$$
where each $\pi_j$ is the blowdown along a $(-1)$-curve in $X_j$ and $\pi_1 \circ \cdots \circ \pi_m = \rho \circ \pi$. In particular, the exceptional set of $\rho \circ \pi$ comprises precisely $m$ exceptional curves. In addition, at each stage except for $\pi_1$ and $\pi_2$, blowing up along a point of intersection of two exceptional curves decreases the trace of the intersection matrix by 3. Otherwise, the trace of the intersection matrix decreases by 2. As a result, the trace of the intersection matrix of $X_j \xrightarrow{\pi_j} X_{j-1} \to \cdots \to \mathbb{C}^2$ for $j\geq 3$ is at least $-3$ plus that of $X_{j-1} \to \cdots \to \mathbb{C}^2$. Clearly, the trace of the intersection matrix of $\pi_1 \circ \pi_2:X_2 \to \mathbb{C}^2$ is $-3$.

By Proposition \ref{nocones}, we know that $m\geq 1$. Moreover, $m=1$ if and only if $X$ is smooth,
so suppose that $N\geq 1$. Then $m\geq 2$, the trace of the intersection matrix of $\rho \circ \pi$ is bounded below by $-3(m-2)-3=-3(m-1)$, and $m=1+\sum_{i=1}^N m_i$. Combining estimates, we find that
$$-3\sum_{i=1}^N m_i  \leq - \left( N+1+3\sum_{i=1}^N m_i \right),$$
that is, $N+1\leq 0$, a contradiction. We therefore conclude that $N=0$, i.e., $X$ has no singular points. By \cite[Theorem E(3)]{CDS}, we can now identify $X$ is the FIK shrinker up to pullback by an element of $GL(2,\,\mathbb{C})$.
\end{proof}

We finally prove that $(\widetilde{M},(\widetilde{g}_t)_{t\in [0,T)})$ has a Type I curvature bound by combining Theorem \ref{ShrinkerisFIK} and Proposition \ref{NoncollapsedTypeI}.

\begin{proof}[Proof of Theorem \ref{noncollapsedtheorem}] By Proposition \ref{NoncollapsedTypeI}, it suffices to find $C>0$ such that for all $(x,t)\in \widetilde{M} \times [0,T)$ with $P_{\widetilde{g}}^{\ast-}(x,t,\sqrt{T-t})\cap \widetilde{E}\neq \emptyset$, we have 
$$|\Rm_{\widetilde{g}}|_{\widetilde{g}}(x,t)\leq \frac{C}{T-t}.$$
Suppose by way of contradiction that there are $(x_i,t_i)\in \widetilde{M}\times [0,T)$ such that 
\begin{equation} \label{eq:curvatureblowsup} \lim_{i\to \infty}|\Rm_{\widetilde{g}}|_{\widetilde{g}}(x_i,t_i)(T-t_i)=\infty\end{equation}
and $P_{\widetilde{g}}^{\ast -}(x_i,t_i,\sqrt{T-t_i}) \cap (\widetilde{E}\times [0,T)) \neq \emptyset$. Set $\widetilde{g}_{i,t}:=(T-t_i)^{-1}g_{T+(T-t_i)t}$ and fix $x_0 \in \widetilde{E}$. We have shown in Theorem \ref{ShrinkerisFIK} that the unique tangent flow $X$ corresponding to any conjugate heat kernel based at $(x_0,T)$ is modeled on the FIK shrinker, hence \eqref{Fconverge} holds. Because $X$ is smooth, we know that the convergence is in fact smooth and $E$ is the unique $(-1)$ curve of $X$. Fix a bounded neighborhood $U$ of $E$ in $X$. As $(\psi_{i,t}^{\ast}\widetilde{g}_{i,t},\psi_{i,t}^{\ast} \widetilde{J}) \to (g_{t},J)$ in $C_{\operatorname{loc}}^{\infty}(X \times (-\infty,0))$, we can apply \cite[Corollary 2.4]{CCD} to obtain $\psi_{i,t}^{\ast}\widetilde{J}$-holomorphic curves $\widetilde{E}_{i,t} \subseteq U \times \{t\}$ with self-intersection $(-1)$. Recalling from Lemma \ref{containedinV} that $\psi_{i,t}(U \times [-2,-1])\subseteq \mathcal{V}$ for sufficiently large $i\in \mathbb{N}$, this means that $\psi_{i,t}(\widetilde{E}_{i,t})$ is a $(-1)$-curve in $(\mathcal{V},\widetilde{J})$. However, the only such $(-1)$-curve is $\widetilde{E}$, so that $\widetilde{E}=\psi_{i,t}(\widetilde{E}_{i,t})$ for all $t\in [-2,-1]$. In particular, we see that $\widetilde{E} \subseteq \psi_{i,t}(U)$ for all $t\in [-2,-1]$ if $i\in \mathbb{N}$ is sufficiently large. Thus, it follows that $$P_{\widetilde{g}_i}^{\ast -}(x_i,-1;1)\cap \psi_{i}(U\times [-2,-1])\neq \emptyset.$$
Set $K:= 2 \sup_X |{\Rm_g}|_g <\infty$.  Then for any $D>0$, the smooth Cheeger-Gromov convergence $\psi_i^{\ast}\widetilde{g}_i\to g$ implies that
$$\sup_{B_{\widetilde{g}_i}(\psi_{i,t'}(x'),t',2D)\times [-4,-1]} |\Rm_{\widetilde{g}_i}|_{\widetilde{g}_i} \leq K$$
for all $(x',t')\in U\times [-2,-1]$ whenever $i\geq \underline{i}(D)$ is sufficiently large. Choose $(x',t')\in \linebreak\psi_i(U\times[-2,-1])\cap P_{\widetilde{g}_i}^{\ast-}(x_i,-1,1)$ and apply \cite[Proposition 9.4(c) \& Corollary 9.6(b)]{Bam1} to obtain the inclusion
$$(x_i,-1)\in P_{\widetilde{g}_i}^{\ast}(x',t';2,0,-1-t')\subseteq B_{\widetilde{g}_i}(x',t';2D) \times [-2,-1]$$
for some $D=D(K)>0$ large. This means that $|\Rm_{\widetilde{g}_i}|_{\widetilde{g}_i}(x_i,-1)\leq K$, contradicting \eqref{eq:curvatureblowsup}. Theorem \ref{noncollapsedtheorem} now follows.
\end{proof}

\appendix
\section{Holomorphic coordinates for converging complex structures}

In this appendix, we show that a convergent sequence of K\"ahler structures near a given point give rise to a convergent sequence of holomorphic coordinates. This is used in Lemma \ref{Hausdorff} and Proposition \ref{ExcSet}.

\begin{lemma} \label{coordlemma}
Let $B\subset\mathbb{C}^{n}$ be the unit ball centred at the origin, let $\widehat{J}$ denote the standard complex structure on $B$, and let $(z_{1},\ldots,z_{n})$ denote $\widehat{J}$-holomorphic coordinates on $B$. Suppose that $\{J_{i}\}_{i}$ is a sequence of complex structures converging locally smoothly to $\widehat{J}$ on $B$. Then there exists a ball $\hat{B}\subset B$ centered at the origin such that for all $i$ large enough, there exist $J_{i}$-holomorphic coordinates $(z_{1}^{(i)},\ldots,z_{n}^{(i)})$ such that $z_{j}^{(i)}\to z_{j}$ locally smoothly on $\hat{B}$ as $i\to\infty$. 
\end{lemma}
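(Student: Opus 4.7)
The plan is to realize $J_i$ as a small perturbation of $\widehat J$ via its Beltrami differential and to solve the resulting nonlinear $\bar\partial$-equation by a contraction argument. Throughout, $\partial$ and $\bar\partial$ denote the Dolbeault operators of $\widehat J$, and $(z_{1},\ldots,z_{n})$ are the given $\widehat J$-holomorphic coordinates on $B$.

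First, fix a ball $\hat B \Subset B$ centred at the origin. For $i$ large, $J_{i}$ is $C^{0}$-close to $\widehat J$ on $\overline{\hat B}$, so $T_{\mathbb C}B = T^{1,0}_{\widehat J}\oplus T^{0,1}_{\widehat J}$ remains transverse to the $\pm\sqrt{-1}$-eigenspaces of $J_{i}$. Consequently, there exists a unique $T^{1,0}_{\widehat J}$-valued $(0,1)$-form $\mu_{i}$ on $\hat B$ (the Beltrami differential of $J_{i}$ relative to $\widehat J$) such that $T^{0,1}_{J_{i}}=\{X+\mu_{i}(X):X\in T^{0,1}_{\widehat J}\}$. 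The hypothesis $J_{i}\to\widehat J$ in $C^{\infty}_{\operatorname{loc}}(B)$ translates to $\|\mu_{i}\|_{C^{k}(\overline{\hat B})}\to 0$ for every $k\in\mathbb N$, and the integrability of $J_{i}$ is equivalent to a Maurer-Cartan identity of the form $\bar\partial\mu_{i}+\tfrac{1}{2}[\mu_{i},\mu_{i}]=0$. A complex-valued function $f$ on $\hat B$ is $J_{i}$-holomorphic if and only if
\begin{equation*}
\bar\partial f=\mu_{i}\lrcorner\,\partial f.
\end{equation*}

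Next, seek $z_{j}^{(i)}:=z_{j}+w_{j}^{(i)}$ with $z_{j}^{(i)}$ $J_{i}$-holomorphic, which amounts to solving
\begin{equation*}
\bar\partial w_{j}^{(i)}=\mu_{i}\lrcorner\,dz_{j}+\mu_{i}\lrcorner\,\partial w_{j}^{(i)}.
\end{equation*}
After possibly shrinking $\hat B$, let $T:C^{k,\alpha}_{\mathrm{cl}}(\overline{\hat B};\Lambda^{0,1})\to C^{k+1,\alpha}(\overline{\hat B})$ be a bounded right inverse of $\bar\partial$ on $\bar\partial$-closed $(0,1)$-forms (for instance via the Bochner-Martinelli-Koppelman kernel on a ball). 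Recast the equation as the fixed-point problem $w=F_{i}(w)$, where
\begin{equation*}
F_{i}(w):=T\bigl(\mu_{i}\lrcorner\,dz_{j}+\mu_{i}\lrcorner\,\partial w\bigr).
\end{equation*}
Using the Maurer-Cartan identity, one verifies inductively along the iteration $w_{0}\equiv 0$, $w_{n+1}=F_{i}(w_{n})$ that the argument of $T$ is indeed $\bar\partial$-closed at each step. Since $\|\mu_{i}\|_{C^{k,\alpha}(\overline{\hat B})}\to 0$, the map $F_{i}$ is a contraction on the ball $\{\|w\|_{C^{k+1,\alpha}(\overline{\hat B})}\leq 1\}$ for all $i\geq\underline{i}(k,\alpha)$ sufficiently large, and its unique fixed point $w_{j}^{(i)}$ satisfies
\begin{equation*}
\|w_{j}^{(i)}\|_{C^{k+1,\alpha}(\overline{\hat B})}\leq C_{k}\|\mu_{i}\|_{C^{k,\alpha}(\overline{\hat B})}\to 0.
\end{equation*}

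Finally, a diagonal argument in $k$ upgrades the convergence $w_{j}^{(i)}\to 0$ to $C^{\infty}_{\operatorname{loc}}(\hat B)$, so $z_{j}^{(i)}\to z_{j}$ locally smoothly on $\hat B$. Since $dz_{1}^{(i)},\ldots,dz_{n}^{(i)}$ converge locally smoothly to $dz_{1},\ldots,dz_{n}$, they span the $J_{i}$-$(1,0)$-cotangent space at every point of $\hat B$ for $i$ large (after possibly shrinking $\hat B$ once more), so the functions $(z_{1}^{(i)},\ldots,z_{n}^{(i)})$ are $J_{i}$-holomorphic coordinates with the required convergence. The main obstacle is the closedness constraint inherent to the right inverse $T$: the nonlinear term $\mu_{i}\lrcorner\,\partial w$ must not break $\bar\partial$-closedness of the data fed into $T$, and it is exactly the integrability of $J_{i}$, through the Maurer-Cartan identity, that makes the iteration consistent. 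Verifying this compatibility is the technical heart of the argument.
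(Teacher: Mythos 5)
There is a genuine gap, and it sits exactly at the step you call the technical heart. Your iteration requires the input to $T$, namely $\mu_{i}\lrcorner\,dz_{j}+\mu_{i}\lrcorner\,\partial w_{n}$, to be $\bar\partial$-closed, and you assert that the Maurer--Cartan identity guarantees this inductively. It does not: already at the first step, with $w_{0}\equiv 0$, writing $\mu_{i}=\sum_{k,l}\mu^{l}_{\bar k}\,d\bar z_{k}\otimes\partial_{z_{l}}$ one has $\mu_{i}\lrcorner\,dz_{j}=\sum_{k}\mu^{j}_{\bar k}\,d\bar z_{k}$, whose $\bar\partial$ is governed by the Maurer--Cartan identity to equal a term quadratic in $\mu_{i}$ (essentially $-\tfrac12[\mu_i,\mu_i]^{j}$), which is small but not zero. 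More structurally, $\bar\partial\bigl(\mu_{i}\lrcorner\,\partial u\bigr)=0$ follows from integrability only when $u$ is exactly $J_{i}$-holomorphic (then $\mu_{i}\lrcorner\,\partial u=\bar\partial u$), whereas along your iteration $u_{n}=z_{j}+w_{n}$ is only approximately holomorphic; the closedness defect is controlled by $\bar\partial u_{n}-\mu_{i}\lrcorner\,\partial u_{n}$, which is nonzero at every finite stage. Consequently $\bar\partial\,T(\eta_{n})\neq\eta_{n}$ in general, and the fixed point of $F_{i}$ is not shown to solve $\bar\partial_{J_{i}}z^{(i)}_{j}=0$. The scheme can be repaired --- e.g.\ use the full Koppelman homotopy formula $\eta=\bar\partial T\eta+T\bar\partial\eta$ and add a separate argument that the residual $\bar\partial_{J_{i}}$-error of the fixed point vanishes (this is essentially Malgrange's or Nijenhuis--Woolf's proof of Newlander--Nirenberg with parameters, which you could instead cite directly) --- but as written the inductive closedness claim is false and the contraction argument does not close.

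For comparison, the paper sidesteps this compatibility problem entirely by working with the $\bar\partial$-operator of $J_{i}$ itself rather than perturbing off $\widehat J$: it solves $\bar\partial_{J_{i}}f^{\alpha}_{i}=-\bar\partial_{J_{i}}z_{\alpha}$ by H\"ormander's $L^{2}$-estimate on a slightly smaller ball, which is strictly pseudoconvex for $J_{i}$ once $i$ is large. The datum $-\bar\partial_{J_{i}}z_{\alpha}$ is automatically $\bar\partial_{J_{i}}$-closed by integrability, its $L^{2}$-norm tends to $0$ because $J_{i}\to\widehat J$, and elliptic regularity upgrades the smallness to local smooth convergence; your final step (differentials of $z_{\alpha}+f^{\alpha}_{i}$ converge to $dz_{\alpha}$, hence give coordinates on a smaller ball) is then the same in both arguments.
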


\begin{proof}
Consider the $\widehat{J}$-holomorphic coordinates $(z_{1},\ldots,z_{n})$ on $B$. $B$ is strictly pseudoconvex with respect to $\widehat{J}$ and the function $|z|^2$. For each $i$ large enough, we want to find solutions $f_i^{\alpha}$ to the equation $\bar{\partial}_{J_{i}}(z_{\alpha}+f_{i}^{\alpha})=0$ converging locally smoothly to zero as $i\to \infty$. For any smaller ball $B'\subset B$ centred at the origin, 
it is clear that $B'$ will be pseudoconvex with respect to $J_{i}$ with the function $|z|^{2}$ for $i$ large enough. Set $$\omega_{i}:=\frac{\sqrt{-1}}{2}\partial_{J_{i}}\bar{\partial}_{J_{i}}\left(\sum_{j\,=\,1}^{n}e^{-|z_{j}|^{2}}\right),\qquad \widehat{\omega} := \frac{\sqrt{-1}}{2} \partial_{\widehat{J}} \bar{\partial}_{\widehat{J}}\left(\sum_{j\,=\,1}^{n}e^{-|z_{j}|^{2}}\right).$$
Then $\omega_{i}\to\widehat{\omega}>0$ as $i\to\infty$ smoothly in $B'$.   Consider values of $i$ large enough for which $\omega_{i}$ is positive-definite. 
Since $\Ric(\widehat{\omega})>0$ so that $\Ric(\omega_i)>0$ for $i$ large enough, H\"ormander's $L^{2}$-estimate
\cite[Theorem 3.8(2)]{SunZhangNoSemistability} (see also \cite[Theorem 8.6.1]{demailly})
gives us a solution $f_i^{\alpha}$ to $\bar{\partial}_{J_{i}}f_{i}^{\alpha}=-\bar{\partial}_{J_i} z_{\alpha}$ satisfying
$$\int_{B'}|f_{i}^{\alpha}|^{2}\,e^{-|z|^{2}}\omega_{i}^{n}\leq C(n)\int_{B'}|\bar{\partial}_{J_{i}}z_{\alpha}|^{2}_{\omega_{i}}\,e^{-|z|^{2}}\omega_{i}^{n},$$
hence also
$$\int_{B'}|f_{i}^{\alpha}|^{2}\,\omega_{i}^{n}\leq C(n)\int_{B'}|\bar{\partial}_{J_{i}}
z_{\alpha}|^{2}_{\omega_{i}}\,\omega_{i}^{n}.$$
Since $\omega_{i}\to\widehat{\omega}$ smoothly in $B'$ as $i\to\infty$, elliptic regularity tells us that $f_{i}^{\alpha}$ converge to zero locally smoothly on a smaller ball $B''\subset B'$. The functions
$\{z_{\alpha}+f_{i}^{\alpha}\}_{i=1}^{n}$, restricted to an even smaller ball $\hat{B}\subset B''$, are the desired $J_{i}$-holomorphic coordinates.
\end{proof}

\section{Capping Holomorphic Curves}

In this appendix, we prove a technical result required in the proof of Proposition \ref{irreducible}. The statements mostly follow from well-known estimates for $J$-holomorphic curves and compactness theorems for immersed submanifolds.

\begin{lemma} \label{cappinglemma}
Let $\Gamma \subseteq U(2)$ be a finite subgroup acting freely on $\mathbb{C}^2\setminus \{0\}$. Set $\mathcal{A}:=\{z\in\mathbb{C}^{2}/\Gamma \: |\: 1<|z|<2\}$ and $\mathbb{S}_r:=\{ z\in \mathbb{C}^2/\Gamma \: |\:|z|=r\}$. Let $(J_i,g_i)$ be a sequence of K\"ahler structures on $\mathcal{A}$ converging locally smoothly to a K\"ahler structure $(J,g)$ on $\mathcal{A}$ satisfying $\frac{1}{2}g_0 \leq g \leq 2g_0$ and $|\Rm_{g}|_g \leq 10$, where $g_0$ is the flat metric on $\mathcal{A}$. Let $E_{i}\subseteq\mathcal{A}$ be a sequence of properly embedded (possibly non-compact) $J_i$-holomorphic curves such that the following hold for some $B>0$.

\begin{enumerate}
\item $E_{i}$ converge in the Hausdorff sense (with respect to the metric $d_g$) as $i\to\infty$ to a smooth $J$-holomorphic curve $E_{\infty} \subseteq \mathcal{A}$ that intersects $\mathbb{S}_r$ transversely for all $r\in (1,\,2)$;

\item $\limsup_{i\to\infty}\mathcal{H}_{g_i}^{2}(E_{i}) \leq B$;

\item $\textnormal{genus}(E_{i})=0$;

\item The currents of integration $[E_i]$ converge to $k[E_{\infty}]$ in the weak sense of currents for some $k\in \mathbb{N}_{>0}$. 
\end{enumerate}

\noindent Then, after passing to a subsequence, there exist $C=C(B)>0$
and $r\in (1,2)$ such that the following hold.
\begin{enumerate}[label=\textnormal{(\alph{*})}, ref=(\alph{*})]
\item $\limsup_{i\to\infty}\mathcal{H}_{g_i}^{1}(E_{i}\cap \mathbb{S}_r)\leq C(B)$;
    \item For all $i\in\mathbb{N}$, $E_{i}\cap \mathbb{S}_r$ is
a disjoint union of simple closed curves;
    \item There is a disjoint union of circles $T$ together with diffeomorphisms $\vartheta_i:T\to E_{i}\cap \mathbb{S}_r$ converging smoothly to a $k$-sheeted covering map $\vartheta_{\infty}: T\to E_{\infty}\cap \mathbb{S}_r$ as $i\to\infty$;
    \item There is a sequence of smooth 2-chains $S_{i,r}\subseteq \mathbb{S}_r$ which satisfy $\partial S_{i,r}=|\Gamma| (E_{i}\cap \mathbb{S}_r)$
as 1-cycles, as well as $\limsup_{i\to\infty}\mathcal{H}_{g_i}^{2}(S_{i,r})\leq C(B)$.
\end{enumerate}
\end{lemma}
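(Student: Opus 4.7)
The plan is: (1) pick $r$ via a coarea/Fatou argument to obtain (a); (2) upgrade the current convergence $[E_i]\rightharpoonup k[E_\infty]$ to a local smooth $k$-sheeted decomposition, giving (b) and (c); (3) construct $S_{i,r}$ by combining a bounded-area filling of $|\Gamma|k(E_\infty\cap\mathbb{S}_r)$, using that $H_1(\mathbb{S}_r;\mathbb{Z})$ has order dividing $|\Gamma|$, with a small-area ``band'' between $E_i\cap\mathbb{S}_r$ and $k(E_\infty\cap\mathbb{S}_r)$.

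To pick $r$, I would apply the coarea formula to the radial coordinate restricted to $E_i$. Since $|\nabla^{g_i}|z||_{g_i}$ is uniformly bounded by the $C^0$ convergence $g_i\to g$ together with the hypothesis $\tfrac12 g_0\le g\le 2g_0$, we obtain
\[
\int_1^2 \mathcal{H}^1_{g_i}(E_i\cap\mathbb{S}_\rho)\,d\rho \;\leq\; C\,\mathcal{H}^2_{g_i}(E_i)\;\leq\; C(B)
\]
for $i$ large. Fatou's lemma then furnishes a full-measure subset of $(1,2)$ along which $\liminf_i \mathcal{H}^1_{g_i}(E_i\cap\mathbb{S}_\rho)<\infty$; picking any such $\rho$ as our $r$ and passing to a subsequence realizes (a). Transversality of $E_\infty\cap\mathbb{S}_r$ is automatic from hypothesis~(i).

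For (b) and (c), I would first show that near any point of $E_\infty$ the curves $E_i$ decompose into $k$ mutually disjoint local sheets converging smoothly to $E_\infty$. Using Lemma~\ref{coordlemma} to introduce $J_i$-holomorphic coordinates on a small ball $U$ around a point of $E_\infty$ that converge smoothly to fixed $J$-holomorphic coordinates, the $E_i\cap U$ become honest holomorphic curves (with respect to the varying, but smoothly converging, coordinates) with locally bounded area and weak current limit $k[E_\infty]$. Chirka's regularity and graph-decomposition results \cite[Section 16.1]{Chirka}, combined with the embeddedness of $E_i$ and the smoothness of $E_\infty$, upgrade this to a smooth splitting of $E_i\cap U$ into $k$ mutually disjoint graphs over $E_\infty\cap U$. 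By transversality of $E_\infty\cap\mathbb{S}_r$, the Hausdorff convergence of (i), and this local smoothness, for $i$ large $E_i$ is transverse to $\mathbb{S}_r$, and $E_i\cap\mathbb{S}_r$ consists of exactly $km$ disjoint smooth simple closed curves (where $m:=\#\{\text{components of }E_\infty\cap\mathbb{S}_r\}$), arranged in $m$ clusters of $k$ circles each converging smoothly to the corresponding component of $E_\infty\cap\mathbb{S}_r$. This yields (b), and letting $T$ be $km$ copies of $S^1$ parametrized by the obvious charts gives (c).

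For (d), I would write $S_{i,r}:=|\Gamma|\,T_i+\sum_{j=1}^m U_j$, where the components $c_1,\dots,c_m$ of $E_\infty\cap\mathbb{S}_r$ satisfy $|\Gamma|[c_j]=0$ in $H_1(\mathbb{S}_r;\mathbb{Z})$ because $\mathbb{S}_r\cong\mathbb{S}^3/\Gamma$ has first homology a finite abelian group of order dividing $|\Gamma|$; this gives 2-chains $U_j$ with $\partial U_j=|\Gamma|c_j$, and an isoperimetric filling on the fixed compact 3-manifold $\mathbb{S}_r$ of bounded geometry yields $\mathcal{H}^2_g(U_j)\leq C(B,|\Gamma|)$ in view of the uniform length bound on $c_j$ inherited from (a). The chain $T_i$ is produced from the clusterwise smooth convergence of the previous paragraph: inside a thin tubular neighborhood of $E_\infty\cap\mathbb{S}_r$ in $\mathbb{S}_r$, the normal-graph description of each cluster provides an explicit ``band'' 2-chain with $\partial T_i=(E_i\cap\mathbb{S}_r)-k(E_\infty\cap\mathbb{S}_r)$ and $\mathcal{H}^2_{g_i}(T_i)\to 0$. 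Then $\partial S_{i,r}=|\Gamma|(E_i\cap\mathbb{S}_r)$, and the area estimate follows.

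The step I expect to be the main obstacle is the upgrade of the current convergence in~(iv) to the pointwise multi-sheeted smooth convergence used for (b), (c), and the band construction in (d). Chirka's regularity statements are phrased for the standard complex structure, so Lemma~\ref{coordlemma} must be invoked to absorb the varying $J_i$; one must then use embeddedness of $E_i$ and smoothness of the reduced support $E_\infty$ to rule out branching in the limit and obtain the clean $k$-sheeted graphical picture.
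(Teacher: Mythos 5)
There is a genuine gap, and it sits exactly at the step you flag as the main obstacle: the passage from the current convergence $[E_i]\rightharpoonup k[E_\infty]$ to a ``clean $k$-sheeted graphical picture'' near $\mathbb{S}_r$. Embeddedness of $E_i$ and smoothness of $E_\infty$ do \emph{not} rule out branching of the projection onto $E_\infty$: the embedded curves $\{w^2=\epsilon_i z\}$ converge (with multiplicity $2$) to the smooth curve $\{w=0\}$, yet in no neighborhood of the branch point do they split into two disjoint graphs, and over a circle encircling the branch point the intersection is a single connected double cover, not two parallel circles. So your claimed local splitting into $k$ disjoint graphs is false in general, and with it the assertion that $E_i\cap\mathbb{S}_r$ consists of $m$ clusters of $k$ circles. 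To salvage the argument you must show that such branch/concentration points are uniformly finite in number (so that, after passing to a subsequence, a full-measure set of radii $r$ avoids their limits and the convergence near $\mathbb{S}_r$ is an honest unbranched, smoothly converging $k$-sheeted cover). This is precisely where hypothesis (iii) (genus zero) must enter — via Riemann--Hurwitz, or, as the paper does, via Fish's bound on $\int|A_i|^2$ (which needs the genus bound), an $\varepsilon$-regularity theorem for the second fundamental form, a covering argument producing finitely many concentration points, and the Breuning--Langer compactness theorem to get the proper immersion $u_\infty$ whose sheet number is identified with $k$ using (iv). Your proposal never uses (iii) and offers no substitute for this quantitative control, so as written (b), (c) and the ``band'' in (d) are unproved; Chirka's theory alone gives convergence of analytic sets and currents, not the absence of branching near a chosen $\mathbb{S}_r$.

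Two further points. First, even after the above is fixed, the correct conclusion (c) is a $k$-sheeted covering map that may be connected (nontrivial monodromy), so your band $T_i$ should be built as the track of a homotopy from $\vartheta_i$ to $\vartheta_\infty$ inside a tubular neighborhood, rather than from $k$ parallel copies; also your chain should be $S_{i,r}=|\Gamma|T_i+k\sum_j U_j$ (with $\partial U_j=|\Gamma|c_j$), not $|\Gamma|T_i+\sum_j U_j$, or the boundary count fails for $k>1$. Second, your route to (d) — using that $|\Gamma|[c_j]=0$ in $H_1(\mathbb{S}_r;\mathbb{Z})$ and an isoperimetric filling of the fixed cycle, with area controlled through the length bound from (a) — is a legitimate alternative to the paper's construction, which instead lifts $E_i\cap\mathbb{S}_r$ to $\mathbb{S}^3$, finds a definite ball missing the curve by a monotonicity/covering argument, stereographically projects, solves Plateau's problem in $\mathbb{R}^3$ with an isoperimetric bound, and pushes forward by the $|\Gamma|$-sheeted covering; either works once (a)--(c) are in place, provided you justify the smooth filling with area bounded only in terms of $B$ and the fixed data.
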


\begin{proof}
Define $\rho (z) := |z|$ for $z \in \mathcal{A}$. By Sard's theorem applied to $\rho|_{E_i}$, there is a subset $\mathcal{I}_{1}\subseteq(1,\,2)$ of full measure such that (b) holds for all $r\in\mathcal{I}_{1}$. By Fatou's lemma and the coarea formula, we have
that
\begin{align*}
\int_1^2 \liminf_{i\to \infty} \mathcal{H}_{g_i}^1(E_i \cap \mathbb{S}_r)\,dr &\leq \liminf_{i\to \infty}\int_{1}^{2}\mathcal{H}_{g_i}^{1}(E_{i}\cap \mathbb{S}_r)\,dr \\ &\leq  \liminf_{i\to \infty} \int_{E_i} |\nabla^{g_i} (\rho|_{E_i})|_{g_i} \,d\mathcal{H}_{g_i}^{2}\\ & \leq \sup_{\mathcal{A}} |\nabla^g \rho|_g \cdot \liminf_{i\to \infty} \mathcal{H}_{g_i}^2(E_i) \\ & \leq 2B.
\end{align*}
Hence there is a subset $\mathcal{I}_{2}\subseteq\mathcal{I}_{1}$ such that $|\mathcal{I}_2|\geq \frac{1}{2}$ and
$$\liminf_{i\to \infty} \mathcal{H}_{g_i}^1(E_i \cap \mathbb{S}_r) \leq 4B$$
for all $r\in \mathcal{I}_2$. 
For all such $r$, by considering a subsequence converging to 
$\liminf_{i\to \infty} \mathcal{H}_{g_i}^1(E_i \cap \mathbb{S}_r)$, we can further ensure that
$$\liminf_{i\to \infty} \mathcal{H}_{g_i}^1(E_i \cap \mathbb{S}_r)= \limsup_{i\to \infty} \mathcal{H}_{g_i}^1(E_i \cap \mathbb{S}_r).$$
In particular, for any choice of $r\in \mathcal{I}_2$,
after passing to a subsequence,
(a) and (b) hold.

Let $u_i: E_i \hookrightarrow \mathcal{A}$ be the inclusion map and let $\mathcal{K} \subseteq \mathcal{A}$ be any compact set. Let $\widetilde{\mathcal{K}} \subseteq \mathcal{A}$ be another compact set satisfying $\mathcal{K} \subseteq \textnormal{Int}(\widetilde{\mathcal{K}})$. Then for any compact subset $\widehat{\mathcal{K}} \subseteq \widetilde{\mathcal{K}}$, $u_i^{-1}(\widehat{\mathcal{K}}) = \widehat{\mathcal{K}} \cap E_i$ is compact. Because $E_i$ has no boundary in $\mathcal{A}$, it follows that the sequence $(u_i)$ is ``robustly $\mathcal{K}$-proper'' in the sense of \cite[Definition 2.3]{fish}. Because of this fact and assumptions (ii) and (iii), we can apply \cite[Proposition 3.11]{fish} to obtain the fact that
\[
\limsup_{i\to\infty}\int_{u_{i}^{-1}(\textnormal{Int}(\mathcal{K}))}|A_{i}|_{g_i}^{2}\,d\mathcal{H}_{g_i}^{2}<\infty
\]
for any compact subset $\mathcal{K} \subseteq\mathcal{A}$, where $A_{i}$ is
the second fundamental form of the embedding \linebreak$u_{i}:E_{i}\hookrightarrow  (\mathcal{A},g_i)$. We consider compact subsets $\mathcal{K}$ of the form
$\mathcal{A}_{\sigma}:=\{ 1+\sigma \leq  |z| \leq 2-\sigma \}$ for $\sigma>0$ small. By \cite[Theorem 5]{fishestimates}, for any $\delta>0$ sufficiently small, there exists $\hbar=\hbar(\delta) \in (0,\delta)$ such that for all $x\in \mathcal{A}_{2\delta}$, if $r\in (0,\hbar)$ satisfies 
$$\int_{u_i^{-1}(B_{g_i}(x,r))} |A_i|_{g_i}^2 d\mathcal{H}_{g_i}^2 \leq \hbar,$$
then $\sup_{u_i^{-1}(B_{g_i}(x,\frac{r}{2}))} |A_i|_{g_i} \leq r^{-1}$. An elementary covering argument then yields $N_0 \in \mathbb{N}$ such that for any $i\in \mathbb{N}$, there are $x_1^i,...,x_{N_0}^i \in \mathcal{A}_{2\delta}$ such that 
\[
\limsup_{i\to\infty}\sup_{u_{i}^{-1}\left(\mathcal{A}_{2\delta}\setminus\bigcup_{j=1}^{N_0}B_{g_i}(x_{j}^i,\sigma)\right)}|A_{i}|_{g_i}<\infty
\]
for any $\sigma>0$. By passing to a subsequence so that each sequence $(x_j^i)_{i\in \mathbb{N}}$ converges to some $x_j\in \mathcal{A}_{2\delta}$, we therefore have that
$$\limsup_{i\to \infty} \sup_{u_i^{-1}(\mathcal{A}_{3\delta} \setminus \bigcup_{j=1}^{N_0} B_g(x_j,\frac{1}{2}\sigma))} |A_i|_{g_i}<\infty.$$
By taking $\sigma\searrow 0$, then $\delta \searrow 0$, a diagonal argument gives a subset $\mathcal{I}_{3}\subseteq\mathcal{I}_{2}$
of full measure such that for each $r\in\mathcal{I}_{3}$, there exists
$\epsilon>0$ such that
\[
\limsup_{i\to\infty}\sup_{u_{i}^{-1}(\{r-\epsilon<|z|<r+\epsilon\})}|A_{i}|_{g_i}<\infty.
\]
By standard elliptic regularity (cf.~\cite[Proposition 5.2]{fishestimates}), there are also uniform bounds on $|(\nabla^{g_i|_{E_i}})^{\ell} A_i|_{g_i}$ for all $\ell \in \mathbb{N}$. For each $r\in \mathcal{I}_3$, we can therefore pass to a subsequence and apply the Breuning-Langer compactness theorem \cite[Corollary 7.13]{breuning}
to obtain $\epsilon_i \nearrow \epsilon$ and a smooth two-dimensional manifold $W$ along with a precompact open exhaustion $\{W_i\}_{i}\subseteq W$ and diffeomorphisms  
\[
\varphi_{i}:W_i \to u_{i}^{-1}(\{r-\epsilon_i <|z|<r+\epsilon_i \})
\]
such that $u_{i}\circ\varphi_{i}:W_i \to E_i \cap \{r-\epsilon_i <|z|<r+\epsilon_i \}$ converges in $C_{\text{loc}}^{\infty}(W)$
to a proper immersion 
$$u_{\infty}:W\to E_{\infty} \cap \{r - \epsilon < |z|< r+\epsilon\}.$$  The fact that $E_i \to E_{\infty}$ in the Hausdorff sense implies that $u_{\infty}$ is surjective.

Let $\omega_i$ denote the K\"ahler form of $(g_i,J_i)$. Because $u_{\infty}$ is a proper immersion, the number $\#u_{\infty}^{-1}(x)$ of points in $u_{\infty}^{-1}(\{x\})$ is locally constant as a function of $x\in \{r -\epsilon<|z|<r+\epsilon\} \cap E_{\infty}$. For any $\chi \in C_c^{\infty}(\{r-\epsilon<|z|<r+\epsilon\})$, we use (iv) to compute that
\begin{align*} k\int_{E_{\infty}}\chi\,d\mathcal{H}_{g}^{2}&=	\lim_{i\to\infty}\int_{E_{i}}\chi \,d\mathcal{H}_{g_{i}}^{2}\\ &=\lim_{i\to\infty}\int_{W_i}(\chi\circ u_{i}\circ\varphi_{i})((u_{i}\circ\varphi_{i})^{\ast}\omega_{i})
\\&=	\int_{W}(\chi\circ u_{\infty})(u_{\infty}^{\ast}\omega)
\\&= \int_{E_{\infty}} \#u_{\infty}^{-1}(x)\chi(x) \omega(x)
\\&=	\int_{E_{\infty}}\# u_{\infty}^{-1}(x)\chi(x) d\mathcal{H}_{g}^{2}(x).
\end{align*}
It follows that $\# u_{\infty}^{-1}(x)=k$ for $\mathcal{H}_g^2$-almost every $x\in E_{\infty}$, and because $\#u_{\infty}^{-1}(x)$ is locally constant, it is everywhere equal to $k$. $u_{\infty}$ is therefore a $k$-sheeted covering map. In particular, it restricts to a $k$-sheeted covering map $$u_{\infty}:T:=u_{\infty}^{-1}(E_{\infty} \cap \mathbb{S}_r)\to E_{\infty}\cap \mathbb{S}_r.$$

By (i), $u_{\infty}$ intersects $\mathbb{S}_r$ transversely for each $r\in(1,2)$. This means that any such $r$ is a regular value of $|u_{\infty}|$. Fix $r\in \mathcal{I}_3$. We then further have the convergence $u_{i}\circ\varphi_{i} \to u_{\infty}$ locally smoothly in a small annulus centered at $\mathbb{S}_{r}$. In particular, $r$ is a regular value of $|u_i \circ \varphi_i|$ for sufficiently large $i\in \mathbb{N}$, the corresponding level sets converge smoothly, and there exist diffeomorphisms $\zeta_i: T \to (u_i\circ \varphi_i)^{-1}(\mathbb{S}_r)$ such that $\vartheta_i:=u_i \circ \varphi_i \circ \zeta_i$ converge smoothly to $\vartheta := u_{\infty}|_T$.  This implies (c).

For each $r\in\mathcal{I}_{3}$, let $\sigma_{i,r} \subseteq \mathbb{S}_r^3$ be the preimage of $E_i \cap \mathbb{S}_r$ under the universal cover $\pi:\widehat{\mathcal{A}} \to \mathcal{A}$, where $\widehat{\mathcal{A}}:=\{ z \in \mathbb{C}^2 \: | \: 1<|z|<2\}$ and $\mathbb{S}_r^3:= \{z\in \mathbb{C}^2 \,|\, |z|=r\}$. Let $g_{\mathbb{S}_r^3}$ denote the round metric on $\mathbb{S}_r^3$. In the following claim, we show that there exists a point of $\mathbb{S}_r^3$ whose distance to $E$ has a definite lower bound.

\begin{claim} \label{claim:elementarycoveringargument} Fix $r\in[\frac{11}{10},\frac{19}{10}]$. Then for any $B'>0$, there exists $\epsilon=\epsilon(B')>0$ such that for all properly embedded $J$-holomorphic curves $E\subseteq\widehat{\mathcal{A}}$ with $\mathcal{H}_{\pi^{\ast}g}^{2}(E)\leq B'$, there exists $\zeta_{0}\in\mathbb{S}_{r}^{3}$ such that\linebreak $B_{g_{\mathbb{S}_r^3}}(\zeta_{0},\epsilon)\cap E=\emptyset.$ 
\end{claim}

\begin{proof}[Proof of Claim
\ref{claim:elementarycoveringargument}] 
We prove this claim by contradiction, so assume that there exists $B'>0$
such that for any given
$\epsilon>0$, one can find 
a $J$-holomorphic curve $E\subseteq\widehat{\mathcal{A}}$ with $\mathcal{H}_{g}^{2}(E)\leq B'$ such that $B_{g_{\mathbb{S}_{r}^{3}}}(\zeta,\epsilon)\cap E\neq\emptyset$ for all
$\zeta\in\mathbb{S}_{r}^{3}$.
Because $\pi^{\ast}g\leq 2\pi^{\ast}g_{0}$ for any $\zeta_{1},\zeta_{2}\in\mathbb{S}_{r}^{3}$, we know that (in $\mathcal{A}$) $d_{\pi^{\ast}g}(\zeta_{1},\zeta_{2})\leq \sqrt{2}d_{\pi^{*}g_{0}}(\zeta_{1},\zeta_{2}).$ Recall that 
\begin{equation} \label{eq:elementaryequivalence} d_{\pi^{\ast}g_0}(\zeta_1,\zeta_2)\leq d_{g_{\mathbb{S}_r^3}}(\zeta_1,\zeta_2)\leq 2d_{\pi^{\ast}g_0}(\zeta_1,\zeta_2)\end{equation}
for all $\zeta_1,\zeta_2\in \mathbb{S}_r^3$. Let $\{\zeta_{i}\}_{i=1}^{N}$ be a maximal subset of $\mathbb{S}_{r}^{3}$ such that $d_{g_{\mathbb{S}^{3}_r}}(\zeta_{i},\zeta_{j})\geq 100\epsilon$ whenever $i\neq j$. For each $i\in\{1,...,N\}$, there exists $\zeta_{i}'\in B_{g_{\mathbb{S}_{r}^{3}}}(\zeta_{i},\varepsilon)\cap E$. Then $d_{\pi^{\ast}g}(\zeta_i,\zeta_i')\leq \sqrt{2}\epsilon$ so that an application of \cite[Comment 1, p.178, and Proposition 4.3.1(ii)]{holocurves}
(see also \cite[Lemma 5.2]{CJL}) gives a universal constant $c>0$ such that $$\mathcal{H}_{\pi^{\ast}g}^{2}(B_{\pi^{\ast}g}(\zeta_{i},2\sqrt{2}\epsilon)\cap E)\geq\mathcal{H}_{\pi^{\ast}g}^{2}(B_{\pi^{\ast}g}(\zeta_{i}',\sqrt{2}\epsilon)\cap E)\geq c\epsilon^{2}.$$
Moreover, since $\{B_{g_{\mathbb{S}_{r}^{3}}}(\zeta_{i},300\epsilon)\}_{i=1}^{N}$ is a cover of $\mathbb{S}_{r}^{3}$, there exists $c_{0}>0$ universal such that $N\geq c_{0}\epsilon^{-3}$. On the other hand, if $\epsilon\in (0,\frac{1}{200})$, then $B_{\pi^{\ast}g}(\zeta_i,2\sqrt{2}\epsilon) \subseteq \widehat{\mathcal{A}}$ for all $i\in \{1,...,N\}$. Moreover, as $\pi^{\ast}g \geq \frac{1}{2}\pi^{\ast}g_0$ by assumption, we know that the collection $\{B_{\pi^{\ast}g}(\zeta_{i},2\sqrt{2}\epsilon)\}_{i=1}^{N}$ forms a pairwise-disjoint subset of $\widehat{\mathcal{A}}$. We therefore find that $$B'\geq\sum_{i=1}^{N}\mathcal{H}_{\pi^{\ast}g}^{2}(B_{\pi^{\ast}g}(\zeta_{i},2\sqrt{2}\epsilon)\cap E)\geq c\epsilon^{2}N\geq cc_{0}\epsilon^{-1}.$$ This leads to a contradiction if we let $\epsilon:=\frac{1}{2}cc_{0}(B')^{-1}$. 
\end{proof}

Now $\pi$ restricts to a covering map $\pi:\sigma_{i,r} \to E_i \cap \mathbb{S}_r$ with $|\Gamma|$ sheets. Let $g_{\mathbb{R}^{3}}$ denote the flat metric on $\mathbb{R}^{3}$. Applying Claim \ref{claim:elementarycoveringargument} to $\pi^{-1}(E_{\infty})$ gives $\zeta_{0}\in\mathbb{S}_r^{3}$ and $\epsilon = \epsilon(B)>0$ such that
\[
B_{g_{\mathbb{S}_r^3}}(\zeta_0,\epsilon)\cap \pi^{-1}(E_{\infty}) = \emptyset.
\]
By (i), it follows that 
\[
B_{g_{\mathbb{S}_r^3}}\left(\zeta_0,\frac{\varepsilon}{2}\right)\cap \pi^{-1}(E_i) = \emptyset
\]
for sufficiently large $i\in\mathbb{N}$.
Thus, letting $\varphi:\mathbb{S}_r^{3}\setminus\{\zeta_{0}\}\to\mathbb{R}^{3}$
denote the stereographic projection map based at $\zeta_{0}$, we see that there exists $R=R(B)>0$ such that 
\begin{equation} \label{eq:ContainedinRball} \varphi(\pi^{-1}(E_i)\cap \mathbb{S}_r^3) \subseteq B_{g_{\mathbb{R}^3}}(0,R)\end{equation} for sufficiently large $i\in \mathbb{N}$. Choose $\Lambda=\Lambda(B)>0$ such that
\begin{equation} \label{eq:stereographicequivalence}
\Lambda^{-1}(\varphi^{-1})^{\ast}g_{\mathbb{S}_r^{3}}\leq g_{\mathbb{R}^{3}}\leq\Lambda(\varphi^{-1})^{\ast}g_{\mathbb{S}_r^{3}}
\end{equation}
on $B_{g_{\mathbb{R}^3}}(0,R)$. By the solution to Plateau's problem (cf.~\cite[Section 4.5, Main Theorem]{plateau} for existence and \cite[Chapter 2.3, Theorem 1]{plateau2} for boundary regularity), there exists an area-minimizing smooth surface $\widetilde{S}_{i,r}\subseteq \mathbb{R}^3$
(with respect to the flat metric) which is a disjoint union of smoothly immersed disks, with boundaries satisfying $\partial \widetilde{S}_{i,r}=\varphi(\sigma_{i,r})$ as 1-cycles. By \eqref{eq:ContainedinRball} and \cite[Section 4.14, Theorem 2]{plateau}, we know that
\begin{equation} \label{eq:isoperimetric}\mathcal{H}_{g_{\mathbb{R}^3}}^2(\widetilde{S}_{i,r}) \leq R \mathcal{H}_{g_{\mathbb{R}^3}}^1(\partial \widetilde{S}_{i,r}).\end{equation}
Combining expressions \eqref{eq:elementaryequivalence}--\eqref{eq:isoperimetric}, we find that $S_{i,r}:=\varphi^{-1}(\widetilde{S}_{i,r})$ satisfies
\[
(100\Lambda)^{-1}\mathcal{H}_{\pi^{\ast}g_i}^{2}(S_{i,r})\leq \Lambda^{-1}\mathcal{H}_{g_{\mathbb{S}_r^3}}^2(S_{i,r})\leq \mathcal{H}_{g_{\mathbb{R}^{3}}}^{2}(\varphi(S_{i,r}))\leq R\mathcal{H}_{g_{\mathbb{R}^{3}}}^{1}(\varphi_{r}(\sigma_{i,r}))\leq 10R\Lambda^{\frac{1}{2}}\mathcal{H}_{\pi^{\ast}g_i}^{1}(E_{i}\cap \mathbb{S}_r^{3})
\]
for sufficiently large $i\in \mathbb{N}$, where we used the equivalence of $g_i$ and $g_0$ in the first and last inequalities. The fact that $\partial \left( \pi_{\ast}[S_{i,r}]\right) = |\Gamma| \cdot [E_i \cap \mathbb{S}_r]$ as 1-cycles now gives (d).
\end{proof}

\bibliographystyle{amsalpha}

\bibliography{ref2}

\end{document}